\documentclass[11pt*]{article}

\usepackage{amsfonts, amsmath, amssymb, amsthm, mathrsfs, bbm, color, enumerate, graphicx, mathtools, tikz, hyperref, relsize, bm,soul}
\usepackage{dsfont} \usepackage{leftidx}
\usepackage[english]{babel}
\usepackage{tikz}
\usepackage{tikz-cd}  %交换图表
\usetikzlibrary{positioning, arrows.meta, calc}
\usepackage{enumerate}
\usepackage{appendix}
\usepackage[left=2.5cm, right=2.5cm, top=2.5cm, bottom=2.5cm]{geometry}
\usepackage{mnotes}%允许批注
\usepackage[
  backend=biber,
  style=alphabetic,
  maxbibnames=99,
  giveninits=true
]{biblatex}
\renewbibmacro{in:}{}

\usepackage{csquotes}%防止引用打架
\usepackage{nccmath}
\usepackage{xcolor}
\usepackage{enumitem}

\usetikzlibrary{arrows}

\numberwithin{equation}{section} 

\usepackage{setspace}
\usepackage{soul}%为了解决\st{}里面\ref{}不编译的问题
\soulregister\ref7

\theoremstyle{plain}

\newtheorem{theorem}{Theorem}[section]
\newtheorem{corollary}[theorem]{Corollary}
\newtheorem{example}[theorem]{Example}
\newtheorem{lemma}[theorem]{Lemma}
\newtheorem{proposition}[theorem]{Proposition}

\newtheorem{fact}[theorem]{Fact}
\newtheorem{definition}[theorem]{Definition}

\newcommand*\tageq{\refstepcounter{equation}\tag{\theequation}}

\newcommand{\function}{f_N}
\newcommand{\average}{\mathcal A}
\newcommand{\difference}{\nabla}
\newcommand{\noise}{\omega}%discrete white noise
\newcommand{\truncatednoise}{\tilde\omega_N}
\newcommand{\boxdomain}{\Lambda^N_{a,b}}%[0,an]\tiems[-bn,bn] 
\newcommand{\trapezoid}{\mathbb T^N_{a,b}}
\newcommand{\setsize}[1]{\left\vert#1\right\vert}
\newcommand{\absolute}[1]{\left\vert#1\right\vert}
\newcommand{\driving}{\phi}
\newcommand{\rate}{\varepsilon}%scaling rate

\newcommand{\kterm}[1]{K_N^{(#1)}}%renormalization term
\newcommand{\ktermnew}[1]{\tilde{K}_N^{(#1)}}% simplified version
\newcommand{\xterm}[1]{X_N^{(#1)}}
\newcommand{\error}[1]{\delta_N^{(#1)}}%error term
\newcommand{\cha}{\nabla}%黑边的权重
\newcommand{\tree}{T}%树
\newcommand{\vroot}{\bm{r}}%根
\newcommand{\extension}[1]{\mathfrak{E}\left(#1\right)}
\newcommand{\oneptunion}{\bigvee}
\newcommand{\one}{\emph{o}} %只有一条边的树
\newcommand{\treefamily}{\mathcal{T}}%每个\kterm的树表示
\newcommand{\embed}{\psi}
\newcommand{\embedset}[1]{\Psi_{#1}}
\newcommand{\embedsetnew}[1]{\tilde\Psi_{#1}}%xterm的embedding
\newcommand{\weight}[1]{c_{#1}}%边权，我给改成psi在下标了
\newcommand{\weightnew}[1]{c'_{#1}}

\newcommand{\moment}[2]{W_{#1}\left(#2\right)}%某个具体划分对应的w乘积之矩
\newcommand{\action}[2]{\mathcal{L}_{#1}\left(#2\right)}
\newcommand{\actionnew}[2]{\mathcal{L}'_{#1}\left(#2\right)}
\newcommand{\reddeg}{\mathrm{deg}_{R}}
\newcommand{\blackdeg}{\mathrm{deg}_{B}}
\newcommand{\freedom}{\mathrm{deg}_{F}}

\newcommand{\RomanNumeralCaps}[1] {\MakeUppercase{\romannumeral #1}}
\newcommand{\loglesssim}{\mathop{\lesssim}\limits^{\log}}
\newcommand{\distribution}[1]{\nu_{#1}}%表示t=#1这一行的noise的distribution

\newcommand{\bbE}{\mathbb{E}}

\newcommand{\bbP}{\mathbb{P}}

\newcommand{\bbR}{\mathbb{R}}

\newcommand{\bbZ}{\mathbb{Z}}

\newcommand{\R}{\mathbb{R}}  %实数
\newcommand{\Z}{\mathbb{Z}}  %整数
\newcommand{\p}{\partial}  %偏导数

\allowdisplaybreaks[4]

\begin{document}

\setcounter{tocdepth}{2}

\title{\LARGE\textbf{Subcritical limits of a 1D KPZ growth surface with finite moments}}
\date{\today }
\author{Dang-Zheng Liu\footnotemark[1]~, Fangzhou Luo\footnotemark[2]~,  Tian Wu\footnotemark[1]~, and Yuxuan Zong\footnotemark[2]}
\renewcommand{\thefootnote}{\fnsymbol{footnote}}
\footnotetext[1]{School of Mathematical Sciences, University of Science and Technology of China, Hefei 230026, P.R.~China. 
dzliu@ustc.edu.cn}
\footnotetext[1]{School of Mathematical Sciences, University of Science and Technology of China, Hefei 230026, P.R.~China. 
wt1997@ustc.edu.cn}

\footnotetext[2]{School of Mathematical Sciences, Peking University, Beijing 100871,  P.R.~China. 
luo\_joe2005@stu.pku.edu.cn}

\footnotetext[2]{School of Mathematical Sciences, Peking University, Beijing 100871,  P.R.~China. 
yxzong25@stu.pku.edu.cn}
\maketitle

\begin{abstract}
We consider the one-dimensional random growth surface introduced by Adhikari and Chatterjee \cite{adhikari2024invariance}, where the height function is defined recursively via the heights at two neighboring sites and an independent noise term at each space-time point. In the subcritical regime, assuming uniformly bounded eighth moments of the noise variables and appropriate regularity of the driving function, we establish convergence in law of the suitably rescaled height function to the solution of the additive stochastic heat equation.  Our approach employs a direct recursive renormalization scheme.  A central component is a graph representation of the renormalization terms, which enables the derivation of the high-moment bounds essential to the renormalization procedure.
\end{abstract}

%\tableofcontents

\section{Introduction}\label{S.Introduction}
\subsection{Models and main results}

In this paper, we consider a discrete 1D random growth surface proposed by Adhikari and Chatterjee \cite{adhikari2024invariance}. Let $\bbZ_{\ge 0}$ be the set of nonnegative integers and $\bbZ_+$ be the set of positive integers. 
Let $\driving:\bbR\to\bbR$ be a  driving function and $\{\noise(t,x)\}_{(t,x)\in\bbZ_+\times\bbZ}$ be a collection of independent random variables. 

\begin{definition}\label{defn:random growth surface}
The height function of the 1D random growth surface $f_N:\mathbb Z_{\ge0}\times \mathbb Z\to \mathbb R$ is defined by the recursion for $t\ge 1$,
\begin{equation}\label{E.def of f}
	\function(t,x)=\frac{\function(t-1,x-1)+\function(t-1,x+1)}{2}+\driving(\function(t-1,x-1)-\function(t-1,x+1))+N^{-\frac{1}{4}-\rate}\noise(t,x),
\end{equation}
with zero initial condition, i.e., $\function(0,x)=0,~\forall x\in\bbZ.$ 
\end{definition}
We impose the following assumptions:

\begin{itemize}
    \item\textbf{\emph{Distributions.}} Let $\distribution{t}, t \in \bbZ_+$ be a sequence of distributions such that $\noise(t,x)\sim\distribution{t},x\in\bbZ$. All $\distribution{t}$ have mean zero and the same second and third moments. 

    \item\textbf{\emph{Finite moments.}} For $k=2,3$, let $\mu_k=\bbE[\noise(1,0)^k]$. Also, $\sup_{t\in\bbZ _+}\bbE [|\noise(t,0)|^k]<\infty$ for all $0<k\le M$, where $M> 3$.
    \item\textbf{\emph{Regularity.}} The driving function $\driving$ is even, satisfies $\driving(0)=0$ and $\driving''(0)=\beta/4$, where $\beta$ is a fixed constant. $\driving$ is $C^\alpha$ in a neighborhood of 0 for some integer $\alpha \ge 2$.
\end{itemize}

\begin{example}
    We list three classical examples of $\eqref{E.def of f}$ for which $\driving$ is an even $C^{\infty}$ function.

    \begin{itemize}
        \item[(1)] Take $\driving(u)=u^2$. The model represents a discrete version of the KPZ growth behavior.
        \item[(2)] Take $\driving(u)=\sqrt{1+u^2}$. The model was considered in the original paper of Kardar, Parisi, and Zhang \cite{kardar1986dynamic}.
        \item[(3)] Take    
        $$
        \driving(u)=\frac{1}{\beta}\log\left(\frac{\mathrm{e}^{\beta u/2}+\mathrm{e}^{-\beta u/2}}{2}\right).
        $$
        This choice corresponds to the 1D directed polymer model. It is straightforward to verify that the corresponding normalized log-partition function is given by
        \begin{equation*}
    	f_N^{\text{poly}}(t,x)=\frac{1}{\beta} \log
        \left(\frac{1}{2^t}\sum_{Y\in \text{RW}(t,x)}\prod_{s=1}^t \exp{\{\beta N^{-\frac{1}{4}-\rate}\noise(s, Y(s))\}}\right), 
    \end{equation*}
    where $\text{RW}(t,x)$ denotes the set of all simple symmetric random walk paths on $\mathbb{Z}$ that start at time $0$ and terminate at position $x$ at time $t$. 
    \end{itemize}
\end{example} 

For comparison, another important choice is $\phi(u)=|u|/2$, although it falls outside the regularity assumptions above. In this case, \eqref{E.def of f} becomes

\begin{equation*}
	\function(t,x)=\max\{ \function(t-1,x-1),\function(t-1,x+1)\}+N^{-\frac{1}{4}-\rate}\noise(t,x).
\end{equation*}
The solution can be represented as the maximum of accumulated random variables over directed paths, yielding a directed last passage percolation (DLPP) model; see \cite{Rost1981,Joh2000}.

The critical intermediate disorder scale for the 1D directed polymer model is $N^{-1/4}$, at which the  rescaled partition function converges to the solution of the multiplicative stochastic heat equation \cite{alberts2014intermediate}. The same \(N^{-1/4}\) critical scaling was established in \cite{adhikari2024invariance} for the 1D random growth surface in Definition \ref{defn:random growth surface}. 
Accordingly, we refer to the cases $\rate>0$, $\rate=0$, and $\rate<0$ as the subcritical, critical, and supercritical regimes, respectively.  In the subcritical regime, the nonlinear contributions become negligible under the appropriate rescaling, leading to Edwards--Wilkinson fluctuations; compare \cite{dey2016high} for the 1D directed polymer model.

In this paper, we focus on the subcritical regime $0<\rate<1$. We establish that, after subtracting the renormalization terms and rescaling,  $\function(t,x)$  converges to a solution of the stochastic heat equation (SHE) with additive noise. For any $(t,x)\in\bbR_+\times\bbR$ such that $t$ is an integer multiple of $N^{-1}$ and $x$ is an integer multiple of $N^{-1/2}$, let 
\begin{equation}\label{E.def of F_N}
F_N(t,x)=N^{\rate}\function(Nt,\sqrt{N}x)-\frac{\beta}{2}\mu_2N^{\frac{1}{2}-\rate }t-\frac{\beta^2}{6}\mu_3N^{\frac{1}{4}-2\rate }t.
\end{equation}
For all other $(t,x)$, $F_N(t,x)$ is defined by the space-time linear interpolation on the lattice $N^{-1}\mathbb{Z}_{\ge 0}\times N^{-1/2}\mathbb{Z}$ as in \cite[Section 10]{adhikari2024invariance}. We equip $C(\bbR\times\bbR_+)$ with the topology of uniform convergence on compact sets.
\begin{theorem}\label{T.main}
    For any fixed constant $\rate \in(0,1)$, let $F_N$ be defined in \eqref{E.def of F_N}. If $M\ge 8$ and $\alpha\ge4/\rate$, then $F_N$ converges in law to the solution $f$ of the stochastic heat equation with additive noise (additive SHE)
    \begin{equation}\label{E.heat equation}
    \partial_t f=\frac{1}{2}\partial_x^2 f+\sqrt{2\mu_2}\xi,\quad f(0,\cdot)=0,
    \end{equation}
    where $\xi$ is standard space-time white noise. 
\end{theorem}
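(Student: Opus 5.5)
We will prove Theorem~\ref{T.main} by a recursive expansion of $\function$ around its linearization, followed by a martingale/tightness argument for the linear part. Write $g_N=N^{\rate}\function$, so that $g_N$ satisfies the recursion \eqref{E.def of f} with noise $N^{-1/4}\noise$ and driving term $N^{\rate}\driving(N^{-\rate}\cdot)$. By evenness of $\driving$ and the $C^\alpha$ regularity, Taylor expansion gives
\[
N^{\rate}\driving\bigl(N^{-\rate}u\bigr)=\sum_{1\le k<\alpha/2}c_k N^{-(2k-1)\rate}u^{2k}+O\bigl(N^{-(\alpha-1)\rate}|u|^{\alpha}\bigr),
\]
with $c_1=\beta/8$. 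The gradient $u=g_N(t-1,x-1)-g_N(t-1,x+1)$ is of order $N^{-1/4}$ locally but has heavy correlations, so every polynomial term has to be handled carefully. Since $\alpha\ge 4/\rate$, the remainder term is $O(N^{-3-\alpha/4})$ per step. Summed over the $N^{3/2}$ space-time points of the light cone, it is negligible once we have an a priori bound on high moments of the gradients; uniform eighth moments suffice for this via Hölder.

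\textbf{Step 1 (linear part).} Let $X_N$ solve the linear recursion, i.e.\ the average plus $N^{-1/4}\noise$. Then $X_N(t,x)=N^{-1/4}\sum_{s\le t}\sum_y p_{t-s}(x-y)\noise(s,y)$, where $p$ is the simple random walk kernel. Under the scaling $(Nt,\sqrt N x)$ this is the classical discrete additive SHE. Convergence of its finite-dimensional distributions to \eqref{E.heat equation} with noise strength $\sqrt{2\mu_2}$ follows from the Lindeberg CLT; the factor $2$ comes from the parity sublattice. Tightness in $C$ follows from Kolmogorov's criterion using the eighth moments and the standard heat kernel estimates $\sum_y|p_t(y)-p_t(y')|^2\lesssim t^{-1/2}\wedge |y-y'|t^{-1}$.

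\textbf{Step 2 (renormalization expansion).} Write $g_N=X_N+\sum_j Y^{(j)}_N$. Here $Y^{(j)}_N$ is obtained by iterating Duhamel's formula: each nonlinear term $c_kN^{-(2k-1)\rate}(\cha g_N)^{2k}$ is propagated by the kernel $p$, and $\cha g_N$ is then re-expanded. Each iterated term is encoded as a tree whose leaves are noises and whose internal vertices are heat-kernel gradients; these are the renormalization terms $\kterm{\cdot}$. We then compute expectations. The term $\tfrac{\beta}{8}N^{-\rate}\bbE[(\cha X_N)^2]\approx \tfrac{\beta}{8}N^{-\rate}\cdot 4\mu_2N^{-1/2}\cdot(\text{local sum})$ accumulates over $Nt$ steps to $\tfrac{\beta}{2}\mu_2N^{1/2-\rate}t$. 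The first cross term between $(\cha X)^2$ and $\cha Y^{(1)}$ involves a third cumulant and produces $\tfrac{\beta^2}{6}\mu_3N^{1/4-2\rate}t$. The remaining expectations carry higher powers $N^{-k\rate}$ of the noise scale, and the expansion depth is chosen so that each has total power below zero; finitely many levels suffice since $\rate>0$. The fourth-moment contributions at order $N^{-3\rate}$ with time factor $N\cdot N^{-1}$ are $o(1)$ for $\rate>0$. For the centered fluctuations of every $\kterm{\cdot}$, the goal is to show $\sup_{t\le N T,x}\bbE|\cdot|^{p}\to0$ with $p$ large enough for a union bound and Kolmogorov tightness.

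\textbf{Step 3 (the main obstacle).} The key difficulty is bounding high moments of the centered tree terms. Expanding $\bbE[\prod \kterm{\cdot}]$ as a sum over partitions of the noise leaves into blocks of size $\ge2$ (singletons vanish by mean zero), one bounds each pairing diagram by kernel estimates:
\begin{itemize}
\item each gradient vertex gains a factor $s^{-1/2}$ over $p_s$;
\item each block of size $m$ costs $\sup_t\bbE|\noise|^m$, which is finite for $m\le 8$. Larger blocks must be avoided or split, which dictates the choice of $p$ and the truncation level.
\end{itemize}
Power counting then reduces to a combinatorial degree count (red/black/free degrees of the graph) showing each connected diagram is $O(N^{-c\rate})$ after subtracting its mean.

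Finally, a closing a priori argument is needed, since the expansion involves $g_N$ itself in the remainder. This is a stopping/Gronwall argument on the event that the gradients stay bounded by $N^{-1/4+\delta}$, whose complement has vanishing probability by Markov's inequality on eighth moments. Combining this with Step 1 and Slutsky's lemma yields convergence of $F_N$ to $f$.
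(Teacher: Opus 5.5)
Your outline follows the paper's architecture: the linear part $X_N^{(1)}$, tree-indexed Duhamel iterates, the explicit drifts $\frac{\beta}{2}\mu_2$ and $\frac{\beta^2}{6}\mu_3$ (which you compute correctly), and a high-probability closing step. There is nonetheless a genuine gap. The estimate that carries essentially the whole proof is $\bbE[(\kterm{l})^n]\lesssim(\log N)^C N^{-ln\rate}$ for all even $n$ (Theorem~\ref{T.moments upper bound on K 1}), together with its analogue for $X_N^{(l)}-\bbE X_N^{(l)}$ (Theorem~\ref{T.concentration on Xl 1}). You only announce these, and the two mechanisms you indicate for them would not work.

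First, large blocks cannot be ``avoided or split''. Already $\bbE[(\kterm{L})^2]$ with $L=\lceil 4/\rate\rceil$ involves at least $8/\rate$ noise variables, so partitions with blocks of size $k>8$, whose moments may be infinite, are unavoidable. The missing idea has two parts:
\begin{enumerate}
\item Truncate at $N^{1/4}\log N$. This costs only $|\trapezoid|\,\bbE[\noise^8](N^{1/4}\log N)^{-8}=o(1)$ and gives $|\tilde\mu_k|\lesssim N^{(k-8)^+/4}$ up to logs.
\item Prove that the glued-graph sum for a proper partition $\pi$ is $\lesssim N^{(|L(T)|-\sum_i(|\pi_i|-4)^+)/4}$ up to logs. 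A block of size $k$ then gains $N^{-(k-4)^+/4}$, which dominates its truncated-moment cost $N^{(k-8)^+/4}$. This is precisely where $M\ge 8$, i.e.\ truncation level $N^{1/4}$, enters.
\end{enumerate}
Obtaining that bound is the content of the graph-lifting argument: leaf lifting to amenable two-coloured graphs with $D(S)\le-2$, backbone pruning, and star reduction. ``Power counting reduces to a degree count'' does not replace it.

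Second, blocks consisting exactly of the two or three leaves below a non-root vertex are not small under absolute-value power counting. For example, take $\bbE[(\kterm{3})^2]$ and the tree $T\vee T$ with $T=\extension{\extension{S_2}\vee\extension{S_2}}$. Gluing the four sibling pairs gives $N^{-2-8\rate}\cdot N^{3}=N^{1-8\rate}$, far above the required $N^{-6\rate}$ when $\rate<1/2$. Subtracting the mean of the diagram does not touch these internal contractions. What removes them is the exact identity $\sum_{z}\cha(s,z)=0$: the subdiagram below such a bad point does not depend on its spatial position, so these partitions reduce to coarser proper ones (Proposition~\ref{P.restrict to proper partitions}). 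For $X_N^{(l)}$, where root edges carry $p$ instead of $\cha$, you additionally need the nonlocal-partition inclusion--exclusion argument.

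Your closing step is also incorrect as written. No moment bound on the gradients of the full height function can be obtained from eighth moments ``via H\"older''. The function $\driving$ is only locally $C^\alpha$, and $\function$ need not have finite moments at all: already for $\driving(u)=u^2$, $\function(t,\cdot)$ is a polynomial of degree $2^{t-1}$ in the noise.

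Nor does an eighth-moment Markov bound give the event you need. The argument has to run pathwise on an event where $|\kterm{l}|\le N^{-0.6l\rate}$ for every $l\le L$, uniformly over the $O(N^2)$ sites of $\trapezoid$. In particular you need $|\kterm{L}|\le N^{-2.4}$, so that the top-order forcing term in the recursion for $\error{L}$ is negligible. Markov with eighth moments gives this only for $l=1$. For $l\ge 2$ you need moments of $\kterm{l}$ of order comparable to $1/\rate$, which is again the truncated high-moment bound above.

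On that event, the induction of Theorem~\ref{T.expansion 1} closes by a maximum principle. The linearized error recursion has coefficients $\tfrac12\pm B$ with $|B|\le\tfrac14$, so $\max_x|\error{L}(t,x)|$ increases by at most $N^{-1-1.1\rate}$ per time step.
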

We refer to \cite[Section~3.2]{alberts2014intermediate} for a precise definition of the space-time white noise $\xi$ in~\eqref{E.heat equation}.  The solution of~\eqref{E.heat equation} is given explicitly by the stochastic convolution of the heat kernel with $\xi$. Our results can also be adapted to other initial conditions and to models with boundary conditions. In the critical regime, one example is the Neumann boundary condition considered in \cite{tang2024invariance}.

We obtain the same conclusion under a different parameter regime $(M,\alpha)$.

\begin{theorem}\label{T.main 2}
    For any fixed constant $\rate\in(0,1)$, if $M\ge 20/\rate$ and $\alpha\ge4$, then the conclusion of Theorem \ref{T.main} also holds.

\end{theorem}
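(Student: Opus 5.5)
The plan is to rerun the recursive renormalization argument behind Theorem \ref{T.main}, but to move the burden from smoothness of $\driving$ to moments of $\noise$. We write $\function=\mathcal A\function+\driving(\nabla\function)+N^{-1/4-\rate}\noise$, where $\mathcal A$ is the averaging operator and $\nabla\function(t-1,x)=\function(t-1,x-1)-\function(t-1,x+1)$. We then Taylor expand $\driving$ at $0$ only to order $4$, which uses $\alpha\ge4$:
\[
\driving(u)=\tfrac{\beta}{8}u^2+c_4u^4+R(u),\qquad |R(u)|\lesssim |u|^{4}\,\omega_\driving(|u|)\ \text{for } |u|\le u_0 .
\]
Here $\omega_\driving$ is a modulus of continuity of $\driving^{(4)}$. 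Since $\driving$ is even, the odd-order terms vanish. In the scheme of Theorem \ref{T.main}, the regularity $\alpha\ge 4/\rate$ was used to make the Taylor remainder $|\nabla\function|^{\alpha}$ summable over the $N^{3/2}$ space-time sites of a box. We replace that step by a truncation argument together with a sharper a priori bound on the gradient, coming from high moments.

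\textbf{Step 1 (a priori gradient bound).} We show that with high probability $\max_{(t,x)\in\boxdomain}|\nabla\function(t,x)|\le N^{-1/4-\rate+\delta}$ for small $\delta>0$. Write $\function$ as the linear part (the heat kernel convolved with the noise) plus the renormalization terms $\kterm{k}$ and the error. For the linear part, Rosenthal's inequality with $M$ moments gives $\bbE|\nabla(\text{linear})|^M\lesssim N^{-M(1/4+\rate)}$. A union bound over $|\boxdomain|\asymp N^{3/2}$ sites then costs a factor $N^{3/2}$, so the maximum is at most $N^{-1/4-\rate+3/(2M)}$. The requirement $M\ge 20/\rate$ makes $3/(2M)\le 3\rate/40$, which is a small fraction of $\rate$. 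The nonlinear renormalization terms are smaller, by factors $N^{-\rate}$ or better, and we control their gradients through the graph (tree) representation moment bounds already developed for Theorem \ref{T.main}. Those bounds must now hold at moment order proportional to $M$. This is why $M$ has to grow like $1/\rate$, and not merely be at least $8$.

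\textbf{Step 2 (remainder control).} On the good event of Step 1, the remainder contributes at most $|\nabla \function|^{4}\omega_\driving(|\nabla\function|)\le N^{-1-4\rate+4\delta}\,o(1)$ at each site. Summed along the heat-kernel propagation over time $N$ (the kernel weights sum to one at each time), and multiplied by the scaling $N^{\rate}$, this gives $N^{-3\rate+4\delta}\cdot o(1)\to0$. The quartic term $c_4(\nabla\function)^4$ is handled the same way. Its mean $3c_4\mu_2^2N^{-1-4\rate}\cdot O(1)$ per step gives a total drift $O(N^{-3\rate})\to0$ after rescaling, so it needs no extra renormalization constant. The quadratic term is renormalized exactly as for Theorem \ref{T.main}. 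Its mean produces $\frac{\beta}{2}\mu_2N^{1/2-\rate}t$, and its second-order interaction with the noise produces the $\mu_3$ term $\frac{\beta^2}{6}\mu_3N^{1/4-2\rate}t$. Iterating the quadratic nonlinearity to higher order generates further terms $\kterm{k}$. Each carries an extra factor $N^{-\rate}$ relative to the previous one, so we only need to go to depth $k\approx C/\rate$. By Step 1 their moment bounds are available.

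\textbf{Step 3 (conclusion).} Off the good event, the probability is $O(N^{-c})$, which is negligible for convergence in law. On the good event, $F_N$ equals the rescaled linear part plus $o(1)$ uniformly on compacts. Finite-dimensional convergence of the linear part to the additive SHE follows from Lindeberg's CLT using only $M>3$, and tightness follows from Kolmogorov's criterion. Both are exactly as in Theorem \ref{T.main}.

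The main obstacle is Step 1 for the nonlinear terms. The tree-graph moment bounds for the renormalization terms must hold at order comparable to $M\sim 20/\rate$, with constants that do not deteriorate faster than the polynomial gains. The first thing to try is to check that the graph combinatorics in those bounds only produce constants depending on the moment order and the tree size. Those quantities are fixed once $\rate$ is fixed, so the bounds should survive.
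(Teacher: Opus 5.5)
\textbf{The main gap: Step 1 for the nonlinear terms.} Your a priori gradient bound needs $|K_N^{(l)}|$, $l\ge 2$, to be well below $N^{-1/4}$; in your full quadratic scheme you need at least $N^{-1/4-\varepsilon/4}$. You say this follows from the tree/graph moment bounds already developed for Theorem~\ref{T.main}, but it does not.

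Theorem~\ref{T.moments upper bound on K 1} only gives $\bbE[(K_N^{(l)})^n]\loglesssim N^{-ln\varepsilon}$. The graph sums of Section~\ref{S.Graph Lifting} are bounded by $N^{\frac14(|L(T)|-2-\sum_i(|\pi_i|-4)^+)}$, and this completely absorbs the factor $N^{-1/4}$ carried by each leaf. That remains true if you drop the truncation and use bounded moments: you gain only an overall $N^{-1/2}$, not a factor per moment. Markov plus a union bound then give at best $|K_N^{(2)}|\le N^{-2\varepsilon+o(1)}$ (the paper records $N^{-1.2\varepsilon}$). This exceeds $N^{-1/4-\varepsilon}$ whenever $\varepsilon<1/4$. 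With only that bound, the quartic/Taylor remainder is of order $N^{-8\varepsilon}$ per site. Accumulated over $aN$ time steps this gives $N^{1-8\varepsilon}$, which is not $o(N^{-\varepsilon})$ for small $\varepsilon$.

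So the obstacle you named, constants growing with the moment order, is not the real one. What is missing is a gain of a power of $N$ \emph{per moment} for $K_N^{(l)}$, $l\ge2$.

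\textbf{How the paper supplies it.} The missing estimate is Theorem~\ref{T.moments upper bound on K 3}: $\bbE[(K_N^{(l)})^n]\loglesssim N^{-(\frac12+l\varepsilon)n}$ for $ln\le M$, which matches the true size $N^{-1/2-l\varepsilon}$. Proving it takes two new ingredients.
\begin{itemize}
\item A leaner renormalization: $X_N^{(2)}$ is driven by $\frac{\beta}{8}(K_N^{(1)})^2$ and $X_N^{(l)}$ by $\frac{\beta}{4}K_N^{(1)}K_N^{(l-1)}$. Then each $K_N^{(l)}$ is the action of a single caterpillar tree $T_l$ with exactly $l$ leaves.
\item A different graph lifting. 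It uses good graphs (properties I--III), a leaf lifting that adds double red edges, pruning of the vertices $v_j$, and a star reduction tracking the number $z(H)$ of vertices with no red edge. This bounds the graph sums by $N^{\frac14(l-2)n}$ instead of $N^{\frac14(ln-2)}$.
\end{itemize}
The hypothesis $M\ge 20/\varepsilon$ is then used, roughly as you guessed, so that Markov's inequality can be applied at moment orders of order $1/\varepsilon$: for $K_N^{(1)}$ via Theorem~\ref{T.moments upper bound on K 2}, and for $K_N^{(l)}$ under the constraint $ln\le M$.

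\textbf{A secondary problem.} You state the bound on $\max|\nabla f_N|$ a priori, but $\nabla f_N$ contains $\nabla\delta_N^{(L)}$. Controlling that term already requires the remainder to be small, so Steps 1--2 are circular as written. Moreover, even inductively one only has $|\nabla\delta_N^{(L)}|\le 2\max|\delta_N^{(L)}|\lesssim N^{-1.1\varepsilon}$, which is not $O(N^{-1/4})$.

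You would need to run the induction in $t$ of Theorem~\ref{T.expansion 1}. There the parts of the nonlinearity that depend on $\nabla\delta_N^{(L)}$ are absorbed into the averaging coefficients (with $|B|\le 1/4$) rather than bounded through $|\nabla f_N|$. The remaining forcing, built from the $K_N^{(l)}$ alone (for example $(\sum_{l\ge2}K_N^{(l)})^2$ and $(\sum_l K_N^{(l)})^4$), is $\lesssim N^{-1-2.4\varepsilon}$ per step. This holds precisely because of the sharp bounds $|K_N^{(1)}|\le N^{-1/4-0.6\varepsilon}$ and $|K_N^{(l)}|\le N^{-1/2-0.6l\varepsilon}$.

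Minor: the union bound runs over $\trapezoid$, which has $\asymp N^2$ sites rather than $N^{3/2}$; this is harmless given $M\ge 20/\varepsilon$. Your Step 3 is fine.
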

\subsection{Background and related results}
The 1D KPZ equation \cite{kardar1986dynamic}, which describes the evolution of a random interface growth model, is a stochastic partial differential equation given by
\begin{equation}\label{eq:KPZ}
    \partial_t h = \frac{1}{2} \partial_x^2 h + \frac{\beta}{2} (\partial_x h)^2 + \gamma \xi,\quad  t\ge 0, x\in\R, 
\end{equation}
where $\xi$ is a space-time white noise, and $\beta, \gamma$ are real-valued parameters. For $\beta\neq 0$, one can overcome the difficulties of the singular term $(\partial_x h)^2$ by giving a rigorous meaning to \eqref{eq:KPZ} through the Cole--Hopf transform $h=\beta^{-1}\log Z$, where $Z$ solves the stochastic heat equation with multiplicative noise (multiplicative SHE)
\begin{equation}
\label{eq:SHE multi noise}
\partial_t Z = \frac{1}{2} \partial_x^2 Z + \beta \gamma  Z \xi,\quad  t\ge 0, x\in\R.
\end{equation}
This is the so-called Cole-Hopf solution of the 1D KPZ equation; see \cite{bertini1997stochastic}. A pathwise solution theory was later developed by \cite{hairer2013solving},  followed by the general framework of regularity structures \cite{hairer2014theory}.

Research on the Kardar-Parisi-Zhang (KPZ) universality class has developed rapidly over the past two decades and has become an important topic in mathematics and physics. Fluctuations in the KPZ universality class were first established in the longest increasing subsequence problem  \cite{BDJ1999} and subsequently in exactly solvable directed last passage percolation \cite{Joh2000}. For a review of recent progress, see \cite{baik2022kpz} and \cite{Quastel2026KPZ}.  A central goal is to understand the strong KPZ universality conjecture, which states that for a large class of random interface growth processes, the height functions under the $3:2:1$ time-space-fluctuation scaling should converge to a universal  random field, also known as the KPZ fixed point  \cite{matetski2021kpz}.

The weak universality conjecture is that a class of 1D random growth surface driven by microscopic fluctuations, with the local growth depending nontrivially on neighboring heights should converge to the KPZ equation under appropriate scaling. A related class of continuum interface models is obtained by replacing $(\partial_x h)^2$ by a general nonlinearity $F(\partial_x h)$. Under suitable regularity conditions on $F$,  rescaled solutions of these models converge to solutions of the KPZ equation \cite{hairer2018class,hairer2019large,kong2025frequency}.

Related work concerns the convergence of discrete and microscopic models to stochastic PDEs. General frameworks and applications for discretizations of singular SPDEs can be found in \cite{cannizzaro2018space,erhard2019discretisation,hairer2018discretisations,erhard2024scaling}. At the microscopic level, \cite{diehl2017kardar} proved that the large-scale fluctuations of weakly asymmetric interacting Brownian motions converge to the KPZ equation via a martingale formulation. These results motivate analogous scaling limits for discrete growth models. The random growth surface considered here is defined by a discrete nonlinear recursion with independent space-time noise variables, rather than as a discretization of an SPDE or an interacting diffusion system.

Another related discrete model is the 1D directed polymer in a random environment, first introduced by Huse and Henley \cite{huse1985pinning} as a model for interfaces in disordered Ising systems;  see \cite{comets2017directed,zygouras2024directed} for reviews. At the critical scaling \(\rate=0\), corresponding to the intermediate disorder scaling for directed polymers, the  rescaled point-to-point partition function, viewed as a space-time random field, converges in law  to the solution of the multiplicative SHE. This was first established by \cite{alberts2014intermediate}, after which \cite{caravenna2017polynomial} extended the convergence at fixed space-time points to long-range directed polymer models.  Both results were initially established under finite exponential moment assumptions on disorder variables.  For the point-to-line partition function at a fixed macroscopic time,  this convergence was later established under a finite six-moment assumption in \cite{dey2016high}, confirming a conjecture of \cite{alberts2014intermediate}.  \cite{dey2016high} also obtained Gaussian fluctuation limits in the subcritical regime.

Recently, \cite{adhikari2024invariance} extended the critical regime convergence result of \cite{alberts2014intermediate} from the 1D directed polymer model to the random growth surface with a general driving function defined in \eqref{E.def of f}. The result assumes that the driving function $\driving$ is $C^6$.  Their proof is based on an invariance principle that compares the random growth surface with the directed polymer model through a renormalization argument, and requires  finite exponential moments of random variables.

In this paper, we study a class of 1D KPZ random growth surfaces \eqref{E.def of f} in the subcritical regime. We assume that the random variables have uniformly bounded eighth moments and are identically distributed in space. Under these assumptions, we show that with high probability, the height function can be represented as a sum of linear and multilinear polynomials in the random variables up to a small error. This representation implies convergence in law of the rescaled and renormalized height function to the solution of the additive SHE. Our proof is based on a direct recursive renormalization argument, without comparison to the directed polymer model. A key ingredient is a combinatorial method, which we call graph lifting, for obtaining the required high-moment bounds on the renormalization terms.

\subsection{Proof strategy}

To outline the proof of Theorem \ref{T.main}, we start with the simplest case where $\driving(x)=x^2$. The proof for general $\driving$ follows the same strategy. We first reduce to working with the truncated version
\[
\truncatednoise(t,x)
:=
\noise(t,x) \mathbf{1}_{\{|\noise(t,x)|\leq N^{1/4}\log N\}}-
\mathbb{E}[\noise(t,0)
\mathbf{1}_{\{|\noise(t,0)|\leq N^{1/4}\log N\}}].
\]
Then, we directly analyze the solution to

\begin{equation}\label{E.simplest equation}
    \function(t,x)=\frac{\function(t-1,x-1)+\function(t-1,x+1)}{2}+(\function(t-1,x-1)-\function(t-1,x+1))^2+N^{-\frac{1}{4}-\rate}\truncatednoise(t,x),
\end{equation}
by introducing a sequence of renormalization terms, defined recursively by substituting lower-order terms back into the equation above.
The first renormalization term $\xterm{1}$ is defined to satisfy

$$\xterm{1}(t,x)=\frac{\xterm{1}(t-1,x-1)+\xterm{1}(t-1,x+1)}{2}+N^{-\frac{1}{4}-\rate}\truncatednoise(t,x).$$
Equivalently, $\xterm{1}$ is a linear combination of random variables
\begin{equation*}
    \xterm{1}(t,x)=\sum_{s=1}^t\sum_{z\in\bbZ}p(t-s,x-z)N^{-\frac{1}{4}-\rate}\truncatednoise(s,z),
\end{equation*}
where $p(\cdot,\cdot)$ is the transition probability for one-dimensional simple random walk.

Subtracting the equation for \(X_N^{(1)}\) from \eqref{E.simplest equation}, the noise term \(N^{-1/4-\rate}\truncatednoise(t,x)\) cancels, while the nonlinear term $\bigl(\function(t-1,x-1)-\function(t-1,x+1)\bigr)^2$ remains.  We next approximate this nonlinear term by replacing \(\function\) with \(X_N^{(1)}\) and define the second renormalization term by
$$\xterm{2}(t,x)=\frac{\xterm{2}(t-1,x-1)+\xterm{2}(t-1,x+1)}{2}+(\xterm{1}(t-1,x-1)-\xterm{1}(t-1,x+1))^2.$$
We then subtract both $\xterm{1}$ and $ \xterm{2}$ from $\function$ and continue the construction. This iterative procedure can be continued through $\xterm{L}$. The error term $\function-\xterm{1}-\dots-\xterm{L}$ also admits a recursive formula which involves the discrete gradients $\kterm{l}:=\xterm{l}(t-1,x-1)-\xterm{l}(t-1,x+1),~1\le l\le L$. This reduces the error estimate to obtaining high-moment bounds on $\kterm{l}$, which are stated in Theorem \ref{T.moments upper bound on K 1}.

The main novelty is a combinatorial framework based on tree expansions and graph lifting  developed in Sections \ref{S.Tree representation and reductions} and \ref{S.Graph Lifting}.
This framework is used to prove Theorem \ref{T.moments upper bound on K 1} by representing the high-moment expansions of $\kterm{l}$ as graph sums. 
In Section \ref{S.Tree representation and reductions}, we represent each $\left(\kterm{l}(t,x)\right)^n$  as a linear combination of weighted rooted-tree actions in Corollary \ref{C.kterm represented by tree}: 
\begin{equation*}\left(\kterm{l}(t,x)\right)^n=\sum_{\tree\in\treefamily_l^n}c(\tree,l,n)\action{t,x}{\tree}.
\end{equation*}
Here $\treefamily_l^n$ is a family of trees determined by $l,n,$ and $c(\tree,l,n)$ are constants. $\action{t,x}{\tree}$ is obtained by summing over all the ways to embed $\tree$ in the lattice $\bbZ_+\times\bbZ$ with the root fixed at $(t,x)$. For each such embedding, the contribution to $\action{t,x}{\tree}$ is given by the product of random variables $N^{-1/4-\rate}\truncatednoise$ at each leaf and the corresponding values of the discrete gradient SRW kernel \(\cha\) along each edge. After expanding moments, one is led to sums indexed by partitions of the leaves, corresponding to possible identifications of random variables. The problem is then reduced to estimating the contribution of each tree together with each proper partition on the leaves-glued graph.

Combining this representation with the lemmas in Appendix \ref{S.Some Lemmas} and ignoring the influence of the logarithmic term, the basic power counting is as follows: each edge  encoded $\cha (t,\cdot)$  contributes no more than order  $O(t^{-1})$, whereas summation over the time-space variable associated with each non-root vertex contributes order $N^{1+\frac12}=N^{\frac32}$. For example,
\[
    \sum_{\substack{1\le t\le N \\ x\in \mathbb Z}}|\cha(t,x)|\lesssim \sum_{1\le t\le N}t^{-\frac{3}{2}}\sum_{x\in\mathbb Z} |x|\text{e}^{-\frac{x^2}{2t}} \lesssim\sum_{1\le t\le N} t^{-\frac{1}{2}}\lesssim\sqrt{N}=N^{-1}\cdot N^{\frac{3}{2}}.
\]
In fact, the conditions of proper partitions in Section~\ref{S.Tree representation and reductions} and graph lifting in Section~\ref{S.Graph Lifting} are precisely to guarantee that this counting can be carried out: each contribution of leaves-glued graph with partition $\pi$ and $|L(T)|$ leaves is bounded by $
N^{(|L(T)|-2-\sum_{\pi_i\in\pi} (|\pi_i|-4)^+)/4}$. With $|\truncatednoise|\loglesssim N^{1/4}$ (the notation $\loglesssim$ is defined in Subsection \ref{S.notation}),  a block \(\pi_i\) of size $|\pi_i|$ contributes $N^{(|\pi_i|-8)^+/4}$. Therefore each proper partition is bounded by
$$
N^{-(\frac{1}{4}+\rate)|L(T)|}N^{\frac{1}{4}\sum_{\pi_i\in\pi}(|\pi_i|-8)^+}N^{\frac{1}{4}(|L(T)|-2-\sum_{\pi_i\in\pi} (|\pi_i|-4)^+)}
\loglesssim N^{-\rate |L(T)|}.
$$
For any even $n$ and any tree $\tree$ appearing in the expansion of $\mathbb E [(\kterm{l}(t,x))^n]$, we have $|L(T)|\ge ln$. Hence $$ \mathbb E [(\kterm{l}(t,x))^n] \loglesssim N^{-ln\rate}.$$ In particular, with  probability $1-o(1)$, $|\kterm{l}(t,x)|\le N^{-0.6l\rate}$ uniformly on the relevant region.

With these moment bounds, we can approximate $\function$ by the sum of the first $L$ renormalization terms $\xterm{1}+\dots+\xterm{L}$, with an error of order $o(N^{-\rate})$. We then show that, except for $\xterm{1}$, all these renormalization terms have fluctuations of order $o(N^{-\rate})$. This is also proved by bounding high moments. Finally, we compute the expectations of the renormalization terms explicitly, which completes the proof of Theorem \ref{T.main}.

\subsection{Structure of the paper}
The rest of the paper is organized as follows.  In Section \ref{S.Renormalization}, we prove Theorem \ref{T.main} using Theorems \ref{T.moments upper bound on K 1}, \ref{T.expectation of Xl 1}, and \ref{T.concentration on Xl 1}, whose proofs are given in the next three sections.  In Section \ref{S.Tree representation and reductions}, we use a graph representation to reduce moment bounds to estimates on sums over graphs. Then, in Sections \ref{S.Graph Lifting} and \ref{S.expectation and concentration of X} we use combinatorial techniques to estimate these graph sums and prove Theorems \ref{T.moments upper bound on K 1}, \ref{T.expectation of Xl 1}, and \ref{T.concentration on Xl 1}. Finally, Theorem \ref{T.main 2} is established in Section \ref{S.Proof of T.main 2}.

\subsection{Notations}\label{S.notation}
\paragraph{Geometry} For any $a,b>0$, we define a rectangular box
$$\boxdomain:=([0,aN]\times[-bN,bN])\cap(\bbZ_{\ge 0}\times\bbZ).$$
We also define the trapezoid-shaped domain
$$\trapezoid:=\{(t,x)\in\bbZ_{\ge 0}\times\bbZ:0\le t\le aN,-(a+b)N+t\le x\le(a+b)N-t\}.$$
\paragraph{Conventions} Throughout the rest of the paper, we will adopt the convention that $A\lesssim B$ 
means that $A\leq C B$ and 
$A\loglesssim B$ 
means that $A\leq C(\log N)^C B$ for some deterministic positive real number $C$ that does not depend on $N,x$, or $t$, as long as $(t,x)$ is in some given rectangle of the form $\boxdomain$. Here, $N$ is the parameter that we will eventually send to infinity, and $x$ and $t$ are specific choices of space and time points where we want to prove something. We will write $A=O(B)$ if $|A|\lesssim|B|$, and $A = o(B)$ if $A/B\to 0$ uniformly over $(t,x)\in\boxdomain$ as $N\to\infty$. We will often write sentences like: ``with probability $1-o(1)$, for all $(t,x)\in\boxdomain$,  

$$|G(t,x)| \loglesssim N^{-\alpha},$$
where $G$ is a random function and $\alpha$ is a constant." This means that there is an event $\Omega$ that may potentially vary with $N$, with $\mathbb{P}(\Omega)\to 1$ as $N\to\infty$, and there is some deterministic constant $C$, independent of $N$, such that on $\Omega$ we have
$$\max_{(t,x)\in\boxdomain} |G(t,x)| \leq C (\log N)^C N^{-\alpha}.$$
Similarly, we say ``with probability $1-o(1)$, for all $(t,x)\in\boxdomain$,
$$|G(t,x)| =o(N^{-\alpha})$$
where $G$ is a random function and $\alpha$ is a constant.’’   This means that there exist an event $ \Omega $ that may potentially vary with $ N $, with $ \mathbb{P}(\Omega) \to 1 $ as $ N \to \infty $, and a sequence of constants $c_{N}\downarrow 0$ such that on $\Omega$ we have 
$$\max_{(t,x)\in\boxdomain} |G(t,x)| \leq c_{N} N^{-\alpha}.$$

\paragraph{\textbf{Operators}} For a real function $f=f(t,x)$ on $\bbZ_{\ge 0}\times\bbZ$ with $f(0,x)=0~\forall x\in\bbZ$, define the operators $\average$ and $\difference$ for $t\ge 1$ by 
$$\average f(t,x)=\frac{f(t-1,x-1)+f(t-1,x+1)}{2},\quad \difference f(t,x)=f(t-1,x-1)-f(t-1,x+1).$$

\section{Renormalization}\label{S.Renormalization}
In this section, we introduce a sequence of renormalization terms that yields an expansion for $\function$ with a small error term. We then prove Theorem \ref{T.main} assuming several moment bounds for the renormalization terms. These bounds are proved in the following sections.

\subsection{Definition of renormalization terms}
Intuitively, $\difference\function(t,x)$ is of order $o(1)$ with probability $1-o(1)$, which allows us to expand the $\driving$ term in \eqref{E.def of f}. Suppose $\driving$ has the following Taylor expansion near 0,

$$\driving(x)=\sum_{k=2}^{\alpha-1}a_kx^k+R(x):=P(x)+R(x), $$
where $a_k=\driving^{(k)}(0)/k!$ ($a_2=\beta/8$) are constants, $R(x)$ is the Taylor remainder, and $P(x)=\sum_{k=2}^{\alpha-1}a_kx^k$ is a polynomial. The remainder $R$ satisfies the bound $\absolute{R(x)}\le C|x|^{\alpha}$ in a neighborhood of $0$. Thus, \eqref{E.def of f} can be rewritten as 
\begin{equation}\label{E.redefine f}
\function(t,x)=\average\function(t,x)+N^{-\frac{1}{4}-\rate}\noise(t,x)+P(\difference\function(t,x))+R(\difference\function(t,x)).
\end{equation}
Heuristically, $R(\difference\function(t,x))$ is typically small. We therefore omit the last term in \eqref{E.redefine f} and regard $\function$ as satisfying a polynomial recursion in $\noise$. This suggests that we approximate $\function$ by a deterministic polynomial of high degree in $\noise$. The most naive approximation is by ignoring $P$ and $R$ together. We write $\xterm{1}$ for the solution to 

$$\xterm{1}(t,x)=\average\xterm{1}(t,x)+N^{-\frac{1}{4}-\rate}\noise(t,x),$$
or, equivalently,
\begin{equation*}
    \xterm{1}(t,x)=\sum_{s=1}^t\sum_{z\in\bbZ}p(t-s,x-z)N^{-\frac{1}{4}-\rate}\noise(s,z).
\end{equation*}
However, this approximation is still far from the true $\function$. Let $\error{1}=\function-\xterm{1}$. Subtracting the above equation from \eqref{E.redefine f}, we obtain
$$\error{1}(t,x)=\average\error{1}(t,x)+P\left(\difference\error{1}(t,x)+\difference\xterm{1}(t,x)\right)+R(\difference\function(t,x)).$$
Here, $\xterm{1}$ is explicit. We therefore extract the contribution obtained by replacing $\difference\function(t,x)$ with $\difference\xterm{1}$ and define the second renormalization term $\xterm{2}$, which approximates $\error{1}$, by
$$\xterm{2}(t,x)=\average\xterm{2}(t,x)+P\left(\difference\xterm{1}(t,x)\right).$$
Denoting by $\error{2}$ the remaining error term $\error{1}-\xterm{2}$, we can derive the equation for $\error{2}$ and extract terms related to $\xterm{1}$ and $\xterm{2}$ to construct the third renormalization term $\xterm{3}$. Iterating this procedure, we obtain renormalization terms $\xterm{l}$ for any $l\ge 1$. The explicit formulas are given in the following definition. 
\begin{definition}\label{D.def of Xl} 
    We say that the random variables $\{\xterm{l}(t,x):l\ge 1,~(t,x)\in\bbZ_{\ge0}\times\bbZ\}$ are \emph{driven by} $\{\noise(t,x)\}_{(t,x)\in\bbZ_+\times\bbZ}$  if they satisfy the following equations
        \begin{equation}\label{E.def of X1}
            \xterm{1}(t,x)=\average\xterm{1}(t,x)+N^{-\frac{1}{4}-\rate}\noise(t,x),
        \end{equation}
        \begin{equation}\label{E.def of Xl}
        \xterm{l}(t,x)=\average\xterm{l}(t,x)+P\left(\sum_{m=1}^{l-1}\difference\xterm{m}(t,x)\right)-P\left(\sum_{m=1}^{l-2}\difference\xterm{m}(t,x)\right),\quad l\ge 2
        \end{equation}
    for any $(t,x)\in\bbZ_+\times\bbZ$ and $l\ge 1$, 
    with initial values $\xterm{l}(0,x)=0$ for all $x\in\bbZ$ and $l\ge 1$. 
\end{definition}
We also define the error terms as follows.
\begin{definition}\label{D.def of error l}
    With $\{\xterm{l}(t,x):l\ge 1,(t,x)\in\bbZ_{\ge0}\times\bbZ\}$ defined above, we define the error terms $\{\error{l}(t,x):l\ge 1,~(t,x)\in\bbZ_{\ge0}\times\bbZ\}$ by
    \begin{equation}\label{E.def of error l}
        \error{l}(t,x)=\function(t,x)-\sum_{m=1}^{l}\xterm{m}(t,x).
    \end{equation}
\end{definition}
Note that the error terms $\error{l}(t,x)$ satisfy the recursive formula
\begin{equation}\label{E.redefine of error l}
\begin{aligned}
    \error{l}(t,x)=\average\error{l}(t,x)&+P\left(\difference\error{l}(t,x)+\sum_{m=1}^{l}\difference\xterm{m}(t,x)\right)-P\left(\sum_{m=1}^{l-1}\difference\xterm{m}(t,x)\right)\\
    &+R(\difference\function(t,x)).
\end{aligned}
\end{equation}
If we can prove a uniform bound of the form $\error{L}(t,x)=o(N^{-\rate})$, then the following expansion holds:
$$\function(t,x)=\sum_{l=1}^{L}\xterm{l}(t,x)+o(N^{-\rate}).$$
The preceding argument remains heuristic, as the smallness of
$\difference\function(t,x)$ is not presupposed. $R(\difference\function(t,x))$ may not be small and then $\error{l}(t,x)$ may be large. A rigorous inductive procedure is therefore required.

Since $\difference\xterm{l}$ appears frequently, we introduce the notation $\{\kterm{l}(t,x):l\ge 1,(t,x)\in\bbZ_{\ge0}\times\bbZ\}$.
\begin{definition}
     For $(t,x)\in\bbZ_{\ge0}\times\bbZ$, set $\kterm{l}(0,x)=0$  and 
    $$\kterm{l}(t,x)=\xterm{l}(t-1,x-1)-\xterm{l}(t-1,x+1),\qquad \text{if }t\ge 1.$$
    
\end{definition}

Since $\noise$ has finite moments only up to order $M$, we will truncate the random variables to $\{\truncatednoise(t,x)\}_{(t,x)\in\bbZ_+\times\bbZ}$ in the next subsection. In the rest of the paper before the end of Section \ref{S.expectation and concentration of X}, the renormalization terms $\xterm{l}$ and $\kterm{l}$ are all driven by truncated random variables $\{\truncatednoise(t,x)\}_{(t,x)\in\bbZ_+\times\bbZ}$.
\subsection{Proof of Theorem \ref{T.main}}
Until the end of Section \ref{S.expectation and concentration of X}, we assume $M\ge8$ and $\alpha\ge4/\rate$.
We fix $a,b>0$ and define the truncated random variables on $\trapezoid$ by 
$$\truncatednoise(t,x):=\noise(t,x)\mathbf{1}_{\{|\noise(t,x)|\le N^{\frac{1}{4}}\log N\}}-c_{N,t},$$
where $c_{N,t}:=\bbE\left[\noise(t,0)\mathbf{1}_{\{|\noise(t,0)|\le N^{1/4}\log N\}}\right]$.
\begin{lemma}\label{L.truncation}
    There is an event $\Omega_{a,b}$ with $\bbP(\Omega_{a,b})=1-o(1)$ such that $\truncatednoise(t,x)=\noise(t,x)-c_{N,t}$ for all $(t,x)\in\trapezoid$.
\end{lemma}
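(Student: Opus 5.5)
Take $\Omega_{a,b}$ to be the event that $|\noise(t,x)|\le N^{1/4}\log N$ for every $(t,x)\in\trapezoid$ with $t\ge 1$. On this event the indicator in the definition of $\truncatednoise(t,x)$ equals $1$ at every point of $\trapezoid$, so $\truncatednoise(t,x)=\noise(t,x)-c_{N,t}$ there, exactly as claimed. It remains to show that $\bbP(\Omega_{a,b}^c)=o(1)$, and for this a union bound combined with Markov's inequality at the eighth moment is enough.

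\emph{Counting points.} The trapezoid $\trapezoid$ has at most $aN+1$ time levels. Each level contains at most $2(a+b)N+1$ spatial sites. Hence $\setsize{\trapezoid}\lesssim N^2$, with a constant depending only on $a,b$.

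\emph{Tail bound at a single point.} We assume $M\ge 8$, so $C_8:=\sup_{t}\bbE[|\noise(t,0)|^8]<\infty$. The variables $\noise(t,x)$ have law $\distribution{t}$ for every $x$, so Markov's inequality gives, uniformly in $(t,x)$,
$$\bbP\left(|\noise(t,x)|>N^{\frac14}\log N\right)\le \frac{C_8}{N^2(\log N)^8}.$$

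\emph{Union bound.} Summing this over the $O(N^2)$ points of $\trapezoid$ gives
$$\bbP(\Omega_{a,b}^c)\lesssim N^2\cdot\frac{1}{N^2(\log N)^8}=(\log N)^{-8}\to 0.$$
Therefore $\bbP(\Omega_{a,b})=1-o(1)$.

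No step here is a real obstacle. The argument is tight in only one place: the point count $N^2$ is exactly balanced by the eighth moment at threshold $N^{1/4}$, and the logarithmic factor in the threshold is what makes the bound decay. This is why $M\ge 8$ is needed. The lemma itself does not require $c_{N,t}$ to be small, but it is worth recording for later use that, by the mean-zero assumption, $|c_{N,t}|=|\bbE[\noise\mathbf 1_{\{|\noise|>N^{1/4}\log N\}}]|\le C_8 N^{-7/4}(\log N)^{-7}$.
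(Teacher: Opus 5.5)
Your proof is correct and is essentially the paper's argument: the paper takes the same event $\Omega_{a,b}=\{|\noise(t,x)|\le N^{1/4}\log N,\ \forall (t,x)\in\trapezoid\}$ and bounds $\bbP(\Omega_{a,b}^c)\le|\trapezoid|\cdot\bbE[\noise^8]\cdot(N^{1/4}\log N)^{-8}=o(1)$ by Markov's inequality and a union bound. Your aside on $c_{N,t}$ is not needed for the lemma; it bounds the truncated tail by the eighth moment and so gives a sharper estimate than the paper's Cauchy--Schwarz bound $o(N^{-1})$.
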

\begin{proof}
    Let
    $$\Omega_{a,b}:=\{|\noise(t,x)|\le N^{\frac{1}{4}}\log N,\forall (t,x)\in\trapezoid\}.$$
By Markov's inequality and the existence of the eighth moment of $\noise$, a union bound gives
    $$\bbP(\Omega_{a,b}^c)\le \setsize{\trapezoid}\cdot{\bbE[\noise^8]}\cdot\left(N^{\frac{1}{4}}\log N\right)^{-8}=o(1).$$
\end{proof}
Since $\bbE[\noise]=0$, we have
$$\absolute{c_{N,t}}=\absolute{\bbE\left[\noise(t,0)\mathbf{1}_{\{|\noise(t,0)|>N^{\frac{1}{4}}\log N\}}\right]}\le\sqrt{\bbE[\noise(t,0)^2]\cdot\bbP\left(|\noise(t,0)|> N^{\frac{1}{4}}\log N\right)}=o(N^{-1}).$$
A direct induction shows that $\function(t,x)$ and $\xterm{1}(t,x)$ defined using $\noise(t,x)$ and $\noise(t,x)-c_{N,t}$ differ by exactly the constant term $N^{-1/4-\rate}(c_{N,1}+\dots+c_{N,t})$, which is of order $o(N^{-1/4-\rate})$ as long as $t\le aN$. The remaining renormalization terms ${\xterm{l}}, {l\ge 2}$ remain unchanged. Hence, we can restrict our discussion to $\noise-c_{N,t}$, and therefore to $\truncatednoise$ via Lemma \ref{L.truncation}. In the rest of the paper except for Section \ref{S.Proof of T.main 2}, we will always assume $\{\xterm{l}\}_{l\ge 1}$ are driven by $\{\truncatednoise(t,x)\}_{(t,x)\in\bbZ_+\times\bbZ}$ instead of $\{\noise(t,x)\}_{(t,x)\in\bbZ_+\times\bbZ}$, and $\kterm{l}=\difference\xterm{l}$.

After truncation, $\xterm{l}$ and $\kterm{l}$ satisfy the following upper bound on the moments of $\kterm{l}$. The proof is given in Section \ref{S.Graph Lifting}.
\begin{theorem}\label{T.moments upper bound on K 1}
     For any fixed $l$ and even $ n\ge 2$, we have
    $$\bbE \left[\left(\kterm{l}(t,x)\right)^n\right]\loglesssim N^{-ln\rate}.$$
\end{theorem}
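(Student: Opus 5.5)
We will prove Theorem \ref{T.moments upper bound on K 1} by turning $\kterm{l}$ into an explicit multilinear polynomial in the truncated variables $N^{-\frac14-\rate}\truncatednoise$, expanding the $n$-th moment as a sum over graphs, and bounding each graph sum by power counting. The first step is a tree representation, proved by induction on $l$ from Definition \ref{D.def of Xl}. Unrolling $\xterm{l}=\average\xterm{l}+(\text{source})$ gives $\xterm{l}(t,x)=\sum_{s\le t,\,z}p(t-s,x-z)\,\mathrm{source}(s,z)$. Hence $\kterm{l}(t,x)=\sum_{s,z}\cha(t-s,x-z)\,\mathrm{source}(s,z)$, where $\cha$ is the discrete gradient of the SRW kernel. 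The source $P(\sum_{m<l}\kterm{m})-P(\sum_{m<l-1}\kterm{m})$ is a finite sum of products $\prod_{j}\kterm{m_j}$ with $k\le\alpha-1$ factors, $\sum_j m_j\ge$ roughly $l-1$, and at least one factor of index $l-1$.

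Inductively, $\kterm{l}(t,x)$ is then a finite linear combination of rooted trees embedded in $\bbZ_+\times\bbZ$. Each internal vertex carries a summed space-time point, each edge carries a $\cha$ weight for the time and space increment, and each leaf carries $N^{-\frac14-\rate}\truncatednoise$. The leaf count is at least $l$, because each level of recursion raises total degree by at least one, since $P$ starts at degree $2$. Taking the $n$-th power glues $n$ such trees at the root, giving a tree $T$ with $|L(T)|\ge ln$.

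Next we take expectations. Independence and mean zero force a sum over partitions $\pi$ of $L(T)$ into blocks of size at least $2$, whose leaves sit at a common lattice point. Truncation gives
\[
\bbE|\truncatednoise|^{k}\loglesssim N^{(k-8)^+/4},
\]
using the eighth moment. It therefore suffices to show that for each tree and each such partition, the deterministic sum of $\prod|\cha|$ over embeddings respecting $\pi$ with the root fixed is
\[
\loglesssim N^{(|L(T)|-2-\sum_i(|\pi_i|-4)^+)/4}.
\]
Combining the factors exactly as in the power counting of the introduction then yields $N^{-\rate|L(T)|}\le N^{-\rate ln}$. The number of trees and partitions is bounded in terms of $l,n,\alpha$, so the theorem follows.

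The main obstacle is this graph-sum bound. The naive count, where each non-root vertex contributes $N^{3/2}$ and each edge $t^{-1}$, fails once leaves are glued, because gluing creates cycles and removes free summation variables. We plan to reduce to \emph{proper} partitions first. Blocks that are unions of leaves from a single subtree whose internal structure forces cancellations, or blocks of odd shape, can be absorbed or rearranged at no loss.

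For proper partitions we will perform a graph lifting. We process the glued graph from the leaves up, choosing for each block a spanning structure. Each glued vertex is summed using one incident edge, with $\sum_{s\le N,z}|\cha(s,z)|\lesssim N^{1/2}$, while the remaining edges to it are bounded in sup norm, $|\cha(s,\cdot)|\lesssim s^{-1}$. Summing such a sup factor in time gives only $\log N$ when the time variable is free, and $N^{1/2}$ heat-kernel factors otherwise.

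The delicate point is to order these integrations so that every edge is used exactly once and the time sums telescope. This should come from the lemmas on convolutions of $\cha$ and $p$ in Appendix \ref{S.Some Lemmas}. We will check that a block of size $k$ costs exactly $N^{-(k-4)^+/4}$ relative to unglued leaves, and that the root pairing yields the $-2$. We would first verify the counting on the simplest cases, $l=1$ with pairings and $l=2$ with a single quadratic vertex, then set up the induction on the number of internal vertices.
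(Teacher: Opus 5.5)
Your outline follows the paper's architecture: the tree expansion of $(\kterm{l})^n$ with $|L(T)|\ge ln$, the partition expansion with $|\tilde\mu_k|\loglesssim N^{(k-8)^+/4}$, the restriction to proper partitions, and the target $N^{\frac14(|L(T)|-2-\sum_i(|\pi_i|-4)^+)}$ for the glued-graph sum. The gap is that this target bound, which is the actual content of the theorem, is only announced. Saying you will ``order the integrations so that every edge is used exactly once and the time sums telescope'' restates what has to be proved.

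The difficulty is concrete. Summing out a glued block with parents $u_1,\dots,u_m$ (space against one edge, time against the sup of a second) leaves a factor that couples the parents' times in an order-dependent way. The paper rewrites it as a chain of red edges $(|t_u-t_{u'}|+1)^{-1/2}$ (Lemmas~\ref{NL.lifting 1-1}, \ref{NL.lifting 1-2}). Summing out a backbone vertex $v$ with parent $w_0$ and red neighbours $w_1,\dots,w_d$ then costs $\log N$ instead of $N^{1/2}$. However, by Lemma~\ref{NL.lifting 2} it creates $d-1$ new red edges from some $w_i$ to the other neighbours.

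These red edges are what pay for the whole count, and they can be wasted. Repeated neighbours produce loops or triple edges whose extra decay buys nothing. A component $\mathcal C$ of the final star with more than $2|\mathcal C|-2$ red edges still only gives $N^{1/2}$. Each wasted edge costs a positive power of $N$, and nothing in your plan prevents this.

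The missing idea is a global invariant. The paper requires the $D$-index $D(S)=R(S)-B(S)-2B(S;S^c)$ to satisfy $D(S)\le -2$ for every nonempty set $S$ avoiding the root; this is amenability.
\begin{itemize}
\item Establishing it after leaf lifting needs three things: the reduction to full binary trees, an in-order labelling that decides which parents get chained, and properness. Properness guarantees $m\ge 2$ and excludes configurations that would violate $D(S)\le-2$.
\item Preserving it under pruning uses $D(A)+D(B)\le D(A\cap B)+D(A\cup B)$. This makes the tight neighbour sets closed under union and intersection, and that closure dictates how the new red edges are routed.
\item The final star bound uses $R(\mathcal C)\le 2|\mathcal C|-2$. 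This is where the $-2$ in the exponent actually comes from, not from a root pairing.
\end{itemize}
Without some such invariant, the leaves-up integration you describe has no reason to close.

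Your reduction to proper partitions (``absorbed or rearranged at no loss'') is likewise unspecified. The paper's mechanism is an exact cancellation. Suppose the leaf descendants of a non-root vertex $u$ form a block of one of the three bad shapes. Ignore the requirement that this block avoid the other leaf locations. Then the sum over the subtree below $u$ does not depend on $u$'s spatial coordinate, because the noise law is spatially homogeneous. Pairing it with the parent edge therefore gives zero, since $\sum_z\cha(s,z)=0$.

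Restoring the avoidance constraint turns the improper-partition term into minus a sum over strictly coarser partitions, each with fewer bad points, so the iteration terminates. Since $\sum_i(|\pi_i|-8)^+$ does not decrease under coarsening, the moment factors stay within the bound attached to the final proper partitions.
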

Under the same assumption on $\xterm{l}$ and $\kterm{l}$, the following upper bound is a direct corollary of Theorem \ref{T.moments upper bound on K 1}.
\begin{corollary}\label{C.upper bound on K 1}
    For fixed $a,b>0$ and $l\ge 1$,
    $$\bbP\left(\left|\kterm{l}(t,x)\right|\le N^{-0.6l\rate},\forall~(t,x)\in\trapezoid\right)=1-o(1).$$
\end{corollary}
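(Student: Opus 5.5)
We derive the corollary from Theorem \ref{T.moments upper bound on K 1} using Markov's inequality at a suitably high even moment, followed by a union bound over the trapezoid. Fix $a,b>0$ and $l\ge1$. The region $\trapezoid$ has polynomially many points, namely $\setsize{\trapezoid}\lesssim N^2$ (roughly $aN\cdot 2(a+b)N$). For every $(t,x)\in\trapezoid$ we have $t\le aN$, so $(t,x)$ lies in a rectangle of the form $\Lambda^N_{a,b'}$ with $b'=a+b$. The implicit constants in the notation $\loglesssim$ are therefore uniform over $\trapezoid$. This uniformity is the only point requiring care, and it holds by the convention in Subsection \ref{S.notation}.

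Next choose an even integer $n$ depending only on $l$ and $\rate$ (both fixed), large enough that $0.4\,l n\rate>2$. Then $n$ is a fixed constant, and Theorem \ref{T.moments upper bound on K 1} gives
\begin{equation*}
\bbE\left[\left(\kterm{l}(t,x)\right)^n\right]\le C(\log N)^{C}N^{-ln\rate}
\end{equation*}
uniformly for $(t,x)\in\trapezoid$, with $C=C(l,n,a,b)$.

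Markov's inequality then yields, for each such $(t,x)$,
\begin{equation*}
\bbP\left(|\kterm{l}(t,x)|>N^{-0.6l\rate}\right)\le N^{0.6 ln\rate}\,\bbE\left[\left(\kterm{l}(t,x)\right)^n\right]\le C(\log N)^C N^{-0.4 ln\rate}.
\end{equation*}
Summing over all points of $\trapezoid$ by a union bound, the probability that the bound fails somewhere is at most
\begin{equation*}
C(\log N)^C N^{2-0.4ln\rate},
\end{equation*}
which is $o(1)$ by the choice of $n$. This proves the claim.
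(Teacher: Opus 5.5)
Your proposal is correct and matches the paper's proof: Markov's inequality at a fixed even moment (the paper takes $n=2\lceil 5/\varepsilon\rceil$, which gives a per-point bound of $N^{-3}$), followed by a union bound over the $O(N^2)$ points of $\trapezoid$. Your explicit remark that $\trapezoid$ lies in a rectangle $\Lambda^N_{a,a+b}$, so the implicit constants in Theorem~\ref{T.moments upper bound on K 1} are uniform over $\trapezoid$, is a point the paper uses implicitly without stating it.
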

\begin{proof}
    For any fixed $(t,x)\in\trapezoid$,
    we take $n=2\lceil5/\rate\rceil$ in Theorem \ref{T.moments upper bound on K 1} and apply Markov's inequality to obtain
    $$\bbP\left(\left|\kterm{l}(t,x)\right|\ge N^{-0.6l\rate}\right)\le\frac{\bbE\left[(\kterm{l}(t,x))^n\right]}{N^{-0.6ln\rate}}\loglesssim N^{-0.4 ln\rate}\le N^{-3}.$$
    A union bound over $(t,x)\in\trapezoid$ gives the desired result.
\end{proof}
Assuming Corollary \ref{C.upper bound on K 1} holds, we can expand $\function$ into a sum of $\xterm{l}$.
\begin{theorem}\label{T.expansion 1}
     Let $L=\max\{2, \lceil4/\rate\rceil\}$, and let $a,b>0$ be fixed. Then there is an event $\Omega_e$ with $\bbP(\Omega_e)=1-o(1)$, such that on $\Omega_e$,    $\truncatednoise(t,x)=\noise(t,x)-c_{N,t}$ for all $(t,x)\in\trapezoid$,
     \begin{equation}\label{E.expansion 1}
     \error{L}(t,x)=o(N^{-\rate}).
     \end{equation}
     %i.e., the following expansion is valid 
     That is, on $\Omega_e$
     \begin{equation}\label{E.expansion 2}
     \function(t,x)=\sum_{l=1}^{L}\xterm{l}(t,x)+o(N^{-\rate}),\qquad\forall(t,x)\in\trapezoid.
    \end{equation}
\end{theorem}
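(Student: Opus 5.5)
We will take $\Omega_e$ to be the intersection of $\Omega_{a,b}$ from Lemma \ref{L.truncation} with the events of Corollary \ref{C.upper bound on K 1} for $l=1,\dots,L$. This is a finite intersection, so $\bbP(\Omega_e)=1-o(1)$. On $\Omega_e$ the truncated and untruncated recursions agree on $\trapezoid$, up to the harmless constant shift already discussed. Note also that the trapezoid is closed under taking predecessors $(t-1,x\pm1)$. We therefore argue deterministically on $\Omega_e$ and prove by induction on $t$ a bootstrap bound
$$\sup_{x:(s,x)\in\trapezoid}|\error{L}(s,x)|\le \eta_s ,\qquad s\le t,$$
with $\eta_s$ a deterministic, slowly growing sequence satisfying $\eta_{aN}=o(N^{-\rate})$. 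A natural first candidate is $\eta_s= s\,N^{-1-\rate}\log N$ or similar. The aim is to show that each step of the recursion \eqref{E.redefine of error l} (with $l=L$) adds at most $o(N^{-1-\rate})$.

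The key step is the one-step increment. Write $u=\difference\error{L}(t,x)$ and $k_m=\kterm{m}(t,x)$. The induction hypothesis gives $|u|\le 2\eta_{t-1}$. On $\Omega_e$ we have $|k_m|\le N^{-0.6m\rate}$, so $y:=\sum_{m\le L}k_m+u$ satisfies $|y|\lesssim N^{-0.6\rate}$. In particular $\difference\function(t,x)=y$ lies in the neighbourhood of $0$ where the Taylor expansion holds. The remainder then satisfies
$$|R(y)|\lesssim |y|^\alpha\lesssim N^{-0.6\alpha\rate}\le N^{-2.4},$$
using $\alpha\ge4/\rate$; this is $o(N^{-1-\rate})$ since $\rate<1$. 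For the polynomial part, we first write
$$P\Big(u+\sum_{m\le L}k_m\Big)-P\Big(\sum_{m\le L-1}k_m\Big)=\Big[P\Big(u+\sum_{m\le L}k_m\Big)-P\Big(\sum_{m\le L}k_m\Big)\Big]+\Big[P\Big(\sum_{m\le L}k_m\Big)-P\Big(\sum_{m\le L-1}k_m\Big)\Big].$$
Since $P$ has no constant or linear term, we have $|P'(z)|\lesssim |z|$ near $0$. The first bracket is therefore $\lesssim |u|\,N^{-0.6\rate}$. The second bracket is $\lesssim |k_L|\cdot N^{-0.6\rate}\le N^{-0.6(L+1)\rate}\le N^{-2.4}$, using $L\ge 4/\rate$.

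Since $\average$ is an average, the recursion then gives
$$\eta_t\le \eta_{t-1}(1+CN^{-0.6\rate})+CN^{-2.4}.$$
This is the main obstacle. Iterating naively over $aN$ steps gives a factor $(1+CN^{-0.6\rate})^{aN}$, which explodes. So the Lipschitz term cannot be bounded by a sup-norm Gronwall argument.

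The fix I would try first is to exploit the discrete-gradient structure. The term
$$P'\Big(\sum_m k_m\Big)\,\difference\error{L}(t,x)$$
is a gradient of $\error{L}$ multiplied by a small coefficient $b(t,x)$ with $|b|\lesssim N^{-0.6\rate}$. We then view the linearized recursion as
$$\error{L}(t,x)=\tfrac12\big(1+2b\big)\error{L}(t-1,x-1)+\tfrac12\big(1-2b\big)\error{L}(t-1,x+1)+(\text{quadratic in }u)+O(N^{-2.4}).$$
The quadratic term is $O(u^2)$, coming from the degree-$\ge2$ part of $P$ in $u$, and is controlled by the bootstrap.

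For $N$ large the weights $\tfrac12(1\pm 2b)$ are nonnegative and sum to $1$. The linear part is therefore a sub-Markov averaging operator, a random-walk transition in a random environment, and it does not amplify the sup norm. This gives
$$\eta_t\le \eta_{t-1}+C\eta_{t-1}^2+CN^{-2.4}.$$
With the bootstrap $\eta_{t-1}\le N^{-1}$ we have $\eta_{t-1}^2\le N^{-2}\eta_{t-1}$, so summing over $t\le aN$ yields
$$\eta_{aN}\lesssim aN\cdot N^{-2.4}=o(N^{-\rate}).$$
This closes the induction and gives both $\difference\function=o(1)$ and \eqref{E.expansion 1}. The expansion \eqref{E.expansion 2} is then just Definition \ref{D.def of error l}.
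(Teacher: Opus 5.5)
Your proof is correct and is essentially the paper's argument. You use the same event $\Omega_e=\Omega_{a,b}\cap\Omega_K$, the same induction on $t$ over $\trapezoid$, and the same key observation that $\average\error{L}+(\text{small coefficient})\cdot\difference\error{L}$ is a convex combination of $\error{L}(t-1,x\pm1)$, so the sup norm is not amplified.

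The only difference is cosmetic. With $K=\sum_{m\le L}\kterm{m}(t,x)$, the paper uses the exact factorization $P(u+K)-P(K)=u\,B$ with $|B|\le 1/4$. This absorbs all powers of $u$ into the coefficient, so no separate $O(u^2)$ term or extra bootstrap is needed, and the paper runs the induction with the bound $tN^{-1-1.1\rate}$. In your version, the self-consistent choice is $\eta_t\asymp tN^{-2.4}$, not your tentative $tN^{-1-\rate}\log N$, which would violate your bootstrap $\eta_{t-1}\le N^{-1}$.
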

\begin{proof}
    The following inductive argument was originally introduced in \cite{adhikari2024invariance}.
    Let
    $$\Omega_K=\left\{\absolute{\kterm{l}(t,x)}\le N^{-0.6l\rate},\forall~(t,x)\in\trapezoid~\text{and}~1\le l\le L\right\}$$
    and take $\Omega_e=\Omega_{a,b}\cap\Omega_K$. We will prove by induction on $t$ that, on $\Omega_e$, for all $(t,x)\in\trapezoid$,
    \begin{equation}\label{E.control on error 1}
        \absolute{\error{L}(t,x)}\le tN^{-1-1.1\rate},
    \end{equation}
   provided that $N\ge N_0$, where $N_0$ is a deterministic threshold depending only on $a,b,\alpha,\rate$ and $\driving$. Throughout, we will work under the assumption that $\Omega_e$ holds. Note that \eqref{E.control on error 1} holds trivially when $t=0$, since $\error{L}(0,x)=0$ for all $x$. Fix some $(t,x)\in\trapezoid$, and assume \eqref{E.control on error 1} has been proved up to time $t-1$. Since $(t-1,x-1)$ and $(t-1,x+1)$ also belong to $\trapezoid$, by the induction hypothesis,

    $$\absolute{\error{L}(t-1,x-1)},\absolute{\error{L}(t-1,x+1)}\le (t-1)N^{-1-1.1\rate}.$$
    Recall that by definition, $\function=\xterm{1}+\dots+\xterm{L}+\error{L}$.   Therefore, 
    \begin{equation}\label{E.upper bound on difference function}
    \begin{aligned}    \absolute{\difference\function(t,x)}=\absolute{\sum_{l=1}^L\kterm{l}(t,x)+\difference\error{L}(t,x)}&\le\sum_{l=1}^{L}N^{-0.6l\rate}+2(t-1)N^{-1-1.1\rate}\\
    &\le LN^{-0.6\rate}+2aN^{-1.1\rate}.
    \end{aligned}
    \end{equation}
    Suppose $\driving$ is $\alpha$ times continuously differentiable in an open interval $U$ that contains 0.  
    $R_0:=\underset{x\in U}{\sup}\left\{\absolute{\driving^{(\alpha)}(x)/\alpha!}\right\}$. Then, $\difference\function(t,x)\in U$, and $\absolute{R(\difference\function(t,x))}\le R_0\absolute{\difference\function(t,x)}^\alpha$, provided $N\ge N_1$, where $N_1$ is a deterministic threshold depending only on $a,b,\alpha,\rate$ and $\driving$.
    
    Using the recursion formula \eqref{E.redefine of error l} and recalling that $P(x)=\sum_{k=2}^{\alpha-1}a_kx^k$ is a polynomial, we obtain
    \begin{align*}
        \error{L}(t,x)=&\average\error{L}(t,x)+P\left(\difference\error{L}(t,x)+\sum_{m=1}^{L}\kterm{m}(t,x)\right)-P\left(\sum_{m=1}^{L-1}\kterm{m}(t,x)\right)+R(\difference\function(t,x))\\
        =&\average\error{L}(t,x)+\difference\error{L}(t,x)\cdot\Bigg[\sum_{k=2}^{\alpha-1}a_k\sum_{j=1}^{k}\binom{k}{j}        \left(\difference\error{L}(t,x)\right)^{j-1}\left(\sum_{l=1}^{L}\kterm{l}(t,x)\right)^{k-j}\Bigg]\\
        &+\kterm{L}(t,x)\cdot\Bigg[\sum_{k=2}^{\alpha-1}a_k\sum_{j=1}^{k}\binom{k}{j}\left(\kterm{L}(t,x)\right)^{j-1}\left(\sum_{l=1}^{L-1}\kterm{l}(t,x)\right)^{k-j}\Bigg]+R(\difference\function(t,x)).
    \end{align*}  
    The sum of the first two terms can be written as
    $$\left(\frac{1}{2}+B\right)\error{L}(t-1,x-1)+\left(\frac{1}{2}-B\right)\error{L}(t-1,x+1)$$
    where $B$ denotes the expression in the first square brackets, which, by the induction hypothesis and the definition of $\Omega_e$, satisfies $|B|\le 1/4$,  provided $N\ge N_2$, where $N_2$ is a deterministic threshold depending only on $a,b,\alpha,\rate$ and $\driving$. 

    For the third term, the coefficient multiplying $\kterm{L}(t,x)$ is also bounded by $1$, provided $N\ge N_3$, where $N_3$ is a deterministic threshold depending only on $a,b,\alpha,\rate$ and $\driving$. In this case, the third term is bounded by
    $$\absolute{\kterm{L}(t,x)}\le N^{-0.6L\rate}\le N^{-2.4}.$$   
    By \eqref{E.upper bound on difference function}, the fourth term is bounded by
    $$\absolute{R(\difference\function(t,x))}\le R_0(LN^{-0.6\rate}+2aN^{-1.1\rate})^\alpha\le N^{-1-1.1\rate}/2,$$
    provided $N\ge N_4$, where $N_4$ is a deterministic threshold depending only on $a,b,\alpha,\rate$ and $\driving$. Thus, if $N\ge \max\{N_1,N_2,N_3,N_4\}$, we have
    \begin{align*}
         \absolute{\error{L}(t,x)}
         &\le \absolute{\left(\frac{1}{2}+B\right)\error{L}(t-1,x-1)+\left(\frac{1}{2}-B\right)\error{L}(t-1,x+1)}+N^{-2.4}+\frac{1}{2}N^{-1-1.1\rate}\\
         &\le \max\left\{\absolute{\error{L}(t-1,x-1)},\absolute{\error{L}(t-1,x+1)}\right\}+N^{-1-1.1\rate}\le tN^{-1-1.1\rate}
    \end{align*}
    Thus, if we choose $N_0=\max\{N_1,N_2,N_3,N_4\}$, the induction step follows, completing the proof of \eqref{E.control on error 1}. Note that \eqref{E.expansion 1} is a direct corollary of \eqref{E.control on error 1}, and \eqref{E.expansion 2} follows from the definition of $\error{L}$. This completes the proof.

\end{proof}

Theorem \ref{T.expansion 1} shows that the renormalization procedure is an appropriate method to approximate $\function$. The next two theorems control the behavior of $\xterm{l}$. Their proofs are given in Section \ref{S.expectation and concentration of X}.
\begin{theorem}\label{T.expectation of Xl 1}
    Let $\mu_k=\bbE[\noise^k]$ and fix an $a>0$. For any $x$ and $t\le aN$, we have
    \begin{align*}
        \bbE[\xterm{2}(t,x)]=&\frac{\beta}{2}\mu_2N^{-\frac{1}{2}-2\rate}t+o(N^{-\rate}),\qquad \
        \bbE[\xterm{3}(t,x)]=\frac{\beta^2}{6}\mu_3N^{-\frac{3}{4}-3\rate}t+o(N^{-\rate}),\\
        \bbE[\xterm{l}(t,x)]=&o(N^{-\rate}), \quad \forall l\ge 4.
    \end{align*}
\end{theorem}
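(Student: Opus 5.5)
We will solve the linear recursions explicitly. Since $\xterm{l}(t,x)=\average\xterm{l}(t,x)+G_l(t,x)$ with zero initial data, we have
\[
\xterm{l}(t,x)=\sum_{s=1}^t\sum_z p(t-s,x-z)\,G_l(s,z).
\]
Because $\truncatednoise$ is i.i.d. in space for fixed time, the field is shift invariant, so $\bbE[G_l(s,z)]$ does not depend on $z$. Hence
\[
\bbE[\xterm{l}(t,x)]=\sum_{s=1}^t \bbE[G_l(s,0)].
\]
The task therefore reduces to computing $\bbE[G_l(s,0)]$ up to errors that are $o(N^{-1-\rate})$ uniformly in $s\le aN$ (summed over $t\le aN$ steps this gives $o(N^{-\rate})$). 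We could also allow a few exceptional $s$ with larger errors.

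\emph{The cases $l=2$ and $l=3$.} For $l=2$, $G_2=P(\kterm{1})=a_2(\kterm{1})^2+\sum_{k\ge3}a_k(\kterm{1})^k$. Write
\[
\kterm{1}(s,0)=N^{-1/4-\rate}\sum_{r<s}\sum_z \cha(s-r,z)\,\truncatednoise(r,z),
\]
where $\cha$ is the gradient kernel $p(\cdot,-1-\cdot)-p(\cdot,1-\cdot)$. Then
\[
\bbE[(\kterm{1})^2]=N^{-1/2-2\rate}\,\bbE[\truncatednoise^2]\sum_{u\ge1}\sum_z\cha(u,z)^2.
\]
The key exact identity is $\sum_{u\ge1}\sum_z \cha(u,z)^2=2$. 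This follows from the energy identity for the averaging operator: writing $h_u=p(u,\cdot)$, we have $\sum_z h_{u+1}^2=\sum_z h_u^2-\tfrac14\sum_z(\text{gradient})^2$, and the sum telescopes since $\sum_z h_u^2\to0$. The tail over $u\ge s$ is $O(s^{-1/2})$. The error $\bbE[\truncatednoise^2]-\mu_2$ is small by the eighth moment. So
\[
a_2\,\bbE[(\kterm{1})^2]=\tfrac{\beta}{8}\cdot2\mu_2 N^{-1/2-2\rate}+O(N^{-1/2-2\rate}s^{-1/2}).
\]
Summing over $s$ gives $\tfrac{\beta}{4}\mu_2N^{-1/2-2\rate}t$. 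I must recheck this constant against the claimed $\beta/2$: the normalization of $\cha$ (a difference across two sites) gives $\sum\cha^2=4$ rather than $2$, which yields exactly $\tfrac\beta2\mu_2$. The error terms are $O(N^{-1/2-2\rate}\sqrt{t})=o(N^{-\rate})$.

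Among the higher terms of $G_2$, the cubic term $a_3\bbE[(\kterm{1})^3]$ vanishes because $\phi$ is even, so $a_k=0$ for odd $k$. This is why the $\mu_3$ contribution arises only through $l=3$. The quartic and higher even terms have $\bbE[(\kterm{1})^{k}]\loglesssim N^{-k\rate}$ by Theorem \ref{T.moments upper bound on K 1}, which is not good enough by itself. So I would instead use Wick-type pairing: the leading part is $(\bbE[(\kterm{1})^2])^{k/2}\sim N^{-k/4-k\rate}$, while higher cumulants are smaller. For $k\ge4$ this gives $N^{-1-4\rate}$, summing to $o(N^{-\rate})$.

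For $l=3$, $G_3=P(\kterm{1}+\kterm{2})-P(\kterm{1})$, with leading term $2a_2\kterm{1}\kterm{2}$. Here $\kterm{2}=\sum\cha\,a_2(\kterm{1})^2$, and the leading contribution to $\bbE[\kterm{1}\kterm{2}]$ comes from all three noises coinciding. This gives
\[
2a_2^2N^{-3/4-3\rate}\mu_3\sum\cha(u,z)\cha(v,w)^{\ldots},
\]
a triple-kernel lattice sum which I will evaluate exactly (again via a telescoping/energy identity for cubic sums) to produce $\tfrac{\beta^2}{6}\mu_3$. Pairing contributions of the form $\bbE[\kterm{1}(\kterm{1})^2]$ with Gaussian pairings vanish since there is an odd number of noises.

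\emph{The case $l\ge4$ and the main obstacle.} For $l\ge4$, and for the remaining terms above, I would use the tree representation of Corollary \ref{C.kterm represented by tree} and the graph-lifting bounds of Section \ref{S.Graph Lifting}. These give that the expectation of a tree with $|L(T)|$ leaves is $\loglesssim N^{-\rate|L(T)|}$ times a gain from the time sum. I also need a refined count showing $\bbE[G_l(s,0)]\loglesssim N^{-1}\cdot N^{-c\rate}$ for some $c>1$, giving $o(N^{-\rate})$ after summation over $s$. The main obstacle is precisely this refinement, i.e.\ extracting an extra factor $N^{-1}$ per time step from the power counting, since the crude bound $N^{-l\rate}$ per step only works when $l\rate>1$. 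I would first try to show that each block of size $m$ in a proper partition costs $N^{-m/4}$ times a summable kernel, and then verify that the exponent $(|L(T)|-2)/4$ is improved to $|L(T)|/4$ when the root is not summed. This yields per-step size $N^{-|L|(1/4+\rate)}\cdot N^{(|L|-4)/4}$, which for $|L|\ge4$ gives $N^{-1-4\rate}$ after summing.
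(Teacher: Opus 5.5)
Your reduction $\bbE[\xterm{l}(t,x)]=\sum_{s\le t}\bbE[G_l(s,0)]$ is valid, and your main terms are exactly the paper's. They are $a_2\bbE[(\kterm{1})^2]$ with $\sum_{u,z}\cha(u,z)^2=4$ (your own energy identity, with its factor $\frac14$, already gives $4$), $a_3=0$ by evenness, and the triple coincidence in $2a_2\kterm{1}\kterm{2}$. The pure stars $a_k\bbE[(\kterm{1})^k]$, $k\ge4$, are also fine by direct partition counting.

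The genuine gap is the step you yourself flag: showing that every other contribution to $\bbE[G_l(s,0)]$ is $o(N^{-1-\rate})$ per time step. This is needed already in $G_3$, e.g.\ for $a_2(\kterm{2})^2$, and the tools you cite do not give it.
\begin{itemize}
\item Where they apply, the estimates of Section~\ref{S.Graph Lifting} for a tree rooted at the fixed point $(s,0)$ give only $N^{-(\frac14+\rate)|L|}N^{(|L|-2)/4}=N^{-\frac12-\rate|L|}$. After summing over $s\le aN$ this is $o(N^{-\rate})$ only if $\rate(|L|-1)>\frac12$.
\item Those estimates presuppose that the root has no leaf children. This fails for your $G_l$-trees whenever a factor $\kterm{1}$ occurs.
\item Your heuristic that a block of size $m$ costs $N^{-m/4}$ times a summable kernel is false for the absolutely summed kernels that the graph-lifting bounds control. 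An internal vertex not tied down by red edges carries a free space-time sum of $|\cha|$ of size $N^{1/2}$ (Lemma~\ref{NL.sum of cha}). For instance, take $a_2(\kterm{3})^2\subset G_4$ and let each copy of $\extension{\extension{\one\vee\one}\vee\extension{\one\vee\one}}$ have its four leaves paired crosswise. Then both children of the root are unconstrained, and for $s\asymp N$ the kernel is of order $N=N^{(|L|-4)/4}$, not $O(\log N)$.
\end{itemize}

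The missing idea is to keep the heat-kernel edge inside the tree. Write $\xterm{l}(t,x)=\sum_{T\in\treefamily_l}c(T,l)\actionnew{t,x}{T}$ as in Proposition~\ref{P.xterm represented by tree}. The root of your $G_l$-tree then becomes the unique child of the root, an ordinary internal vertex that may carry leaf children. Two things follow.
\begin{itemize}
\item The cancellation $\sum_z\cha=0$ removes every improper partition except those in which this vertex is itself a bad point. These are exactly the three pairs of Figure~\ref{F.proper-counterexamples}(b),(c), i.e.\ your main terms (Proposition~\ref{P.restrict to proper partitions new}).
\item For proper partitions, leaf lifting and backbone pruning reduce the amenable graph all the way down to the single $p$-weighted edge, with $\actionnew{t,x}{\one}=t\lesssim N$. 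This yields $N^{\frac12(m+2-R(G))}\le N^{|L|/4}$ for the time-summed action, hence a contribution $\loglesssim N^{-\rate|L|}\le N^{-2\rate}$.
\end{itemize}
This is the extra $N^{-1}$ per time step, relative to Theorem~\ref{T.moments upper bound on K 1}, that you were after. Note that $|L|/4$ is the exponent for the summed quantity, i.e.\ $(|L|-4)/4$ per step, not an improvement ``when the root is not summed''.

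Separately, your proposal only promises the value $\frac{16}{3}$ per step of the cubic lattice sum, which with the coefficient $2a_2^2=\frac{\beta^2}{32}$ gives $\frac{\beta^2}{6}\mu_3$. The paper obtains it in two steps. The reflection identity (Lemma~\ref{NL.reflection identity}) collapses the inner $\cha\cha$ sum to $4\bigl(p(s^*,z^*)-p(2s-s^*-2,z^*)\bigr)$. The recursion for $A_u=\sum_z p(u,z)^3$ then gives $\sum_{s^*<s,\,z^*}\cha(s^*,z^*)^2p(s^*,z^*)=\frac43(A_0-A_{s-1})$.
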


\begin{theorem}\label{T.concentration on Xl 1}
   For any fixed $l\ge 2$ and even $n\ge 2$, the following estimate holds for any $x$ and $t\le aN$:
    $$\bbE \left[\left(\xterm{l}(t,x)-\bbE[\xterm{l}(t,x)]\right)^n\right]\loglesssim N^{-ln\rate}.$$
\end{theorem}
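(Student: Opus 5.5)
We will prove Theorem \ref{T.concentration on Xl 1} with the same tree-expansion and graph-lifting machinery used for Theorem \ref{T.moments upper bound on K 1}. The key observation is that $\xterm{l}$ differs from $\kterm{l}$ only at the root. Iterating \eqref{E.def of Xl} gives
$$\xterm{l}(t,x)=\sum_{s=1}^{t}\sum_{z}p(t-s,x-z)\Bigl[P\Bigl(\sum_{m<l}\kterm{m}(s,z)\Bigr)-P\Bigl(\sum_{m<l-1}\kterm{m}(s,z)\Bigr)\Bigr].$$
The bracket is a polynomial in the lower gradients $\kterm{m}(s,z)$, $m\le l-1$. Only monomials $\prod_j \kterm{m_j}$ of total order $\sum_j m_j\ge l-1$ and at least two factors appear, since $P$ starts at degree $2$. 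Moreover, only monomials with $\sum_j m_j = l-1$ can survive if we track homogeneity: each $\kterm{m}$ is homogeneous of degree $m$ in the noise scale $N^{-1/4-\rate}$, and $\xterm{l}$ collects exactly degree-$l$ contributions. Degrees only add, so by induction $\xterm{l}$ has total noise degree at least $l$.

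Accordingly, we represent $\xterm{l}(t,x)$, exactly as in Corollary \ref{C.kterm represented by tree}, as a finite linear combination of tree actions. The only change is that the root edge carries the heat kernel $p(t-s,x-z)$ rather than its discrete gradient $\cha$. Every tree has at least $l$ leaves. Then
$$\bigl(\xterm{l}-\bbE\xterm{l}\bigr)^n=\sum c\,\prod_{i=1}^n\bigl(\action{t,x}{T_i}-\bbE\action{t,x}{T_i}\bigr),$$
a sum over $n$-tuples of trees glued at the root. Expanding the expectation gives a sum over partitions of the union of leaves. Centering each factor forces the following: any partition whose blocks do not connect leaves of tree $i$ to leaves of some other tree $j\neq i$ cancels against the subtracted mean. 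So only partitions in which every tree is linked to another survive. This is the analogue of the proper-partition reduction, with one additional connectivity constraint across trees.

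Power counting proceeds as in the proof of Theorem \ref{T.moments upper bound on K 1}. The leaf weights still give $N^{-(1/4+\rate)|L|}$ times the truncation gain $N^{\frac14\sum(|\pi_i|-8)^+}$. For the graph sum, each of the $n$ root edges now carries $p$ instead of $\cha$. Summing $p(t-s,\cdot)$ over space costs $O(1)$, and summing over time then costs $N$, instead of the $N^{1/2}$ we get with $\cha$. Uncentered, this would cost an extra factor $N^{1/2}$ per tree. Indeed $\bbE\xterm{2}\asymp N^{1/2-2\rate}$, which is not small. The centering must compensate: because each tree is linked to another, the time-space vertex below the root of tree $i$ is not free but is tied through shared noise to another tree. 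The plan is to show via graph lifting that the lost spatial/temporal freedom yields exactly the missing factor. Concretely, we lift the glued graph so that each cross-tree connection contributes a factor $\sum_s\sum_z p(t-s,x-z)\,|\cha(s-u,z-y)|\cdots$. We then use the lemmas of Appendix \ref{S.Some Lemmas} to bound convolutions of $p$ with products of $\cha$'s as if the top edge were a gradient edge, up to $\log N$ factors. This should recover the bound $N^{(|L|-2-\sum(|\pi_i|-4)^+)/4}$ per connected component. Since each tree has at least $l$ leaves, we get $|L|\ge ln$, and hence the claimed bound $\loglesssim N^{-ln\rate}$.

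The main obstacle is this last step: making precise that in every surviving (connected-across-trees) partition, the $n$ non-gradient root edges cost no more than gradient edges would. This requires a careful choice of spanning structure in the lifted graph. Each $p$-edge must be paired with a gradient edge on a path to another tree, so that the time integral of $p$ is absorbed by the $t^{-1}$ decay of a $\cha$-edge (one of $n$ lines, each using at most one surplus factor). I would first try to handle it by a direct reduction: bound $|\xterm{l}-\bbE\xterm{l}|$ through a discrete Duhamel/variance decomposition in terms of $\kterm{m}$-type trees, and check the case $n=2$, $l=2$ by hand to calibrate the exponent.
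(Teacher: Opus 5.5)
The setup matches the paper's. You expand $\xterm{l}$ in trees with $p$ on the root edges, and you note that centering kills every partition in which the leaves of some factor form a union of blocks (the paper's \emph{nonlocal} condition). You also use $|L(T)|\ge ln$. Your homogeneity remark is wrong, since e.g.\ $\kterm{2}$ contains $(\kterm{1})^k$ for every $2\le k\le\alpha-1$, but you only use ``at least $l$ leaves''.

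The gap is the step you flag as the main obstacle, and the target you set for it is false. You want the $p$-rooted glued graphs to obey the same bound $N^{(|L|-2-\sum(|\pi_i|-4)^+)/4}$ as in the $\kterm{l}$ case, i.e.\ that after centering the $n$ root edges carrying $p$ cost no more than $\cha$-edges.

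Take $l=n=2$ and the pairing $\pi_{13|24}$ of $T_d=T_b\vee T_b$ from Example~\ref{EX.tree representation}. With $p$ on the two root edges, the glued sum is $\sum p(t-s_1,x-z_1)\,p(t-s_2,x-z_2)\,C^2$, where $C=\sum_{u,y}\cha(s_1-u,z_1-y)\cha(s_2-u,z_2-y)\approx 4p(|s_1-s_2|,z_1-z_2)-4p(s_1+s_2,z_1-z_2)$ by Lemma~\ref{NL.reflection identity}. For $t\asymp N$, restrict to $s_1,s_2\asymp N$ with $|s_1-s_2|\asymp N$. This shows the sum is of order $N^2\cdot N^{-1}=N=N^{|L|/4}$, not $N^{1/2}$. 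For $\beta\neq0$ this is the leading part of $\mathrm{Var}\,\xterm{2}$, so the theorem is sharp there. Hence no pairing of $p$-edges with gradient edges can deliver your target: $p$-rooted sums genuinely lose $N^{1/2}$ against $\cha$-rooted ones. The $l=n=2$ hand check you propose would have revealed this.

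What is true, and suffices, is the bound $N^{\frac14|L|-\frac14\sum(|\pi_i|-4)^+}$. Multiplied by $N^{-(\frac14+\rate)|L|}N^{\frac14\sum(|\pi_i|-8)^+}$, it gives exactly $N^{-\rate|L|}$; the $\kterm{l}$ bound simply had $N^{-1/2}$ to spare. The paper obtains it in two steps your sketch does not supply.

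First, which you leave implicit, nonlocal but improper partitions must still be reduced to proper ones. The cancellation $\sum_z\cha=0$ needs the bad point's parent edge to be a $\cha$-edge. This holds because a bad point of a nonlocal partition is never a child of the root, and coarsening preserves nonlocality.

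Second, leaf lifting and backbone pruning never touch root edges. So they apply verbatim and reduce everything to amenable $G'$ with star backbone $S_n$, at cost $N^{\frac12(m-R(G)+R(G'))}$. The new ingredient is a star reduction for $p$-edges (Proposition~\ref{P.lifting 3 new}): $\actionnew{t,x}{G'}\loglesssim N^{n-R(G')/2}$, so each root neighbour costs $N$ and each red edge gains $N^{-1/2}$. It is proved by induction, deleting a vertex of minimal red degree; that degree is at most $3$ because amenability forces $R(G')\le 2n-2$. Degrees $0,1,2$ use $\sum_z p=1$ and Lemma~\ref{NL.1 red}. Degree $3$ uses Lemma~\ref{NL.lifting 2}, together with the union/intersection-closed family of tight sets, to place the new red edge so that amenability survives.

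The resulting bound is $N^{\frac12(m+2n-R(G))}$. Combined with $R(G)\ge\frac12|L|+\frac12\sum(|\pi_i|-4)^+$ from leaf lifting, this finishes the proof.
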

The following upper bound is a direct corollary of Theorem \ref{T.concentration on Xl 1}. Since the proof is identical to the proof of Corollary \ref{C.upper bound on K 1}, we omit it for brevity.
\begin{corollary}\label{C.concentration of Xl}
   For fixed $a,b>0$ and $l\ge 2$, 
    $$\bbP\left(\absolute{\xterm{l}(t,x)-\bbE[\xterm{l}(t,x)]}\le N^{-1.1\rate},\forall~(t,x)~\in\trapezoid\right)=1-o(1).$$
\end{corollary}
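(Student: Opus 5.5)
The plan is to follow the proof of Corollary \ref{C.upper bound on K 1} verbatim, replacing $\kterm{l}$ by the centered variable $\xterm{l}-\bbE[\xterm{l}]$ and Theorem \ref{T.moments upper bound on K 1} by Theorem \ref{T.concentration on Xl 1}. Fix $a,b>0$ and $l\ge 2$. For $(t,x)\in\trapezoid$ we have $t\le aN$, so Theorem \ref{T.concentration on Xl 1} applies at every such point. Its hidden constants in $\loglesssim$ are uniform over $(t,x)$ with $t\le aN$, by the convention of Subsection \ref{S.notation}, since $\trapezoid$ is contained in a box of the form $\Lambda^N_{a,a+b}$.

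Take the even integer $n=2\lceil 5/\rate\rceil$. By Markov's inequality applied to the $n$-th power,
\[
\bbP\left(\absolute{\xterm{l}(t,x)-\bbE[\xterm{l}(t,x)]}> N^{-1.1\rate}\right)\le \frac{\bbE\left[(\xterm{l}(t,x)-\bbE[\xterm{l}(t,x)])^n\right]}{N^{-1.1n\rate}}\loglesssim N^{-ln\rate+1.1n\rate}.
\]
Since $l\ge 2$, the exponent satisfies $-ln\rate+1.1n\rate\le -0.9n\rate\le -9$. Absorbing the polylogarithmic factor, the right-hand side is at most $N^{-8}$ for large $N$, uniformly over $(t,x)\in\trapezoid$.

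Since $\setsize{\trapezoid}\lesssim N^2$, a union bound over $(t,x)\in\trapezoid$ bounds the probability of the complementary event by $O(N^{-6})=o(1)$. This gives the claim.

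There is no real obstacle. The only point to check is that the threshold $N^{-1.1\rate}$ lies strictly above the typical fluctuation size $N^{-l\rate}$, which holds precisely because $l\ge 2$. That gap in the exponent, multiplied by a large enough $n$, beats both the polynomial cardinality of $\trapezoid$ and the logarithmic losses.
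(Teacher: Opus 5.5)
Your proposal is correct and is exactly the argument the paper intends: the paper omits this proof as identical to that of Corollary~\ref{C.upper bound on K 1}, namely Markov's inequality with $n=2\lceil 5/\rate\rceil$ applied to the moment bound of Theorem~\ref{T.concentration on Xl 1}, followed by a union bound over $\trapezoid$. Your exponent bookkeeping is also right: $-ln\rate+1.1n\rate\le-0.9n\rate\le-9$ because $l\ge 2$, which beats the $O(N^2)$ cardinality of $\trapezoid$ and the logarithmic losses.
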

\begin{proof}[Proof of Theorem \ref{T.main}]
    For any fixed $a,b>0$, Theorems \ref{T.expansion 1} and \ref{T.expectation of Xl 1}, together with Corollary \ref{C.concentration of Xl}, imply that, with probability $1-o(1)$, for any $(t,x)\in\trapezoid$,
    $$\function(t,x)=\xterm{1}(t,x)+\frac{\beta}{2}\mu_2N^{-\frac{1}{2}-2\rate}t+\frac{\beta^2}{6}\mu_3N^{-\frac{3}{4}-3\rate}t+o(N^{-\rate}).$$
    The proof is completed by combining this result with the convergence in law of $N^\rate\xterm{1}\left(Nt,\sqrt{N}x\right)$ (after applying the linear interpolation procedure of \cite[Section 10]{adhikari2024invariance})  to the solution $f$ of \eqref{E.heat equation}.
\end{proof}

\section{Tree representation and reductions}\label{S.Tree representation and reductions}
In this section, we represent all renormalization terms using tree expansions to provide some preliminaries for the proof of Theorems \ref{T.moments upper bound on K 1}, \ref{T.expectation of Xl 1} and \ref{T.concentration on Xl 1}. We write the transition probability of a simple random walk as
$$
    p(t,x)
    = 2^{-t}\binom{t}{\frac{t-x}{2}} \mathbf{1}_{\{t\ge0,\ |x|\le t,\ t+x \equiv 0 \ (\mathrm{mod}\ 2)\}}.
$$
The recursion \eqref{E.def of X1} is equivalent to 
\begin{equation}\label{E.redefine X1}
    \xterm{1}(t,x)=\sum_{s=1}^t\sum_{z\in\bbZ}p(t-s,x-z)N^{-\frac{1}{4}-\rate}\truncatednoise(s,z),
\end{equation}
and for $l\ge 2$, \eqref{E.def of Xl} can be rewritten as
\begin{align*}
    \xterm{l}(t,x)
    =&\sum_{s=1}^{t}\sum_{z\in\bbZ}p(t-s,x-z)\left[P\left(\sum_{m=1}^{l-1}\kterm{m}(s,z)\right)-P\left(\sum_{m=1}^{l-2}\kterm{m}(s,z)\right)\right]\\
    =&\sum_{s=1}^{t}\sum_{z\in\bbZ}p(t-s,x-z)\sum_{k=2}^{\alpha-1}a_k\left[\left(\sum_{m=1}^{l-1}\kterm{m}(s,z)\right)^k-\left(\sum_{m=1}^{l-2}\kterm{m}(s,z)\right)^k\right].\tageq \label{E.redefine Xl}
\end{align*}
Set $\cha(t,x)=p(t-1,x-1)-p(t-1,x+1)$. Recalling the definition $\kterm{l}=\difference\xterm{l}$, we have the following for $l\ge 2$:
\begin{align*}\label{E.redefine Kl}
    \kterm{l}(t,x)&=\sum_{s=1}^{t-1}\sum_{z\in\bbZ}\cha(t-s,x-z)\sum_{k=2}^{\alpha-1}a_k\left[\left(\sum_{m=1}^{l-1}\kterm{m}(s,z)\right)^k-\left(\sum_{m=1}^{l-2}\kterm{m}(s,z)\right)^k\right]\\
    &=\sum_{s=1}^{t-1}\sum_{z\in\bbZ}\cha(t-s,x-z)\sum_{k=2}^{\alpha-1}a_k\sum_{j=1}^{k}\binom{k}{j}\left(\kterm{l-1}(s,z)\right)^j\left(\sum_{m=1}^{l-2}\kterm{m}(s,z)\right)^{k-j}.\tageq 
\end{align*}
Thus, there is a natural tree representation in the definition of ${\kterm{l}}$: each $\kterm{l}$ can be represented as a linear combination of lower-order products.

\subsection{Tree representation}\label{Subs.Tree representation}
We represent each $\kterm{l}$ by a family of rooted trees arising from \eqref{E.redefine Kl}. For each tree, we define an associated quantity called its action.
\paragraph{\textbf{Trees}}

A tree $T=(V,E)$ is a finite, simple, undirected, connected graph without cycles. We always work with rooted trees, denote the root by $\vroot$, and let $V(\tree)$ and $E(\tree)$ be the set of vertices and the set of edges of $\tree$. For $v\in V(T)$, let $|v|$ be the graph distance from $v$ to $\vroot$. If $e\in E(T)$ connects $u$ and $v$ and $|u|=|v|-1$, we write $e=[u,v]$. The \emph{parent} of a vertex $v$ is the vertex connected to $v$ on the path
from $v$ to the root; every vertex has a unique parent, except the root, which has no parent. A \emph{child} of $v$ is a vertex having $v$ as parent. A \emph{leaf} is a vertex with no children. Let $L(\tree)$ denote the set of all leaves of $\tree$. A \emph{star} $S_n$ is a tree with exactly $n$ leaves and $n$ edges connecting every leaf to the root.

For a rooted tree $\tree$ with root $\vroot$, we define its \emph{extension} $\extension{\tree}$ by adding a new vertex $v$ to $\tree$ and a new edge connecting $v$ and $\vroot$, and then identifying $v$ as the root of $\extension{\tree}$.
For rooted trees $\tree_1,\ldots,\tree_k$, their \emph{one-point union} is a rooted tree formed by identifying their roots $\vroot_1,\ldots, \vroot_k$ into one vertex, which becomes the root of this new tree. We will write it as 
$$\oneptunion_{i=1}^{k}\tree_i.$$
When $k=2$, we also write $T_1\vee T_2$. Given any set of trees $\treefamily$, define its $n$-th power by
\begin{equation}\label{E.def of treefamily^n}
\treefamily^{n}:\left\{\oneptunion_{i=1}^n\tree_i: \tree_1,\ldots,\tree_n\in\treefamily\right\}.
\end{equation}
Figure \ref{F.Illustrations of rooted trees} illustrates the above definitions.
\begin{figure}[htbp]
\centering
\resizebox{\textwidth}{!}{%
\begin{tikzpicture}[
    x=0.7cm, y=0.7cm,
    vertex/.style={circle, fill=black, inner sep=1.5pt},
    bedge/.style={draw=black,line width=0.95pt},
    lab/.style={font=\footnotesize},
    title/.style={font=\footnotesize},
    >=Latex
]

% =========================================================
% (a) rooted tree T
% =========================================================
\node[vertex] (r)  at (0,0) {};
\node[vertex] (r1) at (-1.1,-1.1) {};
\node[vertex] (r2) at (0.7,-1.1) {};
\node[vertex] (r3) at (-1.8,-2.2) {};
\node[vertex] (r4) at (-0.7,-2.2) {};
\node[vertex] (r5) at (0.7,-2.2) {};

\draw[bedge] (r)--(r1) (r)--(r2) (r1)--(r3) (r1)--(r4) (r2)--(r5);

\node[lab] at (0.4,0.35)   {$\vroot$ (root)};
\node[lab] at (-1.7,-0.85)  {\(r_1\)};
\node[lab] at (1.1,-0.85)   {\(r_2\)};
\node[lab] at (-2.2,-2.40)  {\(r_3\)};
\node[lab] at (-0.3,-2.40)  {\(r_4\)};
\node[lab] at (1.1,-2.40)   {\(r_5\)};

\node[lab,anchor=west] at (1.8,-0.15) {\(\centerdot\) \(r_1\) is a child of $\vroot$};
\node[lab,anchor=west] at (1.8,-0.75) {\(\centerdot\) \(r_2\) is the parent of \(r_5\)};
\node[lab,anchor=west] at (1.8,-1.35) {\(\centerdot\) \(r_3\) is a leaf};
\node[lab,anchor=west] at (1.8,-1.95) {\(\centerdot\) \(|r_1|=1,\;|r_4|=2\)};

\node[title] at (1.5,-3.60) {(a) rooted tree};

% =========================================================
% (b) star S_3
% =========================================================
\node[vertex] (sr) at (9,0) {};
\node[vertex] (sa) at (8.15,-1.25) {};
\node[vertex] (sb) at (9,-1.45) {};
\node[vertex] (sc) at (9.85,-1.25) {};

\draw[bedge] (sr)--(sa) (sr)--(sb) (sr)--(sc);

\node[lab] at (9,0.30)  {$\vroot$};
\node[title] at (9,-3.60) {(b) \(S_3\)};

% =========================================================
% (c) one-point union
% =========================================================
% ---- T1 ----
\node[vertex] (t1r) at (12.0,0) {};
\node[vertex] (t1a) at (11.3,-1.1) {};
\draw[bedge] (t1r)--(t1a);
\node[lab] at (12.2,0.28) {$\vroot$};
\node[title] at (11.65,-2.85) {\(T_a\)};

% ---- T2 ----
\node[vertex] (t2r) at (14.2,0) {};
\node[vertex] (t2c) at (14.6,-1.1) {};
\node[vertex] (t2l) at (13.9,-2.1) {};
\node[vertex] (t2rr) at (15.3,-2.1) {};
\draw[bedge] (t2r)--(t2c) (t2c)--(t2l) (t2c)--(t2rr);
\node[lab] at (14.4,0.28) {$\vroot$};
\node[title] at (14.6,-2.85) {\(T_b\)};

% ---- T1 vee T2 ----
\node[vertex] (ur) at (17.3,0) {};
\node[vertex] (ua) at (16.5,-1.1) {};
\node[vertex] (uc) at (18.0,-1.1) {};
\node[vertex] (ub1) at (17.4,-2.1) {};
\node[vertex] (ub2) at (18.6,-2.1) {};
\draw[bedge] (ur)--(ua) (ur)--(uc) (uc)--(ub1) (uc)--(ub2);
\node[lab] at (17.5,0.28) {$\vroot$};
\node[title] at (17.5,-2.85) {\(T_a \vee T_b\)};

% (c) panel label
\node[title] at (15.2,-3.60) {(c) one-point union $T_a\vee T_b$ };

% =========================================================
% (d) extension E(T1 ∨ T2)
% =========================================================
\node[vertex] (er)  at (21.8,0) {};
\node[vertex] (e0)  at (21.8,-0.9) {};
\node[vertex] (ea)  at (21.0,-1.9) {};
\node[vertex] (ec)  at (22.5,-1.9) {};
\node[vertex] (eb1) at (21.9,-2.8) {};
\node[vertex] (eb2) at (23.1,-2.8) {};

\draw[bedge] (er)--(e0) (e0)--(ea) (e0)--(ec) (ec)--(eb1) (ec)--(eb2);

\node[lab] at (22.5,0.35) {$\vroot$ (new root)};
\node[lab,anchor=west] at (21.95,-0.40) {new edge};
\node[title] at (22.2,-3.60) {(d) extension $T_c=\extension{T_a\vee T_b}$};

\end{tikzpicture}%
}
\caption{Illustrations of rooted trees, the star \(S_3\), the one-point union, and the extension operation.}
\label{F.Illustrations of rooted trees}
\end{figure}
\paragraph{\textbf{Tree Family}} Let $\one$ denote the star $S_1$, a tree with only two vertices and one edge (it no longer looks like a star). The tree set corresponding to $\kterm{1}$, denoted by $\treefamily_1$, contains only one element.
\begin{equation}\label{E.def of treefamily 1}
    \treefamily_1:=\left\{\one\right\}.
\end{equation}
The tree family corresponding to the next renormalization term $\kterm{2}$ is defined by
\begin{equation}\label{E.def of treefamily 2}
    \treefamily_2:=\left\{\extension{\tree}:2\le k\le \alpha-1,\tree\in\treefamily_1^k\right\}.
\end{equation}
For $l\ge 3$, we construct $\treefamily_l$ based on $\treefamily_1,\ldots,\treefamily_{l-1}$ as follows:
\begin{equation}\label{E.def of treefamily l}
\treefamily_l:=\bigcup_{k=2}^{\alpha-1}\left\{\extension{\tree_1}:\tree_1\in\treefamily_{l-1}^k\right\} \cup \bigcup_{k=2}^{\alpha-1}\bigcup_{j=1}^{k-1}\left\{\extension{\tree_1\oneptunion\tree_2}:\tree_1\in\treefamily_{l-1}^j,\tree_2\in\left(\bigcup_{m=1}^{l-2}\treefamily_{m}\right)^{k-j}\right\}.
\end{equation}
If one sets $l=2$ in \eqref{E.def of treefamily l}, the set $\treefamily_{1}\cup\ldots\cup\treefamily_{l-2}$ is empty. So $\treefamily_2$ contains $\extension{\tree_1}:\tree_1\in\treefamily_1^k$, which coincides with the definition in \eqref{E.def of treefamily 2}. Referring to Figure \ref{F.Illustrations of rooted trees}, we see that $T_a \in \treefamily_1, T_b \in \treefamily_2, T_c\in \treefamily_3$.
\paragraph{Action of trees}
Fix a tree $\tree$ with root $\vroot$. We call a map $\embed$ an \emph{embedding} if $$\embed=\left(\embed^{(1)},\embed^{(2)}\right):V(\tree)\to \bbZ_+\times\bbZ, $$

and for every $e=[u,v]\in E(\tree)$, $\embed^{(1)}(u)>\embed^{(1)}(v)$. Given an edge $e=[u,v]$, define its weight associated with $\embed$ by
$$\weight{\embed}(e):=\cha\left(\embed^{(1)}(u)-\embed^{(1)}(v),\embed^{(2)}(u)-\embed^{(2)}(v)\right)$$.
Let $\embedset{t,x}(\tree)$ denote the set of all embeddings $\embed$ of $\tree$ such that $\embed(\vroot)=(t,x)$. 
\begin{definition}
    The \emph{action} of $\tree$ at $(t,x)$ is defined as 
\begin{equation}\label{E.def of action of T}\action{t,x}{\tree}:=\sum_{\embed\in\embedset{t,x}(\tree)}\prod_{e\in E(T)}\weight{\embed}(e)\left[\prod_{v\in L(\tree)}N^{-\frac{1}{4}-\rate}\truncatednoise(\embed(v))\right].
\end{equation}
\end{definition}

\begin{example}
    We illustrate the definition using the tree \(T_b\) in Figure \ref{F.Illustrations of rooted trees}. Let $\vroot$ be the root, \(u\) be the unique child of $\vroot$, and 
    \(v_1,v_2\) be the two leaves emanating from \(u\). Write
$$
\embed(\vroot)=(t,x),\qquad \embed(u)=(t_1,x_1),\qquad
\embed(v_1)=(t_2,x_2),\qquad
\embed(v_2)=(t_3,x_3)
$$
    where $1\le t_1<t, 1\le t_2,t_3<t_1$. Then
$$
\mathcal L_{t,x}(T_b)
=
N^{-1/2-2\rate}\sum_{\substack{
1\le t_1<t \\ 1\le t_2,t_3<t_1\\
x_1,x_2,x_3\in\mathbb Z
}}
\cha(t-t_1,x-x_1)\,
\cha(t_1-t_2,x_1-x_2)\,
\cha(t_1-t_3,x_1-x_3)\,
\truncatednoise(t_2,x_2)\truncatednoise(t_3, x_3).
$$

\end{example}
Here are two facts that will be used frequently in the rest of this section.

\begin{itemize}
    \item $\action{t,x}{\cdot}$ is multiplicative, i.e., for any $\tree_1,\ldots,\tree_n$ we have
    \begin{equation}\label{E.multiplicative}
        \action{t,x}{\oneptunion_{i=1}^n \tree_i}=\prod_{i=1}^{n}\action{t,x}{\tree_i}.
    \end{equation}
    \item For any tree $\tree$, the action of $\extension{\tree}$ can be expressed as
    \begin{equation}\label{E.action of extension}
        \action{t,x}{\extension{\tree}}=\sum_{s=1}^{t-1}\sum_{z\in\bbZ}\cha(t-s,x-z)\action{s,z}{\tree},\qquad\forall(t,x)\in\bbZ_+\times\bbZ.
    \end{equation}
\end{itemize}
The connection between the preceding definitions and \eqref{E.redefine Kl} is established by the following proposition.

\begin{proposition}\label{P.kterm represented by tree}
    Given $l\ge 1$, there exist constants $c(\tree,l)$ depending on $\tree\in\treefamily_l$ such that 
    \begin{equation}\label{E.kterm represented by tree}\kterm{l}(t,x)=\sum_{\tree\in\treefamily_l}c(\tree,l)\action{t,x}{\tree}\qquad\forall(t,x)\in\bbZ_+\times\bbZ.
    \end{equation}
\end{proposition}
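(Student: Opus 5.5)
We argue by strong induction on $l$, using the recursion \eqref{E.redefine Kl} together with the two structural facts \eqref{E.multiplicative} and \eqref{E.action of extension}. The induction also yields a slightly stronger statement, which is the form we actually need: for each $n\ge1$, the power $(\kterm{l}(t,x))^n$ is a linear combination of actions $\action{t,x}{\tree}$ with $\tree\in\treefamily_l^n$. This follows immediately from \eqref{E.kterm represented by tree} once it is proved for $l$. Indeed, expand the $n$-th power of the sum and use multiplicativity: each product $\prod_{i}\action{t,x}{\tree_i}$ equals $\action{t,x}{\oneptunion_i\tree_i}$, and $\oneptunion_i \tree_i\in\treefamily_l^n$ by \eqref{E.def of treefamily^n}.

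\emph{Base case $l=1$.} We have $\kterm{1}(t,x)=\xterm{1}(t-1,x-1)-\xterm{1}(t-1,x+1)$. Insert \eqref{E.redefine X1}. Since $p(t-1-s,x-1-z)-p(t-1-s,x+1-z)=\cha(t-s,x-z)$, this gives
\[
\kterm{1}(t,x)=\sum_{s=1}^{t-1}\sum_{z}\cha(t-s,x-z)N^{-\frac14-\rate}\truncatednoise(s,z).
\]
The $s=t$ term is absent because $p(-1,\cdot)=0$. The right-hand side is exactly $\action{t,x}{\one}$: the only embeddings send the leaf to $(s,z)$ with $s<t$, and the single edge carries the weight $\cha(t-s,x-z)$. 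So $c(\one,1)=1$.

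\emph{Inductive step, $l\ge2$.} Assume \eqref{E.kterm represented by tree} holds for all $m\le l-1$. We expand the bracket in \eqref{E.redefine Kl}. For fixed $k$ and $1\le j\le k$, the term $(\kterm{l-1}(s,z))^j$ is a combination of $\action{s,z}{\tree_1}$ with $\tree_1\in\treefamily_{l-1}^j$, by the power form above. Expanding $(\sum_{m\le l-2}\kterm{m}(s,z))^{k-j}$ multinomially and substituting the hypothesis for each factor, then using multiplicativity, this term is a combination of $\action{s,z}{\tree_2}$ with $\tree_2\in(\bigcup_{m\le l-2}\treefamily_m)^{k-j}$. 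Multiplicativity again turns the product into $\action{s,z}{\tree_1\vee\tree_2}$, where for $j=k$ we read $\tree_1\vee\tree_2$ as $\tree_1$. Finally, we exchange the finite sum over trees with $\sum_{s,z}\cha(t-s,x-z)(\cdot)$ and apply \eqref{E.action of extension}. Every summand then becomes a constant times $\action{t,x}{\extension{\tree_1\vee\tree_2}}$, which is one of the trees listed in \eqref{E.def of treefamily l}. For $l=2$ the second factor is empty, only $j=k$ survives, and we land in \eqref{E.def of treefamily 2}.

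The remaining concern is bookkeeping rather than analysis. The same tree may arise from several $(k,j)$ and several decompositions, and different trees may be isomorphic. This is harmless: we define $c(\tree,l)$ as the sum of all coefficients attached to $\tree$. The coefficients are products of $a_k$, binomial and multinomial coefficients, and earlier constants $c(\cdot,m)$, so they are finite. All sums are finite, since $\truncatednoise$ is bounded and only finitely many $(s,z)$ with $s<t$ contribute via $\cha$, so the interchanges are justified. I therefore expect no real obstacle beyond carefully checking that the index ranges in \eqref{E.redefine Kl} ($1\le j\le k$, $2\le k\le\alpha-1$) match those in \eqref{E.def of treefamily l} exactly. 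In particular, the case $j=k$ corresponds to the first union in \eqref{E.def of treefamily l}, and the cases $1\le j\le k-1$ correspond to the second.
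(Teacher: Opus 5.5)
Your proposal is correct and follows the paper's proof: you argue by induction on $l$ via \eqref{E.redefine Kl}, use multiplicativity \eqref{E.multiplicative} to handle the powers $(\kterm{l-1})^j$ and $(\sum_{m\le l-2}\kterm{m})^{k-j}$, and use \eqref{E.action of extension} to absorb the outer $\cha$-convolution into $\action{t,x}{\extension{\tree_1\vee\tree_2}}$. You also treat explicitly the $j=k$ term (matching it to the first union in \eqref{E.def of treefamily l}) and the degenerate case $l=2$, which the paper's write-up suppresses, so your version is slightly more careful on these points.
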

\begin{proof}
    We prove \eqref{E.kterm represented by tree} by induction on $l$. The case $l=1$ follows directly:
    \begin{align*}
        \kterm{1}(t,x)=\sum_{s=1}^{t-1}\sum_{z\in\bbZ}\cha(t-s,x-z)N^{-\frac{1}{4}-\rate}\truncatednoise(s,z)=\action{t,x}{\one}.
    \end{align*}.
    
    Suppose now that $l\ge 2$ and \eqref{E.kterm represented by tree} holds for $1,\ldots,l-1$. $\kterm{l}$ is defined by
    \begin{align*}
        \kterm{l}(t,x)&=\sum_{s=1}^{t-1}\sum_{z\in\bbZ}\cha(t-s,x-z)\sum_{k=2}^{\alpha-1}a_k\sum_{j=1}^{k}\binom{k}{j}\left(\kterm{l-1}(s,z)\right)^j\left(\sum_{m=1}^{l-2}\kterm{m}(s,z)\right)^{k-j}
    \end{align*}
    By the induction hypothesis, $\kterm{l-1}(s,z)$ is a linear combination of $\action{s,z}{\tree}$ for $\tree\in\treefamily_{l-1}$. Consequently, $(\kterm{l-1}(s,z))^j$ is a linear combination of $\action{s,z}{\tree_1}$ for $\tree_1\in\treefamily_{l-1}^j$ since $\action{s,z}{\cdot}$ is multiplicative (see \eqref{E.multiplicative}). Similarly, $\left(\sum_{m=1}^{l-2}\kterm{m}(s,z)\right)^{k-j}$ is a linear combination of $\action{s,z}{\tree_2}$ for $\tree_2\in\left(\treefamily_{1}\cup\ldots\cup\treefamily_{l-2}\right)^{k-j}$. Thus, there are constants $c_1(\tree_1)$ and $c_2(\tree_2)$ such that 
    \begin{align*}
    \kterm{l}(t,x)=&\sum_{s=1}^{t-1}\sum_{z\in\bbZ}\cha(t-s,x-z)\sum_{k=2}^{\alpha-1}\sum_{j=1}^{k}a_k\binom{k}{j}\cdot\sum_{\tree_1}c_1(\tree_1)\action{s,z}{\tree_1}\sum_{\tree_2}c_2(\tree_2)\action{s,z}{\tree_2}\\
    =&\sum_{k=2}^{\alpha-1}\sum_{j=1}^{k}a_k\binom{k}{j}\sum_{\tree_1,\tree_2}c_1(\tree_1)c_2(\tree_2)\sum_{s=1}^{t-1}\sum_{z\in\bbZ}\cha(t-s,x-z)\action{s,z}{\tree_1\oneptunion\tree_2}\\
    =&\sum_{k=2}^{\alpha-1}\sum_{j=1}^{k}\sum_{\tree_1,\tree_2}a_k\binom{k}{j}c_1(\tree_1)c_2(\tree_2)\action{t,x}{\extension{\tree_1\oneptunion\tree_2}}
    \end{align*}
    Here we suppress the constraints $\tree_1\in\treefamily_{l-1}^j$ and $\tree_2\in\left(\treefamily_{1}\cup\ldots\cup\treefamily_{l-2}\right)^{k-j}$ on the indices of $\sum$ for simplicity. Since $\treefamily_l$ is defined precisely as the set of all such $\extension{\tree_1\oneptunion\tree_2}$,  the proof is complete.

\end{proof}
As a corollary of Proposition \ref{P.kterm represented by tree} and the multiplicativity property \eqref{E.multiplicative}, we have
\begin{corollary}\label{C.kterm represented by tree}
     Fix $l,n\ge 1$. There exist constants $c(\tree,l,n)$ for each $\tree\in\treefamily_l^n$ so that for any $(t,x)\in\bbZ_+\times\bbZ$,
    \begin{equation}\left(\kterm{l}(t,x)\right)^n=\sum_{\tree\in\treefamily_l^n}c(\tree,l,n)\action{t,x}{\tree}.
    \end{equation}
\end{corollary}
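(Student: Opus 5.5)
The plan is to obtain the statement directly from Proposition \ref{P.kterm represented by tree} by raising both sides of \eqref{E.kterm represented by tree} to the $n$-th power and using the multiplicativity \eqref{E.multiplicative} of the action. Fix $l,n\ge1$ and $(t,x)\in\bbZ_+\times\bbZ$. By Proposition \ref{P.kterm represented by tree},
\begin{equation*}
\left(\kterm{l}(t,x)\right)^n=\left(\sum_{\tree\in\treefamily_l}c(\tree,l)\action{t,x}{\tree}\right)^n=\sum_{(\tree_1,\ldots,\tree_n)\in\treefamily_l^{\times n}}\prod_{i=1}^n c(\tree_i,l)\prod_{i=1}^n\action{t,x}{\tree_i}.
\end{equation*}
The sum is finite, since $\treefamily_l$ is a finite set: each $\treefamily_m$ is built from finitely many earlier trees with $k\le\alpha-1$. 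Expanding the power is therefore legitimate.

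By \eqref{E.multiplicative}, each product of actions satisfies $\prod_{i=1}^n\action{t,x}{\tree_i}=\action{t,x}{\oneptunion_{i=1}^n\tree_i}$. By the definition \eqref{E.def of treefamily^n}, the one-point union $\oneptunion_{i=1}^n\tree_i$ lies in $\treefamily_l^n$. I then group the ordered $n$-tuples according to the resulting tree, treating trees as rooted trees up to isomorphism, exactly as the sets $\treefamily_l^n$ are understood. This gives
\begin{equation*}
c(\tree,l,n):=\sum_{\substack{(\tree_1,\ldots,\tree_n)\in\treefamily_l^{\times n}\\ \oneptunion_{i}\tree_i=\tree}}\prod_{i=1}^n c(\tree_i,l).
\end{equation*}
This is a finite sum of constants that depends only on $\tree,l,n$ and not on $(t,x)$.

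The only point that needs any care is that the action is well defined on isomorphism classes, so that different tuples producing the same tree contribute the same quantity $\action{t,x}{\tree}$. This holds because \eqref{E.def of action of T} sums over all embeddings and is therefore invariant under relabeling of vertices. Hence no step is a genuine obstacle, and the identity holds for every $(t,x)$, as claimed.
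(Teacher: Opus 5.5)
Your proposal is correct and is exactly the argument the paper intends: raise the representation of Proposition~\ref{P.kterm represented by tree} to the $n$-th power, turn each product of actions into the action of the one-point union via \eqref{E.multiplicative}, and collect the coefficients. Your remarks on the finiteness of $\treefamily_l$ and on the action being well defined on isomorphism classes fill in minor details that the paper leaves implicit.
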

As Corollary \ref{C.kterm represented by tree} shows, in order to prove Theorem \ref{T.moments upper bound on K 1}, it is sufficient to bound $\bbE[\action{t,x}{\tree}]$ for each $\tree\in\treefamily_l^n$. 

\subsection{Leaf partitions and gluing}
First, we take the expectation of \eqref{E.def of action of T}. Each $\embed\in\embedset{t,x}(\tree)$ induces a partition of $L(\tree)$ in which any two leaves $v,v'$ are in the same part if and only if $\embed(v)=\embed(v')$. Denote by $\Pi(L(\tree))$ the set of all partitions of $L(\tree)$ and $\embedset{t,x}^{\pi}(\tree)$ the set of all $\embed\in\embedset{t,x}(\tree)$ which induce exactly the partition $\pi$ of $L(\tree)$. For $\embed\in\embedset{t,x}(\tree)$ and a part $\pi_i \in \pi$, since $\embed$ takes a constant value on $\pi_i$, we define $
\embed^{(1)}(\pi_i) := \embed^{(1)}(v),$ for any $v \in \pi_i$. Let $\tilde\mu_{k}(t)=\bbE[\truncatednoise(t,0)^k]$. For $\embed\in\embedset{t,x}^\pi(\tree)$, we write
$$\moment{\embed}{\pi}:=\prod_{\pi_i\in\pi}\tilde\mu_{\setsize{\pi_i}}(\embed^{(1)}(\pi_i)).$$

By taking expectations in \eqref{E.def of action of T}, we obtain

$$\bbE[\action{t,x}{\tree}]=N^{-(\frac{1}{4}+\rate)\setsize{L(\tree)}}\sum_{\pi\in\Pi(L(\tree))}\sum_{\embed\in\embedset{t,x}^{\pi}(\tree)}\prod_{e\in E(\tree)}\weight{\embed}(e)\moment{\embed}{\pi}.$$

A partition $\pi\in\Pi(L(\tree))$ is said to be \emph{proper} if any part $\pi_i$ of $\pi$ satisfies the following:
\begin{itemize}
    \item[(1)] $\setsize{\pi_i}\ge 2$.
    \item[(2)] If $\setsize{\pi_i}=2$, suppose $\pi_i=\{v_1,v_2\}$. Then, there is \textbf{no} non-root vertex $u$ such that $v_1$ and $v_2$ are the only two children of $u$. 
    \item[(3)] If $\setsize{\pi_i}=3$, suppose $\pi_i=\{v_1,v_2,v_3\}$. Then, there is \textbf{no} non-root vertex $u$ such that $v_1,v_2$ and $v_3$ are the only three children of $u$. Also, there is \textbf{no} pair of vertices $u,w$ such that the only two children of $u$ are $w$ and one of $v_1,v_2,v_3$, and the only two children of $w$ are the other two of $v_1,v_2,v_3$.
\end{itemize}
The partitions that violate (1) (resp., (2) and (3)) are shown in Figure \ref{F.proper-counterexamples} (a) (resp., (b) and (c)).
If a non-root vertex $u$ satisfies either of the latter two conditions above, we call $u$ a \emph{bad point} of $\pi$. 

\begin{figure}[htbp]
\centering
\begin{tikzpicture}[
    x=0.88cm,y=0.88cm,
    vertex/.style={circle, fill=black, inner sep=1.8pt},
     bedge/.style={draw=black,line width=0.95pt},
  dbedge/.style={draw=black,line width=0.75pt,double,double distance=1.1pt},
    lab/.style={font=\small},
    title/.style={font=\small},
    part/.style={draw, dashed, rounded corners=3pt}
]

% =========================================================
% (a) counterexample to condition (i): |\pi_i| >= 2
% =========================================================
\begin{scope}[xshift=0cm,yshift=0cm]
    % vertices
    \node[vertex] (r)  at (0,0) {};
    \node[vertex] (u)  at (0,-0.95) {};
    \node[vertex] (v1) at (-0.60,-1.75) {};
    \node[vertex] (v2) at (0.60,-1.75) {};

    % edges
    \draw[bedge] (r) -- (u);
    \draw[bedge] (u) -- (v1);
    \draw[bedge] (u) -- (v2);

    % labels
    \node[lab] at (-0.18,0.14) {$\vroot$};
    \node[lab] at (-0.22,-0.95) {$u$};
    \node[lab] at (-0.88,-2.02) {$v_1$};
    \node[lab] at (0.88,-2.02) {$v_2$};

    % partition block: include vertex v1 and its label
    \draw[part] (-1.1,-2.2) rectangle (-0.20,-1.58);
    \node[lab] at (-0.63,-2.52) {$\pi_i=\{v_1\}$};

    % title
    \node[title] at (0,-3.50) {(a)};
\end{scope}

% =========================================================
% (b) counterexample to condition (ii): |\pi_i| = 2
% =========================================================
\begin{scope}[xshift=4.7cm,yshift=0cm]
    % vertices
    \node[vertex] (r)  at (0,0) {};
    \node[vertex] (u)  at (0,-0.95) {};
    \node[vertex] (v1) at (-0.60,-1.75) {};
    \node[vertex] (v2) at (0.60,-1.75) {};

    % edges
    \draw[bedge] (r) -- (u);
    \draw[bedge] (u) -- (v1);
    \draw[bedge] (u) -- (v2);

    % labels
    \node[lab] at (-0.18,0.14) {$\vroot$};
    \node[lab] at (-0.22,-0.95) {$u$};
    \node[lab] at (-0.88,-2.02) {$v_1$};
    \node[lab] at (0.88,-2.02) {$v_2$};

    % partition block: include vertices v1,v2 and labels
    \draw[part] (-1.10,-2.2) rectangle (1.10,-1.58);
    \node[lab] at (0,-2.52) {$\pi_i=\{v_1,v_2\}$};

    % title
    \node[title] at (0,-3.50) {(b) };
\end{scope}

% =========================================================
% (c) counterexample to condition (iii): |\pi_i| = 3
%     left = 3-star type, right = 2+1 chain type
% =========================================================
\begin{scope}[xshift=10.4cm,yshift=0cm]

    % ---------- left: direct 3-children bad point ----------
    \node[vertex] (r1)  at (-1.85,0) {};
    \node[vertex] (u1)  at (-1.85,-0.95) {};
    \node[vertex] (a1)  at (-2.65,-1.55) {};
    \node[vertex] (a2)  at (-1.85,-1.95) {};
    \node[vertex] (a3)  at (-1.05,-1.55) {};

    \draw[bedge] (r1) -- (u1);
    \draw[bedge] (u1) -- (a1);
    \draw[bedge] (u1) -- (a2);
    \draw[bedge] (u1) -- (a3);

    % labels
    \node[lab] at (-2.03,0.14) {$\vroot$};
    \node[lab] at (-2.10,-0.95) {$u$};
    \node[lab] at (-2.95,-1.83) {$v_1$};
    \node[lab] at (-1.85,-2.23) {$v_2$};
    \node[lab] at (-0.75,-1.83) {$v_3$};

    % partition block: include vertices v1,v2,v3 and labels
    \draw[part] (-3.16,-2.4) rectangle (-0.52,-1.32);
    \node[lab] at (-1.85,-2.7) {$\pi_i=\{v_1,v_2,v_3\}$};
    \node[lab] at (-1.85,-3.3) {3-star type};

    % ---------- right: nested 2+1 bad configuration ----------
    \node[vertex] (r2)  at (1.95,0) {};
    \node[vertex] (u2)  at (1.95,-0.95) {};
    \node[vertex] (w)   at (1.35,-1.75) {};
    \node[vertex] (b1)  at (2.75,-2.2) {};
    \node[vertex] (b2)  at (0.75,-2.55) {};
    \node[vertex] (b3)  at (1.95,-2.55) {};

    \draw[bedge] (r2) -- (u2);
    \draw[bedge] (u2) -- (w);
    \draw[bedge] (u2) -- (b1);
    \draw[bedge] (w) -- (b2);
    \draw[bedge] (w) -- (b3);

    % labels
    \node[lab] at (1.77,0.14) {$\vroot$};
    \node[lab] at (2.18,-0.95) {$u$};
    \node[lab] at (1.10,-1.75) {$w$};
    \node[lab] at (3.04,-2.2) {$v_1$};
    \node[lab] at (0.45,-2.82) {$v_2$};
    \node[lab] at (2.25,-2.82) {$v_3$};

    % partition block: include vertices v1,v2,v3 and labels
    \draw[part] (0.23,-3.02) rectangle (3.23,-2);
    \node[lab] at (1.73,-3.5) {$\pi_i=\{v_1,v_2,v_3\}$};
    \node[lab] at (1.73,-4) {$2{+}1$ chain type};

    % title for panel (c)
    \node[title] at (0,-4.5) {(c)};
\end{scope}
\end{tikzpicture}
\caption{Counterexamples in the definition of a proper partition.}
\label{F.proper-counterexamples}
\end{figure}

The following proposition and corollary allow us to restrict our discussion to \emph{proper} partitions.
\begin{proposition}\label{P.restrict to proper partitions}
    For a fixed partition $\pi\in\Pi(L(\tree))$, for any $x$ and $t\le aN$, 
    \begin{equation}\label{E.restrict to proper partitions}
        \absolute{\sum_{\embed\in\embedset{t,x}^{\pi}(\tree)}\prod_{e\in E(\tree)}\weight{\embed}(e)\moment{\embed}{\pi}}\loglesssim \sum_{\substack{\tilde{\pi}\in\Pi(L(\tree))\\ \tilde{\pi}~\text{is proper}}}N^{\sum_{\tilde{\pi}_i\in\tilde{\pi}}\frac{1}{4}(\setsize{\tilde{\pi}_i}-8)^+}\cdot\sum_{\embed\in\embedset{t,x}^{\tilde{\pi}}(\tree)}\prod_{e\in E(\tree)}\absolute{\weight{\embed}(e)}.
    \end{equation}
\end{proposition}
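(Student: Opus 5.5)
The plan is to reduce an arbitrary partition $\pi$ to proper ones by removing the three kinds of defects one at a time. Each removal either shows that the contribution is exactly zero, or rewrites it as a combination of sums over strictly coarser partitions. The moment factors are kept under control throughout.

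\textbf{Step 1: moment bounds.} For every $k$ and $s\le aN$ we have $\tilde\mu_k(s)\lesssim 1$ when $k\le 8$, by the uniform eighth-moment assumption (the truncation and the recentring by $c_{N,s}$ cost only a bounded factor). When $k>8$ we have $|\tilde\mu_k(s)|\le (2N^{1/4}\log N)^{k-8}\,\bbE|\noise|^8$. Hence
\[
|\moment{\embed}{\pi}|\loglesssim N^{\sum_i \frac14(|\pi_i|-8)^+}.
\]
The exponent $(\cdot-8)^+$ is superadditive, so $(a-8)^++(b-8)^+\le(a+b-8)^+$. Consequently, if we merge blocks, the product of the original block moments is still bounded by the weight attached to the coarser partition.

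\textbf{Step 2: singletons.} If $\pi$ has a block of size $1$, then $\tilde\mu_1\equiv 0$ because the truncated noise is centred. So $\moment{\embed}{\pi}=0$ and the left-hand side of \eqref{E.restrict to proper partitions} vanishes. The same holds for every coarsening of $\pi$ that still contains that singleton.

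\textbf{Step 3: bad points, via translation invariance and oddness.} Let $u$ be a bad point of $\pi$. In each of the three configurations of conditions (2) and (3), the subtree $T_u$ hanging below $u$ has all its leaves in a single block $\pi_i$, and $\pi_i=L(T_u)$. Given $\embed(u)$ and the times of the vertices in $T_u$, the product of the edge weights inside $T_u$ together with $\tilde\mu_{|\pi_i|}$ depends only on these times and on the spatial offsets relative to $\embed^{(2)}(u)$. This uses that $\cha$ is translation invariant and that $\tilde\mu_k(s)$ does not depend on space. Suppose we relax the constraint that the location of $\pi_i$ be distinct from the locations of the other blocks, and translate $T_u$ rigidly in space. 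Then the sum over $\embed^{(2)}(u)$ reduces to
\[
\sum_{y}\cha\bigl(\embed^{(1)}(p)-\embed^{(1)}(u),\,\embed^{(2)}(p)-y\bigr),
\]
where $p$ is the parent of $u$ (it exists because $u$ is not the root). This sum vanishes because $\cha(r,\cdot)$ is odd in space, since $p$ is symmetric. So the relaxed sum is zero. By inclusion--exclusion, the exact-$\pi$ sum equals minus a signed combination of sums over partitions $\sigma$ obtained by merging $\pi_i$ with one or more other blocks. In these terms the moment factor is still $\moment{\embed}{\pi}$, and Step 1 bounds it by the $\sigma$-weight. The number of such terms depends only on $|L(\tree)|$, so they contribute constants.

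\textbf{Step 4: iteration, and the main obstacle.} Each application of Step 2 or Step 3 strictly decreases the number of blocks. Iterating therefore terminates, and every surviving term is a sum over a proper $\tilde\pi$. Taking absolute values of the edge weights at the very end gives \eqref{E.restrict to proper partitions}. The hard part is the bookkeeping in Step 3. The inclusion--exclusion must be organised so that we only relax distinctness between $\pi_i$ and the other blocks, and never distinctness among the other blocks themselves. The moment factors of the intermediate terms also have to be tracked honestly: they are products of the original block moments, not moments of the merged blocks. I would handle this by proving a slightly stronger induction statement. It would allow an arbitrary moment factor $W$ that is dominated by the weight of the current partition and depends only on block times, so that the translation argument and the superadditivity of Step 1 both carry through the induction.
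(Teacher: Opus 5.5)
Your proposal is correct and follows the paper's proof essentially step by step: the moment bound combined with superadditivity of $(\cdot-8)^+$, vanishing of the contribution when there is a singleton block, and, for a bad point $u$, relaxing only the distinctness between the block $L(T_u)$ and the other blocks and then using $\sum_{z}\cha(s,z)=0$. As in the paper, you iterate while keeping the product of the original block moments as the moment factor. The differences are cosmetic: because the other blocks stay mutually distinct, your inclusion--exclusion reduces to the paper's disjoint sum over gluing $L(T_u)$ with exactly one other block, and you track termination by the number of blocks rather than the number of bad points.
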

\begin{proof}
    Recall that the moments of $\truncatednoise(t,0)$ satisfy the uniform bound
    $$\absolute{\tilde{\mu}_k(t)}\le \bbE[\truncatednoise(t,0)^8]\cdot(N^{\frac{1}{4}}\log N)^{(k-8)^+}\loglesssim N^{\frac{1}{4}(k-8)^+}~\forall~k\ge 1,$$ and thus for any $\embed\in\embedset{t,x}^\pi(\tree)$, 
    $$\absolute{\moment{\embed}{\pi}}\loglesssim N^{\sum_{\pi_i\in\pi}\frac{1}{4}(\setsize{\pi_i}-8)^+}$$
    holds uniformly. Thus, if $\pi$ is a proper partition, then the left-hand side of \eqref{E.restrict to proper partitions} is bounded by the corresponding term on the right-hand side.
    
    It remains to consider an improper partition $\pi$. If $\pi$ contains a set of size $1$, then the identities $\bbE[\truncatednoise(t,0)]=0,~\forall~t$, imply that $\moment{\embed}{\pi}=0$ for any $\embed\in\embedset{t,x}^\pi(\tree)$.
    Now suppose that every part of $\pi$ contains at least two elements. If $\pi$ has a bad point $u$, then there exists a part of $\pi$ (without loss of generality $\pi_1$) such that $\pi_1$ consists precisely of the leaf descendants of $u$. As we observed before, $u\neq\vroot$. Denote by $v$ the parent of $u$ and $\tree_u$ the subgraph of $T$ induced by $u$ and all of its descendants; this subgraph forms a subtree of $T$ rooted at $u$. Similarly, denote by $\tree \setminus \tree_u$ the subgraph of $T$ induced by all vertices other than $u$ and its descendants. By considering the restrictions $\embed_1 := \embed|_{\tree \setminus \tree_u}$ and $\embed_2 := \embed|_{\tree_u}$, we can then factorize

    \begin{align*}
        &\sum_{\embed\in\embedset{t,x}^{\pi}(\tree)}\prod_{e\in E(\tree)}\weight{\embed}(e)\moment{\embed}{\pi}      =\sum_{\embed_1\in\embedset{t,x}^{\pi\setminus\{\pi_1\}}(\tree\setminus\tree_u)}\prod_{e\in E(\tree\setminus\tree_u)}\weight{\embed_1}(e)\moment{\embed_1}{\pi\setminus\{\pi_1\}}\cdot\\
        &\sum_{s=1}^{\embed_1^{(1)}(v)-1}\sum_{z\in\bbZ}\cha\left(\embed_1^{(1)}(v)-s,\embed_1^{(2)}(v)-z\right)\sum_{\substack{\embed_2\in\embedset{s,z}^{\{\pi_1\}}(\tree_u)\\ \embed_2(L(\tree_u))\cap \embed_1(L(\tree\setminus\tree_u))=\emptyset}}\prod_{e\in E(\tree_u)}\weight{\embed_2}(e)\tilde{\mu}_{\setsize{\pi_1}}(\embed_2^{(1)}(\pi_1)),
    \end{align*}
    where $\embed_2(L(\tree_u))\cap \embed_1(L(\tree\setminus\tree_u))=\emptyset$ is necessary and sufficient for $\embed\in\embedset{t,x}^{\pi}(\tree)$. For fixed $s$ and $\embed_2^{(1)}$, 
    $$\sum_{\substack{\embed_2\in\embedset{s,z}^{\{\pi_1\}}(\tree_u)}}\prod_{e\in E(\tree_u)}\weight{\embed_2}(e)\tilde{\mu}_{\setsize{\pi_1}}(\embed_2^{(1)}(\pi_1))$$
    does not depend on $z$ and 
    $$\sum_{z\in\bbZ}\cha\left(\embed_1^{(1)}(v)-s,\embed_1^{(2)}(v)-z\right)=0.$$
    We write $\pi'\prec_1\pi$ if $\pi'$ is obtained from $\pi$ by gluing $\pi_1$ with any other part of $\pi$. Then
    \begin{align*}
        \sum_{\embed\in\embedset{t,x}^{\pi}(\tree)}\prod_{e\in E(\tree)}\weight{\embed}(e)\moment{\embed}{\pi}=
        -\sum_{\pi'\prec_1\pi} \sum_{\embed\in\embedset{t,x}^{\pi'}(\tree)}\prod_{e\in E(\tree)}\weight{\embed}(e)\moment{\embed_1}{\pi\setminus\{\pi_1\}}\tilde{\mu}_{\setsize{\pi_1}}(\embed_2^{(1)}(\pi_1)),
    \end{align*}
    where $\embed_1$ and $\embed_2$ are the restrictions of $\embed$ on $\tree\setminus\tree_u$ and $\tree_u$, respectively. For any $\pi'\prec_1\pi$, $u$ is no longer a bad point of $\pi'$. Note that any bad point of $\pi'$ is also a bad point of $\pi$, so the number of bad points of $\pi'$ is at least one less than that of $\pi$.
    If $\pi'$ remains improper, we apply the same procedure again. 
    
    We write $\pi\preceq\pi'$ if $\pi'$ is coarser than $\pi$, i.e., for any part $\pi_i\in\pi$ there is a $\pi_j'\in\pi'$ such that $\pi_i\subset\pi_j'$. After finitely many applications of the preceding identity, the process terminates, and we can express 

    $$\sum_{\embed\in\embedset{t,x}^{\pi}(\tree)}\prod_{e\in E(\tree)}\weight{\embed}(e)\moment{\embed}{\pi}$$
    as a linear combination of 
    $$\sum_{\embed\in\embedset{t,x}^{\tilde\pi}(\tree)}\prod_{e\in E(\tree)}\weight{\embed}(e)A_{\tilde\pi},~\text{$\tilde\pi$ is proper},$$
    where $A_{\tilde\pi}$ is a linear combination of terms like $\prod_{\pi_i\in\pi}\tilde{\mu}_{\setsize{\pi_i}}(t_i)$, and are all $\loglesssim N^{\sum_{\pi_i\in\pi}(\setsize{\pi_i}-8)^+/4}$ uniformly. Note that
    $$\frac{1}{4}\sum_{\pi_i\in\pi}(\setsize{\pi_i}-8)^+\le \frac{1}{4}\sum_{\tilde\pi_i\in\tilde\pi}(\setsize{\tilde\pi_i}-8)^+$$
    for every $\pi\preceq\tilde\pi$. This completes the proof.

\end{proof}
Summing \eqref{E.restrict to proper partitions} over $\pi\in\Pi(L(\tree))$, we obtain the following corollary.
\begin{corollary}\label{C.restrict to proper partitions}
    For a fixed tree $\tree$, for any $x$ and $t\le aN$,
    \begin{equation}\label{E.restrict to proper partitions corollary}
        \absolute{\bbE[\action{t,x}{\tree}]}\loglesssim N^{-(\frac{1}{4}+\rate)\setsize{L(\tree)}}\sum_{\substack{\pi\in\Pi(L(\tree))\\ \pi~\text{is proper}}}N^{\frac{1}{4}\sum_{\pi_i\in\pi}(\setsize{\pi_i}-8)^+}\cdot\sum_{\embed\in\embedset{t,x}^{\pi}(\tree)}\prod_{e\in E(\tree)}\absolute{\weight{\embed}(e)}.
    \end{equation}
\end{corollary}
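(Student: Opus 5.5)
The plan is to start from the exact expansion of $\bbE[\action{t,x}{\tree}]$ obtained just before the definition of proper partitions:
\begin{equation*}
\bbE[\action{t,x}{\tree}]=N^{-(\frac{1}{4}+\rate)\setsize{L(\tree)}}\sum_{\pi\in\Pi(L(\tree))}\sum_{\embed\in\embedset{t,x}^{\pi}(\tree)}\prod_{e\in E(\tree)}\weight{\embed}(e)\moment{\embed}{\pi}.
\end{equation*}
This identity is legitimate because every inner sum is finite. Only finitely many embeddings contribute, since $\cha(s,y)=0$ for $|y|\ge s$ and all times lie in $[1,t]$. The truncated variables are also bounded, so all expectations exist and expectation may be exchanged with the finite sum.

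Next take absolute values and apply the triangle inequality over $\pi\in\Pi(L(\tree))$:
\begin{equation*}
\absolute{\bbE[\action{t,x}{\tree}]}\le N^{-(\frac{1}{4}+\rate)\setsize{L(\tree)}}\sum_{\pi\in\Pi(L(\tree))}\absolute{\sum_{\embed\in\embedset{t,x}^{\pi}(\tree)}\prod_{e\in E(\tree)}\weight{\embed}(e)\moment{\embed}{\pi}}.
\end{equation*}
Each summand is then bounded by Proposition \ref{P.restrict to proper partitions}. That bound is a sum over proper $\tilde\pi$ of $N^{\frac14\sum(\setsize{\tilde\pi_i}-8)^+}\sum_{\embed\in\embedset{t,x}^{\tilde\pi}(\tree)}\prod_e\absolute{\weight{\embed}(e)}$. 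Its implicit constant, including powers of $\log N$, is uniform in $x$ and in $t\le aN$ for the fixed tree $\tree$.

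Summing over all $\pi\in\Pi(L(\tree))$ multiplies the right-hand side by at most $\setsize{\Pi(L(\tree))}$. This is the Bell number of $\setsize{L(\tree)}$, a constant that depends only on $\tree$. It is therefore absorbed into the $\loglesssim$ notation, which gives \eqref{E.restrict to proper partitions corollary}.

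The only point needing care is that the constant in $\loglesssim$ may depend on $\tree$ but not on $N,t,x$. This holds because $\tree$ is fixed, so both the number of partitions and the constants produced by Proposition \ref{P.restrict to proper partitions}, which come from finitely many re-gluing steps, are independent of $N$. No step here is a genuine obstacle; all the substantive work is already contained in Proposition \ref{P.restrict to proper partitions}.
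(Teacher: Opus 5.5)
Your proposal is correct and is exactly the paper's argument: expand $\bbE[\action{t,x}{\tree}]$ over leaf partitions, apply Proposition \ref{P.restrict to proper partitions} to each $\pi$, and absorb the constant $\setsize{\Pi(L(\tree))}$, which depends only on $\tree$, into $\loglesssim$. One harmless slip is that $\cha(s,y)$ vanishes only for $|y|>s$, not for $|y|\ge s$ (for example, $\cha(s,s)=2^{-(s-1)}\neq 0$); this does not affect your finiteness remark.
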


\paragraph{Leaf-glued graph.}
Given a tree $T$ and a partition $\pi$ of its leaves, let $T^{\pi}$ be the graph obtained from $T$ by gluing together (equivalently, identifying) all leaves that belong to the same part of $\pi$. We call $T^{\pi}$ the \emph{leaf-glued graph} generated by $T$ and $\pi$. Any edges that become parallel after this identification are retained as multiple edges.

To visualize the contribution of a fixed partition $\pi$, we glue together all leaves belonging to the same part of $\pi$. In this way, the constraint $\embed(v)=\embed(v')$ for $v$ and $v'$ in the same part is represented by a single vertex in the resulting graph. Figure~\ref{fig:fixed-pi-gluing} illustrates this for the tree $T=T_a\vee T_b$ shown in Figure~\ref{F.Illustrations of rooted trees}, in the case where $v_1$, $v_2$, and $v_3$ belong to the same part $\pi_1=\{v_1,v_2,v_3\}$.

\begin{figure}[htbp]
\centering
\begin{tikzpicture}[
    x=0.72cm,y=0.72cm,
    vertex/.style={circle, fill=black, inner sep=1.8pt},
     bedge/.style={draw=black,line width=0.95pt},
  dbedge/.style={draw=black,line width=0.75pt,double,double distance=1.1pt},
    lab/.style={font=\small},
    >=Latex
]

% =========================================================
% Left: original tree with a chosen block pi_1
% =========================================================
\begin{scope}[xshift=0cm,yshift=0cm]
    % vertices
    \node[vertex] (r)  at (0,1.55) {};
    \node[vertex] (v1) at (-0.95,0.55) {};
    \node[vertex] (u)  at (0.62,0.98) {};
    \node[vertex] (v2) at (0.25,0.02) {};
    \node[vertex] (v3) at (1.00,0.28) {};

    % edges
    \draw[bedge] (r) -- (v1);
    \draw[bedge] (r) -- (u);
    \draw[bedge] (u) -- (v2);
    \draw[bedge] (u) -- (v3);

    % labels
    \node[lab] at (-0.02,1.86) {$\vroot$};
    \node[lab] at (0.88,1.18) {$u$};
    \node[lab] at (-1.08,0.25) {$v_1$};
    \node[lab] at (0.08,-0.28) {$v_2$};
    \node[lab] at (1.15,-0.06) {$v_3$};

    % dashed box around the block
    \draw[dashed, rounded corners=2pt]
        (-1.35,-0.5) rectangle (1.4,0.78);

    % partition label
    \node[lab] at (0.00,-0.82) {$\pi_1=\{v_1,v_2,v_3\}$};
\end{scope}

% arrow
\draw[->, line width=0.5pt] (3.0,0.88) -- (4.45,0.88);

% =========================================================
% Right: contracted graph after gluing v1,v2,v3
% =========================================================
\begin{scope}[xshift=4.95cm,yshift=-0.22cm]
    % vertices
    \node[vertex] (a)  at (0,0.58) {};
    \node[vertex] (rr) at (0.82,1.62) {};
    \node[vertex] (uu) at (1.72,0.92) {};

    % edges
    \draw[bedge] (rr) -- (a);
    \draw[bedge] (rr) -- (uu);
    \draw[dbedge] (a) -- (uu);

    % labels
    \node[lab] at (0.83,1.92) {$\vroot$};
    \node[lab] at (1.96,1.06) {$u$};
    \node[lab] at (0.5,0) {$\pi_1=\{v_1,v_2,v_3\}$};
\end{scope}

\end{tikzpicture}
\caption{Illustration of the summation for $T_a \vee T_b$ when $v_1$, $v_2$, and $v_3$ belong to the same part $\pi_1=\{v_1,v_2,v_3\}$. Leaves in the same part are merged into a single vertex, and any resulting parallel edges are retained as multiple edges.}
\label{fig:fixed-pi-gluing}
\end{figure}

\begin{example}\label{EX.tree representation}
     Assuming that the random variables $\noise(t,x)$ are i.i.d., we illustrate the computation of the expectation of the action associated with the tree \(T_d= T_b\vee T_b\); see Figure \ref{F.Td-example46}. Among the partitions of $L(T_d)=\{v_1,v_2,v_3,v_4\}$, the only proper ones are

$$
    \pi_4:=\{\{v_1,v_2,v_3,v_4\}\},\qquad \pi_{13|24}:=\{\{v_1,v_3\},\{v_2,v_4\}\},\qquad  \pi_{14|23}:=\{\{v_1,v_4\},\{v_2,v_3\}\}.
$$
The proof of Proposition \ref{P.restrict to proper partitions} gives
$$
    \mathbb{E} [\action{t,x}{T_d}]=N^{-1-4\rate}\Bigl((\mathbb{E}[\truncatednoise(1,0)^4]-(\mathbb{E}[\truncatednoise(1,0)]^2)^2) \mathcal I_4(t,x)+2(\mathbb{E}[\truncatednoise(1,0)^2] )^2\,\mathcal I_{2,2}(t,x)\Bigr),
$$
where

\begin{align*}
    \mathcal I_4(t,x):=\sum_{t_1,t_2=1}^{t-1}\sum_{s=1}^{t_1\wedge t_2-1}\sum_{z_1,z_2,y\in\mathbb Z}&\cha(t-t_1,x-z_1)\cha(t-t_2,x-z_2)\\
&\times\cha(t_1-s,z_1-y)^2\cha(t_2-s,z_2-y)^2,   
\end{align*}
and
\begin{align*}
    \mathcal I_{2,2}(t,x):=&\sum_{t_1,t_2=1}^{t-1}   \sum_{s_1,s_2=1}^{t_1\wedge t_2-1}\sum_{z_1,z_2\in\mathbb Z}\sum_{\substack{y_1,y_2\in\mathbb Z\\(s_1,y_1)\neq(s_2,y_2)}}\cha(t-t_1,x-z_1)\cha(t-t_2,x-z_2)\\
    &\times\cha(t_1-s_1,z_1-y_1)\cha(t_2-s_1,z_2-y_1)\cha(t_1-s_2,z_1-y_2)\cha(t_2-s_2,z_2-y_2).
\end{align*}
Here $\mathcal I_4$ corresponds to $\pi_4$, while $\mathcal I_{2,2}$  corresponds to $\pi_{13|24}$ and $\pi_{14|23}$. The latter two partitions yield isomorphic glued graphs. $\pi_4$ and  $\pi_{13|24}$ (or $\pi_{14|23}$) correspond to the two graphs displayed together with $T_d$ in Figure~\ref{F.Td-example46}. 

The graph-lifting method introduced in Section \ref{S.Graph Lifting} yields
$$\mathbb{E} [\action{t,x}{T_d}]\loglesssim N^{-1-4\rate}.$$
\end{example}

\begin{figure}[htbp]
\centering
\begin{tikzpicture}[
    x=0.72cm,y=0.72cm,
    vertex/.style={circle, fill=black, inner sep=1.8pt},
     bedge/.style={draw=black,line width=0.95pt},
  dbedge/.style={draw=black,line width=0.75pt,double,double distance=1.1pt},
    lab/.style={font=\small},
    slab/.style={font=\scriptsize}
]

% =========================================================
% (a) original tree T_d
% =========================================================
\begin{scope}[xshift=-3.95cm,yshift=0cm]

\node[vertex] (r)  at (0,0.88) {};
\node[vertex] (u1) at (-0.78,0.20) {};
\node[vertex] (u2) at ( 0.78,0.20) {};
\node[vertex] (v1) at (-1.36,-0.44) {};
\node[vertex] (v2) at (-0.20,-0.44) {};
\node[vertex] (v3) at ( 0.20,-0.44) {};
\node[vertex] (v4) at ( 1.36,-0.44) {};

\draw[bedge] (r)  -- (u1);
\draw[bedge] (r)  -- (u2);
\draw[bedge] (u1) -- (v1);
\draw[bedge] (u1) -- (v2);
\draw[bedge] (u2) -- (v3);
\draw[bedge] (u2) -- (v4);

\node[lab]  at (0,1.56) {$T_d$};
\node[slab] at (0,-1.10) {$T_d=T_b\vee T_b$};
\end{scope}

% =========================================================
% (b) glued graph for pi_4
% =========================================================
\begin{scope}[xshift=0cm,yshift=0cm]

\node[vertex] (r)  at (0,0.88) {};
\node[vertex] (u1) at (-0.78,0.20) {};
\node[vertex] (u2) at ( 0.78,0.20) {};
\node[vertex] (w)  at ( 0.00,-0.44) {};

\draw[bedge] (r)  -- (u1);
\draw[bedge] (r)  -- (u2);

% parallel multiple edges from u1 to w
\draw (-0.82,0.16) -- (-0.04,-0.48);
\draw (-0.74,0.24) -- ( 0.04,-0.40);

% parallel multiple edges from u2 to w
\draw ( 0.74,0.24) -- (-0.04,-0.40);
\draw ( 0.82,0.16) -- ( 0.04,-0.48);

\node[lab]  at (0,1.56) {$\pi_4$};
\node[slab] at (0,-1.10) {$\{v_1,v_2,v_3,v_4\}$};
\end{scope}

% =========================================================
% (c) glued graph for pi_{13|24}
% =========================================================
\begin{scope}[xshift=3.95cm,yshift=0cm]

\node[vertex] (r)   at (0,0.88) {};
\node[vertex] (u1)  at (-0.78,0.20) {};
\node[vertex] (u2)  at ( 0.78,0.20) {};
\node[vertex] (w13) at (-0.58,-0.44) {};
\node[vertex] (w24) at ( 0.58,-0.44) {};

\draw[bedge] (r)  -- (u1);
\draw[bedge] (r)  -- (u2);

\draw[bedge] (u1) -- (w13);
\draw[bedge] (u1) -- (w24);
\draw[bedge] (u2) -- (w13);
\draw[bedge] (u2) -- (w24);

\node[lab]  at (0,1.56) {$\pi_{13|24}$};
\node[slab] at (-0.86,-1.10) {$\{v_1,v_3\}$};
\node[slab] at ( 0.86,-1.10) {$\{v_2,v_4\}$};
\end{scope}

\end{tikzpicture}

\par\vspace{3pt}
\refstepcounter{figure}\small Figure~\thefigure: The tree $T_d=T_b\vee T_b$ and the two graphs obtained from the proper partitions $\pi_4$ and $\pi_{13|24}$.
\label{F.Td-example46}
\end{figure}

\subsection{Reduction to full binary trees} 
After gluing, $\tree^\pi$ is in general a rooted multigraph rather than a tree. Each edge of $\tree^\pi$ is identified with its preimage in $\tree$, and therefore inherits the parent–child orientation from the original rooted tree.

Note that for each set of leaves that were glued together when constructing $\tree^\pi$, $\embed$ takes the same value on this set. Every embedding map $\embed\in\embedset{t,x}^\pi(\tree)$ naturally lifts to a map $\tilde\embed$ on $V(\tree^\pi)$. Let $\embedset{t,x}(\tree^\pi)$ denote the set of all $\tilde\embed$.
We may identify each edge of $\tree^\pi$ with the corresponding edge of $\tree$. We henceforth identify lifted maps with embeddings of $\tree^\pi$. Here, we slightly modify the weight function for technical reasons (as will be seen in the proof of the next proposition). Define
$$\cha'(t,x):=\frac{C_{\cha}}{1+t+x^2}, \quad C_{\cha}:=1+\sup_{s\ge 1, z\in\mathbb{Z}} (1+s+z^2)\absolute{\cha(s,z)}<\infty. $$
Given an edge $e=[u,v]$ and an embedding map $\tilde\embed=(\tilde\embed^{(1)},\tilde\embed^{(2)})$, the weight of $e$ associated with $\tilde\embed$ is defined as
$$\weight{\tilde\embed}(e):=\cha'\left(\tilde\embed^{(1)}(u)-\tilde\embed^{(1)}(v),\tilde\embed^{(2)}(u)-\tilde\embed^{(2)}(v)\right).$$
The resulting action of $\tree^\pi$ is defined by
\begin{equation}\label{E.def of action of T pi}
    \action{t,x}{\tree^\pi}:=\sum_{\tilde\embed\in\embedset{t,x}(\tree^\pi)}\prod_{e\in E(\tree^\pi)}\weight{\tilde\embed}(e),
\end{equation}
and hence

$$ \sum_{\embed\in\embedset{t,x}^\pi(\tree)}\prod_{e\in E(\tree)}\absolute{\weight{\embed}(e)}\le \action{t,x}{\tree^\pi}$$
since $|\cha|\le\cha’$ for all $(t,x)$.
We call a tree a \emph{full binary tree} if every vertex other than the root and the leaves has exactly two children. The following proposition allows us to reduce our discussion to the class of full binary trees. We defer the proof to Appendix~\ref{S.Proof of Proposition P.binary tree}.

\begin{proposition}\label{P.binary tree}
Let $\tree$ be a rooted tree such that every non-root, non-leaf vertex has at least two children. Fix a proper partition $\pi$ of $L(\tree)$. Then, there exists a full binary tree $\tilde\tree$ with the same number of leaves as
$\tree$, and a constant $C=C(\tree,\pi)$, such that $\pi$ is also a proper partition of $L(\tilde\tree)$ and, for all $x$ and $t\le aN$,
    $$\action{t,x}{\tree^\pi}\lesssim\action{t+C,x}{\tilde{\tree}^\pi}.$$
\end{proposition}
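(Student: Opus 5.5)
We reduce the tree one vertex at a time. We say a non-root, non-leaf vertex $u$ with $m\ge 3$ children $w_1,\dots,w_m$ is \emph{excess}, and we replace it by a binary splitting. Concretely, insert a new vertex $u'$ as a child of $u$. Make $w_2,\dots,w_m$ children of $u'$ and keep $w_1$ as a child of $u$. Now $u$ has two children, $u'$ has $m-1\ge 2$ children, the leaf set is unchanged, and the quantity $\sum_u(\#\text{children}(u)-2)$ drops by one. Iterating gives a full binary tree $\tilde\tree$. The root is not required to be binary, so it is left alone. Since each step is local, it suffices to prove two things for a single step $\tree\to\tree'$: (i) $\pi$ remains proper for $L(\tree')$, and (ii) $\action{t,x}{\tree^\pi}\lesssim \action{t+1,x}{(\tree')^\pi}$. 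The total time shift $C$ is then at most the number of steps, which is finite and depends on $\tree$.

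\textbf{Step (ii), the analytic comparison.} Given an embedding $\embed$ of $\tree^\pi$ rooted at $(t,x)$, we build an embedding of $(\tree')^\pi$ rooted at $(t+1,x)$. Shift every vertex at or above the relevant part up in time, or more simply use the following construction. Keep all vertices of $\tree^\pi$ in place and place $u'$ strictly between $u$ and its children. The difficulty is that $u$ and its children may be adjacent in time, with gap $1$, leaving no room.

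To avoid this, we instead compare in the other direction. Shift \emph{all} non-leaf ancestors of $u$, together with $u$ itself and the root, up by one time unit; the root then sits at $t+1$. Put $u'$ at the old position of $u$. The remaining constraint is that $u'$ must lie strictly above its children, and it occupies exactly the old position of $u$, which is fine. The time differences along the shifted edges change by at most one. Since
\[
\cha'(s,z)\le 2\,\cha'(s\pm1,z)\quad\text{for } s\ge1,
\]
each weight changes by at most a constant factor. One could shift only $u$ and its ancestors and leave other branches fixed; edges from a shifted ancestor to an unshifted child then gain one time unit, which is again harmless.

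The new edge $[u,u']$ has time difference $1$ and space difference $0$, so its weight is $\cha'(1,0)=C_\cha/2\ge 1/2$, which is bounded below. The map is injective, so summing over $\embed$ gives
\[
\action{t,x}{\tree^\pi}\lesssim\action{t+1,x}{(\tree')^\pi},
\]
because the right-hand side sums over a superset of nonnegative terms. We must also check that the glued leaves stay at the same positions, so the partition constraint is preserved; this holds because leaves are never shifted.

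\textbf{Step (i), properness, is where I expect the main care to be needed.} Condition (1) is trivially preserved. For (2), a bad point in $\tree'$ would be a vertex whose only two children are $v_1,v_2$ forming a part. The vertex $u$ has children $w_1$ and $u'$, where $u'$ is not a leaf, so $u$ is not bad. The vertex $u'$ could be bad only if $m-1=2$ and $w_2,w_3$ are leaves forming a part. For (3), the 2+1 chain could be newly created through $u,u'$: for instance $m=3$ with $w_1,w_2,w_3$ all leaves forming a single part, which was a 3-star bad point in $\tree$ and hence excluded, while the 2-part case was not excluded.

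The fix is to choose \emph{which} children to group under $u'$. If some two children of $u$ are leaves forming a part of size 2, do not put exactly those two under $u'$. Similar care is needed for 3-parts when $m=3$ or $4$. Since $\pi$ is proper in $\tree$, the leaf children of $u$ cannot all form one part of size $\le 3$ (with $m\ge3$). A short case analysis on $m$ and on the partition restricted to the leaf children of $u$ then shows that a valid grouping always exists. For example, when $m=3$ with leaves $a,b,c$ and $\{a,b\}$ a part, group $\{a,c\}$ under $u'$. This grouping could still create a 2+1 chain if $\{a,b,c\}$ were a part, but that case is excluded by properness. I expect this combinatorial case check, ensuring no new bad points arise, to be the main obstacle; the analytic part is routine.
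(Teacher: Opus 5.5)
Your proposal is correct and is essentially the paper's argument. It uses the same iterated local split at a non-root vertex with at least three children, and the same injective embedding map: raise $u$ and all its ancestors by one time unit, place the new vertex at the old position of $u$, and compare weights using $\nabla'(s,z)\lesssim\nabla'(s+1,z)$ and $\nabla'(1,0)>0$.

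The only difference is minor. The paper hangs two carefully chosen children under the new vertex, so $u$ keeps $m-1$ children, whereas you hang $m-1$ children under it. Your properness check does go through: for $m=3$ at most one of the three candidate pairs can create a bad point, and for $m=4$ at most one triple of leaves can be a part. One correction: your remark that the leaf children of $u$ cannot all form one part of size at most $3$ holds only when all children of $u$ are leaves. For example, $m=3$ with one internal child and two leaf children forming a $2$-part is proper.
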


Figure~\ref{F.prop4.7-case} illustrates the operation in Proposition~\ref{P.binary tree} in the case where $\pi$ has only one part, namely, the set of all leaves of $T$.

\begin{figure}[htbp]
\centering
\begin{tikzpicture}[
    x=0.72cm,y=0.72cm,
    vertex/.style={circle, fill=black, inner sep=1.8pt},
     bedge/.style={draw=black,line width=0.95pt},
  dbedge/.style={draw=black,line width=0.75pt,double,double distance=1.1pt},
    lab/.style={font=\small},
    >=Latex
]

% =========================================================
% Figure 5: schematic case for Proposition 4.7
% =========================================================

% left tree T

\node[vertex] (a1) at (0.00,1.48) {};
\node[vertex] (a2) at (-1.00,0.74) {};
\node[vertex] (a3) at (-0.34,0.48) {};
\node[vertex] (a4) at ( 0.34,0.48) {};
\node[vertex] (a5) at ( 1.00,0.74) {};

\draw[bedge] (a1) -- (a2);
\draw[bedge] (a1) -- (a3);
\draw[bedge] (a1) -- (a4);
\draw[bedge] (a1) -- (a5);

\node[lab] at (0.00,2) {$T$};

% arrow
\draw[->] (1.5,1.10) -- (2.5,1.10);

% right tree \widetilde T
\node[vertex] (b0) at (4.10,1.48) {};
\node[vertex] (b1) at (3.30,0.82) {};
\node[vertex] (b2) at (4.90,0.82) {};
\node[vertex] (b3) at (2.68,0.08) {};
\node[vertex] (b4) at (3.92,0.08) {};
\node[vertex] (b5) at (4.28,0.08) {};
\node[vertex] (b6) at (5.52,0.08) {};

\draw[bedge] (b0) -- (b1);
\draw[bedge] (b0) -- (b2);
\draw[bedge] (b1) -- (b3);
\draw[bedge] (b1) -- (b4);
\draw[bedge] (b2) -- (b5);
\draw[bedge] (b2) -- (b6);

\node[lab] at (4.10,2) {$\widetilde{T}$};

\end{tikzpicture}

\par\vspace{2pt}
\refstepcounter{figure}\small Figure~\thefigure: An example of Proposition \ref{P.binary tree}.
\label{F.prop4.7-case}
\end{figure}

\section{Graph lifting}\label{S.Graph Lifting}
Building on Section \ref{S.Tree representation and reductions}, we introduce a technique called \emph{graph lifting} and prove Theorem \ref{T.moments upper bound on K 1}. Here we consider $\kterm{l}$ only for $l\ge 2$, which also means that the root $\vroot$ has no leaf children. We first introduce the necessary definitions.

\paragraph{Graphs} For a fixed tree $\tree$, we say a graph $G$ is a \emph{two-colored backbone graph} with backbone $\tree$ if it satisfies
\begin{itemize}
    \item All edges of $G$ are undirected, and each edge is colored black or red. Multiple edges are allowed.
    \item The black edges can be divided into two parts. The first part forms a copy of $\tree$ with simple edges, which we will identify with $\tree$ if no ambiguity arises. All black edges in the second part connect a vertex in $\tree$ to a vertex not in $\tree$.
\end{itemize}
We still call the root $\vroot$ of $\tree$ in $G$ the root of $G$, and denote it by $\vroot$, although $G$ is generally not a tree.
For an edge $e$ connecting vertices $u$ and $v$, we write $e=\{u,v\}$ if $e$ is red. If $e$ is black and belongs to the first part, we write $e=[u,v]$ if $u$ is closer to the root than $v$ is in the tree $\tree$. If $e$ is black and belongs to the second part, we write $e=[u,v]$ if $u$ is a vertex of $\tree$ but $v$ is not.
For any set of vertices $S$, denote by $R(S)$ the number of red edges between vertices in $S$ (a multiple edge is counted repeatedly), and $B(S)$ the number of black edges between vertices in $S$. Also, for disjoint vertex sets $S,S'$, let $B(S;S')$ be the number of black edges in the form $e=[u,v]$, $u\in S'$, $v\in S$. 
\begin{definition}The \emph{$D$-index} of $S$ is defined by $$D(S):=R(S)-B(S)-2B(S;S^c).$$
\end{definition}

\begin{definition}
We say that a \emph{two-colored backbone graph} $G$ is \emph{amenable} if it also satisfies:
\begin{itemize}
    \item The second part in the above definition of black edges is empty, i.e., all the black edges of $G$ exactly form a copy of $\tree$ with simple edges. Also, $\tree$ contains all vertices of $G$. That is, $\tree$ is a spanning tree.
    \item For every vertex set $S$ not containing $\vroot$, if $S$ is not empty then $D(S)\le -2$.
\end{itemize}
\end{definition}

We may also define the \emph{embedding} map for a \emph{two-colored backbone graph} $G$ in a similar way. We say that a map $\embed$ is an \emph{embedding} if 
$$\embed=\left(\embed^{(1)},\embed^{(2)}\right):V(G)\to \bbZ_+\times\bbZ$$
and for every black edge $e=[u,v]$, $\embed^{(1)}(u)>\embed^{(1)}(v)$. Weights are assigned differently to edges of different colors. For a black edge $e=[u,v]$,
$$\weight{\embed}(e):=\cha'\left(\embed^{(1)}(u)-\embed^{(1)}(v),\embed^{(2)}(u)-\embed^{(2)}(v)\right).$$
For a red edge $e=\{u,v\}$,
$$c_\embed(e):=\frac{1}{\sqrt{\absolute{\embed^{(1)}(u)-\embed^{(1)}(v)}+1}}.$$
Let $\embedset{t,x}(G)$ denote the set of all embedding maps of $G$ such that $\embed(\vroot)=(t,x)$. The \emph{action} of $G$ at $(t,x)$ is defined as
\begin{equation}\label{E.def of action of G}
    \action{t,x}{G}:=\sum_{\embed\in\embedset{t,x}(G)}\prod_{e\in E(G)}\weight{\embed}(e).
\end{equation}
In Sections \ref{Subs.Leaf lifting}--\ref{Subs.Lifting 3}, we develop a combinatorial reduction procedure called \emph{graph lifting}, which, combined with the kernel estimates in Appendix \ref{S.Some Lemmas}, proves Theorem \ref{T.moments upper bound on K 1}. 
\subsection{Leaf lifting}\label{Subs.Leaf lifting} 
In this section, we reduce the glued leaves of $\tilde{\tree}^\pi$.
Given a tree $\tilde{\tree}$, its \emph{naked part} $\tilde{\tree}^\circ$ is the subgraph of $\tilde{\tree}$ obtained by removing all the leaves.
\begin{proposition}[\textbf{Leaf lifting}]\label{P.lifting 1}
    Suppose $\tree$, $\pi$ and $\tilde{\tree}$ are as in Proposition \ref{P.binary tree}. Then, there exists an amenable graph $G$ with backbone $\tilde{\tree}^\circ$ and at least 
    $$\frac{\setsize{L(\tree)}}{2}+\frac{1}{2}\sum_{\pi_i\in\pi}(\setsize{\pi_i}-4)^+$$
     red edges. Also, for any $x$ and $t\le aN$,
    $$\action{t,x}{\tilde{\tree}^\pi}\loglesssim\action{t,x}{G}.$$
\end{proposition}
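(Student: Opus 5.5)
The plan is to integrate out, one at a time, every vertex of $\tilde{\tree}^\pi$ that comes from a glued part $\pi_i$. Each such integration turns the black edges at that vertex into red edges between its parents. What remains is a graph whose black edges are exactly $\tilde{\tree}^\circ$, which is then a spanning tree, together with the new red edges. Amenability has to be checked separately at the end.

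\textbf{Step 1: the basic kernel estimate.} Fix a part $\pi_i$ of size $k$, glued into a vertex $w$ at position $(s,y)$. Its parents $p_1,\dots,p_k$ in $\tilde{\tree}^\circ$ sit at $(t_j,x_j)$ with $t_j>s$. Some parents may coincide; I discuss this below. The quantity to bound is
\begin{equation*}
 \sum_{s<\min_j t_j}\ \sum_{y\in\bbZ}\ \prod_{j=1}^k \cha'(t_j-s,x_j-y).
\end{equation*}
I would pick the two parents $p_1,p_2$ with the smallest times and keep the spatial sum only on them. A lemma from Appendix~\ref{S.Some Lemmas} gives $\sum_y \cha'(a,\cdot)\cha'(b,\cdot)\loglesssim (a\vee b)^{-1}(a\wedge b)^{-1/2}$, essentially a Gaussian convolution with an extra $1/t$ per kernel. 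The other factors are bounded by $\cha'(t_j-s,\cdot)\le C_{\cha}(1+t_j-s)^{-1}$. Since $s<t_1\le t_j$, we have $1+t_j-s\ge 1+|t_j-t_1|$ for $j\ge 3$. Splitting $(1+t_j-s)^{-1}=(1+t_j-s)^{-1/2}(1+t_j-s)^{-1/2}$ then yields two red-type factors attached to $p_j$ and $p_1$ (or $p_2$), at the cost of the decay in $s$. Finally I sum over $s$, using $\sum_{a\ge1}(a+d)^{-1}a^{-1/2}\loglesssim (1+d)^{-1/2}$. For $k=2$ this gives exactly one red edge $\{p_1,p_2\}$. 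For $k=3$ it gives two red edges. For $k\ge4$ it gives $k-2$ red edges. In each case the count is at least $k/2+(k-4)^+/2$. Summing over parts gives the claimed total $\tfrac{|L(\tree)|}{2}+\tfrac12\sum_i(|\pi_i|-4)^+$.

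One bookkeeping point comes first. The red weights can be recovered only if the $s$-sum converges with at most a $\log$ loss. So I would first use half of the decay of each extra edge to make the $s$-sum summable, and only then convert the rest.

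\textbf{Step 2: coincident parents.} If two leaves of $\pi_i$ share the same parent $p$, the construction above would create a red loop at $p$. Properness of $\pi$, together with the full binary structure from Proposition~\ref{P.binary tree}, limits this. A part of size $2$ or $3$ cannot consist only of the children of a single vertex, nor form the $2{+}1$ chain. So I would choose the pair $p_1,p_2$ to be distinct, which properness allows. A loop from a repeated parent is then replaced by the trivial bound $(1+t_p-s)^{-1}\le 1$ on that edge. This explains why condition (3) for size-$3$ parts is needed: otherwise only one distinct red edge could be produced, and it would fail the $D$-index bound.

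\textbf{Step 3: amenability.} This is the main obstacle. The black edges now form $\tilde{\tree}^\circ$ and span all vertices, so the first condition holds. For the second I must show $D(S)=R(S)-B(S)-2B(S;S^c)\le-2$ for every nonempty $S\not\ni\vroot$.

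The argument I would try first is this. $S$ induces a forest in $\tilde{\tree}^\circ$, and each component has exactly one incoming black edge from its parent. The root is excluded, and since $l\ge2$ the root has no leaf children. Hence $B(S;S^c)$ equals the number of components of $S$, and $B(S)=|S|$ minus that number. So $D(S)=R(S)-|S|-\#\mathrm{comp}(S)$.

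It would then suffice to have the degree bound $R(S)\le|S|-1$ for each connected piece, i.e.\ that the red graph is sparse. Since the tree is full binary, each vertex of $\tilde{\tree}^\circ$ has at most two leaf children. This means every vertex receives red edges from at most two parts, so each vertex is incident to at most two red edges, except for the distinguished pair chosen within each part.

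If this naive count fails, the fallback is to choose the red edges more carefully. In each part I would connect the parents along a path ordered by time, rather than as a star at $p_1$. The path choice keeps the red graph restricted to a set of size $|S|$ acyclic, or with controlled excess, and properness then excludes the small configurations where $D(S)=-1$.
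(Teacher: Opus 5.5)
\textbf{Main gap: amenability (Step 3).} Your identity $D(S)=R(S)-|S|-c(S)$ is correct, where $c(S)$ is the number of black components of $S$. So what has to be shown is $R(S)\le |S|+c(S)-2$, and the route you propose does not give it.

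\begin{itemize}
\item The bound $R(S)\le |S|-1$ is false in general. Take vertices $a,b,c,d$ of $\tilde{\tree}^\circ$, each with two leaf children, and the proper parts $\{a_1,b_1\},\{b_2,c_1\},\{c_2,d_1\},\{d_2,a_2\}$. The red graph on $S=\{a,b,c,d\}$ is then a $4$-cycle, so $R(S)=4>|S|-1$. Here $D(S)=-4$ is fine, but only because $c(S)=4$.
\item The degree claim is also wrong. A vertex with two leaf children that lies in two parts of size $\ge 3$ can be interior to both red paths, so it carries four red edges.
\item A bound on each connected piece separately says nothing about red edges joining different components.
\item The fallback, ``paths ordered by time'', does not define a single graph. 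The time order depends on the embedding $\embed$, whereas $\action{t,x}{G}$ sums over embeddings of one fixed $G$. You would have to split by orderings and prove amenability for every resulting graph.
\end{itemize}

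Amenability really does depend on which red edges you keep. Suppose you kept all the half-powers your Step 1 makes available, i.e.\ a double edge $\{p_j,p_1\}$ for each $j\ge 3$. Take a part with distinct parents $p_1,p_2,p_3$, where the second leaves of $p_1$ and $p_3$ form another part. Then $S=\{p_1,p_3\}$ has $R(S)=3$ and $D(S)=-1$.

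So Step 1 must commit to a specific sparse red-edge set that is independent of $\embed$, and Step 3 must be proved for that set. The paper does this as follows:
\begin{itemize}
\item It labels $\tilde{\tree}^\circ$: a vertex with one inner child gets a smaller label than that child, and a vertex with two inner children gets a label between theirs.
\item For each part it joins the parents by a path in label order, adds a second path on the doubled parents, and uses a double edge when $|\pi_i|=3$.
\item It proves $\sum_{u\in S}\freedom^S(u)\le -3$ by examining the smallest- and largest-labelled vertices of $S$, with a separate argument when a $|\pi_i|=3$ double edge sits at an extreme vertex.
\end{itemize}

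A direct count also works, with paths in any fixed order. Let $s_j,d_j$ be the numbers of distinct and doubled parents of $\pi_j$ lying in $S$. Part $\pi_j$ contributes at most $(s_j-1)^++(d_j-1)^+$ red edges inside $S$, plus one for a $|\pi_j|=3$ double edge with both ends in $S$. Moreover $\sum_j(s_j+d_j)$ equals the number of leaf children of vertices of $S$. Finally, each black component of $S$ has at most one more vertex with two leaf children than vertices with two inner children. Together these give $D(S)\le -2$, with properness needed only to exclude the $2{+}1$ chain. Neither argument is present in your proposal.

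\textbf{Gaps in Steps 1--2.} Choosing ``the two parents with smallest times'' has the same embedding-dependence problem. What allows a fixed choice is Lemma~\ref{NL.lifting 1-2}: the bound of Lemma~\ref{NL.lifting 1-1} dominates the weight of a red path through the parents in every order. A fixed star would not work: centred at the latest parent it gives $(t_k-t_1+1)^{-(k-1)/2}$, which for $k\ge 4$ can be far smaller than the actual sum.

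Your treatment of a repeated parent also loses a red edge when $|\pi_i|=3$. Let the parents be $u$ (once) and $w$ (twice) with $t_w<t_u$. Replacing the repeated factor $\cha'(t_w-s,\cdot)$ by a constant leaves $\log N\,(t_u-t_w+1)^{-1/2}$, i.e.\ one red edge, while your count needs two. You must keep the square, as in Lemma~\ref{NL.lifting 1-3}:
\begin{itemize}
\item $\sup_y \cha'(t_u-s,\cdot)\lesssim (t_u-t_w+1)^{-1}$ for $s<t_w$;
\item $\sum_{s,y}\cha'(t_w-s,\cdot)^2=O(1)$.
\end{itemize}
Together these give the double red edge $\{u,w\}$. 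This double edge is also the actual reason condition (3) is needed. In the $2{+}1$ chain $u$ is the parent of $w$, so $D(\{u,w\})=2-1-2=-1$. The problem is not that ``only one red edge'' is produced.
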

\begin{proof}  
    We construct a sequence of two-colored backbone graphs $G_0,\ldots,G_r$ that interpolate between $\tilde{\tree}^\pi$ and an amenable $G$. We set $G_0=\tilde{\tree}^\pi, G_r=G$, and $r=\setsize{\pi}$, the number of parts in $\pi$. Proceeding as in the proof of Proposition \ref{P.binary tree}, we construct a sequence $G_i,0\le i\le r$, such that
    \begin{equation}\label{E.Lifting 1}
        \action{t,x}{G_i}\loglesssim\action{t,x}{G_{i+1}}
    \end{equation}
    for every $0\le i\le r-1$. Since $r=\setsize{\pi}$ is finite, the proof is complete after verifying that $G=G_r$ is indeed amenable.
    
    First, we assign an integer-valued label to each vertex of $\tilde{\tree}^\circ$, with all labels distinct. For any vertex $v$ of $\tilde{\tree}^\circ$:
    \begin{itemize}
        \item If $v$ has one child in $\tilde{\tree}^\circ$, then the label of $v$ is smaller than the label of that child.
        \item If $v$ has two children in $\tilde{\tree}^\circ$, then its label lies between the labels of its two children.
    \end{itemize}  
    The existence of such labels follows by induction. These labeling requirements ensure that the graph $G$ obtained below is amenable. Assume that the parts of $\pi$ are $\pi_1,\ldots,\pi_r$.
    Each part of $\pi$ corresponds to a vertex of $\tilde{\tree}^\pi\setminus\tilde{\tree}^\circ$. Under this identification, we delete $\pi_1, \ldots, \pi_r$ one by one in order, and at each step add corresponding red edges. We inductively construct $G_{i}$, whose vertices are the vertices of $\tilde{\tree}^\circ$ and $\pi_{i+1},\ldots,\pi_r$. Assume that we already have $G_{i}$, $i\ge 0$, and next we construct $G_{i+1}$.
    Suppose $\pi_{i+1}=\{v_1,\ldots,v_k\}$ and $u_1,\ldots,u_m$ are all the vertices connected to $\pi_{i+1}$ in $\tilde{\tree}^\pi$. Here we also assume that the labels of $u_1,\ldots,u_m$ are in increasing order. Since $\tilde{\tree}$ is a full binary tree and $\pi$ is proper, we must have $m\ge 2$ and each $u_j$ is connected to $\pi_{i+1}$ by one or two edges in $\tilde{\tree}^\pi$. There are three cases.
    
    \textbf{Case 1:} If $m=k$, then every $u_j$ is connected to $\pi_{i+1}$ by a single edge. We delete $\pi_{i+1}$ and add red edges $\{u_{j},u_{j+1}\}, 1\le j\le m-1$. Hence, $k-1$ red edges are added. Let $G_{i+1}$ be the resulting graph. Then \eqref{E.Lifting 1} follows from Lemmas \ref{NL.lifting 1-1} and \ref{NL.lifting 1-2}.
    
    \textbf{Case 2:} If $m=2,k=3$, then one of $u_1, u_2$ is connected to $\pi_{i+1}$ by a double edge.  We delete $\pi_{i+1}$ and add two red edges $\{u_1,u_2\}$. Let $G_{i+1}$ be the resulting graph. Then \eqref{E.Lifting 1} follows from Lemma \ref{NL.lifting 1-3}. In this case, we say \textbf{Case 2 occurred at $u_1$} (resp. $u_2$) if $\pi_{i+1}$ is connected to $u_1$ (resp. $u_2$) by \textbf{one edge}.
     
    \textbf{Case 3:} If $m<k$ and $k\ge 4$, then exactly $k-m$ of the $u_1,\dots,u_m$ are connected to $\pi_{i+1}$ by a double edge. Suppose they are $u_{i_{1}},\ldots,u_{i_{k-m}}$, $1\le i_1<\dots<i_{k-m}\le m$. We delete $\pi_{i+1}$ and add red edges $\{u_{j},u_{j+1}\},1\le j\le m-1$ and $\{u_{i_{j}},u_{i_{j+1}}\},1\le j\le k-m-1$. This gives $k-2$ edges in total. Let $G_{i+1}$ be the resulting graph. Then \eqref{E.Lifting 1} follows from Lemmas \ref{NL.lifting 1-1} and \ref{NL.lifting 1-2}.

    \begin{figure}[htbp]
\centering
\begin{tikzpicture}[
  x=1cm,y=1cm,
  line cap=round,line join=round,
  vtx/.style={circle,fill=black,inner sep=1.7pt},
  bedge/.style={draw=black,line width=0.95pt},
  dbedge/.style={draw=black,line width=0.75pt,double,double distance=1.1pt},
  redge/.style={draw=red,line width=1.0pt},
  dredge/.style={draw=red,line width=0.8pt,double,double distance=1.1pt},
  downarrow/.style={-{Stealth[length=3mm,width=2mm]},line width=0.95pt},
  symlab/.style={font=\Large},
  casehead/.style={font=\large}
]

% frame
\draw[line width=1.2pt] (0,0) rectangle (15.9,6.75);
\draw[line width=1.2pt] (0,5.9) -- (15.9,5.9);
\draw[line width=1.2pt] (5.3,0) -- (5.3,6.75);
\draw[line width=1.2pt] (10.6,0) -- (10.6,6.75);

% headers
\node[casehead] at (2.65,6.18) {\textbf{Case 1}};
\node[casehead] at (7.95,6.18) {\textbf{Case 2}};
\node[casehead] at (13.25,6.18) {\textbf{Case 3}};

% =====================
% Case 1
% =====================
\coordinate (c1r) at (2.65,4.40);
\coordinate (c1u1) at (0.95,5.1);
\coordinate (c1u2) at (1.72,5.1);
\coordinate (c1u3) at (2.38,5.10);
\coordinate (c1um) at (4.35,5.1);

\draw[bedge] (c1r) -- (c1u1);
\draw[bedge] (c1r) -- (c1u2);
\draw[bedge] (c1r) -- (c1u3);
\draw[bedge] (c1r) -- (c1um);

\node[vtx] at (c1r) {};
\node[vtx] at (c1u1) {};
\node[vtx] at (c1u2) {};
\node[vtx] at (c1u3) {};
\node[vtx] at (c1um) {};

\node at (0.95,5.46) {$u_1$};
\node at (1.67,5.46) {$u_2$};
\node at (2.39,5.46) {$u_3$};
\node at (3.32,5.14) {$\cdots$};
\node at (4.36,5.46) {$u_m$};
\node at (2.88,4.0) {$\pi_{i+1}$};

\node[symlab] at (2.65,3) {$\Downarrow$};

\coordinate (c1b1) at (0.75,1.65);
\coordinate (c1b2) at (1.5,1.65);
\coordinate (c1b3) at (2.25,1.65);
\coordinate (c1bm1) at (3.95,1.65);
\coordinate (c1bm) at (4.55,1.65);

\draw[redge] (c1b1) -- (c1b2) -- (c1b3);
\draw[redge] (c1bm1) -- (c1bm);

\foreach \p in {c1b1,c1b2,c1b3,c1bm1,c1bm}
  \node[vtx] at (\p) {};

\node at (0.65,2.08) {$u_1$};
\node at (1.50,2.08) {$u_2$};
\node at (2.32,2.08) {$u_3$};
\node at (3.22,1.85) {$\cdots$};
\node at (4.82,2.08) {$u_m$};

% =====================
% Case 2
% =====================
% left top
\coordinate (c2lr) at (6.70,4.38);
\coordinate (c2lu1) at (6.25,5.27);
\coordinate (c2lu2) at (7.1,5.27);

\draw[bedge]  (c2lr) -- (c2lu1);
\draw[dbedge] (c2lr) -- (c2lu2);

\node[vtx] at (c2lr) {};
\node[vtx] at (c2lu1) {};
\node[vtx] at (c2lu2) {};

\node at (6.22,5.58) {$u_1$};
\node at (7.1,5.58) {$u_2$};
\node at (6.6,4.00) {$\pi_{i+1}$};

% right top
\coordinate (c2rr) at (9.10,4.38);
\coordinate (c2ru1) at (8.70,5.27);
\coordinate (c2ru2) at (9.48,5.27);

\draw[dbedge] (c2rr) -- (c2ru1);
\draw[bedge]  (c2rr) -- (c2ru2);

\node[vtx] at (c2rr) {};
\node[vtx] at (c2ru1) {};
\node[vtx] at (c2ru2) {};

\node at (8.68,5.58) {$u_1$};
\node at (9.50,5.58) {$u_2$};
\node at (9.1,4.00) {$\pi_{i+1}$};

\node[font=\normalsize] at (7.95,4.82) {or};

\node[symlab] at (7.95,3) {$\Downarrow$};

\coordinate (c2b1) at (7.08,1.70);
\coordinate (c2b2) at (8.82,1.70);

\draw[dredge] (c2b1) -- (c2b2);

\node[vtx] at (c2b1) {};
\node[vtx] at (c2b2) {};

\node at (7.08,2.10) {$u_1$};
\node at (8.82,2.10) {$u_2$};

% =====================
% Case 3
% =====================
\coordinate (c3r)   at (13.12,3.7);
\coordinate (c3t1)  at (11.02,4.7);
\coordinate (c3t2)  at (11.70,4.7);
\coordinate (c3t3)  at (12.30,4.7);
\coordinate (c3tk)  at (13.40,4.7);
\coordinate (c3s1)  at (14.48,4.7);
\coordinate (c3s2)  at (15.55,4.7);

\draw[dbedge] (c3r) -- (c3t1);
\draw[dbedge] (c3r) -- (c3t2);
\draw[dbedge] (c3r) -- (c3t3);
\draw[dbedge] (c3r) -- (c3tk);
\draw[bedge]  (c3r) -- (c3s1);
\draw[bedge]  (c3r) -- (c3s2);

\node[vtx] at (c3r) {};
\foreach \p in {c3t1,c3t2,c3t3,c3tk,c3s1,c3s2}
  \node[vtx] at (\p) {};

\node at (13.18,5.38) {$\overbrace{\qquad\qquad\qquad\qquad\qquad\quad}^{u_1\sim u_m}$};

\node at (11.14,4.94) {$u_{i_1}$};
\node at (11.84,4.94) {$u_{i_2}$};
\node at (12.46,4.94) {$u_{i_3}$};
\node at (12.92,4.72) {$\cdots$};
\node at (13.72,4.94) {$u_{i_{k-m}}$};
\node at (14.96,4.72) {$\cdots$};
\node at (13.12,3.40) {$\pi_{i+1}$};

\node[symlab] at (13.32,2.95) {$\Downarrow$};

% first red chain
\coordinate (c3u1) at (11.16,2.12);
\coordinate (c3u2) at (11.78,2.12);
\coordinate (c3u3) at (12.44,2.12);
\coordinate (c3um1) at (14.32,2.12);
\coordinate (c3um) at (14.90,2.12);

\draw[redge] (c3u1) -- (c3u2) -- (c3u3);
\draw[redge] (c3um1) -- (c3um);

\foreach \p in {c3u1,c3u2,c3u3,c3um1,c3um}
  \node[vtx] at (\p) {};

\node at (11.26,2.52) {$u_1$};
\node at (11.9,2.52) {$u_2$};
\node at (12.55,2.52) {$u_3$};
\node at (13.36,2.25) {$\cdots$};
\node at (15.20,2.52) {$u_m$};

\node[font=\Large] at (13.32,1.55) {$+$};

% second red chain
\coordinate (c3v1) at (11.22,0.72);
\coordinate (c3v2) at (11.90,0.72);
\coordinate (c3vk1) at (14.32,0.72);
\coordinate (c3vk) at (14.90,0.72);

\draw[redge] (c3v1) -- (c3v2);
\draw[redge] (c3vk1) -- (c3vk);

\foreach \p in {c3v1,c3v2,c3vk1,c3vk}
  \node[vtx] at (\p) {};

\node at (11.42,1.10) {$u_{i_1}$};
\node at (12.14,1.10) {$u_{i_2}$};
\node at (13.34,0.88) {$\cdots$};
\node at (15.28,1.10) {$u_{i_{k-m}}$};

\end{tikzpicture}
\caption{Three cases of Leaf lifting.}
\label{F.lifting 1}
\end{figure}

    The procedures for the above cases are illustrated in Figure \ref{F.lifting 1}. In any of these cases, $G_{i+1}$ has $\tilde{\tree}^\circ$ as backbone. Thus we may construct $\tilde{\tree}^\pi=G_0,G_1,\ldots,G_r$ by induction. Finally, the graph $G_r$ has $\tilde{\tree}^\circ$ as a spanning tree. We let $G=G_r$. Note that in any of the above three cases, at least $k-\min\{2,k/2\}$ red edges are added to $G$, where $k=\setsize{\pi_i}$. Therefore, after summing over all $1\le i\le n$, we obtain that the number of red edges in $G$ is at least:    
    $$\sum_{i=1}^n\left(\setsize{\pi_i}-\min\{2,\frac{1}{2}\setsize{\pi_i}\}\right)=\frac{1}{2}\sum_{i=1}^n\left(\setsize{\pi_i}+(\setsize{\pi_i}-4)^+\right)=\frac{\setsize{L(\tree)}}{2}+\frac{1}{2}\sum_{\pi_i\in\pi}(\setsize{\pi_i}-4)^+.$$

    Now we proceed to verify that $G$ is amenable. By the above edge-connection rule, for a non-root vertex $u$ in $\tilde{T}^\circ$, the number of red and black edges connected to $u$ has the following properties:
    \begin{itemize}
        \item If $u$ is connected to \textbf{no leaves} in $\tilde{\tree}$ (that is, it is connected to \textbf{three black edges} in $\tilde{\tree}^\circ$), then $u$ is connected to \textbf{no red edge} in $G$.
        \item If $u$ is connected to \textbf{one leaf} in $\tilde{\tree}$ (that is, it is connected to \textbf{two black edges} in $\tilde{\tree}^\circ$),   then $u$ is connected to \textbf{at most two red edges} in $G$.
        \item If $u$ is connected to \textbf{two leaves} in $\tilde{\tree}$ (that is, it is connected to \textbf{one black edge} in $\tilde{\tree}^\circ$), then $u$ is connected to \textbf{at most four red edges} in $G$.
    \end{itemize}
    For a set of non-root vertices $S$ and a vertex $u\in S$, denote by $\reddeg^{S}(u)$ the number of red edges in $S$ that $u$ is connected to, and $\blackdeg^{S}(u)$ the number of black edges in $S$ of the form $[u,v]$. The \emph{freedom} of $u$ in $S$ is defined as $\freedom^{S}(u):=\reddeg^{S}(u)+2\blackdeg^{S}(u)-4$. If $S$ consists of all non-root vertices, then we omit $S$ and simply write $\reddeg$, $\blackdeg$, and $\freedom$.
    With this notation, the above observations are equivalent to saying that $\freedom(u)$ is non-positive for every non-root vertex $u$. The following lemma explains the relationship between \emph{freedom} and the \emph{D-index}.
    \begin{lemma}  
         For a set of non-root vertices $S$,
         $$D(S)=\frac{1}{2}\sum_{u\in S}\freedom^S(u)$$
    \end{lemma}
    \begin{proof}
        Since all the black edges in $G$ form a spanning tree of $G$, the black edges in $S$ form a spanning forest of $S$. Suppose the black edges in $S$ form $k$ connected components; then the number of black edges in $S$ is exactly $\setsize{S}-k$, and $B(S;S^c)=k$. Hence,
        \begin{align*}
            2D(S)&=2R(S)-2B(S)-4k=\sum_{u\in S}\reddeg^S(u)-2\sum_{u\in S}\blackdeg^S(u)-4k\\
            &=\sum_{u\in S}\left(\reddeg^S(u)+2\blackdeg^S(u)-4\right)-4\sum_{u\in S}\blackdeg^S(u)+4\setsize{S}-4k=\sum_{u\in S}\freedom^S(u).
        \end{align*}
    \end{proof}
    Now we continue the proof of Proposition \ref{P.lifting 1}. By the lemma above, to prove the desired bound $D(S)\le -2$ for a set $S$ of non-root vertices, it suffices to show
    $$\sum_{u\in S}\freedom^S(u)\le -3.$$
    If $\setsize{S}=1$, then $R(S)=B(S)=0$ and $B(S;S^c)=1$ since the only vertex in $S$ is not the root; thus, by definition, $D(S)=-2$. Now suppose $\setsize{S}\ge 2$.  
    
    If \textbf{Case 2} does not occur on $u$, then the number of red edges from $u$ to vertices with labels larger than that of $u$ is at most $2-\blackdeg u$, and the same holds for vertices with labels smaller than that of $u$.
    For a vertex $u$, we say that $u$ is of type 1 (resp. type 2, type 3) if $\blackdeg u = 0$ (resp. 1,2). Let $u_1$ and $u_2$ be the vertices in $S$ with the smallest and largest labels, respectively. We first assume \textbf{Case 2} did not occur at $u_1$ and $u_2$. For $u=u_1$ or $u_2$,
    \begin{itemize}
        \item If $u$ is of type 1, then $\reddeg^S(u)\le 2$ and therefore $\freedom^S(u)=\reddeg^S(u)-4\le-2$.
        \item If $u$ is of type 2, then $\reddeg^S(u)\le 1$ and therefore $\freedom^S(u)\le\reddeg^S(u)+2\blackdeg(u)-4\le-1$.
        \item If $u$ is of type 3, then $\blackdeg^S(u)\le 1$ because the two children of $u$ in $\tilde{\tree}^\circ$ have labels such that one is larger than that of $u$ and the other is smaller.
        Therefore $\freedom^S(u)=2\blackdeg^S(u)-4\le-2$.
    \end{itemize}
    
    Hence $\freedom^S(u_1)+\freedom^S(u_2)\le-3$ if $u_1$ and $u_2$ are not both of type 2. If they are both of type 2, note that the child of $u_2$ in $\tilde{\tree}^\circ$ has a label larger than the label of $u_2$. This means $\blackdeg^S(u_2)=0$ and then  $\freedom^S(u_2)=\reddeg^S(u_2)-4\le -3$.
    In conclusion, if \textbf{Case 2} did not occur at $u_1$ and $u_2$, we have $\freedom^S(u_1)+\freedom^S(u_2)\le -3$ and consequently $D(S)\le-2$. 

    Now, we assume \textbf{Case 2} occurred at one or both of the vertices $u_1,u_2$. For simplicity, we use a proof by contradiction and assume that $D(S)\ge -1$.
    If \textbf{Case 2} occurred at $u_1$, then there exists another vertex $v$ such that $v$ is of type 1, and the only red edges connected to $v$ are the double red edges between $u_1$ and $v$.
    If $v\notin S$, then we still have $\reddeg^S(u_1)\le 2$ and therefore $\freedom^S(u_1)\le -2$. Together with $\freedom^S(u_2)\le -1$, this leads to a contradiction.
    If $v\in S$ but $v\neq u_2$, then, since $v$ is of type 1 and connected only to two red edges, we have $\freedom^S(v)=2+0-4=-2$. Then $\freedom^S(v)+\freedom^S(u_2)\le -3$, a contradiction.
    
    Now suppose $v=u_2$. For the same reason, we have $\freedom^S(u_2)\le -2$. Then $D(S)\ge -1$ if and only if $\freedom^S(\cdot)$ is zero for every vertex in $S$ other than $u_2$. We have $\freedom^S(u_1)=0$ and $u_1$ must be of type 1 or type 2 since it is connected to red edges. 
    
    If $u_1$ is of type 1 and \textbf{Case 2} occurred only once at $u_1$, then among the remaining red edges connected to $u_1$, at most one is connected to a vertex with a label larger than that of $u_1$. Hence $\freedom^S(u_1)\le 3+0-4=-1$, a contradiction. So \textbf{Case 2} must occur again at $u_1$, and all the red edges connected to $u_1$ are between vertices in $S$. This means that there exists another $v'\in S$ of type 1 and the only red edges connected to $v$ are the double red edges between $u_1$ and $v'$. Then, as in the discussion above, we have $\freedom^S(v')=-2$, a contradiction.
    
    So $u_1$ is of type 2, and the child of $u_1$ is in $S$ since $\freedom^S(u_1)=0$. Let $u_3$ be the vertex in $S$ with the second smallest label. If $\setsize{S}=2$, then $u_3=u_2=v$ and $u_3$ is the child of $u_1$. Therefore, the leaves connected to $u_1$ and $u_3$ form a part of $\pi$ and hence $\pi$ is not proper, a contradiction. So $\setsize{S}\ge 3$ and $u_3\neq u_2$. Note that there is no red edge between $u_3$ and $u_1$. If $u_3$ is of type $1$ or type $2$, then $\freedom^S(u_3)\le -1$ because the red edges between $u_3$ and the vertices with smaller labels are not in $S$, a contradiction.
    
    So $u_3$ is of type 3, and the children of $u_3$ are all in $S$. The one with a smaller label must be $u_1$. Now $S$ contains at least four elements: $u_3$, $u_1$, the child of $u_3$ other than $u_1$, and the child of $u_1$. 
    Let $u_4$ be the vertex in $S$ with the third smallest label. $u_4\neq u_1, u_3$. Note that $u_3$ is connected to no red edge. Following the same argument, we obtain that $u_4$ is also of type 3 and $u_3$ must be the child of $u_4$, which has a smaller label. Now $S$ contains at least six elements: $u_1$, $u_3$, $u_4$, the child of $u_1$, the child of $u_3$ other than $u_1$, and the child of $u_4$ other than $u_3$. The above argument can be repeated, but since the number of elements in $S$ is finite, the argument must eventually lead to a contradiction.
    
    If \textbf{Case 2} occurred at $u_2$, the same argument applies. The only difference is that we choose $u_3$ to be the vertex in $S$ with the second largest label instead of the second smallest one. Moreover, $u_2$ must be the child of $u_3$ that has the larger label. Continuing with $u_4,u_5,\ldots$ yields a contradiction.
\end{proof}
\subsection{Backbone pruning} In this subsection, we derive a reduction for amenable graphs. At each step, we remove one of the leaves of the backbone of $G$. 
We therefore call this step \emph{Backbone pruning}.
This will allow us to reduce a complicated amenable graph to the simplest type, in which all the non-root vertices are connected to the root.

\begin{figure}[htbp]
\centering
\resizebox{1\textwidth}{!}{%
\begin{tikzpicture}[
    x=1cm,y=1cm,
    line cap=round,line join=round,
    vtx/.style={circle,fill=black,inner sep=1.8pt},
    bedge/.style={draw=black,line width=0.95pt},
    redge/.style={draw=red,line width=1.05pt},
    casehead/.style={font=\large\bfseries},
    symlab/.style={font=\Large},
    lab/.style={font=\small}
]

% =========================================================
% frame (further enlarged to avoid all overlaps)
% =========================================================
\draw[line width=1.2pt] (0,0.85) rectangle (16.3,8.8);

% horizontal separators
\draw[line width=1.2pt] (0,7.95) -- (16.3,7.95); % below Case 1 header
\draw[line width=1.2pt] (0,5.00) -- (16.3,5.00); % below Case 1 content
\draw[line width=1.2pt] (0,4.15) -- (16.3,4.15); % below Case 2 header

% headers
\node[anchor=west,casehead] at (0.70,8.38) {Case 1:};
\node[anchor=west,casehead] at (0.70,4.58) {Case 2:};

% =========================================================
% Case 1: text and symbols
% =========================================================
\node[symlab] at (1.75,6.43) {$\mathcal{L}_{t,x}$};
\node[symlab] at (2.50,6.43) {$\Bigg ( $};
\node[symlab] at (6.30,6.43) {$\Bigg  ) $};
\node[symlab] at (7.20,6.43) {$\lesssim$};
\node[symlab] at (8.65,6.43) {$N^{1/2}\ \mathcal{L}_{t,x}$};
\node[symlab] at (9.90,6.43) {$\Bigg  ( $};
\node[symlab] at (14.05,6.43) {$\Bigg  ) $};

% =========================================================
% Case 1: left graph
% =========================================================
\begin{scope}[xshift=4.45cm,yshift=6.30cm]
    \coordinate (L)  at (-1.35, 0.52);
    \coordinate (A)  at ( 0.00, 1.42);
    \coordinate (R)  at ( 1.35, 0.52);
    \coordinate (ML) at (-0.58, 0.52);
    \coordinate (MR) at ( 0.48, 0.52);
    \coordinate (BL) at (-1.35,-0.75);
    \coordinate (B1) at (-0.90,-0.75);
    \coordinate (B2) at (-0.25,-0.75);
    \coordinate (V)  at ( 0.30,-0.75);
    \coordinate (B3) at ( 0.92,-0.75);
    \coordinate (BR) at ( 1.35,-0.75);

    \draw[bedge] (L)--(A)--(R);
    \draw[bedge] (L)--(BL);
    \draw[bedge] (A)--(ML) (ML)--(B1) (ML)--(B2);
    \draw[bedge] (A)--(MR) (MR)--(V)  (MR)--(B3);
    \draw[bedge] (R)--(BR);

    \draw[redge] (L)--(B1);
    \draw[redge] (L)--(V);
    \draw[redge] (V)--(R);
    \draw[redge] (B2)--(V);
    \draw[redge] (B3)--(BR);

    \foreach \p in {L,A,R,ML,MR,BL,B1,B2,V,B3,BR}
        \node[vtx] at (\p) {};

    \node[lab] at (-1.32,-1.05) {$v$};
\end{scope}

% =========================================================
% Case 1: right graph
% =========================================================
\begin{scope}[xshift=12.00cm,yshift=6.30cm]
    \coordinate (L)  at (-1.35, 0.52);
    \coordinate (A)  at ( 0.00, 1.42);
    \coordinate (R)  at ( 1.35, 0.52);
    \coordinate (ML) at (-0.58, 0.52);
    \coordinate (MR) at ( 0.48, 0.52);
    \coordinate (B1) at (-0.90,-0.75);
    \coordinate (B2) at (-0.25,-0.75);
    \coordinate (V)  at ( 0.30,-0.75);
    \coordinate (B3) at ( 0.92,-0.75);
    \coordinate (BR) at ( 1.35,-0.75);

    \draw[bedge] (L)--(A)--(R);
    \draw[bedge] (A)--(ML) (ML)--(B1) (ML)--(B2);
    \draw[bedge] (A)--(MR) (MR)--(V)  (MR)--(B3);
    \draw[bedge] (R)--(BR);

    \draw[redge] (L)--(B1);
    \draw[redge] (L)--(V);
    \draw[redge] (V)--(R);
    \draw[redge] (B2)--(V);
    \draw[redge] (B3)--(BR);

    \foreach \p in {L,A,R,ML,MR,B1,B2,V,B3,BR}
        \node[vtx] at (\p) {};
\end{scope}

% =========================================================
% Case 2: symbols
% =========================================================
\node[symlab] at (4.25,2.20) {$\Rightarrow$};
\node[symlab] at (8.50,2.20) {$+$};
\node[symlab] at (12.9,2.20) {$+$};

% =========================================================
% Case 2: initial graph
% =========================================================
\begin{scope}[xshift=2.10cm,yshift=2.15cm]
    \coordinate (L)  at (-1.35, 0.52);
    \coordinate (A)  at ( 0.00, 1.42);
    \coordinate (R)  at ( 1.35, 0.52);
    \coordinate (ML) at (-0.58, 0.52);
    \coordinate (MR) at ( 0.48, 0.52);
    \coordinate (BL) at (-1.35,-0.75);
    \coordinate (B1) at (-0.90,-0.75);
    \coordinate (B2) at (-0.25,-0.75);
    \coordinate (V)  at ( 0.30,-0.75);
    \coordinate (B3) at ( 0.92,-0.75);
    \coordinate (BR) at ( 1.35,-0.75);

    \draw[bedge] (L)--(A)--(R);
    \draw[bedge] (L)--(BL);
    \draw[bedge] (A)--(ML) (ML)--(B1) (ML)--(B2);
    \draw[bedge] (A)--(MR) (MR)--(V)  (MR)--(B3);
    \draw[bedge] (R)--(BR);

    \draw[redge] (L)--(B1);
    \draw[redge] (L)--(V);
    \draw[redge] (V)--(R);
    \draw[redge] (B2)--(V);
    \draw[redge] (B3)--(BR);

    \foreach \p in {L,A,R,ML,MR,BL,B1,B2,V,B3,BR}
        \node[vtx] at (\p) {};

    \node[lab] at (0.30,-1.05) {$v$};
\end{scope}

% =========================================================
% Case 2: first resulting graph
% =========================================================
\begin{scope}[xshift=6.45cm,yshift=2.15cm]
    \coordinate (L)  at (-1.35, 0.52);
    \coordinate (A)  at ( 0.00, 1.42);
    \coordinate (R)  at ( 1.35, 0.52);
    \coordinate (ML) at (-0.58, 0.52);
    \coordinate (MR) at ( 0.48, 0.52);
    \coordinate (BL) at (-1.35,-0.75);
    \coordinate (B1) at (-0.90,-0.75);
    \coordinate (B2) at (-0.25,-0.75);
    \coordinate (B3) at ( 0.92,-0.75);
    \coordinate (BR) at ( 1.35,-0.75);

    \draw[bedge] (L)--(A)--(R);
    \draw[bedge] (L)--(BL);
    \draw[bedge] (A)--(ML) (ML)--(B1) (ML)--(B2);
    \draw[bedge] (A)--(MR) (MR)--(B3);
    \draw[bedge] (R)--(BR);

    \draw[redge] (L)--(B1);
    \draw[redge] (L) to[bend left=12] (R);
    \draw[redge] (B2)--(MR);
    \draw[redge] (B3)--(BR);

    \foreach \p in {L,A,R,ML,MR,BL,B1,B2,B3,BR}
        \node[vtx] at (\p) {};
\end{scope}

% =========================================================
% Case 2: second resulting graph
% =========================================================
\begin{scope}[xshift=10.75cm,yshift=2.15cm]
    \coordinate (L)  at (-1.35, 0.52);
    \coordinate (A)  at ( 0.00, 1.42);
    \coordinate (R)  at ( 1.35, 0.52);
    \coordinate (ML) at (-0.58, 0.52);
    \coordinate (MR) at ( 0.48, 0.52);
    \coordinate (BL) at (-1.35,-0.75);
    \coordinate (B1) at (-0.90,-0.75);
    \coordinate (B2) at (-0.25,-0.75);
    \coordinate (B3) at ( 0.92,-0.75);
    \coordinate (BR) at ( 1.35,-0.75);

    \draw[bedge] (L)--(A)--(R);
    \draw[bedge] (L)--(BL);
    \draw[bedge] (A)--(ML) (ML)--(B1) (ML)--(B2);
    \draw[bedge] (A)--(MR) (MR)--(B3);
    \draw[bedge] (R)--(BR);

    \draw[redge] (L)--(B1);
    \draw[redge] (L) to[bend left=18] (MR);
    \draw[redge] (B2)--(R);
    \draw[redge] (B3)--(BR);

    \foreach \p in {L,A,R,ML,MR,BL,B1,B2,B3,BR}
        \node[vtx] at (\p) {};
\end{scope}

% =========================================================
% Case 2: third resulting graph
% =========================================================
\begin{scope}[xshift=14.65cm,yshift=2.15cm]
    \coordinate (L)  at (-1.25, 0.52);
    \coordinate (A)  at ( 0.00, 1.42);
    \coordinate (R)  at ( 1.25, 0.52);
    \coordinate (ML) at (-0.48, 0.52);
    \coordinate (MR) at ( 0.40, 0.52);
    \coordinate (B1) at (-0.82,-0.75);
    \coordinate (B2) at (-0.18,-0.75);
    \coordinate (B3) at ( 0.88,-0.75);
    \coordinate (BR) at ( 1.25,-0.75);

    \draw[bedge] (L)--(A)--(R);
    \draw[bedge] (A)--(ML) (ML)--(B1) (ML)--(B2);
    \draw[bedge] (A)--(MR) (MR)--(B3);
    \draw[bedge] (R)--(BR);

    \draw[redge] (L)--(B1);
    \draw[redge] (L)--(B2);
    \draw[redge] (MR)--(R);
    \draw[redge] (B3)--(BR);

    \foreach \p in {L,A,R,ML,MR,B1,B2,B3,BR}
        \node[vtx] at (\p) {};
\end{scope}

\end{tikzpicture}
}
\caption{An example of Backbone pruning.}
\label{F.lifting 2}
\end{figure}

\begin{proposition}[\textbf{Backbone pruning}]\label{P.lifting 2}
    For any full binary tree $\tilde{\tree}$ and an amenable graph $G$ with backbone $\tilde{\tree}^\circ$, if $v$ is a leaf of $\tilde{\tree}^\circ$ which is not connected to the root $\vroot$, let $\tree'$ be the tree obtained by deleting $v$ from $\tilde{\tree}^\circ$.
    \begin{itemize}
    \item If $v$ is not connected to any red edge, let $G'$ be the graph obtained by deleting $v$ from $G$. It is an amenable graph with backbone $\tree'$, and for any $x$ and $t\le aN$,
            \begin{equation}\label{E.lifting 2-2}
                \action{t,x}{G}\lesssim N^{\frac{1}{2}}\action{t,x}{G'}.
            \end{equation}
        \item If $v$ is connected to at least one red edge in $G$, then there exists a family of amenable graphs $\{G'_{w}\}_{w\in N(v)}$ (here $N(v)$ denotes the set of neighbors of $v$ in $G$) all having $\tree'$ as backbone and one fewer red edge than $G$,  such that  for any $x$ and $t\le aN$,
        \begin{equation}\label{E.lifting 2-1}
            \action{t,x}{G}\loglesssim\sum_{w\in N(v)}\action{t,x}{G'_w}.
        \end{equation}
    \end{itemize} 
\end{proposition}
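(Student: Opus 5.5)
In both cases I will integrate out the embedding variable $\embed(v)=(s,y)$ of the pruned vertex $v$ with all other vertices fixed, and then check amenability of the new graph through the $D$-index. Let $p$ be the parent of $v$ in $\tilde\tree^\circ$; since $v$ is not adjacent to $\vroot$, we have $p\neq\vroot$. The black edge $[p,v]$ gives the factor $\cha'(\embed^{(1)}(p)-s,\embed^{(2)}(p)-y)$. The remaining factors involving $v$ are red edges $\{v,w\}$, and these depend only on the time $s$.

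\textbf{First case (no red edge at $v$).} The only factor involving $(s,y)$ is $\cha'(\embed^{(1)}(p)-s,\embed^{(2)}(p)-y)$. Summing over $y$ and then over $1\le s<\embed^{(1)}(p)\le aN$ gives $\sum_{r\le aN}\sum_y C_\cha/(1+r+y^2)\lesssim\sum_{r\le aN} r^{-1/2}\lesssim N^{1/2}$, which is \eqref{E.lifting 2-2}. For amenability of $G'$, note that a nonempty set $S$ of non-root vertices of $G'$ is also such a set in $G$. Its red and black edges inside $S$ are unchanged. The count $B(S;S^c)$ can only drop, and only if $p\in S$, since in $G$ we have $v\in S^c$ and the edge $[p,v]$ goes from $S$ into $S^c$. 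I need to recheck the orientation convention: $B(S;S')$ counts edges $[u,v]$ with $v\in S$, so it counts edges entering $S$ from its parent side. Under this convention $[p,v]$ with $v\notin S$ does not contribute, so $D_{G'}(S)=D_G(S)\le -2$.

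\textbf{Second case (red edges at $v$).} Let $w_1,\dots,w_j$, $j\ge1$, be the red neighbours of $v$, listed with multiplicity, and set $\tau_i=\embed^{(1)}(w_i)$. Summing over $y$ gives $\sum_y\cha'(r,y)\lesssim r^{-1/2}$, where $r=\embed^{(1)}(p)-s$. This leaves the sum
\[\sum_{s<\embed^{(1)}(p)}\frac{1}{\sqrt{\embed^{(1)}(p)-s+1}}\prod_{i=1}^j\frac{1}{\sqrt{|s-\tau_i|+1}}.\]
This is a one-dimensional convolution of $j+1$ kernels of the form $|\cdot|^{-1/2}$. I expect the appendix kernel lemmas to bound it, up to logarithms, by a sum over $w\in N(v)$ of a product of $j-1$ kernels $(|\tau-\tau'|+1)^{-1/2}$ connecting the remaining points in a chain. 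The intended bound is: split according to which anchor among $\{\embed^{(1)}(p),\tau_1,\dots,\tau_j\}$ is nearest to $s$; call it $w$. On that region each other anchor $z$ satisfies $|s-\tau_z|\gtrsim|\tau_w-\tau_z|$, so its factor is bounded by the kernel between $w$ and $z$. Summing the factor attached to $w$ over $s$ uses at most $\sum_{s\le aN}|s-\tau_w|^{-1/2}$, which is too large. So I would instead use the pairing $\sum_s|s-a|^{-1/2}|s-b|^{-1/2}\lesssim\log N$, applied to $w$ together with one further anchor. The graph $G'_w$ is then obtained by deleting $v$ and rerouting the red edges of $v$ as red edges between $w$ and the other red neighbours and $p$. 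This removes one red edge net: $j$ red edges and one black edge are replaced by $j$ red edges, minus the one absorbed in the log-sum. Figure \ref{F.lifting 2} matches this pattern, with the options $w\in\{L,R,MR\}$.

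\textbf{Main obstacle.} The hard part is proving that each $G'_w$ is amenable, i.e.\ that $D_{G'_w}(S)\le-2$ for every nonempty $S\not\ni\vroot$. I would compare with $D_G(S\cup\{v\})$ and $D_G(S)$, splitting on whether $w$ and $p$ lie in $S$. If $v$'s neighbours in the rerouted star all lie in $S$, then the red count in $S$ for $G'_w$ is at most $R_G(S\cup\{v\})-1$, black edges drop by one, and $B(S;S^c)$ is unchanged, so $D_{G'_w}(S)\le D_G(S\cup\{v\})\le-2$. If some neighbours lie outside $S$, then the rerouted edges centred at $w$ count only when $w\in S$, and the count is bounded by $D_G(S)$ plus the edges from $v$ into $S$, which are in turn controlled by $D_G(S\cup\{v\})$. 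This is exactly where the choice of $w$ (nearest anchor) and the resulting star shape must be exploited. I would first attempt a star centred at $w$ and handle the cases $w\in S$ and $w\notin S$ separately. If that fails, I would switch to a chain ordered by time, in the same spirit as the labelling used in Proposition \ref{P.lifting 1}.
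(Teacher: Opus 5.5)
Your first case is correct and matches the paper: the edge $[p,v]$ has its child $v$ outside $S$, so $D(S)$ is unchanged. Your analytic treatment of the second case is also essentially the paper's Lemma~\ref{NL.lifting 2}. Summing $\cha'$ over space turns $[p,v]$ into a factor $(\cdot)^{-1/2}$. On the region where an anchor $w$ is nearest to $s$, you may pair $w$ with \emph{any} other anchor, so the omitted neighbour is at your disposal.

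\textbf{The gap.} You leave the amenability of $G'_w$ as the ``main obstacle'', but it is the real content of the proposition, and your sketch does not close it. From $D_G(S\cup\{v\})=D_G(S)+|S\cap N(v)|-2\le-2$ you only get $D_G(S)\le-|S\cap N(v)|$. If $w\in S$ but the omitted neighbour is not in $S$, the star puts $|S\cap N(v)|-1$ red edges inside $S$, so you only get $D_{G'_w}(S)\le-1$.

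With an arbitrary or time-dependent choice of the omitted neighbour this genuinely fails. Take non-root, non-black-adjacent $w_1,w_2$ joined by a double red edge, with $v$ joined by one red edge to each and parent $w_0\notin\{w_1,w_2\}$. Then $D_G(\{w_1,w_2\})=-2$ and $D_G(\{v,w_1,w_2\})=-2$, consistent with amenability. But if the star at $w_1$ omits $w_0$ (which the nearest-in-time rule may do), it adds a third red edge $\{w_1,w_2\}$, and $D_{G'_{w_1}}(\{w_1,w_2\})=3-4=-1$. Taking $w$ to be the nearest anchor does not help: each $G'_w$ must be amenable as a graph, independently of the embedding.

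\textbf{The missing idea.} Choose the omitted neighbour combinatorially. Call $A\subseteq N(v)$ tight if $A=S\cap N(v)$ for some $S\not\ni v,\vroot$ with $D(S)=-|A|$.
\begin{itemize}
\item By Fact~\ref{Fact.property of D} you have submodularity, $D(A)+D(B)\le D(A\cap B)+D(A\cup B)$. Combined with $D(S)\le-|S\cap N(v)|$, this shows tight sets are closed under union and intersection.
\item Amenability rules out tight singletons.
\item $D(\{v,w\})\le-2$ forces each vertex to occur at most twice in $N(v)$.
\item Hence for every $w$ there is a $w'$ (the other copy, if $w$ is repeated) lying in every tight set that contains $w$.
\end{itemize}
Omit exactly $w'$. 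If $w\in S$ and $w'\notin S$, then $S\cap N(v)$ is not tight, so $D(S)\le-|S\cap N(v)|-1$ and $D_{G'_w}(S)\le-2$. If $w\notin S$, then $D_{G'_w}(S)=D_G(S)$. If $w,w'\in S$, then $D_{G'_w}(S)=D_G(S\cup\{v\})$. Both are at most $-2$ by amenability of $G$. Because your kernel estimate allows any fixed partner, this choice is compatible with your analytic bound.
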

Figure \ref{F.lifting 2}  illustrates the two situations arising in Backbone pruning. Before the proof, we derive several crucial properties of the \emph{D-index}.
\begin{fact}\label{Fact.property of D}
    For any three disjoint vertex sets $X,Y,Z$,
    $$ D(X\cup Y)+D(Y\cup Z)\le D(Y)+D(X\cup Y\cup Z).$$
\end{fact}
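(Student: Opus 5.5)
The plan is to prove the inequality separately for red and for black edges by a counting identity, since $D$ is a linear combination of edge counts. For disjoint $X,Y,Z$, write $U=X\cup Y\cup Z$. For each edge $e$, consider its contribution to each of the four quantities $D(X\cup Y)$, $D(Y\cup Z)$, $D(Y)$, $D(U)$, and show that the contribution to the left side never exceeds the contribution to the right side.

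\textbf{Red edges.} A red edge $\{u,v\}$ contributes $+1$ to $R(S)$ exactly when both endpoints lie in $S$. The function $S\mapsto R(S)$ is supermodular: $R(A)+R(B)\le R(A\cap B)+R(A\cup B)$. Apply this with $A=X\cup Y$, $B=Y\cup Z$, so that $A\cap B=Y$ and $A\cup B=U$. Per edge, $\mathbf 1_{\{u,v\}\subset A}+\mathbf 1_{\{u,v\}\subset B}\le \mathbf 1_{\{u,v\}\subset A\cap B}+\mathbf 1_{\{u,v\}\subset A\cup B}$. This holds because if the edge lies in both $A$ and $B$, it lies in the intersection; and if it lies in either one, it lies in the union.

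\textbf{Black edges.} For a black edge $[u,v]$, its contribution to $-D(S)$ is
\[
f_e(S)=\mathbf 1_{u,v\in S}+2\,\mathbf 1_{v\in S,\,u\notin S}.
\]
It suffices to show that $f_e$ is submodular:
\[
f_e(A)+f_e(B)\ge f_e(A\cap B)+f_e(A\cup B).
\]
Equivalently, write $f_e(S)=2\,\mathbf 1_{v\in S}-\mathbf 1_{u,v\in S}$. The map $S\mapsto\mathbf 1_{v\in S}$ is modular. The map $S\mapsto \mathbf 1_{u,v\in S}$ is supermodular by the red-edge argument, so its negative is submodular. Hence $f_e$ is submodular.

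Summing these per-edge inequalities over all edges and subtracting gives
\[
D(X\cup Y)+D(Y\cup Z)\le D(Y)+D(U).
\]
I should double-check the orientation convention in $B(S;S^c)$: it counts $[u,v]$ with $v\in S$, $u\in S^c$, which matches $f_e$ above. This also covers second-part black edges, whose non-tree endpoint plays the role of $v$.

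The argument is purely local, and I see no real obstacle. The only care needed is to use exactly the indicator form of $B(S;S^c)$ and to note that $R$ is supermodular while the black part enters $D$ with negative, submodular sign.
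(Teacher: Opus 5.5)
Your proof is correct, but it is organized differently from the paper's. You write $D$ as a sum of per-edge set functions and check lattice inequalities edge by edge. Each red edge contributes $\mathbf{1}_{\{u,v\}\subset S}$, which is supermodular. Each black edge contributes $-(2\,\mathbf{1}_{v\in S}-\mathbf{1}_{\{u,v\}\subset S})$, the negative of a submodular function. Your reading of the orientation in $B(S;S^c)$, including for second-part black edges, is right.

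The paper works at the level of sets instead of edges. It first proves the splitting formula $D(X\cup Y)=D(X)+D(Y)+R(X,Y)+B(X;Y)+B(Y;X)$ for disjoint $X,Y$, using $B(X;X^c)=B(X;(X\cup Y)^c)+B(X;Y)$. Applying this twice gives the exact identity
\[
D(Y)+D(X\cup Y\cup Z)=D(X\cup Y)+D(Y\cup Z)+R(X,Z)+B(X;Z)+B(Z;X).
\]

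The two routes buy different things.
\begin{itemize}
\item Your route gives the general supermodularity $D(A)+D(B)\le D(A\cap B)+D(A\cup B)$ for arbitrary $A,B$ directly. That is the form the paper actually uses later, in \eqref{E.D-index inequality}, where it has to recover it from the Fact by substituting $X=A\setminus B$, $Y=A\cap B$, $Z=B\setminus A$. Your argument also makes clear that the inequality is purely local and holds for any two-colored graph.
\item The paper's identity computes the slack exactly. Equality holds precisely when no red or black edge joins $X$ and $Z$. Your per-edge bookkeeping would recover this too: such an edge has slack exactly $1$, and every other edge has slack $0$.
\end{itemize}
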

\begin{proof}
    For disjoint vertex sets $S,S’$, let $R(S,S’)$ be the number of red edges between $S$ and $S’$. Recall that the number of black edges between $S$ and $S’$ is $B(S;S’)+B(S’;S)$. By definition,
    \begin{align*}
        D(X\cup Y)=&R(X\cup Y)-B(X\cup Y)-2B\left(X\cup Y; (X\cup Y)^c\right)\\
        =&R(X)+R(Y)+R(X,Y)-B(X)-B(Y)-B(X;Y)-B(Y;X)\\&-2B\left(X;(X\cup Y)^c\right)-2B\left(Y;(X\cup Y)^c\right)\\
        =&\left(R(X)-B(X)-2B(X;X^c)\right)+\left(R(Y)-B(Y)-2B(Y;Y^c)\right)+R(X,Y)\\
        &+B(X;Y)+B(Y;X)\\
        =&D(X)+D(Y)+R(X,Y)+B(X;Y)+B(Y;X).
    \end{align*}
    Here we use $B(X;X^c)=B\left(X;(X\cup Y)^c\right)+B(X;Y)$ and vice versa.
    Similarly, $D(X\cup Y\cup Z)$ can be expanded as
    $$D(X\cup Y\cup Z)=D(X)+D(Y\cup Z)+R(X,Y\cup Z)+B(X;Y\cup Z)+B(Y\cup Z;X).$$
    Adding $D(Y)$ to both sides  and comparing the result with the expansion of $D(X\cup Y)$, we obtain
    $$D(Y)+D(X\cup Y\cup Z)=D(X\cup Y)+D(Y\cup Z)+R(X,Z)+B(X;Z)+B(Z;X).$$
    This proves the claim, since the last three terms above are nonnegative.
\end{proof}
\begin{proof}[Proof of Proposition \ref{P.lifting 2}]
    Suppose first that $v$ is not connected to any red edge. For any vertex set $S$ not containing $v$, $D(S)$ is unchanged after deleting $v$ from $G$. Therefore, $G'$ remains amenable and has backbone $T'$. The inequality \eqref{E.lifting 2-2} follows from Lemma \ref{NL.sum of cha}.

    Suppose now that $v$ is connected to at least one red edge in $G$. We need several further properties of the \emph{D-index}. As a corollary of Lemma \ref{Fact.property of D}, for any pair of vertex sets $A$ and $B$, taking $X=A\setminus B$, $Y=A\cap B$, and $Z=B\setminus A$, we obtain
    \begin{equation}\label{E.D-index inequality}
            D(A)+D(B)\le D(A\cap B)+D(A\cup B).
    \end{equation}
    Suppose $w_0$ is the parent of $v$ and $\reddeg(v)=d$. $v$ is connected by a red edge to each of $w_1, \ldots, w_d$. They need not be distinct. Let
    $N(v)=\{w_0,w_1,\ldots,w_d\}$ denote the multiset of neighbors of $v$ in $G$. 
    For any $w\in N(v)$, if $w=w_0$, then $D(\{v,w\})=R(\{v,w\})-3$ and therefore $R(\{v,w\})\le 1$, i.e., $w$ appears at most once in $\{w_1,\ldots,w_d\}$. If $w\neq w_0$, then $D(\{v,w\})=R(\{v,w\})-4$, and $w$  appears at most twice in $N(v)$. Therefore, we have just proved that:
    \begin{fact}
        Any vertex appears in $N(v)$ at most twice.
    \end{fact}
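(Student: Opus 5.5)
The statement to prove is the Fact that any vertex appears in $N(v)$ at most twice. Here $N(v)$ is the multiset $\{w_0,w_1,\ldots,w_d\}$: the parent $w_0$ of $v$ together with the red neighbours of $v$, listed with multiplicity. The only tool needed is amenability of $G$, namely $D(S)\le -2$ for every nonempty $S$ not containing $\vroot$, applied to the two-element set $S=\{v,w\}$. So I fix $w\in N(v)$ and compute $D(\{v,w\})$ from the definition $D(S)=R(S)-B(S)-2B(S;S^c)$.

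First I need $w\neq\vroot$, so that $S=\{v,w\}$ is an admissible set in the amenability condition.
\begin{itemize}
\item For $w=w_0$ this holds because $v$ is assumed not to be connected to the root. In particular its parent is not $\vroot$.
\item For a red neighbour $w$, red edges added during leaf lifting always join non-root vertices of $\tilde\tree^\circ$. Indeed, the root has no leaf children, because $l\ge 2$, so it never appears among the $u_j$.
\item Backbone pruning preserves this, by induction on the pruning steps.
\end{itemize}
If one prefers not to rely on this, the root case can be excluded separately.

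Next I count black edges. Since the black edges form the spanning tree $\tilde\tree^\circ$ and $v$ is a leaf of it, the only black edge at $v$ is $[w_0,v]$.
\begin{itemize}
\item \textbf{Case $w=w_0$.} The edge $[w_0,v]$ lies inside $S$, so $B(S)=1$. The edge from the parent of $w_0$ into $w_0$ gives $B(S;S^c)=1$. Hence $D(S)=R(S)-3\le-2$, so $R(\{v,w_0\})\le 1$. Thus $w_0$ occurs at most once among the red neighbours, hence at most twice in $N(v)$.
\item \textbf{Case $w\ne w_0$.} Now $B(S)=0$ unless $w$ is a child of $v$, which is impossible since $v$ is a leaf. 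Each of $v,w$ receives one incoming black edge from outside $S$, so $B(S;S^c)=2$. Hence $D(S)=R(S)-4\le -2$, so $R(\{v,w\})\le 2$. Since $w\neq w_0$, it appears in $N(v)$ only through red edges, hence at most twice.
\end{itemize}

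The only delicate point is the bookkeeping of $B(S;S^c)$ when $w$ is the parent, since one must not double-count the edge $[w_0,v]$, together with confirming that red neighbours are non-root vertices. Both are immediate from the definitions.
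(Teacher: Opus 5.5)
Your proposal is correct and is the paper's argument: apply amenability to $S=\{v,w\}$, getting $D(S)=R(S)-3$ when $w=w_0$ and $D(S)=R(S)-4$ otherwise, which bounds the multiplicity of $w$ in $N(v)$ by two. The paper leaves implicit the check that no red neighbour of $v$ is $\vroot$ (needed so that $S$ is admissible), and your structural justification for it is the right one; your fallback remark does not work, though, because amenability places no constraint on red edges incident to $\vroot$, so that case can only be excluded structurally.
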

    To discuss subsets of $N(v)$, we fix some conventions here. By $A \subseteq N(v)$, we mean that $A$ is a submultiset of $N(v)$ in the restricted sense that, for every vertex $w$, its multiplicity in $A$ is either $0$ or exactly its multiplicity in $N(v)$. Thus, partial inclusion of repeated elements is excluded; for example, if $N(v)$ contains two copies of a vertex $w$, then a multiset $A$ containing only one copy of $w$ will not be discussed. Moreover, $\setsize{A}$ is defined to be the sum of the multiplicities of its vertices in $N(v)$. If $S$ is a set of vertices, we interpret $S \cap N(v)$ as a submultiset of $N(v)$, where the multiplicity of each vertex is prescribed by the convention stated above.
    
    A second fact follows from a similar argument. Let $S$ be a set of vertices containing neither $v$ nor the root $\vroot$. By distinguishing the cases according to whether $w_0 \in S$, we obtain

    $$D(S\cup\{v\})=D(S)+\setsize{S\cap N(v)}-2.$$
    The left-hand side is $\le -2$ since $G$ is amenable. Therefore,
    \begin{fact}\label{Fact.constraints on D}
        If $\vroot, v\notin S$,
        $$D(S)\le-\setsize{S\cap N(v)}.$$
    \end{fact}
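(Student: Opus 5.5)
We derive the identity $D(S\cup\{v\})=D(S)+\setsize{S\cap N(v)}-2$ by direct edge counting, and then apply amenability of $G$ to the set $S\cup\{v\}$. The only structural inputs are two facts. First, $v$ is a leaf of the backbone $\tilde{\tree}^\circ$, so its unique black edge is $[w_0,v]$, with $w_0$ its parent, and there is no black edge of the form $[v,y]$. Second, since $G$ is amenable, every black edge lies in the backbone.

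First compare the red counts. Red edges have two distinct endpoints, so no red edge joins $v$ to itself. Hence $R(S\cup\{v\})-R(S)$ equals the number of red edges between $v$ and $S$. By the multiset convention, a vertex $w\in S$ is counted in $S\cap N(v)$ with its full multiplicity among $w_1,\ldots,w_d$, so this number is exactly $\setsize{S\cap N(v)}$ minus $1$ if $w_0\in S$. (The multiset convention is what makes multiple red edges from $v$ to the same $w$ count correctly here.)

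Next compare the black counts, splitting into two cases according to whether $w_0\in S$.

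If $w_0\in S$, the edge $[w_0,v]$ becomes internal, so $B(S\cup\{v\})=B(S)+1$. The boundary term is unchanged: $v$ has no children, and the only edge into $v$ now comes from inside the set. The change in $D$ is therefore
$$(\setsize{S\cap N(v)}-1)-1=\setsize{S\cap N(v)}-2.$$

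If $w_0\notin S$ (this includes the case $w_0=\vroot$, which is allowed since $\vroot\notin S$), then $B$ is unchanged. The edge $[w_0,v]$ contributes $1$ to $B\bigl(S\cup\{v\};(S\cup\{v\})^c\bigr)$, and no other boundary term changes, again because $v$ has no children. The change in $D$ is $\setsize{S\cap N(v)}-2$ once more.

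This proves the identity in both cases.

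Finally, $S\cup\{v\}$ is a nonempty vertex set not containing $\vroot$. By amenability of $G$, $D(S\cup\{v\})\le -2$, and rearranging the identity gives $D(S)\le -\setsize{S\cap N(v)}$.

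The step most prone to error is the bookkeeping of the black boundary term $B(\cdot\,;\cdot)$. One must check that this term counts only edges $[u,y]$ with the child endpoint $y$ inside the set. That orientation is why the leaf $v$, which has no children, contributes nothing beyond its single parent edge.
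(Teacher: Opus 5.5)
Your proof is correct and follows the paper's argument. The paper likewise derives $D(S\cup\{v\})=D(S)+\setsize{S\cap N(v)}-2$ by splitting on whether the parent $w_0$ lies in $S$, then applies amenability to $S\cup\{v\}$; you have supplied the edge-counting details it leaves implicit, namely the red edges to $S$, the extra multiplicity of $w_0$ in $N(v)$, and the internal black edge versus the boundary term $B(\cdot\,;\cdot)$.
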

    We next consider the conditions under which the above inequality becomes an equality. To this end, we introduce the following family of submultisets of $N(v)$.
    $$\mathcal{F}:=\left\{A\subseteq N(v):there~is~a~vertex~set~ S_A~s.t.~\vroot, v\notin S_A,~S_A\cap N(v)=A~and~D(S_A)=-\setsize{A}\right\}.$$
    For any pair of sets $A,B\in \mathcal{F}$,  \eqref{E.D-index inequality} gives
    $$D(S_A\cap S_B)+D(S_A\cup S_B)\ge D(S_A)+D(S_B)=-\setsize{A}-\setsize{B}=-\setsize{A\cap B}-\setsize{A\cup B}.$$
    Note that $(S_A\cap S_B)\cap N(v)=A\cap B$ and $(S_A\cup S_B)\cap N(v)=A\cup B$. Together with Fact \ref{Fact.constraints on D}, these identities imply $D(S_A\cap S_B)=-\setsize{A\cap B}$ and $D(S_A\cup S_B)=-\setsize{A\cup B}$, which shows that both $A\cap B$ and $A\cup B$ belong to $\mathcal F$. Thus we obtain the following proposition.

    \begin{proposition}\label{P.property of F}
        The family of sets $\mathcal F$ defined above is closed under union and intersection.
    \end{proposition}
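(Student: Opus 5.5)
The statement is essentially the lattice property of the family of ``tight'' witness sets, and the computation just before it already contains the argument. I will organize it as follows. Fix $A,B\in\mathcal F$ with witnesses $S_A,S_B$: vertex sets avoiding $\vroot$ and $v$, with $S_A\cap N(v)=A$, $S_B\cap N(v)=B$, $D(S_A)=-\setsize{A}$ and $D(S_B)=-\setsize{B}$. The natural candidates for witnesses of $A\cap B$ and $A\cup B$ are $S_A\cap S_B$ and $S_A\cup S_B$. Both avoid $\vroot$ and $v$, so Fact \ref{Fact.constraints on D} applies to them.

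The first step is to check the multiset bookkeeping. I claim that $(S_A\cap S_B)\cap N(v)=A\cap B$ and $(S_A\cup S_B)\cap N(v)=A\cup B$ as restricted submultisets. This holds because each vertex $w$ enters $S\cap N(v)$ with either multiplicity $0$ or its full multiplicity in $N(v)$, according to whether $w\in S$. Hence the multiplicities are additive in the usual way:
\[
\setsize{A\cap B}+\setsize{A\cup B}=\setsize{A}+\setsize{B}.
\]
This is the one place where the convention on submultisets matters, so I will state it explicitly.

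The second step is the submodularity inequality \eqref{E.D-index inequality}, which gives
\[
D(S_A\cap S_B)+D(S_A\cup S_B)\ge D(S_A)+D(S_B)=-\setsize{A\cap B}-\setsize{A\cup B}.
\]
Fact \ref{Fact.constraints on D} gives the reverse inequalities term by term: $D(S_A\cap S_B)\le-\setsize{A\cap B}$ and $D(S_A\cup S_B)\le-\setsize{A\cup B}$. A sum of two quantities, each bounded above by a constant, that is at least the sum of those constants forces equality in each. So both sets witness membership in $\mathcal F$.

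The only subtle point is the case $S_A\cap S_B=\emptyset$ (for example when $A\cap B=\emptyset$). There I would check directly that $D(\emptyset)=0=-\setsize{\emptyset}$, so the empty set witnesses $\emptyset\in\mathcal F$. I would also note that Fact \ref{Fact.constraints on D} is consistent at $S=\emptyset$, since $D(\emptyset)=0\le 0$. The derivation of Fact \ref{Fact.constraints on D} via $D(S\cup\{v\})\le-2$ uses only that $S\cup\{v\}$ is nonempty and avoids $\vroot$, so it covers $S=\emptyset$ as well. With this checked, the proof is complete.
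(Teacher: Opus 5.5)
Your proof is correct and follows the same argument as the paper. You apply the submodularity inequality \eqref{E.D-index inequality} to $S_A,S_B$, use $(S_A\cap S_B)\cap N(v)=A\cap B$, $(S_A\cup S_B)\cap N(v)=A\cup B$ and $\setsize{A\cap B}+\setsize{A\cup B}=\setsize{A}+\setsize{B}$, and let Fact \ref{Fact.constraints on D} force equality in each term; your checks of the restricted-multiset bookkeeping and of the empty-set case are left implicit in the paper, and you handle both correctly.
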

    Since $G$ is amenable, $\mathcal F$ does not contain any set with cardinality 1. By Proposition \ref{P.property of F}, for any $w\in N(v)$,
    $$\bigcap_{w\in A,A\in \mathcal F}A\neq\{w\}.$$
    It follows that there exists an element $w' \in N(v)$ such that, whenever $A \in \mathcal{F}$ and $w \in A$, one also has $w' \in A$. In the special case where $w$ appears twice in $N(v)$, we may take $w'$ to be the other copy of $w$ in $N(v)$. Let $G^\circ := G - v$ denote the graph obtained by deleting $v$ and all edges incident to it. Based on $G^\circ$, we now define a family of graphs $\{G'_w\}_{w \in N(v)}$ by adding red edges, in particular between vertices of $N(v)$.
    For any $w\in N(v)$, we attach red edges to $G^\circ$ as follows.
    \begin{itemize}
        \item\textbf{Case 1}: If the vertex $w$ appears only once in $N(v)$, then there exists another vertex $w'$ that 
        $$\{w,w'\}\subset\bigcap_{w\in A,A\in\mathcal F}A.$$
        The red edges we add are
        $$\{e=\{w,w_i\}:0\le i\le d,w_i\neq w,w'\}.$$
        \item\textbf{Case 2}: If the vertex $w$ appears twice in $N(v)$, then we add red edges:
        $$\{e=\{w,w_i\}:0\le i\le d,w_i\neq w\}.$$
    \end{itemize}
    Note that in either case we add $d-1$ red edges to $G^\circ$; therefore, any resulting $G'_{w}$ has one fewer red edge than $G$.
    We now verify that $G'_w$ is amenable for any $w$. Denote by $D'(\cdot)$ the \emph{D-index} of $G'_{w}$. By our rules for adding red edges, for any vertex set $S$ of $G'_{w}$, if $w$ belongs to Case 1, then
    \begin{align*}
        D'(S)=\begin{cases}
            D(S)+\left(\setsize{S\cap N(v)}-2\right)&\text{$w,w'\in S$.}\\
            D(S)+\left(\setsize{S\cap N(v)}-1\right)&\text{$w\in S$ but $w'\notin S$.}\\
            D(S)&\text{$w\notin S$.}
        \end{cases}
    \end{align*}
    If $w\in S$ but $w'\notin S$, then $S\cap N(v)\notin\mathcal F$ and hence $D(S)<-\setsize{S\cap N(v)}$. Therefore, $D'(S)\le -2$. In the other two cases, we have $D'(S)\le -2$ by the amenability of $G$ and Fact \ref{Fact.constraints on D}. When $w$ belongs to Case 2, then
    \begin{align*}
        D'(S)=\begin{cases}
            D(S)+\left(\setsize{S\cap N(v)}-2\right)&\text{$w\in S$.}\\
            D(S)&\text{$w\notin S$.}
        \end{cases}
    \end{align*}
    So $D'(S)\le -2$ follows from the amenability of $G$ and Fact \ref{Fact.constraints on D}. Thus, we have verified that $G_w'$ is amenable for any $w\in N(v)$. It remains to prove \eqref{E.lifting 2-1}. Each embedding map $\embed\in\embedset{t,x}(G)$ can be decomposed into its restriction to $G^\circ$, which is an embedding map $\embed^\circ\in\embedset{t,x}(G^\circ)$, and the value of $\embed$ at $v$. This gives an expansion
    \begin{align*}
        \action{t,x}{G}=&\sum_{\embed\in\embedset{t,x}(G)}\prod_{e\in E(G)}\weight{\embed}(e)\\
        =&\sum_{\embed^\circ\in\embedset{t,x}(G^\circ)}\prod_{e\in E(G^\circ)}\weight{\embed^\circ}(e)\cdot\\
        &\left(\sum_{s=1}^{\embed^{\circ(1)}(w_0)-1}\sum_{z\in\bbZ}\cha'\left(\embed^{\circ(1)}(w_0)-s,\embed^{\circ(2)}(w_0)-z\right)\prod_{i=1}^{d}\frac{1}{\sqrt{\absolute{\embed^{\circ(1)}(w_i)-s}}+1}\right)\\
        \lesssim&\sum_{\embed^\circ\in\embedset{t,x}(G^\circ)}\prod_{e\in E(G^\circ)}\weight{\embed^\circ}(e)\cdot\sum_{s=1}^{aN}\prod_{i=0}^{d}\frac{1}{\sqrt{\absolute{\embed^{\circ(1)}(w_i)-s}}+1}\loglesssim\sum_{w\in N(v)}\action{t,x}{G_w'}.
    \end{align*}
    Here we use Lemma \ref{NL.sum of cha} in the penultimate inequality and Lemma \ref{NL.lifting 2} in the last inequality. 
\end{proof}
Proposition \ref{P.lifting 2} allows us to integrate out all the vertices of $G$ which are not connected to the root $\vroot$. Recall that $S_n$ is the tree with $n$ non-root vertices, all connected directly to the root. Suppose the root of $\tilde{\tree}^\circ$ has $n$ children, and the number of edges in $\tilde{\tree}^\circ$ is $n+m$.  By iteratively applying Proposition \ref{P.lifting 2}, we obtain
\begin{equation}\label{E.lifting 2-3}
    \action{t,x}{G}\loglesssim \sum_{G'\in\mathcal G}N^{\frac{1}{2}(m-R(G)+R(G'))}\action{t,x}{G'},
\end{equation}
where $R(G)$ (resp. $R(G')$) is the number of red edges in $G$ (resp. $G'$) and $\mathcal G$ is the set of all amenable graphs with backbone $S_n$ and at least $R(G)-m$ red edges. 
\subsection{Star reduction}\label{Subs.Lifting 3} 
In this subsection, we reduce amenable graphs with star backbones.
\begin{proposition}[\textbf{Star reduction}]\label{P.lifting 3}
    Let $G'$ be an amenable graph with backbone $S_n$, and assume that no red edge is incident to the root $\vroot$. Then for any $x$ and $t\le aN$,
    $$\action{t,x}{G'}\loglesssim N^{\frac{1}{4}(2n-R(G'))}.$$
\end{proposition}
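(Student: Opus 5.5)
The plan is to integrate out the spatial variables first, which reduces the action to a sum over times only. Then I split the red edges into two forests using amenability, and apply Cauchy--Schwarz to decouple them.

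\textbf{Step 1: removing the spatial sums.} Write the non-root vertices of $S_n$ as $1,\dots,n$, with $\embed(i)=(s_i,z_i)$ and $1\le s_i<t\le aN$. Since no red edge touches $\vroot$, each $z_i$ appears only in the black weight $\cha'(t-s_i,x-z_i)$, because red weights depend only on times. The elementary bound
$$\sum_{z\in\bbZ}\frac{1}{1+s+z^2}\lesssim (1+s)^{-1/2}$$
(compare with an integral) then gives
$$\action{t,x}{G'}\lesssim \sum_{s_1,\dots,s_n=1}^{aN}\ \prod_{i=1}^n (t-s_i)^{-1/2}\prod_{\{i,j\}\in R(G')}\frac{1}{\sqrt{|s_i-s_j|+1}}=:I .$$

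\textbf{Step 2: combinatorics from amenability.} For a nonempty $S\subseteq\{1,\dots,n\}$ we have $B(S)=0$ and $B(S;S^c)=|S|$, since every vertex of $S$ hangs off the root. Amenability $D(S)\le-2$ therefore reads
$$R(S)\le 2|S|-2 .$$
This is exactly the Nash-Williams condition, so the red multigraph on $\{1,\dots,n\}$ decomposes into two forests $F_1,F_2$. Multi-edges are allowed, and a double edge is split into one edge in each forest. Denoting by $f_k$ the number of edges of $F_k$, we get $f_1+f_2=R(G')$.

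\textbf{Step 3: Cauchy--Schwarz and peeling.} Split the red product as $P_1P_2$, where $P_k$ is the product over the edges of $F_k$. Cauchy--Schwarz with respect to the positive measure $\prod_i (t-s_i)^{-1/2}$ gives $I\le\sqrt{I_1I_2}$, where
$$I_k=\sum_{s}\prod_i (t-s_i)^{-1/2}\prod_{\{i,j\}\in F_k}\frac{1}{|s_i-s_j|+1}.$$
To bound $I_k$, root each tree of $F_k$ and sum out the time variables leaf by leaf. For a non-root vertex $i$ of a tree with parent $j$, bound $(t-s_i)^{-1/2}\le1$ and use
$$\sum_{s_i\le aN}\frac{1}{|s_i-s_j|+1}\lesssim\log N .$$
Each tree root $i$ is isolated once its subtree is summed out, and
$$\sum_{s_i}(t-s_i)^{-1/2}\lesssim N^{1/2}.$$
Since $F_k$ has $n-f_k$ components, $I_k\loglesssim N^{(n-f_k)/2}$. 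Hence
$$I\loglesssim N^{\frac14(2n-f_1-f_2)}=N^{\frac14(2n-R(G'))}.$$

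\textbf{Expected obstacles.} The main point to verify is the forest decomposition in the multigraph setting: Nash-Williams' arboricity theorem applies to multigraphs, and the condition $R(S)\le 2(|S|-1)$ is exactly its hypothesis. Loops cannot occur, since a loop at $i$ would give $R(\{i\})\ge1>0=2|\{i\}|-2$. A secondary check is that equal times $s_i=s_j$ cause no trouble, because all red weights are bounded by $1$ there.
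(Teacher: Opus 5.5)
Your proof is correct, and it takes a genuinely different route from the paper's.

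\textbf{The paper's argument.} The paper never uses amenability for arbitrary vertex subsets. It treats each connected component $\mathcal C_i$ of $G'\setminus\{\vroot\}$ separately. Within a component it keeps only the red edges of a spanning tree and bounds every other red weight by $1$. It then peels off leaves one at a time; each step costs $O(\log N)$ through the joint black--red sum over $(s,z)$. So each component contributes $N^{1/2}$, and factorization gives $N^{k/2}$, where $k$ is the number of components. Amenability enters only at the end, through $R(\mathcal C_i)\le 2|\mathcal C_i|-2$, which yields $k\le n-R(G')/2$.

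\textbf{Your argument.} You keep every red edge. You use the hereditary inequality $R(S)\le 2|S|-2$ for all $S$ to invoke Nash-Williams' decomposition into two forests. Cauchy--Schwarz then squares the red weights into the kernel $(|s_i-s_j|+1)^{-1}$, whose time sum is $O(\log N)$.

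\textbf{What each buys.} Your route makes the power counting ``each red edge is worth $N^{-1/4}$'' transparent and uses exactly the stated hypothesis. It does, however, require an arboricity theorem, and it only gives $N^{(2n-R(G'))/4}$. The paper's elementary induction gives the stronger intermediate bound $N^{k/2}\le N^{(2n-R(G'))/4}$, and it needs the amenability inequality only on components.

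\textbf{Two small corrections.}
\begin{enumerate}
\item The time sums in your Steps 1 and 3 should run over $1\le s_i\le t-1$, not up to $aN$. Otherwise $(t-s_i)^{-1/2}$ is meaningless.
\item The spatial sums decouple simply because red weights depend only on times, so you do not need the no-red-edge-at-$\vroot$ hypothesis there. Where you actually need it is Step 2: it guarantees that all $R(G')$ red edges lie on $\{1,\dots,n\}$, so the two forests account for all of them.
\end{enumerate}
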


\begin{figure}[htbp]
\centering
\begin{tikzpicture}[
    x=0.82cm,y=0.88cm,
    line cap=round,line join=round,
    vtx/.style={circle,fill=black,inner sep=1.7pt},
    bedge/.style={draw=black,line width=0.95pt},
    redge/.style={draw=red,line width=1.05pt},
    lab/.style={font=\large},
    orlab/.style={font=\Large},
    note/.style={font=\Large}
]

% =========================================================
% row (a)
% =========================================================
\node[anchor=west,lab] at (0.10,3.40) {(a):};

% (a) first graph
\begin{scope}[xshift=2.50cm,yshift=2.18cm]
    \coordinate (t)  at (0,0.95);
    \coordinate (b1) at (-1.30,0.00);
    \coordinate (b2) at (-0.65,0.00);
    \coordinate (b3) at ( 0.00,0.00);
    \coordinate (b4) at ( 0.65,0.00);
    \coordinate (b5) at ( 1.30,0.00);

    \draw[bedge] (t)--(b1) (t)--(b2) (t)--(b3) (t)--(b4) (t)--(b5);
    \draw[redge] (b1)--(b2)--(b3)--(b4)--(b5);

    \node[vtx] at (t) {};
    \foreach \p in {b1,b2,b3,b4,b5} {\node[vtx] at (\p) {};}
\end{scope}

\node[orlab] at (5.15,2.67) {$\Rightarrow$};

% (a) second graph
\begin{scope}[xshift=5.85cm,yshift=2.18cm]
    \coordinate (t)  at (0,0.95);
    \coordinate (b1) at (-1.00,0.00);
    \coordinate (b2) at (-0.33,0.00);
    \coordinate (b3) at ( 0.33,0.00);
    \coordinate (b4) at ( 1.00,0.00);

    \draw[bedge] (t)--(b1) (t)--(b2) (t)--(b3) (t)--(b4);
    \draw[redge] (b1)--(b2)--(b3)--(b4);

    \node[vtx] at (t) {};
    \foreach \p in {b1,b2,b3,b4} {\node[vtx] at (\p) {};}
\end{scope}

\node[orlab] at (8.95,2.67) {$\Rightarrow$};

% (a) third graph
\begin{scope}[xshift=8.90cm,yshift=2.18cm]
    \coordinate (t)  at (0,0.95);
    \coordinate (b1) at (-0.60,0.00);
    \coordinate (b2) at ( 0.00,0.00);
    \coordinate (b3) at ( 0.60,0.00);

    \draw[bedge] (t)--(b1) (t)--(b2) (t)--(b3);
    \draw[redge] (b1)--(b2)--(b3);

    \node[vtx] at (t) {};
    \foreach \p in {b1,b2,b3} {\node[vtx] at (\p) {};}
\end{scope}

\node[orlab] at (12.45,2.67) {$\Rightarrow$};

% (a) fourth graph
\begin{scope}[xshift=11.50cm,yshift=2.15cm]
    \coordinate (t)  at (0.30,0.95);
    \coordinate (b1) at (-0.10,0.00);
    \coordinate (b2) at ( 0.55,0.00);

    \draw[bedge] (t)--(b1) (t)--(b2);
    \draw[redge] (b1)--(b2);

    \node[vtx] at (t) {};
    \node[vtx] at (b1) {};
    \node[vtx] at (b2) {};
\end{scope}

\node[orlab] at (15.85,2.67) {$\Rightarrow$};

% (a) final graph
\begin{scope}[xshift=14.25cm,yshift=2.10cm]
    \coordinate (t) at (0,0.95);
    \coordinate (b) at (0,0.00);
    \draw[bedge] (t)--(b);
    \node[vtx] at (t) {};
    \node[vtx] at (b) {};
\end{scope}

\node[note,anchor=west] at (18.15,2.74) {$\lesssim N^{1/2}$};

% =========================================================
% row (b)
% =========================================================
\node[anchor=west,lab] at (0.10,1.22) {(b):};

% (b) initial graph
\begin{scope}[xshift=2.30cm,yshift=0.08cm]
    \coordinate (t)  at (0,0.95);
    \coordinate (b0) at (-1.65,0.00);
    \coordinate (b1) at (-1.65,0.00);
    \coordinate (b2) at (-1.05,0.00);
    \coordinate (b3) at (-0.45,0.00);
    \coordinate (b4) at ( 0.15,0.00);
    \coordinate (b5) at ( 0.75,0.00);
    \coordinate (b6) at ( 1.35,0.00);
    \coordinate (b7) at ( 1.95,0.00);
    \coordinate (b8) at ( 2.55,0.00);

    \draw[bedge] (t)--(b0) (t)--(b1) (t)--(b2) (t)--(b3) (t)--(b4)
                 (t)--(b5) (t)--(b6) (t)--(b7) (t)--(b8);
    \draw[redge] (b0)--(b1)--(b2) (b3)--(b4)--(b5)--(b6) (b7)--(b8);

    \node[vtx] at (t) {};
    \foreach \p in {b1,b2,b3,b4,b5,b6,b7,b8} {\node[vtx] at (\p) {};}
\end{scope}

\node[orlab] at (7.95,0.82) {$\Rightarrow\ \cdots\ \Rightarrow$};

% (b) final graph
\begin{scope}[xshift=10.25cm,yshift=0.2cm]
    \coordinate (t)  at (0,0.90);
    \coordinate (b1) at (-1.25,0.00);
    \coordinate (b2) at (-0.15,-0.18);
    \coordinate (b3) at ( 1.25,0.00);

    \draw[bedge] (t)--(b1) (t)--(b2) (t)--(b3);

    \node[vtx] at (t) {};
    \foreach \p in {b1,b2,b3} {\node[vtx] at (\p) {};}
\end{scope}

\node[note,anchor=west] at (14.95,0.88) {$\lesssim N^{3/2}$};

\end{tikzpicture}
\caption{An example of Star reduction.}
\label{F.lifting 3}
\end{figure}

\begin{proof}
    We first prove that, if $G'\setminus{\vroot}$ is connected, then
    $$\action{t,x}{G'} \loglesssim N^{1/2}, \qquad \forall~x\text{ and } t\le aN.$$ This will be established by induction on $n$. When $n=1$, the claim follows directly from Lemma \ref{NL.sum of cha}. Assume that the statement holds for all $S_{n'}$, $n'\le n-1$. Let $n\ge 2$. Since $G'\setminus{\vroot}$ is connected, it has a spanning tree. Pick a leaf $v$ of this tree, and let $G''$ be the graph obtained by deleting $v$ from $G'$. Then, $G''\setminus{\vroot}$ remains connected. Suppose that $v$ is connected to a vertex $u$ by a red edge. Then, we have
    \begin{align*}
    \action{t,x}{G'}
    &= \sum_{\embed \in \embedset{t,x}(G')} \prod_{e\in E(G')} \weight{\embed}(e) \\
    &\le \sum_{\embed' \in \embedset{t,x}(G'')} \left[ \prod_{e\in E(G'')} \weight{\embed'}(e) \cdot\sum_{s=1}^{t-1}\sum_{z\in\bbZ} \cha'(t-s,x-z)\frac{1}{\sqrt{\lvert \embed'^{(1)}(u)-s\rvert}+1} \right] \loglesssim \action{t,x}{G''}.
    \end{align*}
    Here we use Lemma \ref{NL.1 red} and \ref{NL.sum of cha} in the last inequality. By the induction hypothesis, the desired bound follows. Figure \ref{F.lifting 3}(a) illustrates the reduction obtained by eliminating non-root vertices step by step, until reaching the case $n=1$, which gives $N^{1/2}$. In Figure \ref{F.lifting 3} (b), we apply this $N^{1/2}$ bound to each component.

    Now consider the general case where $G'\setminus{\vroot}$ has $k$ connected components, denoted $\mathcal C_1,\ldots,\mathcal C_k$. For each $i$, let $G'_i$ be the subgraph of $G'$ induced by $\vroot$ together with $\mathcal C_i$. Each $G'_i$ is an amenable graph with backbone $S_{\lvert\mathcal C_i\rvert}$. We obtain the factorization
    \begin{align*}
    \action{t,x}{G'}
    &= \sum_{\embed\in\embedset{t,x}(G')} \prod_{e\in E(G')} \weight{\embed}(e)= \sum_{\substack{\embed_i\in\embedset{t,x}(G'_i) \\ 1\le i\le k}}    \prod_{i=1}^k \prod_{e\in E(G'_i)} \weight{\embed_i}(e) \\
    &= \prod_{i=1}^k \left[ \sum_{\embed_i\in\embedset{t,x}(G'_i)}    \prod_{e\in E(G'_i)} \weight{\embed_i}(e) \right] = \prod_{i=1}^k \action{t,x}{G'_i}\loglesssim N^{\frac{k}{2}},
    \end{align*}
    where the last inequality uses the previously established bound $\action{t,x}{G'_i}\loglesssim N^{1/2}$ for each $i$.

    Finally, since $G'$ is amenable, each component $\mathcal C_i$ satisfies
    $$R(\mathcal C_i) - 2\setsize{\mathcal C_i}=D(\mathcal C_i)\le -2,$$
    which implies that the number of red edges in $\mathcal C_i$ is at most $2\lvert\mathcal C_i\rvert - 2$. Summing over $i$ gives
    $$ R(G') \le \sum_{i=1}^k \left(2\lvert\mathcal C_i\rvert - 2\right) = 2n - 2k.$$
    Thus $ k\le n-R(G')/2$ and the proof is complete.
\end{proof}

\subsection{Proof of Theorem \ref{T.moments upper bound on K 1}}  
\begin{proof}[Proof of Theorem \ref{T.moments upper bound on K 1}]
    The case $l=1$ can be proved by the same argument as in Theorem \ref{T.moments upper bound on K 2}, with $\truncatednoise$ in place of $\noise$. We now focus on $l\ge 2$. For a fixed $\tree\in\treefamily_l^n$ and a proper partition $\pi$ Proposition \ref{P.binary tree} yields a constant $C$ and a full binary tree $\tilde{\tree}$ whose root has exactly $n$ children, such that
    $$\sum_{\embed\in\embedset{t,x}^\pi(\tree)}\prod_{e\in E(\tree)}\absolute{\weight{\embed}(e)}\lesssim\action{t,x}{\tree^\pi}\lesssim\action{t+C,x}{\tilde\tree^\pi}.$$
    Let $\tree^\circ$ denote the naked part of $\tilde\tree$ and suppose it has $m+n$ edges. Then $\setsize{L(\tree)}=m+2n$. By applying Propositions \ref{P.lifting 1}, \ref{P.lifting 2} and \ref{P.lifting 3}, we obtain
    \begin{align*}
    \action{t,x}{\tilde\tree^\pi}&\loglesssim\action{t,x}{G}\loglesssim\sum_{G'\in\mathcal G}N^{\frac{1}{2}(m-R(G)+R(G'))}\action{t,x}{G'}\loglesssim\sum_{G'\in\mathcal G} N^{\frac{1}{4}(2m+2n+R(G')-2R(G))}\\
    &\lesssim N^{\frac{1}{2}(m+2n-1-R(G))}\lesssim N^{\frac{1}{4}(\setsize{L(\tree)}-2-\sum_{\pi_i}(\setsize{\pi_i}-4)^+)}.
    \end{align*}
    Here $\mathcal G$ is the set of all amenable graphs with backbone $S_n$ and at least $R(G)-m$ red edges. In the second-to-last inequality, we use the fact that $R(G’)\le 2n-2$, since $G’$ is amenable. In the last inequality, we use the fact that the $G$ we constructed in Proposition \ref{P.lifting 1} satisfies
    $$R(G)\ge\frac{\setsize{L(T)}}{2}+\frac{1}{2}\sum_{\pi_i\in\pi}(\setsize{\pi_i}-4)^+=\frac{m+2n}{2}+\frac{1}{2}\sum_{\pi_i\in\pi}(\setsize{\pi_i}-4)^+.$$
    Using the above inequality and \eqref{E.restrict to proper partitions corollary} in Corollary \ref{C.kterm represented by tree}, we may conclude
    \begin{align*}
        \absolute{\bbE\left[\left(\kterm{l}(t,x)\right)^n\right]}\lesssim &\sum_{\tree\in\treefamily_l^n}\absolute{\bbE[\action{t,x}{\tree}]}\\
        \loglesssim&\sum_{\tree\in\treefamily_l^n}N^{-(\frac{1}{4}+\rate)\setsize{L(\tree)}}\sum_{\substack{\pi\in\Pi(L(\tree))\\\pi~is~proper}}N^{\frac{1}{4}\sum_{\pi_i\in\pi}(\setsize{\pi_i}-8)^+}\sum_{\embed\in\embedset{t,x}^\pi(\tree)}\prod_{e\in E(\tree)}\absolute{\weight{\embed}(e)}\\
        \loglesssim&\sum_{\tree\in\treefamily_l^n}N^{-(\frac{1}{4}+\rate)\setsize{L(\tree)}}\sum_{\substack{\pi\in\Pi(L(\tree))\\\pi~is~proper}}N^{\frac{1}{4}\sum_{\pi_i\in\pi}(\setsize{\pi_i}-8)^+}\cdot N^{\frac{1}{4}\setsize{L(\tree)}-\frac{1}{4}\sum_{\pi_i\in\pi}(\setsize{\pi_i}-4)^+}\\
        \lesssim& N^{-\rate\setsize{L(\tree)}}\lesssim N^{-ln\rate}.
    \end{align*}
    The last inequality uses the fact that any tree in $\treefamily_l^n$ has at least $ln$ leaves. This completes the proof.
\end{proof}

\section{Expectation and concentration of  $\xterm{l}$}\label{S.expectation and concentration of X} 
In this section, we prove Theorems  \ref{T.expectation of Xl 1} and \ref{T.concentration on Xl 1} based on the same graph lifting technique developed in Section \ref{S.Graph Lifting}.

\subsection{Proof of Theorem \ref{T.expectation of Xl 1}}

The recursion formula \eqref{E.redefine Xl} shows that $\xterm{l}$ can also be expressed explicitly as a linear combination of lower-order products of $\kterm{m},m\le l-1$. Thus, the same tree representation works, but edges attached to the root are weighted by $p(\cdot,\cdot)$ instead of $\cha(\cdot,\cdot)$.
  
To apply our graph lifting technique, a few more definitions are needed.
\paragraph{Actions.} We may use the same family of trees to represent $\xterm{l}$, but we use a slightly larger class of embeddings since $p(0,\cdot)$ is well-defined.

Fix a tree $\tree$ with root $\vroot$ and $(t,x)\in\bbZ_+\times\bbZ$. Denote by $\embedsetnew{t,x}(\tree)$ the set of all maps $$\embed=\left(\embed^{(1)},\embed^{(2)}\right):V(\tree)\to \bbZ_+\times\bbZ$$
that satisfy $\embed(\vroot)=(t,x)$, $\embed^{(1)}(u)>\embed^{(1)}(v)$ for any $e=[u,v]\in E(\tree)$ such that $u\neq\vroot$ and $\embed^{(1)}(\vroot)\ge\embed^{(1)}(v)$ for any $e=[\vroot,v]\in E(\tree)$. 

For any $\embed\in\embedsetnew{t,x}(\tree)$, we assign another weight function to the edges $E(\tree)$ by
\begin{align*}
    \weightnew{\embed}(e)=
    \begin{cases}
        \cha\left(\embed^{(1)}(u)-\embed^{(1)}(v),\embed^{(2)}(u)-\embed^{(2)}(v)\right), &\text{if $e=[u,v]$ and $u\neq\vroot$;}\\
        p\left(t-\embed^{(1)}(v),x-\embed^{(2)}(v)\right), &\text{if $e=[\vroot,v]$.}
    \end{cases}
\end{align*}
The corresponding action $\actionnew{t,x}{\cdot}$ is defined as
$$\actionnew{t,x}{\tree}:=\sum_{\embed\in\embedsetnew{t,x}(\tree)}\prod_{e\in E(\tree)}\weightnew{\embed}(e)\left[\prod_{v\in L(\tree)}N^{-\frac{1}{4}-\rate}\truncatednoise\left(\embed(v)\right)\right].$$
If $\pi$ is a partition of the leaves of $\tree$, we denote by $\embedsetnew{t,x}^\pi(\tree)$ the subset of $\embedsetnew{t,x}(\tree)$ consisting of all  $\embed\in\embedsetnew{t,x}(\tree)$ that take the same value on two leaves of $\tree$ if and only if these two leaves belong to the same part of $\pi$.
Also, the set of maps $\embedsetnew{t,x}(\cdot)$ for \emph{leaf-glued graphs} and \emph{two-colored backbone graphs} is defined similarly.
For an embedding $\embed\in\embedsetnew{t,x}(\tree^\pi)$ or $\embed\in\embedsetnew{t,x}(G)$, 

\begin{align*}
    \weightnew{\embed}(e)=
    \begin{cases}
        \cha'\left(\embed^{(1)}(u)-\embed^{(1)}(v),\embed^{(2)}(u)-\embed^{(2)}(v)\right) &\text{if $e=[u,v]$ and $u\neq\vroot$.}\\
        p\left(t-\embed^{(1)}(v),x-\embed^{(2)}(v)\right) &\text{if $e=[\vroot,v]$.}\\
        \left(\absolute{\embed^{(1)}(u)-\embed^{(1)}(v)}+1\right)^{-1/2}&\text{if $e=\{u,v\}$ is a red edge.}
    \end{cases}
\end{align*}
The actions $\actionnew{t,x}{\cdot}$ are defined in the same way as the right-hand sides of \eqref{E.def of action of T pi} and \eqref{E.def of action of G}, where $\weight{\embed}(e)$ is replaced by $\weightnew{\embed}(e)$. In Propositions \ref{P.binary tree}, \ref{P.lifting 1}, and \ref{P.lifting 2}, if $\action{t,x}{\cdot}$ is replaced by $\actionnew{t,x}{\cdot}$, the arguments remain valid and yield the same conclusions for $\actionnew{t,x}{\cdot}$.

With this notation, $\xterm{l}$ can also be represented as an action of trees. The following proposition can be proved in the same way as Proposition \ref{P.kterm represented by tree}.

\begin{proposition}\label{P.xterm represented by tree}
    For a fixed $l\ge1$ and any $(t,x)\in\bbZ_+\times\bbZ$,
    \begin{equation}\label{E.xterm represented by tree}
        \xterm{l}(t,x)=\sum_{\tree\in\treefamily_l}c(\tree,l)\actionnew{t,x}{\tree}.
    \end{equation}
    Here, the constants $c(\tree,l)$ are the same as those  defined in Proposition \ref{P.kterm represented by tree}.
\end{proposition}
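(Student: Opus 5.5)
We argue by induction on $l$, following the proof of Proposition \ref{P.kterm represented by tree}; the only change is that the outermost kernel is $p$ rather than $\cha$. The key observation is that, by \eqref{E.redefine X1} and \eqref{E.redefine Xl}, $\xterm{l}$ is obtained from exactly the same polynomial expression in $\kterm{1},\dots,\kterm{l-1}$ as $\kterm{l}$ in \eqref{E.redefine Kl}. The differences are that the convolution kernel is $p(t-s,x-z)$ with $1\le s\le t$, instead of $\cha(t-s,x-z)$ with $1\le s\le t-1$. We therefore first record the analogue of \eqref{E.action of extension}: for any tree $\tree$,
\begin{equation*}
\actionnew{t,x}{\extension{\tree}}=\sum_{s=1}^{t}\sum_{z\in\bbZ}p(t-s,x-z)\,\action{s,z}{\tree}.
\end{equation*}
This follows directly from the definitions. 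In $\embedsetnew{t,x}(\extension{\tree})$ the unique child $u$ of the new root satisfies the weak constraint $\embed^{(1)}(u)\le t$, and the edge $[\vroot,u]$ carries weight $p$. All other edges lie strictly below $u$, carry the weight $\cha$, and satisfy the strict time ordering, so restricting $\embed$ to the copy of $\tree$ rooted at $u$ gives exactly an element of $\embedset{s,z}(\tree)$ with $(s,z)=\embed(u)$.

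\emph{Base case $l=1$.} Here $\treefamily_1=\{\one\}$ with $c(\one,1)=1$. The tree $\one$ has a single edge $[\vroot,v]$ with $v$ a leaf, so $\actionnew{t,x}{\one}=\sum_{s\le t}\sum_z p(t-s,x-z)N^{-\frac14-\rate}\truncatednoise(s,z)$. By \eqref{E.redefine X1} this equals $\xterm{1}(t,x)$. We note that $\one$ is not an extension of a smaller tree, which is why this case is handled separately.

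\emph{Case $l\ge 2$.} By Proposition \ref{P.kterm represented by tree} applied to $\kterm{l-1}$ and $\kterm{1},\dots,\kterm{l-2}$, together with multiplicativity \eqref{E.multiplicative}, we expand the bracket in \eqref{E.redefine Xl} with the binomial formula exactly as in the proof of Proposition \ref{P.kterm represented by tree}. This gives
\begin{equation*}
\sum_{k=2}^{\alpha-1}\sum_{j=1}^{k}\sum_{\tree_1,\tree_2}a_k\binom{k}{j}c_1(\tree_1)c_2(\tree_2)\,\action{s,z}{\tree_1\oneptunion\tree_2},
\end{equation*}
where the coefficients $c_1,c_2$ are built from the same $c(\cdot,m)$, $m<l$. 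Inserting this into the $p$-convolution and using the identity above converts each term into $\actionnew{t,x}{\extension{\tree_1\oneptunion\tree_2}}$. Grouping terms over the elements of $\treefamily_l$ then produces exactly the same coefficients that arose for $\kterm{l}$, that is, the same $c(\tree,l)$. The point is that both derivations perform the identical algebraic manipulation on the identical polynomial, and differ only in the outer kernel.

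The only point needing care is the extension identity, in particular the boundary term $s=t$. This term is allowed because $p(0,\cdot)=\mathbf 1_{\{0\}}$ is well defined, and it contributes $P(\cdots)(t,x)$, consistent with the recursion \eqref{E.def of Xl}. The constraint $s\ge1$ is automatic, since embeddings map into $\bbZ_+\times\bbZ$. Once this identity is checked, the rest is bookkeeping.
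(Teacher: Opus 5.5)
Your proof is correct and is essentially the paper's argument. The paper only says the proposition follows in the same way as Proposition \ref{P.kterm represented by tree}. Your extension identity $\actionnew{t,x}{\extension{\tree}}=\sum_{s=1}^{t}\sum_{z\in\bbZ}p(t-s,x-z)\action{s,z}{\tree}$, including the $s=t$ term coming from $p(0,\cdot)$, is exactly the detail that makes this work, and the coefficients coincide because \eqref{E.redefine Xl} and \eqref{E.redefine Kl} undergo the identical expansion.
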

Then, we can reduce improper partitions to proper partitions as we did in Proposition \ref{P.restrict to proper partitions}. For an improper partition $\pi$, if a vertex $u$ is a bad point of $\pi$, then one may reduce $\pi$ to a collection of partitions, none of which has $u$ as a bad point. This reduction relies on the fact that the edge $e$ connecting $u$ to its parent $v$ is weighted by $\cha(\cdot,\cdot)$, and the following summation vanishes:
$$\sum_{z\in\bbZ}\cha\left(\embed_1^{(1)}(v)-s,\embed_1^{(2)}(v)-z\right)=0.$$
The only obstacle to deriving the same result for $\actionnew{t,x}{\cdot}$ is that if a bad point of $\pi$ is the child of the root, then that edge is weighted by $p(\cdot,\cdot)$, whose summation over space does not vanish. Fortunately, if $\tree\in\treefamily_l$, such a case occurs for only a few tree--partition pairs $(\tree,\pi)$. Indeed, a vertex $v$ is a bad point for a partition $\pi$ if and only if the subtree rooted at $v$ belongs to one of the three cases, and its leaves form a part of $\pi$. Therefore, for $\tree\in\treefamily_l$, the root has only one child, and that child is a bad point if and only if $(\tree,\pi)$ belongs to one of the three cases shown in Figure \ref{F.proper-counterexamples} (b) and (c).

\begin{proposition}\label{P.restrict to proper partitions new}
    If $\tree\in\treefamily_l$, $l\ge 2$ and $(\tree,\pi)$ does not belong to any of the three cases shown in Figure \ref{F.proper-counterexamples} (b) and (c), then for any $x$ and $t\le aN$,
    \begin{equation}\label{E.restrict to proper partitions new}
        \absolute{\sum_{\embed\in\embedsetnew{t,x}^{\pi}(\tree)}\prod_{e\in E(\tree)}\weightnew{\embed}(e)\moment{\embed}{\pi}}\loglesssim \sum_{\substack{\tilde{\pi}\in\Pi(L(\tree))\\ \tilde{\pi}~\text{is proper}}}N^{\frac{1}{4}\sum_{\tilde{\pi}_i\in\tilde{\pi}}(\setsize{\tilde{\pi}_i}-8)^+}\cdot\sum_{\embed\in\embedsetnew{t,x}^{\tilde{\pi}}(\tree)}\prod_{e\in E(\tree)}\absolute{\weightnew{\embed}(e)}.
    \end{equation}
\end{proposition}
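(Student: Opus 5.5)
We would mimic the proof of Proposition \ref{P.restrict to proper partitions}, replacing $\action{t,x}{\cdot}$, $\embedset{t,x}$ and $\weight{\embed}$ by $\actionnew{t,x}{\cdot}$, $\embedsetnew{t,x}$ and $\weightnew{\embed}$. If $\pi$ is proper, the bound follows directly from the uniform moment estimate $|\tilde\mu_k(t)|\loglesssim N^{(k-8)^+/4}$. If $\pi$ has a singleton part, every term vanishes because $\truncatednoise$ is centered. It remains to treat partitions with all parts of size at least two and at least one bad point $u$. In that case some part, say $\pi_1$, is exactly the set of leaf descendants of $u$.

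The key observation concerns where $u$ sits. Since $\tree\in\treefamily_l$ with $l\ge2$, the root $\vroot$ has a single child $c$. A bad point is a non-root vertex whose subtree is one of the three configurations in Figure \ref{F.proper-counterexamples}(b),(c) and whose leaves form a part of $\pi$. If $u=c$, then the whole tree minus the root is that configuration and $\pi=\{\pi_1\}$. This is exactly the excluded situation. Hence every bad point $u$ has a parent $v\neq\vroot$, so the edge $[v,u]$ carries the weight $\cha$ rather than $p$.

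With this in hand, the factorization step of Proposition \ref{P.restrict to proper partitions} goes through unchanged. We split $\embed$ into $\embed_1$ on $\tree\setminus\tree_u$ and $\embed_2$ on $\tree_u$. For fixed $s$ and fixed time coordinates, the inner sum over $\embed_2$ without the disjointness constraint is translation invariant in $z$. Therefore summing against $\cha(\embed_1^{(1)}(v)-s,\embed_1^{(2)}(v)-z)$ over $z$ gives zero. Restoring the constraint $\embed_2(L(\tree_u))\cap\embed_1(L(\tree\setminus\tree_u))=\emptyset$ rewrites the $\pi$-sum as minus the sum over the partitions $\pi'\prec_1\pi$. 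The moment factors stay bounded by $N^{\sum(|\pi_i|-8)^+/4}$, and each $\pi'$ has strictly fewer bad points than $\pi$.

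The step needing care is the iteration. Each intermediate $\pi'$ is coarser than $\pi$, and we must make sure it never falls into an excluded case, since such a $\pi'$ would make the root-child a bad point. This cannot happen. In an excluded case $\pi'$ has a single part, so every leaf has been merged into one part. A bad point of $\pi'$ is also a bad point of $\pi$, and gluing destroys the bad point we used. Thus the root-child would have to be a bad point of the original $\pi$, which contradicts the assumption on $(\tree,\pi)$. More simply, the root-child can only be a bad point when the partition is the single block equal to the leaf set of a special-shaped tree, and this depends only on the pair $(\tree,\pi')$. If $\pi'$ becomes that single block, the root-child is a bad point of $\pi'$ and hence of $\pi$, which was excluded. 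So the induction on the number of bad points terminates at proper partitions. Monotonicity of $\sum(|\pi_i|-8)^+$ under coarsening, together with bounding the weights by their absolute values, then gives \eqref{E.restrict to proper partitions new}.
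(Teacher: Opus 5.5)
Your proposal is correct and follows the paper's route. The paper likewise reruns the cancellation argument of Proposition~\ref{P.restrict to proper partitions}, observing that the only obstruction is a bad point at the root's unique child, whose edge carries $p$ instead of $\cha$, and that this happens exactly in the three excluded pairs. Your explicit check that the iteration never produces such a bad point, because any bad point of $\pi'$ is already a bad point of $\pi$, supplies a detail the paper leaves implicit.
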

The proof is identical to that of Proposition \ref{P.restrict to proper partitions}. We now prove Theorem \ref{T.expectation of Xl 1}.
\begin{proof}[Proof of Theorem \ref{T.expectation of Xl 1}]
   We show that, if $\tree\in\treefamily_l$, $l\ge 2$ and $(\tree,\pi)$ does not belong to any of the three cases shown in Figure \ref{F.proper-counterexamples} (b) and (c), then for any $x$ and $t\le aN$,
    \begin{equation}\label{E.bound on usual cases}
        \absolute{\sum_{\embed\in\embedsetnew{t,x}^{\pi}(\tree)}\prod_{e\in E(\tree)}\weightnew{\embed}(e)\moment{\embed}{\pi}}\loglesssim N^{\frac{1}{4}\setsize{L(\tree)}}.
    \end{equation}
    By Proposition \ref{P.restrict to proper partitions new}, it suffices to show 
    $$\absolute{\sum_{\embed\in\embedsetnew{t,x}^{\pi}(\tree)}\prod_{e\in E(\tree)}\weightnew{\embed}(e)}\loglesssim N^{\frac{1}{4}\setsize{L(\tree)}-\frac{1}{4}\sum_{\pi_i\in\pi}(\setsize{\pi_i}-8)^+}$$
    for any proper partition $\pi$. Note that the root of $\tree$ has exactly one child. By Proposition \ref{P.binary tree}, there exists a constant $C$ and a full binary tree $\tilde{\tree}$ whose root has exactly one child such that 
    $$\absolute{\sum_{\embed\in\embedsetnew{t,x}^{\pi}(\tree)}\prod_{e\in E(\tree)}\weightnew{\embed}(e)}\lesssim\actionnew{t,x}{\tree^\pi}\lesssim \actionnew{t+C,x}{\tilde{\tree}^\pi}$$
    for all $x$ and $t\le aN$. Denote the naked part of $\tilde{\tree}$ by $\tree^\circ$ and suppose it has $m+1$ edges. Then $\setsize{L(\tree)}=m+2$. By applying Propositions \ref{P.lifting 1} and \ref{P.lifting 2} followed by inequality \eqref{E.lifting 2-3}, we obtain

    $$\actionnew{t,x}{\tilde{\tree}^\pi}\loglesssim N^{\frac{1}{2}(m-R(G))}\actionnew{t,x}{G'},$$
    where $G'$ is exactly $\one$, the graph with only one black edge. Hence $\actionnew{t,x}{G'}=t\loglesssim N$ and so
    \begin{align*}
        \actionnew{t,x}{\tilde{\tree}^\pi}\loglesssim N^{\frac{1}{2}(m+2-R(G))}\le N^{\frac{1}{4}(m+2)-\frac{1}{4}\sum_{\pi_i\in\pi}(\setsize{\pi_i}-4)^+}.
    \end{align*}
    Here, in the last inequality, we use $R(G)\ge\setsize{L(\tree)}/2+\sum_{\pi_i\in\pi}(\setsize{\pi_i}-4)^+/2$ and $\setsize{L(\tree)}=m+2$. This finishes the proof of \eqref{E.bound on usual cases}. 
    
    Taking expectations in \eqref{E.xterm represented by tree}, we obtain
    \begin{align*}
        \bbE[\xterm{l}(t,x)]&=\sum_{\tree\in\treefamily_l}c(\tree,l)\bbE[\actionnew{t,x}{\tree}]\\
        &=\sum_{\tree\in\treefamily_l}\sum_{\pi\in\Pi(L(\tree))}c(\tree,l)N^{-(\frac{1}{4}+\rate)\setsize{L(\tree)}}\sum_{\embed\in\embedsetnew{t,x}^{\pi}(\tree)}\prod_{e\in E(\tree)}\weightnew{\embed}(e)\moment{\embed}{\pi}.
    \end{align*}
    If $(\tree,\pi)$ does not belong to any of the three cases shown in Figure \ref{F.proper-counterexamples} (b) and (c), then 
    $$\setsize{N^{-(\frac{1}{4}+\rate)\setsize{L(\tree)}}\sum_{\embed\in\embedsetnew{t,x}^{\pi}(\tree)}\prod_{e\in E(\tree)}\weightnew{\embed}(e)\moment{\embed}{\pi}}\loglesssim N^{-\rate\setsize{L(\tree)}},$$
    which is bounded by $N^{-2\rate}$ since any $\tree\in\treefamily_l,l\ge 2$ has at least two leaves. So we only need to consider the exceptional cases.
    Denote the three cases in Figure \ref{F.proper-counterexamples} (b) and (c) by $(\tree_1,\pi_1), (\tree_2,\pi_2)$ and $(\tree_3,\pi_3)$. We have $\moment{\embed}{\pi_1}=\mu_2+o(N^{-1})$ for any $\embed\in\embedsetnew{t,x}(\tree_1)$, and the same holds for $(\tree_2,\pi_2)$ or $(\tree_3,\pi_3)$. Thus, we have proved
    \begin{align}
        \nonumber
       \bbE[\xterm{2}(t,x)]=&c(\tree_1,2)N^{-\frac{1}{2}-2\rate}\sum_{\embed\in\embedsetnew{t,x}^{\pi_1}(\tree_1)}\prod_{e\in E(\tree_1)}\weightnew{\embed}(e)(\mu_2+o(N^{-1}))\\
        \label{E.expectation of X2 midstep}
        &+c(\tree_2,2)N^{-\frac{3}{4}-3\rate}\sum_{\embed\in\embedsetnew{t,x}^{\pi_2}(\tree_2)}\prod_{e\in E(\tree_2)}\weightnew{\embed}(e)(\mu_3+o(N^{-1}))+o(N^{-\rate}).\\
        \label{E.expectation of X3 midstep}
        \bbE[\xterm{3}(t,x)]=&c(\tree_3,3)N^{-\frac{3}{4}-3\rate}\sum_{\embed\in\embedsetnew{t,x}^{\pi_3}(\tree_3)}\prod_{e\in E(\tree_3)}\weightnew{\embed}(e)(\mu_3+o(N^{-1}))+o(N^{-\rate}).\\
        \nonumber
        \bbE[\xterm{l}(t,x)]=&o(N^{-\rate}),\forall~l\ge 4.
    \end{align}
    Therefore, we only need to compute those three non-vanishing terms. First, $\tree_1$ and $\tree_2$ are unique in $\treefamily_2$ up to isomorphism. There are two trees isomorphic to $\tree_3$ in $\treefamily_3$. We have $c(\tree_1,2)=\beta/8$, $c(\tree_2,2)=0$ (since $\driving$ is an even function) and $c(\tree_3,3)=\beta^2/32$. 
    The main terms in \eqref{E.expectation of X2 midstep} and \eqref{E.expectation of X3 midstep} can be computed explicitly. For the main term in $\bbE[\xterm{2}(t,x)]$,

    \begin{align*}
        \sum_{\embed\in\embedsetnew{t,x}^{\pi_1}(\tree_1)}\prod_{e\in E(\tree)}\weightnew{\embed}(e)&=\sum_{s=1}^{t}\sum_{z\in\bbZ}p(t-s,x-z)\cdot\sum_{s_1=1}^{s-1}\sum_{z_1\in\bbZ}\cha(s-s_1,z-z_1)^2\\
        &=\sum_{s=1}^{t}\sum_{s_1=1}^{s-1}\sum_{z_1\in\bbZ}\cha(s-s_1,z_1)^2=\sum_{s=1}^{t}4(1-p(2s-2,0))\\
        &=4t+O(t^{\frac{1}{2}}).
    \end{align*}
    The second equality follows from Lemma \ref{NL.reflection identity}, and the third from Stirling's formula.

    For the main term in $\bbE[\xterm{3}(t,x)]$ we have
    \begin{align*}
        &\sum_{\embed\in\embedsetnew{t,x}^{\pi_3}(\tree_3)}\prod_{e\in E(\tree)}\weightnew{\embed}(e)\\
        =&\sum_{\substack{1\le s\le t\\z\in\bbZ}}p(t-s,x-z)\sum_{\substack{1\le s_1\le s-1\\z_1\in\bbZ}}\cha(s-s_1,z-z_1)\sum_{\substack{1\le s_2\le s_1-1\\z_2\in\bbZ}}\cha(s-s_2,z-z_2)\cha(s_1-s_2,z_1-z_2)^2\\
        =&\sum_{s=1}^{t}\sum_{\substack{1\le s_1\le s-1\\z_1\in\bbZ}}\cha(s-s_1,-z_1)\sum_{\substack{1\le s_2\le s_1-1\\z_2\in\bbZ}}\cha(s-s_2,-z_2)\cha(s_1-s_2,z_1-z_2)^2.
     \end{align*}
     For any fixed $s$, changing variables $s^*:=s_1-s_2$ and $z^*:=z_1-z_2$ gives
     \begin{align*}
         &\sum_{\substack{1\le s_1\le s-1\\z_1\in\bbZ}}\cha(s-s_1,-z_1)\sum_{\substack{1\le s_2\le s_1-1\\z_2\in\bbZ}}\cha(s-s_2,-z_2)\cha(s_1-s_2,z_1-z_2)^2\\
         =&\sum_{\substack{1\le s^*\le s-1\\z^*\in\bbZ}}\cha(s^*,z^*)^2\sum_{\substack{1\le s_2\le s-s^*-1\\z_2\in\bbZ}}\cha(s-s^*-s_2,-z^*-z_2)\cha(s-s_2,-z_2)\\
         =4&\sum_{\substack{1\le s^*\le s-1\\z^*\in\bbZ}}\cha(s^*,z^*)^2\left(p(s^*,z^*)-p(2s-s^*-2,z^*)\right)
     \end{align*}
    The last equality follows from Lemma \ref{NL.reflection identity}.
    Define
	$$A_t:=\sum_{z\in \Z}p^3(t,z)=O((t+1)^{-1}),~\forall~t\ge 0.$$
	It satisfies the recursive formula
	\begin{equation}\label{E.recursive of At}
		A_{t+1}=\sum_{z\in\Z}\left(\frac{p(t,z+1)+p(t,z-1)}{2}\right)^3=\frac{1}{4}A_t+\frac{3}{4}\sum_{z\in\Z}p^2(t,z+1)p(t,z-1).
	\end{equation}
	Here we use that $p(t,\cdot)$ is an even function and therefore
    \begin{equation}\label{E.symmetry of p}
		\sum_{z\in\Z}p(s,z+1)^2p(s,z-1)=\sum_{z\in\Z}p(s,-z+1)^2p(s,-z-1)=\sum_{z\in\Z}p(s,z-1)^2p(s,z+1).
	\end{equation}
    Consequently,
    \begin{align*}
        \sum_{\substack{1\le s^*\le s-1\\z^*\in\bbZ}}\cha(s^*,z^*)^2p(s^*,z^*)
        =&\sum_{\substack{0\le u\le s-2\\z^*\in\bbZ}}\left[p(u,z^*+1)-p(u,z^*-1)\right]^2\frac{p(u,z^*-1)+p(u,z^*+1)}{2}\\
        =&\sum_{\substack{0\le u\le s-2\\z^*\in\bbZ}}\left[p(u,z^*+1)-p(u,z^*-1)\right]^2p(u,z^*+1)\\
		=&\sum_{\substack{0\le u\le s-2\\z^*\in\bbZ}}p(u,z^*+1)^3-p(u,z^*+1)^2p(u,z^*-1)\\
        =&\frac{4}{3}(A_0-A_{s-1})=\frac{4}{3}+O(s^{-1}),
    \end{align*}
	where the third equality uses \eqref{E.symmetry of p} and the fourth equality uses \eqref{E.recursive of At}. For the second term, Lemmas \ref{NL.upper bound on cha} and \ref{NL.reflection identity} give
    $$\sum_{\substack{1\le s^*\le s-1\\z^*\in\bbZ}}\cha(s^*,z^*)^2p(2s-s^*-2,z^*)=O(s^{-\frac{1}{2}}).$$
	Therefore,
	$$\sum_{\embed\in\embedsetnew{t,x}^{\pi_3}(\tree_3)}\prod_{e\in E(\tree)}\weightnew{\embed}(e)=\sum_{s=0}^{t-1}4\left(\frac{4}{3}+O(s^{-\frac{1}{2}})\right)=\frac{16}{3}t+O(t^{\frac{1}{2}}).$$
    Substituting these expressions into \eqref{E.expectation of X2 midstep} and \eqref{E.expectation of X3 midstep} completes the proof of Theorem \ref{T.expectation of Xl 1}.
\end{proof}
\subsection{Proof of Theorem \ref{T.concentration on Xl 1}}
As in Corollary \ref{C.kterm represented by tree}, one can deduce a representation of $\xterm{l}(t,x)^n$ from Proposition \ref{P.xterm represented by tree}.

\begin{corollary}\label{C.xterm represented by tree}
    For fixed $l,n\ge 1$ and any $(t,x)\in\bbZ_+\times\bbZ$,
    $$\xterm{l}(t,x)^n=\sum_{\tree\in\treefamily_l^n}c(\tree,l,n)\actionnew{t,x}{\tree}.$$
    Here, the constants $c(\tree,l,n)$ are defined in Corollary \ref{C.kterm represented by tree}.
\end{corollary}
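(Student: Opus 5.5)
The statement to prove is Corollary \ref{C.xterm represented by tree}. I would derive it from Proposition \ref{P.xterm represented by tree} together with a multiplicativity property of $\actionnew{t,x}{\cdot}$, in the same way Corollary \ref{C.kterm represented by tree} follows from Proposition \ref{P.kterm represented by tree} and \eqref{E.multiplicative}.

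The first step is to check multiplicativity:
\[
\actionnew{t,x}{\oneptunion_{i=1}^n \tree_i}=\prod_{i=1}^n\actionnew{t,x}{\tree_i}.
\]
An embedding $\embed\in\embedsetnew{t,x}(\oneptunion_i\tree_i)$ must send the common root to $(t,x)$. The constraints in the definition of $\embedsetnew{t,x}$ only relate each vertex to its parent: strict time decrease along non-root edges, and $\embed^{(1)}(\vroot)\ge\embed^{(1)}(v)$ along root edges. Every such constraint lives inside a single $\tree_i$. Hence $\embed$ corresponds bijectively to a tuple $(\embed_i)_{i}$ with $\embed_i\in\embedsetnew{t,x}(\tree_i)$, namely its restrictions to the branches.

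Each weight $\weightnew{\embed}(e)$ depends only on the images of the endpoints of $e$. Root edges carry $p(t-\embed^{(1)}(v),x-\embed^{(2)}(v))$ in both the union and the individual trees, so the edge-weight product factorizes. The leaf set of $\oneptunion_i\tree_i$ is the disjoint union of the $L(\tree_i)$, since the roots are not leaves when $l\ge 1$. So the noise product factorizes as well. Note that the unrestricted embedding sets impose no distinctness between leaves of different branches; that distinctness appears only later with partitions. Summing over the product of embedding sets then gives the product of the actions.

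The second step is to raise Proposition \ref{P.xterm represented by tree} to the $n$-th power and expand:
\[
\xterm{l}(t,x)^n=\sum_{\tree_1,\dots,\tree_n\in\treefamily_l}\Big(\prod_{i=1}^n c(\tree_i,l)\Big)\actionnew{t,x}{\oneptunion_{i}\tree_i}.
\]
Grouping the terms according to the resulting tree in $\treefamily_l^n$ gives coefficients
\[
c(\tree,l,n)=\sum_{(\tree_1,\dots,\tree_n):\ \oneptunion_i\tree_i=\tree}\ \prod_{i=1}^n c(\tree_i,l).
\]
This is exactly the combinatorial recipe that produced $c(\tree,l,n)$ in Corollary \ref{C.kterm represented by tree}, because Propositions \ref{P.kterm represented by tree} and \ref{P.xterm represented by tree} share the same $c(\tree,l)$. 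So the constants coincide.

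The main point to check is the root-edge convention. The factorization must not create hidden cross-branch constraints, such as requiring children of the root to have distinct times. By definition they do not, so the argument goes through.
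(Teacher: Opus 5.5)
Your proposal is correct and follows the paper's route. The paper obtains this corollary the same way as Corollary~\ref{C.kterm represented by tree}: it raises Proposition~\ref{P.xterm represented by tree} to the $n$-th power and uses multiplicativity of the action under one-point unions. Your explicit check that $\actionnew{t,x}{\cdot}$ is multiplicative is the one detail the paper leaves implicit, and you handle it correctly. The constraints and weights in $\embedsetnew{t,x}$ are parent--child local, root edges carry $p$ in both the union and the branches, and no cross-branch leaf distinctness is imposed.
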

Recall that \eqref{E.def of Xl} gives the following explicit expression for $\xterm{l}$ for any $t,x$,

\begin{equation*}
    \xterm{l}(t,x)=\sum_{s=1}^{t}\sum_{z\in\bbZ}p(t-s,x-z)\left[P\left(\sum_{m=1}^{l-1}\kterm{m}(s,z)\right)-P\left(\sum_{m=1}^{l-2}\kterm{m}(s,z)\right)\right],
\end{equation*}
or equivalently,
$$\xterm{l}(t,x)=\sum_{s=1}^{t}\sum_{z\in\bbZ}p(t-s,x-z)\ktermnew{l}(s,z),$$
where $\ktermnew{l}=P\left(\sum_{m=1}^{l-1}\kterm{m}\right)-P\left(\sum_{m=1}^{l-2}\kterm{m}\right)$.

Fix $n$ and a tree $\tree$ whose root has $n$ children $v_1,\ldots,v_n$. If $I$ is a subset of $[n]$, denote by $\sigma^I$ the partition of $L(\tree)$ in which for each $i\in I$, all the leaves attached to $v_i$ or to the descendants of $v_i$ form a part of $\sigma ^I$, while the remaining leaves form another part of $\sigma^I$. Thus, $\setsize{\sigma^I}=\min\{\setsize{I}+1,n\}$. For two partitions $\pi,\pi'$ of $L(\tree)$, their intersection is defined as

$$\pi\cap\pi':=\left\{A\cap B:A\in\pi,B\in\pi',A\cap B\neq\emptyset\right\},$$
which remains a partition of $L(\tree)$. Using Corollary \ref{C.xterm represented by tree} and the same argument as above, we can deduce
\begin{align*}
    &\bbE\left[\left(\xterm{l}(t,x)-\bbE[\xterm{l}(t,x)]\right)^n\right]\\
    =&\sum_{\substack{1\le s_1,\ldots,s_n\le t\\z_1,\ldots,z_n\in\bbZ}}\prod_{i=1}^n p(t-s_i,x-z_i)\cdot \bbE\left[\prod_{i=1}^{n}\left(\ktermnew{l}(s_i,z_i)-\bbE[\ktermnew{l}(s_i,z_i)]\right)\right]\\
    =&\sum_{\substack{1\le s_1,\ldots,s_n\le t\\z_1,\ldots,z_n\in\bbZ}}\prod_{i=1}^n p(t-s_i,x-z_i)\cdot\sum_{I\subseteq [n]}(-1)^{\setsize{I}}\prod_{i\in I}\bbE[\ktermnew{l}(s_i,z_i)]\cdot\bbE\left[\prod_{j\in [n]\setminus I}\ktermnew{l}(s_j,z_j)\right]\\
    =&\sum_{T\in\treefamily_l^n}c(\tree,l,n)N^{-(\frac{1}{4}+\rate)\setsize{L(\tree)}}\sum_{\pi\in\Pi(L(\tree))}\sum_{\embed\in\embedsetnew{t,x}^\pi(\tree)}\prod_{e\in E(\tree)}\weightnew{\embed}(e)\sum_{I\subseteq[n]}(-1)^{\setsize{I}}\moment{\embed}{\pi\cap\sigma^I}
\end{align*}
We say a partition $\pi$ is \emph{nonlocal} if $\pi\cap\sigma^{\{i\}}\neq\pi$ for every $1\le i\le n$, i.e., none of the sets of leaves attached to $v_i$ or to the descendants of $v_i$ is a union of parts of $\pi$. If $\pi$ is not \emph{nonlocal}, so that $\pi\cap\sigma^{\{i\}}=\pi$ for some $i$, then
$$\pi\cap\sigma^{I}=\pi\cap\sigma^{I\cup \{i\}},~\forall~I\subset[n]\setminus\{i\}, $$
and hence, for any $\embed$, we have $$\sum_{I\subseteq[n]}(-1)^{\setsize{I}}\moment{\embed}{\pi\cap\sigma^I}=0.$$
Thus, we can write
\begin{align}
    \nonumber&\bbE\left[\left(\xterm{l}(t,x)-\bbE[\xterm{l}(t,x)]\right)^n\right]\\
    \label{E.restrict to nonlocal partitions}=&\sum_{T\in\treefamily_l^n}c(\tree,l,n)N^{-(\frac{1}{4}+\rate)\setsize{L(\tree)}}\sum_{\substack{\pi\in\Pi(L(\tree))\\\pi~is~nonlocal}}\sum_{\embed\in\embedsetnew{t,x}^\pi(\tree)}\prod_{e\in E(\tree)}\weightnew{\embed}(e)\sum_{I\subseteq[n]}(-1)^{\setsize{I}}\moment{\embed}{\pi\cap\sigma^I}.
\end{align}
If an improper partition $\pi$ is nonlocal, then any partition coarser than $\pi$ remains nonlocal. Note that a bad point of a nonlocal partition cannot be the child of the root. Thus, the obstacles that appeared in the proof of Proposition \ref{P.restrict to proper partitions new} will not occur in the reduction procedure of a nonlocal improper partition.
\begin{proposition}\label{P.restrict to proper partitions new+}
If $\tree\in\treefamily_l^n,l\ge 2$ and $\pi$ is a nonlocal partition of $L(\tree)$, then for any $x$ and $t\le aN$,
    \begin{align}\label{E.restrict to proper partitions new+}
        \nonumber&\absolute{\sum_{\embed\in\embedsetnew{t,x}^{\pi}(\tree)}\prod_{e\in E(\tree)}\weightnew{\embed}(e)\sum_{I\subseteq[n]}(-1)^{\setsize{I}}\moment{\embed}{\pi\cap\sigma^I}}\\
        \loglesssim &\sum_{\substack{\tilde{\pi}\in\Pi(L(\tree))\\ \tilde{\pi}~\text{is proper}}}N^{\frac{1}{4}\sum_{\tilde{\pi}_i\in\tilde{\pi}}(\setsize{\tilde{\pi}_i}-8)^+}\cdot\sum_{\embed\in\embedsetnew{t,x}^{\tilde{\pi}}(\tree)}\prod_{e\in E(\tree)}\absolute{\weightnew{\embed}(e)}.
    \end{align}
\end{proposition}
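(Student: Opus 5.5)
The plan is to rerun the proof of Proposition \ref{P.restrict to proper partitions}, with the single moment product $\moment{\embed}{\pi}$ replaced by the signed combination
$$\Phi_\embed(\pi):=\sum_{I\subseteq[n]}(-1)^{\setsize{I}}\moment{\embed}{\pi\cap\sigma^I}.$$
For each $I$, the factor $\moment{\embed}{\pi\cap\sigma^I}$ is a product of moments $\tilde\mu_k$ over blocks of the refinement $\pi\cap\sigma^I$. Each block of $\pi\cap\sigma^I$ is contained in a block of $\pi$, so the moment bound $\absolute{\tilde\mu_k(t)}\loglesssim N^{(k-8)^+/4}$ and superadditivity of $k\mapsto(k-8)^+$ give $\absolute{\Phi_\embed(\pi)}\loglesssim N^{\sum_{\pi_i\in\pi}(\setsize{\pi_i}-8)^+/4}$ uniformly in $\embed$. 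This settles the case where $\pi$ is already proper. If some block of $\pi$ is a singleton, each term of $\Phi_\embed(\pi)$ contains a block that is a singleton, so it vanishes because $\tilde\mu_1=0$.

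Now suppose $\pi$ is nonlocal and improper, with a bad point $u$ and a block $\pi_1$ equal to the leaf set of $\tree_u$. The remark preceding the statement says $u$ is not a child of $\vroot$, since otherwise $\pi$ would not be nonlocal. Hence the edge from $u$ to its parent $v$ carries the weight $\cha$ and not $p$. For each $I$, the leaves of $\pi_1$ all lie below a single child $v_i$ of the root, so $\pi_1$ remains a block of $\pi\cap\sigma^I$. This gives the factorization
$$\moment{\embed}{\pi\cap\sigma^I}=\tilde\mu_{\setsize{\pi_1}}\bigl(\embed^{(1)}(\pi_1)\bigr)\cdot\moment{\embed_1}{(\pi\setminus\{\pi_1\})\cap\sigma^I},$$
where the second factor depends only on $\embed_1:=\embed|_{\tree\setminus\tree_u}$. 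Therefore $\Phi_\embed(\pi)$ splits as a product of a factor depending on $\embed_2:=\embed|_{\tree_u}$ only through times and a factor depending only on $\embed_1$. The translation-invariance argument of Proposition \ref{P.restrict to proper partitions} then goes through unchanged. For fixed $s$ and $\embed_2^{(1)}$, the inner sum over $\embed_2$ does not depend on $z$, and $\sum_z\cha(\cdot,\cdot-z)=0$. Removing the collision constraint $\embed_2(L(\tree_u))\cap\embed_1(L(\tree\setminus\tree_u))=\emptyset$ therefore rewrites the sum as $-\sum_{\pi'\prec_1\pi}$ of sums over $\embedsetnew{t,x}^{\pi'}(\tree)$. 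In these sums the weight is a product of the same moments, now indexed by the old blocks, namely $\tilde\mu_{\setsize{\pi_1}}$ times the signed $\sigma^I$-combination built from $\pi\setminus\{\pi_1\}$.

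Every $\pi'\prec_1\pi$ is coarser than $\pi$, so it is again nonlocal, as noted before the statement. Its set of bad points is contained in that of $\pi\setminus\{u\}$. We iterate: at every stage the current partition is nonlocal, so no bad point is a child of the root, and the $\cha$-cancellation is always available. The iteration terminates at proper partitions $\tilde\pi\succeq\pi$. The resulting coefficients are finite linear combinations of products of moments over blocks of refinements of $\tilde\pi$. By the bound from the first paragraph together with $\sum_{\pi_i}(\setsize{\pi_i}-8)^+\le\sum_{\tilde\pi_i}(\setsize{\tilde\pi_i}-8)^+$ for $\pi\preceq\tilde\pi$, these coefficients are $\loglesssim N^{\sum_{\tilde\pi_i}(\setsize{\tilde\pi_i}-8)^+/4}$. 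Taking absolute values inside the embedding sums gives \eqref{E.restrict to proper partitions new+}.

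The main obstacle is bookkeeping. During the iteration the coefficient attached to an embedding is no longer of the form $\Phi_\embed(\pi')$ for the current partition $\pi'$. It remains a product of moments indexed by the \emph{original} blocks, and the $\sigma^I$-structure only refines it. I would handle this by tracking, as the induction invariant, a general coefficient that is any product of moments over a refinement of the current partition, whose blocks of the form $\pi_1$ are not split by any $\sigma^I$. Then I would check that the cancellation step preserves this form. The one fact this needs is that every glued block $\pi_1$ lies inside a single root branch, which holds because $u$ is a non-root vertex below some $v_i$.
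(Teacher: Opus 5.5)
Your proposal is correct and follows essentially the same route as the paper. The paper proves this by rerunning the bad-point cancellation of Proposition \ref{P.restrict to proper partitions}, using the two observations you invoke: coarsenings of a nonlocal partition stay nonlocal, and a bad point of a nonlocal partition is never a child of the root, so its parent edge carries $\cha$ and the spatial sum vanishes. Your checks that the glued block $\pi_1$ lies in a single root branch (hence is a block of every $\pi\cap\sigma^I$, so the signed coefficient factorizes) and that the coefficient stays a combination of moments over the original blocks are details the paper leaves implicit.
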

For a proper partition, we can apply Propositions \ref{P.lifting 1} and \ref{P.lifting 2} again. However, an $\actionnew{t,x}{G}$-analogue of Proposition \ref{P.lifting 3} is needed.
\begin{proposition}\label{P.lifting 3 new}
    If $G$ is an amenable graph with backbone $S_n$, then for any $x$ and $t\le aN$,
    \begin{equation}\label{E.lifting 3 new}
        \actionnew{t,x}{G}\loglesssim N^{n-\frac{1}{2}R(G)}.
    \end{equation}
\end{proposition}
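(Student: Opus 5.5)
We will follow the proof of Proposition \ref{P.lifting 3}, replacing the $\cha'$ weights on root edges by heat kernel weights $p(t-\embed^{(1)}(v),x-\embed^{(2)}(v))$. The gain from the black edges changes: a root edge weighted by $p$ contributes $\sum_{s\le t}\sum_z p(t-s,x-z)=t\lesssim N$ when summed freely, instead of $N^{1/2}$. Each red edge still costs roughly $N^{-1/2}$ after the time summation. The target $N^{n-R(G)/2}$ is exactly this counting. We write $\mathcal C_1,\ldots,\mathcal C_k$ for the connected components of $G\setminus\{\vroot\}$ and restrict to $t\le aN$.

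\textbf{Step 1 (connected case).} Assume first that $G\setminus\{\vroot\}$ is connected with $n$ vertices and that red edges do not touch the root. We claim
\[
\actionnew{t,x}{G}\loglesssim N^{1}\cdot N^{\frac{1}{2}(n-1)}\cdot N^{-\frac12 (R(G)-(n-1))}.
\]
The cleaner form of this claim is $N^{n-R(G)/2}$ directly. To prove it, pick a spanning tree of the red edges in $G\setminus\{\vroot\}$ and eliminate its leaves one at a time, as in Proposition \ref{P.lifting 3}. For a leaf $v$ attached by a red edge to $u$, sum over $\embed(v)=(s,z)$. Using $\sum_z p(t-s,x-z)=1$, the sum reduces to $\sum_{s\le t}(|\embed^{(1)}(u)-s|+1)^{-1/2}\lesssim N^{1/2}$. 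So each eliminated vertex gains $N^{1/2}$ and uses one red edge. The last vertex gains $\sum_{s,z}p\le t\lesssim N$. Red edges beyond the spanning tree are bounded by $1$, which is wasteful. To pass from the crude count $N^{1+(n-1)/2}$ to $N^{n-R/2}$ we must exploit the extra red edges.

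\textbf{Step 2 (extra red edges).} This is the main obstacle. When several red edges attach to the eliminated vertex $v$, we instead use Lemma \ref{NL.lifting 2}-type bounds. The estimate $\sum_s\prod_{i=1}^d(|s_i-s|+1)^{-1/2}$ behaves like $N^{1-d/2}$ (up to logs) after transferring the red edges to the neighbours. This is precisely the mechanism of Proposition \ref{P.lifting 2}, whose amenability-preserving reconnection via the family $\mathcal F$ works verbatim: $p$ is only used through $\sum_z p\le 1$, just as $\cha'$ was through Lemma \ref{NL.sum of cha}. So we iterate Proposition \ref{P.lifting 2} (now for star backbones, since every non-root vertex is a leaf of $S_n$). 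The argument there never used non-adjacency to the root except through the black edge weight sum. Each pruning of a vertex with $d\ge1$ red edges gives a factor $\loglesssim N^{1-1/2}$ per removed red edge count as in \eqref{E.lifting 2-3}, yielding $N^{1}$ per removed vertex times $N^{-1/2}$ per destroyed red edge. Bookkeeping: pruning a vertex with $d$ red edges removes one red edge net and gains $\sum_s p$-summation $\lesssim N^{1/2}$ (the $N^{1/2}$ arises from the $s$-sum against $d$ inverse square roots, via Lemma \ref{NL.lifting 2}). A vertex with no red edges gains $N$. Hence with $r$ red edges and $n$ vertices, after all prunings we get $N^{n-r/2}\cdot N^{0}$, provided every red-edge-carrying pruning removes exactly one red edge net, which Proposition \ref{P.lifting 2} guarantees.

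\textbf{Step 3 (totals).} Counting, the pruning steps with red edges number $R(G)$ at most. The vertices are exhausted after $n$ steps. Steps with red edges contribute $N^{1/2}$ each, and steps without contribute $N$. Since red edges decrease by exactly one in each red step and never otherwise, there are exactly $R(G)$ red steps and $n-R(G)$ free steps. This uses $R(G)\le 2n-2$ from amenability; if $R(G)>n$, the red steps must still fit. We handle this by noting that the final vertex attached to the root is always free, so we need $R(G)\le n-1$ red steps. If instead several red edges are removed at once (e.g.\ a vertex all of whose neighbours are gone), we bound the leftover weights by $1$ and simply lose, which only helps since the estimate is an upper bound in the direction $N^{n-R/2}$ decreasing in $R$. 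That last point is suspect, so carefully, we will use the multi-red-edge sum $\sum_s\prod_{i\le d}(|s_i-s|+1)^{-1/2}\loglesssim N^{1-d/2}$ for $d\le2$ directly. The final bound is then $N^{\#\text{free}}\cdot N^{\#\text{red steps}/2}=N^{n-R(G)/2}$.
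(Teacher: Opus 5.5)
\textbf{There is a genuine gap in Steps 2--3.} Your Step 1 is fine when the red edges form a forest. The gap is that you have no valid mechanism for red edges beyond a spanning forest. That is, nothing handles vertices of red degree $\ge 3$, or graphs with $R(G)\ge n$, and amenability allows $R(G)$ up to $2n-2$.

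Proposition~\ref{P.lifting 2} does not transfer to star backbones, for three reasons:
\begin{itemize}
\item It only prunes leaves not adjacent to $\vroot$, and every non-root vertex of $S_n$ is adjacent to $\vroot$.
\item Its estimate uses $\sum_z\cha'(s,z)\lesssim (s+1)^{-1/2}$ to turn the parent edge into one of the $d+1$ factors fed into Lemma~\ref{NL.lifting 2}. For a $p$-weighted root edge, $\sum_z p=1$ gives no decay, so only the $d$ red factors are available.
\item Its reconnection would create red edges at $w_0=\vroot$, which the $D$-index does not see.
\end{itemize}
So the exchange rate differs from yours: a vertex with $d\ge 2$ red edges yields only $\log N$ and must consume two red edges net.

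Your bookkeeping (one red edge net and $N^{1/2}$ per step, at most $n-1$ such steps) gives at best $N^{(n+1)/2}$. This is weaker than $N^{n-R(G)/2}$ as soon as $R(G)\ge n$. Bounding surplus red weights by $1$ goes the wrong way, as you suspected. Restricting to $d\le 2$ does not help either. The bound $\sum_s\prod_{i\le d}(|s_i-s|+1)^{-1/2}\lesssim N^{1-d/2}$ fails for $d=3$: the sum is of order $1$ when the $s_i$ coincide. And degree-$3$ vertices cannot be avoided. For example, the red $K_4$ on the leaves of $S_4$ is amenable with $R=6$, every vertex has red degree $3$, and the target is $N$. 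Deleting a vertex and discarding one of its red edges leaves a red triangle, giving only $N^{3/2}$ up to logarithms. Reconnecting with two new red edges gives $5>2\cdot 3-2$ red edges on three vertices, so amenability is lost.

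\textbf{The missing idea is the paper's minimal-degree induction.} Induct on $n$ and delete a vertex $v$ of minimal red degree. Since $R(G)\le 2n-2$, the average red degree is below $4$, so $\reddeg(v)\le 3$.

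If $\reddeg(v)\in\{0,1,2\}$, deleting $v$ gains $aN$, $N^{1/2}$, $\log N$ respectively, using $\sum_z p=1$ and Lemma~\ref{NL.1 red}. The remaining graph is amenable, because on a star $D(S)=R(S)-2|S|$ depends only on $S$.

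If $\reddeg(v)=3$, Lemma~\ref{NL.lifting 2} bounds the $s$-sum against the three red factors at $v$ by $\log N$ times a sum of terms. Each term is a single red factor between two neighbours of $v$. Thus $n\to n-1$ and $R\to R-2$, which is exactly what $N^{n-R/2}$ requires. To keep amenability, the new red edge must not lie in any $S$ with $R(S)=2|S|-2$. Amenability of $G$ gives $D(S)\le -|S\cap N(v)|$ for $S\not\ni v$. Combined with \eqref{E.D-index inequality}, this shows two such bad pairs of neighbours cannot share a vertex. Hence some $u\in N(v)$ has both pairs through it good, and every term from Lemma~\ref{NL.lifting 2} can be routed through such a pair.

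The paper phrases this choice via a union/intersection-closed family $\mathcal F$ and a vertex $u$ outside all its members. Singletons trivially satisfy $R(S)=2|S|-2$, so that selection has to be read at the level of pairs, as above.
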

\begin{proof}
    We prove the proposition by induction on $n$. The case $n=1$ follows easily from the fact that $\sum_{x\in \bbZ} p(t,x)=1$ for every $t>0$. Now suppose \eqref{E.lifting 3 new} holds for all $1\le n'\le n-1$. We pick a non-root vertex $v$ with the smallest red degree. By the amenability of $G$, the average red degree is strictly less than $4$. $\reddeg(v)\le 3$. The multiset $N(v)$ is defined to be the multiset of neighbors of $v$.
    \begin{itemize}
        \item \textbf{Case 1} $\reddeg(v)\in\{0,1,2\}$. Denote by $G'$ the graph obtained from $G$ by deleting $v$ and all the edges incident to $v$. Then, \eqref{E.lifting 3 new} follows from the induction hypothesis on $G'$. More precisely, one uses $\sum_{t\le aN}\sum_{x\in \bbZ} p(t,x)=aN$ when $\reddeg(v)=0$, uses $\sum_{x\in \bbZ} p(t,x)=1$ and Lemma \ref{NL.1 red} when $\reddeg(v)=1$, and uses Lemma \ref{NL.1 red} when $\reddeg(v)=2$.
        \item \textbf{Case 2} $\reddeg(v)=3$. Consider a family of subsets of $N(v)$,
        $$\mathcal F:=\{A\subset N(v):there~is~a~vertex~set~S~s.t. \vroot, v\notin S,S\cap N(v)=A~and~R(S)=2|S|-2\}.$$
        As in Proposition \ref{P.property of F}, $\mathcal F$ defined above is closed under union and intersection. Moreover, $N(v)\notin \mathcal F$, which means that at least one vertex in $N(v)$ is not included in any element of $\mathcal F$. Pick such a vertex and call it $u$. Then pick a vertex $w\in N(v)$ different from $u$. Denote by $G’$ the graph obtained from $G$ by deleting $v$ and all the edges incident to $v$ and adding an additional red edge ${u,w}$. Then, $G’$ is amenable and \eqref{E.lifting 3 new} follows from the induction hypothesis on $G’$ and Lemma \ref{NL.lifting 2}.

    \end{itemize} 
\end{proof}
We now prove Theorem \ref{T.concentration on Xl 1}.
\begin{proof}[Proof of Theorem \ref{T.concentration on Xl 1}]
    For a fixed $\tree\in\treefamily_l^n$ and a proper partition $\pi$ Proposition \ref{P.binary tree} yields a constant $C$ and a full binary tree $\tilde{\tree}$ whose root has exactly $n$ children, such that
    $$\absolute{\sum_{\embed\in\embedsetnew{t,x}^\pi(\tree)}\prod_{e\in E(\tree)}\weightnew{\embed}(e)}\lesssim\actionnew{t,x}{\tree^\pi}\lesssim\actionnew{t+C,x}{\tilde\tree^\pi}.$$
    Let $\tree^\circ$ denote the naked part of $\tilde\tree$ and suppose it has $m+n$ edges. Then $\setsize{L(\tree)}=m+2n$. By applying Propositions \ref{P.lifting 1}, \ref{P.lifting 2} and \ref{P.lifting 3 new}, we obtain
    \begin{align*}
        \actionnew{t,x}{\tilde\tree^\pi}&\loglesssim\sum_{G'\in\mathcal G}N^{\frac{1}{2}(m-R(G)+R(G'))}\actionnew{t,x}{G'}\loglesssim \sum_{G'\in\mathcal G}N^{\frac{1}{2}(m-R(G)+R(G'))}\cdot N^{n-\frac{1}{2}R(G')}\\
        &\lesssim N^{\frac{1}{2}(m+2n-R(G))}\lesssim N^{\frac{1}{4}\setsize{L(\tree)}-\frac{1}{4}\sum_{\pi_i\in\pi}(\setsize{\pi_i}-4)^+}.
    \end{align*}
    Here, in the last inequality, we use $R(G)\ge\setsize{L(\tree)}/2+\sum_{\pi_i\in\pi}(\setsize{\pi_i}-4)^+/2$ and $\setsize{L(\tree)}=m+2n$. Using the above inequalities and \eqref{E.restrict to proper partitions new+} in \eqref{E.restrict to nonlocal partitions}, we may conclude
    \begin{align*}
        &\absolute{\bbE\left[\left(\xterm{l}(t,x)-\bbE[\xterm{l}(t,x)]\right)^n\right]}\\
        \loglesssim&\sum_{\tree\in\treefamily_l^n}N^{-(\frac{1}{4}+\rate)\setsize{L(\tree)}}\sum_{\substack{\pi\in\Pi(L(\tree))\\\pi~is~proper}}N^{\frac{1}{4}\sum_{\pi_i\in\pi}(\setsize{\pi_i}-8)^+}\sum_{\embed\in\embedsetnew{t,x}^\pi(\tree)}\prod_{e\in E(\tree)}\absolute{\weightnew{\embed}(e)}\\
        \loglesssim&\sum_{\tree\in\treefamily_l^n}N^{-(\frac{1}{4}+\rate)\setsize{L(\tree)}}\sum_{\substack{\pi\in\Pi(L(\tree))\\\pi~is~proper}}N^{\frac{1}{4}\sum_{\pi_i\in\pi}(\setsize{\pi_i}-8)^+}\cdot N^{\frac{1}{4}\setsize{L(\tree)}-\frac{1}{4}\sum_{\pi_i\in\pi}(\setsize{\pi_i}-4)^+}\\
        \lesssim& N^{-\rate\setsize{L(\tree)}}\lesssim N^{-ln\rate}.
    \end{align*}
    The last inequality uses the fact that any tree in $\treefamily_l^n$ has at least $ln$ leaves. This completes the proof.
\end{proof}

\section{Proof of Theorem \ref{T.main 2}}\label{S.Proof of T.main 2}

In this section, we prove Theorem \ref{T.main 2}, which concerns the regime $\alpha\ge 4$ and $M\ge 20/\rate$. In this case, truncation of $\noise$ is unnecessary, and the proof follows the same strategy  as before with slightly different definitions of $\xterm{l}$ and $\error{l}$, as well as different bounds.

\subsection{Renormalization}
Suppose $\driving$ has the following expansion near $0$:
$$\driving(x)=\frac{\beta}{8}x^2+R(x)x^4,$$
where $R(x)$ is a continuous function in a neighborhood of $0$, satisfying $R(0)=\driving^{(4)}(0)/24$. The renormalization terms are defined as follows:

\begin{equation*}
    \xterm{1}(t,x)=\average\xterm{1}(t,x)+N^{-\frac{1}{4}-\rate}\noise(t,x),
\end{equation*} The initial condition is $\xterm{1}(0,x)=0$ for all $x\in\bbZ$. Let $\kterm{1}(t,x)=\difference\xterm{1}(t,x)$. Next, define

\begin{equation*}
    \xterm{2}(t,x)=\average\xterm{2}(t,x)+\frac{\beta}{8}\kterm{1}(t,x)^2
\end{equation*} The initial condition is $\xterm{2}(0,x)=0$ for all $x\in\bbZ$. Let $\kterm{2}(t,x)=\difference\xterm{2}(t,x)$. 
For $l\ge 3$, suppose $\xterm{m}(t,x)$ and $\kterm{m}(t,x)$ are defined for all $1\le m\le l-1$ and $(t,x)\in\bbZ_+\times\bbZ$. Define
$$\xterm{l}(t,x)=\average\xterm{l}(t,x)+\frac{\beta}{4}\kterm{1}(t,x)\kterm{l-1}(t,x)$$
The initial condition is $\xterm{l}(0,x)=0$ for all $x\in\bbZ$. Let $\kterm{l}(t,x)=\difference\xterm{l}(t,x)$ and
$$\error{l}(t,x)=\function(t,x)-\sum_{m=1}^{l}\xterm{m}(t,x).$$
We may derive the following upper bound on moments of $\kterm{l}$.
\begin{theorem}\label{T.moments upper bound on K 2}
     For any fixed even $2\le n\le M$,
    $$\bbE\left[\left(\kterm{1}(t,x)\right)^n\right]\lesssim N^{-(\frac{1}{4}+\rate)n}.$$
\end{theorem}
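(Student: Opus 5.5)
Since $\kterm{1}$ is a linear form in the independent, mean-zero, untruncated noise variables, namely
$$\kterm{1}(t,x)=N^{-\frac14-\rate}\sum_{s=1}^{t-1}\sum_{z\in\bbZ}\cha(t-s,x-z)\,\noise(s,z),$$
I will not use the tree machinery at all. Instead I will apply a moment inequality for sums of independent centered variables, namely Rosenthal's (or Burkholder's / Marcinkiewicz--Zygmund's) inequality. For even $n\le M$ it gives
$$\bbE\Big[\Big(\sum_{(s,z)} c_{s,z}\noise(s,z)\Big)^n\Big]\lesssim_n \Big(\sum_{(s,z)} c_{s,z}^2\,\bbE[\noise(s,z)^2]\Big)^{n/2}+\sum_{(s,z)}|c_{s,z}|^n\,\bbE|\noise(s,z)|^n,$$
with $c_{s,z}=\cha(t-s,x-z)$. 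The moments $\bbE|\noise(s,z)|^k$ are uniformly bounded for $k\le M$ by the finite-moment assumption. Alternatively, I can expand $\bbE[(\kterm{1})^n]$ over partitions of the $n$ factors with no singleton blocks and bound each block $\pi_i$ by $\sum_{(s,z)}|c_{s,z}|^{|\pi_i|}$ times a uniform moment. This is elementary and avoids quoting Rosenthal.

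Either way, the proof reduces to the kernel estimate
$$\sum_{s=1}^{t-1}\sum_{z\in\bbZ}\cha(s,z)^2\lesssim 1$$
uniformly in $t$, together with $|\cha|\le 1$, which gives $\sum|c|^k\le\sum c^2\lesssim 1$ for $k\ge 2$. For the squared sum I will invoke the computation already used in the proof of Theorem \ref{T.expectation of Xl 1}, via Lemma \ref{NL.reflection identity}: there $\sum_{s_1<s}\sum_{z_1}\cha(s-s_1,z_1)^2=4(1-p(2s-2,0))\le 4$. Alternatively it follows directly from $|\cha(s,z)|\lesssim s^{-1}e^{-cz^2/s}$ (Lemma \ref{NL.upper bound on cha}), since $\sum_z \cha(s,z)^2\lesssim s^{-3/2}$ is summable in $s$.

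Plugging in, each block of the partition contributes $O(1)$. The number of partitions depends only on $n$. The prefactor is $N^{-(\frac14+\rate)n}$. This yields $\bbE[(\kterm{1}(t,x))^n]\lesssim N^{-(\frac14+\rate)n}$ uniformly in $(t,x)$.

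The only point needing care is that $n\le M$, so that every block moment $\bbE|\noise|^{|\pi_i|}$ with $|\pi_i|\le n$ is finite and uniformly bounded in $t$. This is exactly why the statement restricts to $n\le M$, and no truncation is needed. I expect no real obstacle; the main step is simply the $\ell^2$ bound on $\cha$.
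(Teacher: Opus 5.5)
Your proposal is correct, and its partition-expansion form is exactly the paper's proof: expand the $n$-th moment over block structures whose blocks all have size at least two (since $\bbE[\noise]=0$), bound the block moments uniformly using $n\le M$, drop the distinctness constraint, and use $|\cha|^{a_j}\le\cha^2$ together with $\sum_{s,z}\cha(s,z)^2\le 4$ from Lemma~\ref{NL.reflection identity} to get a factor $4^k$; the Rosenthal variant just packages the same $\ell^2$ bound as a black box. One small slip: Lemma~\ref{NL.upper bound on cha} only gives $\sup_z|\cha(s,z)|=O(1/(s+1))$, which alone yields $\sum_z\cha(s,z)^2\lesssim s^{-1}$ and a $\log N$ loss, so your backup justification needs a separate input (the Gaussian tail, or $|\cha|\le\cha'$ combined with Lemma~\ref{NL.sum of cha}) --- but your primary route via the reflection identity makes this moot.
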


\begin{theorem}\label{T.moments upper bound on K 3}
     For any fixed $l\ge 2$ and even $n\ge 2$ satisfying  $ln\le M$,
    $$\bbE\left[\left(\kterm{l}(t,x)\right)^n\right]\loglesssim N^{-(\frac{1}{2}+l\rate)n}.$$
\end{theorem}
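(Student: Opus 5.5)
The plan is to run the tree-expansion and graph-lifting machinery of Sections~\ref{S.Tree representation and reductions}--\ref{S.Graph Lifting} again, for the simpler recursion of this section. The truncation is replaced by the moment condition $ln\le M$, and the star-reduction count has to be sharpened.

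\textbf{Step 1: tree representation.} Here $\kterm{2}=\difference\xterm{2}$ with $\xterm{2}$ driven by $\frac{\beta}{8}(\kterm{1})^2$, and $\kterm{l}$ is driven by $\frac{\beta}{4}\kterm{1}\kterm{l-1}$. So, exactly as in Proposition~\ref{P.kterm represented by tree}, $\kterm{l}(t,x)=c_l\,\action{t,x}{\tree_l}$ for a single ``caterpillar'' tree. This tree is $\tree_1=\one$, $\tree_2=\extension{\one\vee\one}$ and $\tree_l=\extension{\one\vee\tree_{l-1}}$, now with the untruncated $\noise$ at the leaves. Hence $\bbE[(\kterm{l})^n]=c_l^n\,\bbE[\action{t,x}{\tree_l^{\vee n}}]$, which is a sum over partitions $\pi$ of the $ln$ leaves.

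Since every block has size at most $ln\le M$, each moment $\bbE[\noise^{|\pi_i|}]$ is bounded uniformly. No $N^{(|\pi_i|-8)^+/4}$ losses occur, so the prefactor is just $N^{-(\frac14+\rate)ln}$.

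The reduction to proper partitions goes through verbatim (Proposition~\ref{P.restrict to proper partitions}). Now every edge, including the root edges, carries the kernel $\cha$, whose spatial sum vanishes. A bad point adjacent to the root is therefore handled like any other bad point. The binary-tree reduction (Proposition~\ref{P.binary tree}) is not needed, because $\tree_l$ is already full binary below the root.

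It therefore suffices to show, for every proper $\pi$,
\[
\action{t,x}{\tree^\pi}\loglesssim N^{\frac14 ln-\frac12 n},\qquad \tree=\tree_l^{\vee n}.
\]

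\textbf{Step 2: lifting.} I apply leaf lifting (Proposition~\ref{P.lifting 1}) to get an amenable $G$ with backbone $\tree^\circ$ and $R(G)\ge ln/2+\frac12\sum(|\pi_i|-4)^+$. I then prune the backbone (Proposition~\ref{P.lifting 2}, inequality \eqref{E.lifting 2-3}) down to amenable graphs $G'$ with backbone $S_n$. Finally I run the star reduction (Proposition~\ref{P.lifting 3}).

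Tracking the exponents exactly as in the proof of Theorem~\ref{T.moments upper bound on K 1}, but keeping the number $k$ of components of $G'\setminus\vroot$ instead of bounding it by $1$, gives
\[
\action{t,x}{\tree^\pi}\loglesssim N^{\frac14 ln-\frac k2}.
\]
When $k=n$ this is exactly the desired bound. This is the case where no red edge joins two different root branches, which essentially means $\pi$ does not mix the $n$ copies of $\tree_l$. That case can also be obtained directly by factorization and induction on $l$.

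\textbf{Step 3: mixing partitions (main obstacle).} The hard case is a partition that couples different copies of $\tree_l$, for instance $n=2$ with $k=1$. There the crude count loses $N^{(n-k)/2}$. I see two places where the lost factors should be recovered.

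First, the red edges produced by leaf lifting across different branches. In a caterpillar every internal vertex has a leaf child, so cross-branch blocks force vertices of type 2 to carry red edges. I would redo the count of Proposition~\ref{P.lifting 1} for caterpillars to show that each cross-branch identification yields one red edge beyond the generic $|\pi_i|/2$.

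Second, the root edges. In Proposition~\ref{P.lifting 3} I currently bound $\sum_{s,z}|\cha(t-s,x-z)|$ by $N^{1/2}$ for the last vertex of each component. Instead I would keep the signed kernel $\cha$ on the root edges and sum by parts in $z$, using that the rest of the component depends on $z$ only through red-edge weights, which depend on time differences alone. This should turn the $N^{1/2}$ into a $\log N$ whenever a component contains vertices from several branches, supplying the missing factor per merged branch.

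I expect Step~3 to be the real difficulty. If the summation-by-parts gain fails, the fallback is an inductive $L^n$ bound on $\kterm{l}$. This would combine the Burkholder inequality in the time variable of the last noise, Hölder's inequality (which is where $ln\le M$ enters), and Theorem~\ref{T.moments upper bound on K 2}. Its aim is to produce the missing factor $N^{-1/4}$ per factor of $\kterm{1}$ that the naive Minkowski bound $\|\kterm{l}\|_n\lesssim N^{1/2}\|\kterm{1}\|_{ln}\|\kterm{l-1}\|_{ln}$ misses.
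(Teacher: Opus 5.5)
Your Steps 1--2 are sound, and the exponent $N^{\frac14 ln-\frac k2}$ is indeed what the Section~4 chain gives. But the proof stops exactly where the content of the theorem lies. For partitions that link different copies of $\tree_l$ you have no argument, and neither remedy in Step~3 can work as stated.

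Take the smallest case $l=n=2$, $\pi=\{\{v_1,v_3\},\{v_2,v_4\}\}$ (the partition $\pi_{13|24}$ of Example~\ref{EX.tree representation}). Leaf lifting produces $S_2$ plus a double red edge $\{u_1,u_2\}$, and $D(\{u_1,u_2\})=2-4=-2$. So this graph is already saturated: any further red edge destroys the amenability that pruning and star reduction require, and your first remedy has no room. Proposition~\ref{P.lifting 3} then returns $N^{1/2}$, against the target $N^{0}$.

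The loss sits in the star reduction itself. It removes $u_2$ using its root edge and one red edge, discards the second copy of the red edge, and pays $\sum_{s,z}\cha'\lesssim N^{1/2}$ for $u_1$. In fact a root edge, after the spatial sum, is $\lesssim(t-s+1)^{-1/2}$, so it behaves like a red edge to the root, and $\sum_{s_1,s_2\le t}(t-s_1+1)^{-1/2}(t-s_2+1)^{-1/2}(\absolute{s_1-s_2}+1)^{-1}\lesssim\log^2 N$.

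Your second remedy is not available. Every weight in $G'$ is the positive majorant $\cha'$, and the lifting inequalities hold only for nonnegative weights. Indeed, if the rest of the summand really were $z$-independent with a signed root kernel, $\sum_z\cha=0$ would make a nonzero quantity vanish. The Burkholder fallback is stated only as an aim; nothing in it shows where the missing $N^{-1/4}$ per factor would come from.

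The missing idea, which is how the paper proceeds, is to drop the amenable/$D$-index bookkeeping for this theorem and use the caterpillar structure directly. Every naked vertex of $\tree_l^n$ has a leaf child, and a proper block is never a singleton nor exactly the two leaf children of one vertex. So leaf lifting (Proposition~\ref{P.Lifting 1 in Case 2}) yields ``good'' graphs: every non-root vertex carries a red edge, the bottom vertices $v_1,\dots,v_n$ carry at least two, and red edges are allowed to end at the root.

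Pruning $v_1,\dots,v_n$ then costs only $\log N$ each (Proposition~\ref{P.Lifting 2 in Case 2}). On the remaining backbone $(\tree^\circ)^\circ$, with $(l-2)n$ non-root vertices, each removal of a backbone leaf does one of two things. Either it costs $\log N$ and raises by at most one the number $z$ of non-root vertices with no red edge, or it costs $N^{1/2}$ and lowers $z$ by one. Since $z$ starts and ends at $0$, at most $(l-2)n/2$ steps cost $N^{1/2}$. This gives $\action{t,x}{(\tree_l^n)^\pi}\loglesssim N^{\frac14(l-2)n}=N^{\frac14 ln-\frac12 n}$ for every proper $\pi$, with no need to separate mixing from non-mixing partitions.
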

As in Corollary \ref{C.upper bound on K 1}, we obtain the following by the same proof.
\begin{corollary}\label{C.upper bound on K 2}
    For fixed $a,b>0$,
    $$\bbP\left(\absolute{\kterm{1}(t,x)}\le N^{-\frac{1}{4}-0.6\rate},\forall~ (t,x)\in\trapezoid\right)=1-o(1).$$
    Moreover, for any fixed $2\le l\le \max\{2, \lceil4/\rate\rceil\}$,
    $$\bbP\left(\absolute{\kterm{l}(t,x)}\le N^{-\frac{1}{2}-0.6l\rate},\forall~ (t,x)\in\trapezoid\right)=1-o(1).$$
\end{corollary}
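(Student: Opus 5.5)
This follows from Theorems \ref{T.moments upper bound on K 2} and \ref{T.moments upper bound on K 3} by Markov's inequality and a union bound over $\trapezoid$, exactly as in Corollary \ref{C.upper bound on K 1}. Here the moment index is capped by $M$ (no truncation), so the first task is to check that the available moments are enough. Note that $\setsize{\trapezoid}\lesssim N^2$, so it suffices to prove a pointwise probability bound of order $o(N^{-2})$, uniformly in $(t,x)\in\trapezoid$.

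\textbf{The term $\kterm{1}$.} Choose an even $n\le M$ and apply Markov's inequality together with Theorem \ref{T.moments upper bound on K 2}:
\begin{equation*}
\bbP\left(\absolute{\kterm{1}(t,x)}> N^{-\frac14-0.6\rate}\right)\le \frac{\bbE[(\kterm{1}(t,x))^n]}{N^{-(\frac14+0.6\rate)n}}\lesssim N^{-(\frac14+\rate)n+(\frac14+0.6\rate)n}=N^{-0.4\rate n}.
\end{equation*}
We need $0.4\rate n>2$, i.e. $n>5/\rate$. Take $n=2\lceil 5/\rate\rceil$ (or the smallest even integer exceeding $5/\rate$). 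Then $n\le 10/\rate+2\le 20/\rate\le M$, since $\rate<1$. The union bound gives total failure probability $\lesssim N^{2-0.4\rate n}=o(1)$; if needed, one may bump $n$ by $2$ to make the exponent strictly negative.

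\textbf{The terms $\kterm{l}$ with $2\le l\le L:=\max\{2,\lceil4/\rate\rceil\}$.} By Theorem \ref{T.moments upper bound on K 3}, for even $n$ with $ln\le M$,
\begin{equation*}
\bbP\left(\absolute{\kterm{l}(t,x)}> N^{-\frac12-0.6l\rate}\right)\loglesssim N^{-(\frac12+l\rate)n+(\frac12+0.6l\rate)n}=N^{-0.4l\rate n}.
\end{equation*}
It suffices to take $n=2$, provided $0.8l\rate>2$, i.e. $l\rate>2.5$, and $2l\le M$. The constraint $2l\le M$ holds because $2L\le 2(4/\rate+1)\le 10/\rate\le M$ (and $L=2$ is fine since $M\ge 20$).

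\textbf{The main obstacle: small $l$.} For small $l$, $l\rate$ may be less than $2.5$, so $n=2$ is not enough. In that case take the smallest even $n$ with $0.4l\rate n>2$, namely $n\approx 2\lceil 5/(2l\rate)\rceil$. The check needed is $ln\le M$. We have $ln\le l(5/(l\rate)+2)=5/\rate+2l$, and we must show this is at most $20/\rate$. For $l\le L$, $2l\le 8/\rate+2$, so $ln\le 13/\rate+2\le 20/\rate$ since $\rate<1$. The only real work is this arithmetic; the $\log N$ factors implicit in $\loglesssim$ are absorbed by the strict polynomial margin.

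Since $L$ is fixed, a final union bound over the finitely many $l$ completes the proof.
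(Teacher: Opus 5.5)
Your proposal is correct and follows the paper's argument. At each point you apply Markov's inequality with the moment bounds of Theorems \ref{T.moments upper bound on K 2} and \ref{T.moments upper bound on K 3}, then take a union bound over the $O(N^2)$ points of $\trapezoid$ and the finitely many $l$. Your $l$-dependent choice of the even moment order (the smallest even $n>5/(l\rate)$) is what keeps $ln\le M$ when only $M\ge 20/\rate$ is assumed. The paper leaves this implicit by calling the proof identical to that of Corollary \ref{C.upper bound on K 1}, but the fixed choice $n=2\lceil 5/\rate\rceil$ used there would violate $ln\le M$ for large $l$.
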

\begin{proof}
    
    The proof is identical to that of Corollary \ref{C.upper bound on K 1}. Apply Markov's inequality and Theorems \ref{T.moments upper bound on K 2} and \ref{T.moments upper bound on K 3} at each $(t,x)\in\trapezoid$, followed by a union bound.
    
\end{proof}
Thus, we can expand $\function$ into a sum of $\xterm{l}$ as we did in Theorem \ref{T.expansion 1}. Since the proof follows the same inductive strategy, we omit it for brevity.
\begin{theorem}\label{T.expansion 2}
    Let $L=\max\{2, \lceil4/\rate\rceil\}$,  and let $a,b>0$ be fixed. Then there is an event $\Omega_e$ with $\bbP(\Omega_e)=1-o(1)$, such that on $\Omega_e$, for all $(t,x)\in\trapezoid$,
    \begin{equation}\label{E.expansion 3}
        \error{L}(t,x)=o(N^{-\rate}).
    \end{equation}
    Consequently,  we have the following expansion, which holds with high probability:
    \begin{equation}\label{E.expansion 4}
        \function(t,x)=\sum_{l=1}^{L}\xterm{l}(t,x)+o(N^{-\rate}),~\forall~(t,x)\in\trapezoid
    \end{equation}
\end{theorem}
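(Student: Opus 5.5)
We follow the inductive scheme of Theorem \ref{T.expansion 1}, now using the modified renormalization terms of this section. Set $\Omega_K$ to be the intersection of the two events in Corollary \ref{C.upper bound on K 2} for $1\le l\le L$, and let $\Omega_e:=\Omega_K$. No truncation event is needed, since the noise is untruncated here. On $\Omega_e$ we will show by induction on $t$ that $|\error{L}(t,x)|\le tN^{-1-1.1\rate}$ for all $(t,x)\in\trapezoid$, once $N\ge N_0$ for a deterministic $N_0$. Given this, \eqref{E.expansion 3} follows because $t\le aN$, and \eqref{E.expansion 4} is just the definition of $\error{L}$.

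First we derive the recursion for $\error{L}$. Write $K:=\sum_{l=1}^L\kterm{l}$, so that $\difference\function=K+\difference\error{L}$. Subtracting the defining equations of $\xterm{1},\dots,\xterm{L}$ from \eqref{E.def of f} gives
\begin{equation*}
\error{L}=\average\error{L}+\frac{\beta}{8}\Bigl[(\difference\function)^2-\kterm{1}^2-2\kterm{1}\sum_{l=2}^{L}\kterm{l-1}\Bigr]+R(\difference\function)(\difference\function)^4 .
\end{equation*}
Expanding $(K+\difference\error{L})^2$, the bracket equals
\begin{equation*}
\difference\error{L}\,(2K+\difference\error{L})+2\kterm{1}\kterm{L}+\sum_{\substack{2\le l,l'\le L}}\kterm{l}\kterm{l'} .
\end{equation*}
The term $\difference\error{L}(2K+\difference\error{L})$ combines with $\average\error{L}$. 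The result is $(\tfrac12+B)\error{L}(t-1,x-1)+(\tfrac12-B)\error{L}(t-1,x+1)$ with $B=\frac{\beta}{8}(2K+\difference\error{L})$. On $\Omega_e$, and under the induction hypothesis, $|B|\le 1/4$ for large $N$, because $|K|\lesssim N^{-1/4-0.6\rate}$ and $|\difference\error{L}|\le 2aN^{-1.1\rate}$. This contracting term therefore contributes at most $\max|\error{L}(t-1,\cdot)|$.

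The remaining forcing terms are bounded using Corollary \ref{C.upper bound on K 2}:
\begin{itemize}
\item $|\kterm{1}\kterm{L}|\le N^{-3/4-0.6(L+1)\rate}\le N^{-3/4-2.4}$.
\item $|\kterm{l}\kterm{l'}|\le N^{-1-0.6(l+l')\rate}\le N^{-1-2.4\rate}$ for $l,l'\ge2$.
\item $|R(\difference\function)(\difference\function)^4|\lesssim N^{-1-2.4\rate}$, since $R$ is continuous and hence bounded near $0$, and $|\difference\function|\lesssim N^{-1/4-0.6\rate}$.
\end{itemize}
All three are $\le \tfrac13N^{-1-1.1\rate}$ for large $N$, which closes the induction exactly as before.

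The main obstacle is not this induction, which is routine, but making sure Corollary \ref{C.upper bound on K 2} genuinely applies for every $l\le L$. Theorem \ref{T.moments upper bound on K 3} requires $ln\le M$. The Markov and union-bound step needs $n$ with $N^{-0.4\rate(\frac12+l\rate)n}\cdot N^{2}\to0$ for $l\ge2$, which is cheaper, and similarly $N^{-0.4\rate n}N^2\to0$ for $\kterm{1}$. The plan is to check that $M\ge20/\rate$ permits an even $n\ge 5/\rate$ with $Ln\le M$ for $L=\lceil4/\rate\rceil$, possibly choosing smaller $n$ for larger $l$. If this bookkeeping is too tight, we would lower the exponent $0.6$ in the targets, since the induction only needs the product bounds above to beat $N^{-1-1.1\rate}$.
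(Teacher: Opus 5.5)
You are following the route the paper intends (it only says the induction of Theorem \ref{T.expansion 1} carries over). Your contraction step and your bounds on $\kterm{1}\kterm{L}$ and $\kterm{l}\kterm{l'}$ are fine. The third bullet, however, has a genuine gap.

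You claim $|\difference\function|\lesssim N^{-1/4-0.6\rate}$. But $\difference\function=K+\difference\error{L}$, and the induction hypothesis only gives $|\difference\error{L}|\le 2aN^{-1.1\rate}$, the bound you yourself use for $B$. This dominates $N^{-1/4-0.6\rate}$ whenever $\rate<1/2$.

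The correct crude bound is therefore $|R(\difference\function)(\difference\function)^4|\lesssim N^{-4.4\rate}$. This is not $\le\frac13N^{-1-1.1\rate}$ once $\rate<10/33$, so the induction does not close. This is exactly where a verbatim transfer from Theorem \ref{T.expansion 1} fails. There the remainder was bounded by $R_0(LN^{-0.6\rate}+2aN^{-1.1\rate})^\alpha$, which is harmless only because $\alpha\ge 4/\rate$; here you only have fourth order.

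Retuning exponents, your proposed fallback, does not rescue the crude treatment for small $\rate$. Making $(\difference\error{L})^4$ smaller than the per-step budget forces $\max|\error{L}|\lesssim N^{-1/3}$. Meanwhile, the forcing $(\kterm{2})^2$ that you leave in the error can accumulate over $aN$ steps to size $N^{-2.4\rate}$ on $\Omega_K$, and to order $N^{-4\rate}$ even with sharp moment bounds.

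The missing idea is to absorb the $\difference\error{L}$-dependence of the whole nonlinearity, not only its quadratic part, into the contraction coefficient. With $h:=\difference\error{L}(t,x)$, write
\begin{equation*}
\driving(\difference\function)=\driving(K)+B\,h,\qquad B:=\int_0^1\driving'(K+\theta h)\,d\theta .
\end{equation*}
Since $\driving$ is even and $C^2$ near $0$, we have $\driving'(0)=0$. Hence $|B|\lesssim|K|+|h|=o(1)$, so $|B|\le 1/4$ for large $N$, and $\average\error{L}+Bh$ is again a convex combination of $\error{L}(t-1,x\pm1)$. The forcing becomes
\begin{equation*}
\driving(K)-\frac{\beta}{8}\kterm{1}^2-\frac{\beta}{4}\kterm{1}\sum_{m=2}^{L-1}\kterm{m}=\frac{\beta}{8}\Bigl[2\kterm{1}\kterm{L}+\Bigl(\sum_{l=2}^{L}\kterm{l}\Bigr)^2\Bigr]+R(K)K^4 .
\end{equation*}
This depends only on the $\kterm{l}$ and is $\lesssim N^{-1-2.4\rate}$ on $\Omega_K$, because $|K|^4\lesssim N^{-1-2.4\rate}$. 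With this, the induction $|\error{L}(t,x)|\le tN^{-1-1.1\rate}$ closes.

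Two minor points. First, in your displayed recursion the last sum should be $\sum_{l=3}^{L}\kterm{l-1}$; your expanded bracket is nonetheless correct. Second, the step you call the main obstacle is not one, since Corollary \ref{C.upper bound on K 2} is already available. In any case the Markov bound is $N^{-0.4l\rate n}$ (not the exponent you wrote), and an even $n$ with $5/\rate<ln\le 5/\rate+2l\le 13/\rate+2\le M$ always exists.
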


Although the upper bound on $\kterm{l}$ is different from that in the previous case, the behavior of $\xterm{l}$ is the same. We have the following two theorems, which are analogous to Theorems \ref{T.expectation of Xl 1} and \ref{T.concentration on Xl 1}.
\begin{theorem}\label{T.expectation of Xl 2}
    Let $\mu_k=\bbE[\noise^k]$ and fix $a>0$. For any $x$ and $t\le aN$,
    \begin{align*}
        \bbE[\xterm{2}(t,x)]&=\frac{\beta}{2}\mu_2N^{-\frac{1}{2}-2\rate}t+o(N^{-\rate}),\ \   \bbE[\xterm{3}(t,x)]=\frac{\beta^2}{6}\mu_3N^{-\frac{3}{4}-3\rate}t+o(N^{-\rate}),\\
        \bbE[\xterm{l}(t,x)]&=o(N^{-\rate}),\forall~l\ge 4.
    \end{align*}
\end{theorem}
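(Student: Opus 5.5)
The plan is to rerun the argument of the proof of Theorem \ref{T.expectation of Xl 1}, adapted to the simplified renormalization of this section. Two features make this easier.

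First, the trees are now very rigid. By induction, $\xterm{1}$ is the action $\actionnew{t,x}{\cdot}$ of a single edge, $\xterm{2}=\frac{\beta}{8}\actionnew{t,x}{\extension{\one\vee\one}}$, and for $l\ge 3$
$$\xterm{l}=\frac{\beta}{4}\,\actionnew{t,x}{\extension{\one\vee \tree_{l-1}}}.$$
Here $\tree_{l-1}$ is the (unique) tree with $\kterm{l-1}=c_{l-1}\action{t,x}{\tree_{l-1}}$, and all constants are explicit products of $\beta/8$ and $\beta/4$. In particular $\xterm{l}$ is a homogeneous multilinear polynomial of degree exactly $l$ in the variables $N^{-\frac14-\rate}\noise$. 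Every non-root, non-leaf vertex of its tree has exactly two children, so the tree is already a full binary tree and Proposition \ref{P.binary tree} is not needed.

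Second, no truncation is used. Since $l\le L\le 4/\rate+1\le M$, every moment $\bbE[\noise(t,0)^k]$ with $k\le l$ is bounded uniformly in $t$. The factor $N^{\frac14\sum(\setsize{\pi_i}-8)^+}$ is therefore replaced by $O(1)$, and singleton blocks still vanish because $\bbE\noise=0$.

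With this in hand, I would expand
$$\bbE[\xterm{l}(t,x)]=c_l N^{-(\frac14+\rate)l}\sum_{\pi}\sum_{\embed\in\embedsetnew{t,x}^\pi}\prod_{e}\weightnew{\embed}(e)\,\moment{\embed}{\pi}$$
over partitions $\pi$ of the $l$ leaves. The root has a single child $u$, so the cancellation argument of Proposition \ref{P.restrict to proper partitions new} applies verbatim, with $\mu$'s in place of $\tilde\mu$'s. It reduces every pair $(\tree,\pi)$ not in the three exceptional configurations of Figure \ref{F.proper-counterexamples}(b),(c) to proper partitions. For these I would apply Proposition \ref{P.lifting 1}, iterate Proposition \ref{P.lifting 2} down to the one-edge graph $\one$ with $\actionnew{t,x}{\one}=t\lesssim N$, and obtain the bound
$$\actionnew{t,x}{\tree^\pi}\loglesssim N^{\frac14 l-\frac14\sum(\setsize{\pi_i}-4)^+}\le N^{\frac l4},$$
exactly as in \eqref{E.bound on usual cases}. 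Multiplying by $N^{-(\frac14+\rate)l}$ gives $\loglesssim N^{-\rate l}$. This is $o(N^{-\rate})$ for all $l\ge2$, so all non-exceptional contributions are negligible, and in particular $\bbE[\xterm{l}]=o(N^{-\rate})$ for $l\ge4$ (at $l\ge4$ no exceptional configuration can occur, since those have at most three leaves).

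It remains to handle the exceptional terms.
\begin{itemize}
\item For $l=2$, the tree is the two-leaf cherry with the pair partition. Its coefficient is $\beta/8$ and its moment is exactly $\mu_2$, and the kernel sum equals $4t+O(t^{1/2})$ by the computation via Lemma \ref{NL.reflection identity}. This gives $\frac{\beta}{2}\mu_2N^{-\frac12-2\rate}t$.
\item The three-leaf star does not occur, since no cubic term appears in the recursion; this matches $c(\tree_2,2)=0$ before.
\item For $l=3$, the only tree is the $2{+}1$ chain $\extension{\one\vee\extension{\one\vee\one}}$ with coefficient $\frac{\beta}{4}\cdot\frac{\beta}{8}=\frac{\beta^2}{32}$. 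Its full block has moment $\mu_3$, and its kernel sum is $\frac{16}{3}t+O(t^{1/2})$ by the $A_t$ recursion computed earlier. This yields $\frac{\beta^2}{6}\mu_3N^{-\frac34-3\rate}t$.
\end{itemize}
The $O(t^{1/2})$ errors, multiplied by $N^{-\frac12-2\rate}$ or $N^{-\frac34-3\rate}$, are $o(N^{-\rate})$ because $\rate>0$.

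I expect the main obstacle to be checking that the reduction and lifting propositions, which were proved for $\truncatednoise$, transfer without change. The graph-lifting bounds are purely deterministic kernel estimates, so they carry over. The only probabilistic input is the moment bound, which is in fact easier here, since $M\ge 20/\rate$ bounds all relevant moments outright. I would also verify that the moment products $\moment{\embed}{\pi}$ in the exceptional cases are exactly $\mu_2$ and $\mu_3$, with no $o(N^{-1})$ correction, because the noise is not centred after truncation.
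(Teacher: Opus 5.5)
Your proposal is correct and follows the paper's route. The paper only states that the proof is the same as that of Theorem \ref{T.expectation of Xl 1}, and you carry out exactly that adaptation: the single full binary tree $T_l$; untruncated moments, which are bounded since $l\le L\le M$; cancellation down to proper or exceptional partitions; leaf lifting and backbone pruning to the one-edge graph, which gives the $N^{-l\varepsilon}$ bound; and the explicit kernel sums $4t$ and $\frac{16}{3}t$ with coefficients $\beta/8$ and $\beta^2/32$ for the exceptional terms. Only cosmetic fixes are needed. The display for $X_N^{(l)}$, $l\ge 3$, should carry the factor $c_{l-1}$, as your later value $\beta^2/32$ already does. $X_N^{(l)}$ is homogeneous but not multilinear, since noise variables can repeat. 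Finally, the claim for $l\ge 4$ only makes sense for $l\le M$, which is also the range the paper implicitly uses.
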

\begin{theorem}\label{T.concentration on Xl 2}
      For any fixed $l\ge 2$ and even $n\ge 2$ satisfying $ln\le M$, the following estimate holds for any $x$ and $t\le aN$,
    $$\bbE \left[\left(\xterm{l}(t,x)-\bbE[\xterm{l}(t,x)]\right)^n\right]\loglesssim N^{-ln\rate}.$$
\end{theorem}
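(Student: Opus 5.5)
We prove Theorem \ref{T.concentration on Xl 2} along the lines of Theorem \ref{T.concentration on Xl 1}, adapting to the untruncated noise and the simplified recursion of this section. First we get a tree representation. Unrolling the definitions, $\kterm{1}=\action{t,x}{\one}$ with $\noise$ in place of $\truncatednoise$, $\kterm{2}=\frac{\beta}{8}\action{t,x}{\extension{\one\vee\one}}$, and for $l\ge 3$, $\kterm{l}$ is $\frac{\beta}{4}$ times the extension of $\one\vee(\text{tree of }\kterm{l-1})$. So each $\xterm{l}$ is a single action $\actionnew{t,x}{\tree_l}$ of an explicit caterpillar tree $\tree_l$ with exactly $l$ leaves; its root has one child. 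Hence $(\xterm{l})^n$ is a single action over $\tree_l^{\vee n}$, which has $ln$ leaves.

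Next we expand the centred moment as in \eqref{E.restrict to nonlocal partitions}. By the same inclusion–exclusion over $I\subseteq[n]$ with the partitions $\sigma^I$, we may restrict to nonlocal partitions $\pi$ of the $ln$ leaves. No truncation is made, so every moment that appears is a product of $\bbE[\noise^{k}]$ with $k\le ln\le M$. These are bounded uniformly in $t$ by the finite-moment assumption, and this is exactly where the hypothesis $ln\le M$ is used. In particular the factor $N^{\frac14\sum(|\pi_i|-8)^+}$ never appears. Proposition \ref{P.restrict to proper partitions new+} uses only the zero space-sum of $\cha$ and the fact that bad points of a nonlocal partition are not children of the root. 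Its proof therefore goes through verbatim, with constant moment factors, and reduces everything to proper partitions.

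For a proper $\pi$ we apply Proposition \ref{P.binary tree}. The caterpillar is already full binary except at the root, which has $n$ children. We then apply leaf lifting (Proposition \ref{P.lifting 1}), backbone pruning (Proposition \ref{P.lifting 2} together with \eqref{E.lifting 2-3}), and Proposition \ref{P.lifting 3 new}, exactly as in the proof of Theorem \ref{T.concentration on Xl 1}. This gives
\[
\sum_{\embed}\prod_e|\weightnew{\embed}(e)|\loglesssim N^{\frac14 ln-\frac14\sum_{\pi_i}(|\pi_i|-4)^+}\le N^{\frac14 ln}.
\]
Multiplying by the prefactor $N^{-(\frac14+\rate)ln}$ gives $N^{-ln\rate}$. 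Summing over the finitely many trees and partitions gives the theorem.

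The main obstacle is checking that the lifting steps never used the truncation level. The factor $N^{\frac14(|\pi_i|-8)^+}$ was only ever absorbed by the gain $-\frac14(|\pi_i|-4)^+$, and that gain is now simply discarded. A second point to check is that the proper-partition reduction of Proposition \ref{P.restrict to proper partitions new+} does not generate blocks of size larger than $M$. It does not: it only coarsens partitions of the $ln$ leaves, so every block has size at most $ln\le M$.
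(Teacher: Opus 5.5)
Your proposal is correct and follows the paper's route. The paper proves this by rerunning the argument of Theorem \ref{T.concentration on Xl 1}: inclusion--exclusion to nonlocal partitions, reduction to proper partitions, leaf lifting, backbone pruning, and Proposition \ref{P.lifting 3 new}. Like you, it notes only that the single caterpillar tree and the hypothesis $ln\le M$ keep every moment factor uniformly bounded, so no truncation is needed.
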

The proofs of these two theorems are the same as those of Theorems \ref{T.expectation of Xl 1} and \ref{T.concentration on Xl 1}. The present assumptions are stronger than those of Theorems \ref{T.expectation of Xl 1} and \ref{T.concentration on Xl 1}. Moreover, $\xterm{l}$ are defined with fewer terms, and the required moments are available for the fixed orders used here.

The remainder of the proof of Theorem \ref{T.main 2} is identical to the proof of Theorem \ref{T.main}, so we omit the details.
\subsection{Proof of Theorems \ref{T.moments upper bound on K 2} and \ref{T.moments upper bound on K 3}}
In this subsection, we will prove Theorems \ref{T.moments upper bound on K 2} and \ref{T.moments upper bound on K 3} and hence complete the proof of Theorem \ref{T.main 2}.
\begin{proof}[Proof of Theorem \ref{T.moments upper bound on K 2}]
    Recall that $\kterm{1}$ has the explicit formula
    $$\kterm{1}(t,x)=\sum_{s=1}^{t-1}\sum_{z\in\bbZ}\cha(t-s,x-z)N^{-\frac{1}{4}-\rate}\noise(s,z).$$
    Thus 
    \begin{align*}
        \bbE\left[\left(\kterm{1}(t,x)\right)^n\right]&=\bbE\left[\left(\sum_{s=1}^{t-1}\sum_{z\in\bbZ}\cha(t-s,x-z)N^{-\frac{1}{4}-\rate}\noise(s,z)\right)^n\right]\\
        &=N^{-(\frac{1}{4}+\rate)n}\sum_{s_1,\dots,s_n=1}^{t-1}\sum_{z_1,\dots,z_n\in\bbZ}\prod_{i=1}^{n}\cha(t-s_i,x-z_i)\bbE\left[\prod_{i=1}^{n}\noise(s_i,z_i)\right]\\
        &\lesssim N^{-(\frac{1}{4}+\rate)n}\sum_{k\ge 1}\sum_{\substack{a_1,\dots,a_k\ge 2\\a_1+\cdots+a_k=n}}\sum_{\substack{1\le s_1,\dots,s_k\le t-1\\z_1,\dots,z_k\in\bbZ\\(s_j,z_j)~are~distinct}}\prod_{j=1}^{k}\cha(t-s_j,x-z_j)^{a_j}\bbE\left[\noise(s_j,z_j)^{a_j}\right], 
    \end{align*}
    Only $a_1,\dots,a_k\ge 2$ contribute to the sum because $\bbE[\noise(s,0)]=0$ for any $s$. For any fixed $a_1,\dots,a_k$,
    each $\bbE\left[\noise(s_j,z_j)^{a_j}\right]$ is bounded by a finite constant since $n\le M$. Then 
    $$\sum_{\substack{1\le s_1,\dots,s_k\le t-1\\z_1,\dots,z_k\in\bbZ\\(s_j,z_j)~are~distinct}}\prod_{j=1}^{k}\absolute{\cha(t-s_j,x-z_j)}^{a_j}\lesssim\sum_{\substack{1\le s_1,\dots,s_k\le t-1\\z_1,\dots,z_k\in\bbZ}}\prod_{j=1}^{k}\absolute{\cha(t-s_j,x-z_j)}^{2}\le 4^k.$$
    Thus, there is a constant $C=C_n$ such that
    $$\bbE\left[\left(\kterm{1}(t,x)\right)^n\right]\le C_nN^{-(\frac{1}{4}+\rate)n}$$
    holds for any $(t,x)\in\bbZ_+\times\bbZ$.
\end{proof}
To prove Theorem \ref{T.moments upper bound on K 3}, we apply the graph lifting technique developed in Section \ref{S.Graph Lifting}. We start with the tree family
$$\treefamily_1:=\{\one\}.$$ 

For $l\ge 2$, $\treefamily_l$ is defined inductively:

$$\treefamily_l:={\extension{\tree_1\oneptunion\tree_2}:\tree_1\in\treefamily_{l-1},\tree_2\in\treefamily_{1}}.$$
Note that in this case, each tree family $\treefamily_l$ contains only one tree, denoted by $\tree_l$. The first few $\tree_l$ are shown in the following diagram. As in Proposition \ref{P.kterm represented by tree}, induction gives
$$\kterm{l}(t,x)=\frac{\beta^{l-1}}{2^{2l-1}}\action{t,x}{\tree_l}, \forall~l\ge 2.$$
Here, the definition of action $\action{t,x}{\cdot}$ is the same as that in \eqref{E.def of action of T}, but $\truncatednoise$ is replaced by $\noise$. If we denote by $\tree_l^n$ the one-point union of $n$ copies of $\tree_l$, then we have
$$\left(\kterm{l}(t,x)\right)^n=\left(\frac{\beta^{l-1}}{2^{2l-1}}\right)^n\action{t,x}{\tree_l^n}, \forall~l\ge 2.$$
Note that $\tree_l$ has exactly $l$ leaves, so $\tree_l^n$ has $ln$ leaves and we can write $\bbE[\action{t,x}{\tree_l^n}]$ as
$$\bbE[\action{t,x}{\tree_l^n}]=N^{-(\frac{1}{4}+\rate)ln}\sum_{\pi\in\Pi(L(\tree_l^n))}\sum_{\embed\in\embedset{t,x}^{\pi}(\tree_l^n)}\moment{\embed}{\pi}\prod_{e\in E(\tree_l^n)}\weight{\embed}(e),$$
where $$\moment{\embed}{\pi}:=\prod_{\pi_i\in\pi}\mu_{\setsize{\pi_i}}(\embed^{(1)}(\pi_i)) $$
is uniformly bounded since the total number of leaves $ln$ is bounded by $M$. The argument in Proposition \ref{P.restrict to proper partitions} can be applied in the same way, allowing us to consider only those proper partitions $ \pi\in\Pi (L (\tree_l^n)) $.
It therefore suffices to show 
$$\action{t,x}{\left(\tree_l^n\right)^\pi}\loglesssim N^{\frac{1}{4}(l-2)n}.$$
For simplicity, we shall write $\tree$ in place of $\tree_l^n$ from now on. Let $\tree^\circ$ denote the naked part of $\tree$. Figure \ref{F.T54-naked} provides an example of $T_5^4$ and its naked part.

\begin{figure}[htbp]
\centering
\begin{tikzpicture}[
    x=0.74cm, y=0.74cm,
    vertex/.style={circle, fill=black, inner sep=1.75pt},
    bedge/.style={draw=black,line width=0.95pt},
    lab/.style={font=\small}
]

% =========================================================
% Left: T_5^4
% =========================================================
\begin{scope}[xshift=0cm,yshift=0cm]

% root
\node[vertex] (r) at (2.35,5.65) {};

% four main columns
\node[vertex] (a1) at (0.70,4.25) {};
\node[vertex] (a2) at (0.70,3.15) {};
\node[vertex] (a3) at (0.70,2.05) {};
\node[vertex] (a4) at (0.70,0.95) {};
\node[vertex] (a5) at (0.70,-0.15) {};

\node[vertex] (b1) at (1.75,4.25) {};
\node[vertex] (b2) at (1.75,3.15) {};
\node[vertex] (b3) at (1.75,2.05) {};
\node[vertex] (b4) at (1.75,0.95) {};
\node[vertex] (b5) at (1.75,-0.15) {};

\node[vertex] (c1) at (2.80,4.25) {};
\node[vertex] (c2) at (2.80,3.15) {};
\node[vertex] (c3) at (2.80,2.05) {};
\node[vertex] (c4) at (2.80,0.95) {};
\node[vertex] (c5) at (2.80,-0.15) {};

\node[vertex] (d1) at (3.85,4.25) {};
\node[vertex] (d2) at (3.85,3.15) {};
\node[vertex] (d3) at (3.85,2.05) {};
\node[vertex] (d4) at (3.85,0.95) {};
\node[vertex] (d5) at (3.85,-0.15) {};

% diagonal leaves
\node[vertex] (la1) at (-0.25,3.45) {};
\node[vertex] (la2) at (-0.25,2.35) {};
\node[vertex] (la3) at (-0.25,1.25) {};
\node[vertex] (la4) at (-0.25,0.15) {};

\node[vertex] (lb1) at (1.05,3.45) {};
\node[vertex] (lb2) at (1.05,2.35) {};
\node[vertex] (lb3) at (1.05,1.25) {};
\node[vertex] (lb4) at (1.05,0.15) {};

\node[vertex] (lc1) at (2.10,3.45) {};
\node[vertex] (lc2) at (2.10,2.35) {};
\node[vertex] (lc3) at (2.10,1.25) {};
\node[vertex] (lc4) at (2.10,0.15) {};

\node[vertex] (ld1) at (3.15,3.45) {};
\node[vertex] (ld2) at (3.15,2.35) {};
\node[vertex] (ld3) at (3.15,1.25) {};
\node[vertex] (ld4) at (3.15,0.15) {};

% edges from root
\draw[bedge] (r) -- (a1);
\draw[bedge] (r) -- (b1);
\draw[bedge] (r) -- (c1);
\draw[bedge] (r) -- (d1);

% vertical chains
\draw[bedge] (a1) -- (a2) -- (a3) -- (a4) -- (a5);
\draw[bedge] (b1) -- (b2) -- (b3) -- (b4) -- (b5);
\draw[bedge] (c1) -- (c2) -- (c3) -- (c4) -- (c5);
\draw[bedge] (d1) -- (d2) -- (d3) -- (d4) -- (d5);

% diagonal leaf edges
\draw[bedge] (a1) -- (la1);
\draw[bedge] (a2) -- (la2);
\draw[bedge] (a3) -- (la3);
\draw[bedge] (a4) -- (la4);

\draw[bedge] (b1) -- (lb1);
\draw[bedge] (b2) -- (lb2);
\draw[bedge] (b3) -- (lb3);
\draw[bedge] (b4) -- (lb4);

\draw[bedge] (c1) -- (lc1);
\draw[bedge] (c2) -- (lc2);
\draw[bedge] (c3) -- (lc3);
\draw[bedge] (c4) -- (lc4);

\draw[bedge] (d1) -- (ld1);
\draw[bedge] (d2) -- (ld2);
\draw[bedge] (d3) -- (ld3);
\draw[bedge] (d4) -- (ld4);

\end{scope}

% =========================================================
% Right: naked part
% =========================================================
\begin{scope}[xshift=8.25cm,yshift=0cm]

% root
\node[vertex] (R) at (2.35,5.65) {};

% four vertical columns
\node[vertex] (A1) at (0.70,4.25) {};
\node[vertex] (A2) at (0.70,3.10) {};
\node[vertex] (A3) at (0.70,1.95) {};
\node[vertex] (A4) at (0.70,0.80) {};

\node[vertex] (B1) at (1.75,4.25) {};
\node[vertex] (B2) at (1.75,3.10) {};
\node[vertex] (B3) at (1.75,1.95) {};
\node[vertex] (B4) at (1.75,0.80) {};

\node[vertex] (C1) at (2.80,4.25) {};
\node[vertex] (C2) at (2.80,3.10) {};
\node[vertex] (C3) at (2.80,1.95) {};
\node[vertex] (C4) at (2.80,0.80) {};

\node[vertex] (D1) at (3.85,4.25) {};
\node[vertex] (D2) at (3.85,3.10) {};
\node[vertex] (D3) at (3.85,1.95) {};
\node[vertex] (D4) at (3.85,0.80) {};

% edges
\draw[bedge] (R) -- (A1);
\draw[bedge] (R) -- (B1);
\draw[bedge] (R) -- (C1);
\draw[bedge] (R) -- (D1);

\draw[bedge] (A1) -- (A2) -- (A3) -- (A4);
\draw[bedge] (B1) -- (B2) -- (B3) -- (B4);
\draw[bedge] (C1) -- (C2) -- (C3) -- (C4);
\draw[bedge] (D1) -- (D2) -- (D3) -- (D4);

\end{scope}

\end{tikzpicture}
\caption{$T_5^4$ and its naked part.}
\label{F.T54-naked}
\end{figure}

Suppose $v_1,\dots,v_n$ are the leaves of $\tree^\circ$. 
We say that a graph $G$ is \emph{good} if its backbone lies between $\tree^\circ$ and $(\tree^\circ)^\circ$ (in the subgraph sense), and satisfies the following three properties:

\RomanNumeralCaps{1}. Every non-root vertex is incident to at least one red edge.

\RomanNumeralCaps{2}. Every vertex in $\{v_1,\dots,v_n\}$ is incident to at least two red edges.

\RomanNumeralCaps{3}. For every vertex $v_j\in\{v_1,\dots,v_n\}$, it is not the case that the only red edges incident to $v_j$ or its parent are the two red edges between them.

We stress that all graphs under consideration are loopless, meaning that no edge is incident twice to the same vertex. In the remainder of the proof, whenever a loop appears, it is understood to be deleted automatically. Furthermore, whenever a red edge has multiplicity at least three, we remove extra parallel copies so that its multiplicity becomes exactly two.

The next proposition is an analogue of the leaf lifting step in Section \ref{Subs.Leaf lifting}.
\begin{proposition}[\textbf{Leaf lifting}]\label{P.Lifting 1 in Case 2}
    If $\tree=\tree_l^n$ for some $\l\ge 2,n\ge 1$ and $\pi$ is a proper partition of its leaves, then there exists a family of good graphs $\{G_\lambda\}_{\lambda\in\Lambda}$, all with backbone $\tree^\circ$, such that for any $x$ and $t\le aN$,
    \begin{equation}\label{E.Lifting 1 in Case 2}
        \action{t,x}{\tree^\pi}\loglesssim\sum_{\lambda\in\Lambda}\action{t,x}{G_\lambda}.
    \end{equation}
    Here $\Lambda$ is a finite index set.
\end{proposition}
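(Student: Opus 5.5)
We follow the scheme of Proposition \ref{P.lifting 1}: starting from $G_0=\tree^\pi$, remove the glued vertices $\pi_1,\dots,\pi_r$ one at a time and replace each by red edges among its neighbours. The only change is that the labelling used there is not needed here. Amenability is replaced by the weaker conditions I--III, which we check directly from the shape of $\tree_l^n$.

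\textbf{Structure of $\tree$.} In $\tree_l$ the root has a single child. Below it hangs a chain of $l-1$ non-root internal vertices. Every chain vertex except the bottom one has exactly one leaf child and one chain child, and the bottom vertex has two leaf children. Consequently:
\begin{itemize}
\item every non-root vertex of $\tree^\circ$ is adjacent in $\tree$ to at least one leaf;
\item the leaves $v_1,\dots,v_n$ of $\tree^\circ$ are exactly the vertices with two leaf children;
\item the parent of each $v_j$ has exactly one leaf child.
\end{itemize}

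\textbf{Removing one part.} Fix a part $\pi_{i+1}=\{w_1,\dots,w_k\}$, and let $u_1,\dots,u_m$ be its neighbours in $G_i$, each joined by one or two black edges. We integrate out the glued vertex using Lemmas \ref{NL.lifting 1-1}, \ref{NL.lifting 1-2} and \ref{NL.lifting 1-3}.
\begin{itemize}
\item Before applying them, split the sum over embeddings according to the relative order of the times $\embed^{(1)}(u_1),\dots,\embed^{(1)}(u_m)$. This gives finitely many orderings, and these orderings produce the index set $\Lambda$.
\item For a fixed ordering, the lemmas bound the sum over the glued vertex by red edges chaining $u_1,\dots,u_m$ consecutively in time order.
\item In addition, a second chain is added through the $u$'s that were joined by a double black edge, exactly as in Cases 1--3 of Proposition \ref{P.lifting 1}.
\end{itemize}
All red weights are at most $1$. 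Hence deleting loops, or trimming a red multiplicity of three or more down to two, only increases the action, so these normalisations are harmless. Iterating over all parts gives \eqref{E.Lifting 1 in Case 2}, with every $G_\lambda$ having backbone $\tree^\circ$.

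\textbf{Goodness.} We check I--III using properness of $\pi$.
\begin{itemize}
\item \emph{Property I.} Let $u$ be a non-root vertex. It has a leaf child, and that leaf lies in a part of size at least $2$. If this part had $u$ as its only neighbour, it would consist of the two leaves of some $v_j$ and nothing else. This is excluded by condition (2) of properness. So the part has another neighbour, and $u$ receives a red edge.
\item \emph{Property II.} Consider $v_j$. If its two leaves lie in different parts, each part contributes a red edge at $v_j$. If they lie in the same part, then $v_j$ is joined by a double black edge and the construction puts $v_j$ on both chains. In either case $v_j$ gets at least two red edges; only trimming of multiplicity $\ge 3$ ever occurs, never below $2$.
\item \emph{Property III.} Suppose the only red edges at $v_j$ and at its parent $p$ were two parallel edges $\{v_j,p\}$. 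Then both leaves of $v_j$ and the unique leaf of $p$ would have to form a single part, with no other neighbours. That is the $2{+}1$ chain configuration excluded by condition (3). The other possibility is that the two leaves of $v_j$ are in distinct parts, each meeting only $p$. But $p$ has only one leaf, so at most one of these parts can contain a leaf of $p$. The other part then consists of leaves whose only non-$v_j$ neighbour is $p$, which is impossible. Hence this case also cannot occur.
\end{itemize}

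\textbf{Expected obstacle.} The delicate step is Property III, more precisely the interaction between the time-ordering choice and the double-edge chains. A part of size $\ge 4$ containing one leaf of $v_j$ and the leaf of $p$ might, for some orderings, produce only a red edge $v_j$--$p$ from that part. One must then ensure the second red edge at $v_j$ comes from a different neighbour. I would handle this by choosing, within each ordering, the second chain of Case 3 to start at the vertex of smallest time that is different from $p$. If this fails, I would instead refine $\Lambda$ further so that each $v_j$ is always linked to some neighbour other than its parent.
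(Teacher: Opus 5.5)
Your overall scheme is the paper's: integrate out the glued vertices one at a time, replace each by red chains among its parent vertices, and read off I--III from properness of $\pi$. Your arguments for Properties I and III are sound. The gap is in how you treat a part that contains both leaves of some $v_j$.

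You handle such a part ``exactly as in Cases 1--3 of Proposition~\ref{P.lifting 1}''. But suppose $v_j$ is the \emph{only} vertex joined to the part by a double edge and $k\ge 4$. Then the second chain runs through a single vertex and contributes no edge. So $v_j$ gets red edges only from the time-ordered first chain, and no other part contributes, since both its leaves are here. Whenever $v_j$ has the earliest or latest time, it is an endpoint of that chain and ends with $\reddeg(v_j)=1$, so Property II fails.

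For example, take $\tree_3^2$ with chains $\vroot\to p_i\to v_i$, leaf $c_i$ of $p_i$, and leaves $a_i,b_i$ of $v_i$. The partition $\{\{a_1,b_1,a_2,c_2\},\{b_2,c_1\}\}$ is proper. On the orderings where $\embed^{(1)}(v_1)$ is extremal among $\embed^{(1)}(v_1),\embed^{(1)}(v_2),\embed^{(1)}(p_2)$, your graph has $\reddeg(v_1)=1$. Your sentence ``the construction puts $v_j$ on both chains'' is exactly where this slips.

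The missing idea is to exploit the squared kernel $\cha'(\cdot)^2$ at the doubled vertex. Whenever two leaves of a part share a parent, the paper adds \emph{double} red edges along a chain through all distinct parents, for every permutation of them, using Lemma~\ref{NL.lifting 1-4}. These permutations are where $\Lambda$ comes from. Every parent of that part, in particular $v_j$, then gets at least two red edges. Alternatively, Lemma~\ref{NL.lifting 1-2} allows any rearrangement of the multiset of times. Using the order $v_j,u_2,v_j,u_3,\dots,u_m$ after Lemma~\ref{NL.lifting 1-1} gives $v_j$ three red edges with no loops. Either way, no splitting by time order is needed.

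Separately, the obstacle you flag for Property III does not arise, and your proposed remedies (restarting the second chain, refining $\Lambda$) target the wrong property. A red edge $\{v_j,p\}$ can only come from the part containing the unique leaf $w$ of $p$. If the leaves $a,b$ of $v_j$ lie in different parts, the one not containing $w$ joins $v_j$ to a vertex other than $p$. If $a,b,w$ share a part, connectivity of the chain forces that part to be exactly $\{a,b,w\}$, which condition (3) excludes. One minor point: for $l=2$ the parent of $v_j$ is the root, which has no leaf child. So your claim that the parent always has exactly one leaf child is false there, though harmless, since III is vacuous in that case.
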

\begin{proof}
 Assume that the parts of $\pi$ are $\pi_1,\ldots,\pi_r$. We will construct a sequence of two-colored backbone graph families $\{G_{\lambda_0}\}_{\lambda_0\in\Lambda_0},\dots,\{G_{\lambda_r}\}_{\lambda_r\in\Lambda_r}$ that interpolate between $\tree^\pi$ and our goal $\{G_\lambda\}_{\lambda\in\Lambda}$. To begin with, we let $\{G_{\lambda_0}\}_{\lambda_0\in\Lambda_0}=\{\tree^\pi\}$. This sequence satisfies, for any $0\le i\le r-1$ and any graph $G_{\lambda_i}, \lambda_i\in\Lambda_i$, 
    all the vertices in $G_{\lambda_i}$ are vertices of $\tree^\circ$ and $\pi_{i+1},\dots,\pi_r$. Moreover, for any $x$ and $t\le aN$,
    \begin{equation}\label{E.Lifting 1 in Sec 5}
        \action{t,x}{G_{\lambda_i}}\loglesssim\sum_{\lambda_{i+1}\in\Lambda_{i+1}}\action{t,x}{G_{\lambda_{i+1}}}.
    \end{equation}
    We construct these families inductively. Suppose we have constructed $\{G_{\lambda_i}\}_{\lambda_i\in\Lambda_i}$, $i\ge 0$. We pick any graph $G$ in this family and consider the vertices of $\pi_{i+1}$ in the original tree $\tree$.

    \begin{itemize}
        \item \textbf{Case 1:} If all the vertices in $\pi_{i+1}$ have distinct parents, suppose they are $u_1,\dots,u_m$.
        
        We delete $\pi_{i+1}$ and add red edges $\{u_j,u_{j+1}\}, 1\le j\le m-1$. We put the resulting graph into $\{G_{\lambda_{i+1}}\}_{\lambda_{i+1}\in\Lambda_{i+1}}$. Then, each $u_j,1\le j\le m$ is incident to at least one red edge, and inequality \eqref{E.Lifting 1 in Sec 5} follows from Lemma \ref{NL.lifting 1-1} and \ref{NL.lifting 1-2}.
        
        \item \textbf{Case 2:} If there exist two vertices in $\pi_{i+1}$ that have the same parent, 
        denote their parent by $u_1$; then $u_1$ has two children that are leaves in $\tree$; hence $u_1$ is a leaf of $\tree^\circ$. Suppose all the parents of the remaining vertices in $\pi_{i+1}$ are $u_2,\dots,u_m$. Since $\pi$ is proper, $\pi_{i+1}$ cannot be exactly the two children of $u_1$, $m\ge 2$. 
        For any permutation $\sigma$ of $\{1,2,\dots,m\}$, we delete $\pi_{i+1}$ and add double red edges $\{u_{\sigma(j)},u_{\sigma(j+1)}\}, 1\le j\le m-1$ (every red edge $\{u_{\sigma(j)},u_{\sigma(j+1)}\}$ is added twice). We put all the resulting graphs into $\{G_{\lambda_{i+1}}\}_{\lambda_{i+1}\in\Lambda_{i+1}}$. Then, each $u_j,1\le j\le m$ is incident to at least two red edges, and inequality \eqref{E.Lifting 1 in Sec 5} follows from Lemma \ref{NL.lifting 1-4}.
    \end{itemize}

    \begin{figure}[htbp]
\centering
\begin{tikzpicture}[
    x=0.95cm,y=0.95cm,
    line cap=round,line join=round,
    vtx/.style={circle,fill=black,inner sep=1.9pt},
    bedge/.style={draw=black,line width=0.95pt},
    redge/.style={draw=red,line width=1.15pt},
    casehead/.style={font=\large\bfseries},
    symlab/.style={font=\LARGE},
    ptlab/.style={font=\normalsize}
]

% =========================================================
% outer frame
% =========================================================
\draw[line width=1.25pt] (0,0.7) rectangle (16.8,7);

% horizontal separators
\draw[line width=1.25pt] (0,6.15) -- (16.8,6.15);
\draw[line width=1.25pt] (0,3.9) -- (16.8,3.9);
\draw[line width=1.25pt] (0,3.05) -- (16.8,3.05);

% headers
\node[anchor=west,casehead] at (0.55,6.55) {Case 1};
\node[anchor=west,casehead] at (0.55,3.45) {Case 2};

% =========================================================
% Case 1 : left graph
% =========================================================
\begin{scope}[xshift=5.80cm,yshift=4.75cm]
    \coordinate (t1) at (-1.50, 0.42);
    \coordinate (t2) at (-0.90, 0.42);
    \coordinate (t3) at (-0.30, 0.42);
    \coordinate (t4) at ( 0.30, 0.42);
    \coordinate (t5) at ( 0.90, 0.42);
    \coordinate (t6) at ( 1.50, 0.42);
    \coordinate (b)  at ( 0.00,-0.52);

    % fan edges
    \draw[bedge] (b)--(t1);
    \draw[bedge] (b)--(t2);
    \draw[bedge] (b)--(t3);
    \draw[bedge] (b)--(t4);
    \draw[bedge] (b)--(t5);
    \draw[bedge] (b)--(t6);

    \foreach \p in {t1,t2,t3,t4,t5,t6,b}
        \node[vtx] at (\p) {};

    % point labels
    \node[ptlab,above=0.1cm] at (t1) {$u_1$};
    \node[ptlab,above=0.1cm] at (t2) {$u_2$};
    \node[ptlab,above=0.1cm] at ($(t3)!0.5!(t4)$) {$\cdots$};
    \node[ptlab,above=0.1cm] at ($(t4)!0.5!(t5)$) {$\cdots$};
    \node[ptlab,above=0.1cm] at (t6) {$u_m$};
    \node[ptlab,below=0.07cm] at (b) {$\pi_{i+1}$};
\end{scope}

% implication arrow
\node[symlab] at (8.45,5.02) {$\Rightarrow$};

% =========================================================
% Case 1 : right graph
% =========================================================
\begin{scope}[xshift=10.70cm,yshift=4.75cm]
    \coordinate (a1) at (-1.50, 0.38);
    \coordinate (a2) at (-0.90, 0.38);
    \coordinate (a3) at (-0.30, 0.38);
    \coordinate (a4) at ( 0.30, 0.38);
    \coordinate (a5) at ( 0.90, 0.38);
    \coordinate (a6) at ( 1.50, 0.38);

    \draw[redge] (a1)--(a2)--(a3)--(a4)--(a5)--(a6);

    \foreach \p in {a1,a2,a3,a4,a5,a6}
        \node[vtx] at (\p) {};

    % point labels
    \node[ptlab,above=0.1cm] at (a1) {$u_1$};
    \node[ptlab,above=0.1cm] at (a2) {$u_2$};
    \node[ptlab,above=0.1cm] at ($(a3)!0.5!(a4)$) {$\cdots$};
    \node[ptlab,above=0.1cm] at ($(a4)!0.5!(a5)$) {$\cdots$};
    \node[ptlab,above=0.1cm] at (a6) {$u_m$};
\end{scope}

% =========================================================
% Case 2 : initial graph
% =========================================================
\begin{scope}[xshift=1.55cm,yshift=1.8cm]
    \coordinate (l) at (-0.90, 0.34);
    \coordinate (m) at ( 0.00, 0.34);
    \coordinate (r) at ( 0.90, 0.34);
    \coordinate (v) at ( 0.00,-0.42);

    % edge
    \draw[bedge] (m)--(v);
    \draw[bedge] (r)--(v);

    % double edge
    \draw[bedge] (-0.94,0.3)--(-0.08,-0.42);
    \draw[bedge] (-0.86,0.32)--( 0,-0.42);

    \foreach \p in {l,m,r,v}
        \node[vtx] at (\p) {};

    % point labels
    \node[ptlab,above=0.1cm] at (l) {$u_1$};
    \node[ptlab,above=0.1cm] at (m) {$u_2$};
    \node[ptlab,above=0.1cm] at (r) {$u_3$};
    \node[ptlab,below=0.1cm] at (v) {$\pi_{i+1}$};
\end{scope}

% implication arrow
\node[symlab] at (3.75,2) {$\Rightarrow$};

% =========================================================
% Case 2 : first resulting graph
% =========================================================
\begin{scope}[xshift=5.85cm,yshift=1.8cm]
    \coordinate (l) at (-0.90, 0.35);
    \coordinate (m) at ( 0.00, 0.35);
    \coordinate (r) at ( 0.90, 0.35);

    % double straight red edges
    \draw[redge] (-0.90,0.4)--(0.00,0.4)--(0.90,0.4);
    \draw[redge] (-0.90,0.3)--(0.00,0.3)--(0.90,0.3);

    \foreach \p in {l,m,r}
        \node[vtx] at (\p) {};

    % point labels
    \node[ptlab,above=0.1cm] at (l) {$u_1$};
    \node[ptlab,above=0.1cm] at (m) {$u_2$};
    \node[ptlab,above=0.1cm] at (r) {$u_3$};
\end{scope}

\node[symlab] at (8.15,2) {$+$};

 % =========================================================
% Case 2 : second resulting graph
% =========================================================
\begin{scope}[xshift=9.95cm,yshift=1.8cm]
    \coordinate (l) at (-0.95, 0.40);
    \coordinate (m) at ( 0.00, 0.40);
    \coordinate (r) at ( 0.95, 0.42);

    % short double top edges
    \draw[redge] (-0.95,0.45)--(0.00,0.45);
    \draw[redge] (-0.95,0.35)--(0.00,0.35);

    % two lower curved edges
    \draw[redge] (l) to[out=-28,in=208] (r);
    \draw[redge] (l) to[out=-38,in=218] (r);

    \foreach \p in {l,m,r}
        \node[vtx] at (\p) {};

    % point labels
    \node[ptlab,above=0.1cm] at (l) {$u_1$};
    \node[ptlab,above=0.1cm] at (m) {$u_2$};
    \node[ptlab,above=0.1cm] at (r) {$u_3$};
\end{scope}

\node[symlab] at (12.65,2) {$+$};

% =========================================================
% Case 2 : third resulting graph
% =========================================================
\begin{scope}[xshift=14.25cm,yshift=1.8cm]
    \coordinate (l) at (-0.95, 0.40);
    \coordinate (m) at ( 0.00, 0.40);
    \coordinate (r) at ( 0.95, 0.42);

    % short double top edges
    \draw[redge] (0.95,0.45)--(0.00,0.45);
    \draw[redge] (0.95,0.35)--(0.00,0.35);

    % two lower curved edges
    \draw[redge] (l) to[out=-28,in=208] (r);
    \draw[redge] (l) to[out=-38,in=218] (r);

    \foreach \p in {l,m,r}
        \node[vtx] at (\p) {};

    % point labels
    \node[ptlab,above=0.1cm] at (l) {$u_1$};
    \node[ptlab,above=0.1cm] at (m) {$u_2$};
    \node[ptlab,above=0.1cm] at (r) {$u_3$};
\end{scope}

\end{tikzpicture}

\caption{Two cases of Leaf lifting.}
\label{F.lifting 1 in case 2}
\end{figure}
 
    The procedures for the above cases are illustrated in Figure \ref{F.lifting 1 in case 2}. When $G$ ranges over all graphs in $\{G_{\lambda_i}\}_{\lambda_i\in\Lambda_i}$, the resulting $\{G_{\lambda_{i+1}}\}_{\lambda_{i+1}}$ satisfies the inductive hypothesis. Finally, we obtain $\{G_{\lambda_r}\}_{\lambda_r\in\Lambda_r}$ and take $\{G_{\lambda}\}_{\lambda\in\Lambda}$ to be the desired family. It is straightforward to check that any graph in this family satisfies properties \RomanNumeralCaps{1}, \RomanNumeralCaps{2}, and \RomanNumeralCaps{3}.

\end{proof}
We next develop the backbone-pruning procedure. 

\begin{proposition}[\textbf{Backbone pruning}]\label{P.Lifting 2 in Case 2}
    If $G$ is a good graph, then there exists a family of good graphs $\{G_\theta\}_{\theta\in\Theta}$ all of which have $(\tree^\circ)^\circ$ as backbone, such that for any $x$ and $t\le aN$,
    \begin{equation}\label{E.Lifting 2 in Case 2}
        \action{t,x}{G}\loglesssim\sum_{\theta\in\Theta}\action{t,x}{G_\theta}.
    \end{equation}
\end{proposition}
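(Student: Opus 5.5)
The plan is to delete the $n$ backbone leaves $v_1,\dots,v_n$ of $\tree^\circ$ one at a time, producing a finite tree of graph families exactly as in Proposition \ref{P.Lifting 1 in Case 2}. At step $j$, each graph $G$ in the current family has backbone lying between $\tree^\circ$ and $(\tree^\circ)^\circ$ and still contains $v_j,\dots,v_n$. We integrate out the space-time position of $v_j$. Let $w_0$ be the parent of $v_j$ and $w_1,\dots,w_d$ its red neighbours, with $d\ge 2$ by property \RomanNumeralCaps{2}.

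As in the proof of Proposition \ref{P.lifting 2}, we first sum the black edge $[w_0,v_j]$ over the spatial variable using Lemma \ref{NL.sum of cha}. This leaves the time sum
$$\sum_{s}\prod_{i=0}^d\big(|\embed^{(1)}(w_i)-s|+1\big)^{-1/2}.$$
Lemma \ref{NL.lifting 2} then bounds this time sum, up to logarithms, by a finite sum over choices of $w\in N(v_j)$ of products of red weights $(|\embed^{(1)}(w)-\embed^{(1)}(w_i)|+1)^{-1/2}$. For each admissible choice we obtain a graph $G'$. It is built from $G-v_j$ by adding red edges from the chosen $w$ to the remaining elements of $N(v_j)$, with the same pairing rule as in Cases 1 and 2 of Proposition \ref{P.lifting 2}. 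The new edges are doubled when $v_j$ was attached by double red edges, which is legitimate because the weights multiply. Loops are discarded and multiplicities above two are reduced to two. Both operations only decrease the action, since every red weight is at most $1$.

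The only additional ingredient is that the produced family has no $N$-loss. When $d\ge 2$, so that $v_j$ together with its parent edge carries three or more square-root factors in time, Lemma \ref{NL.lifting 2} gives $\loglesssim$ with no extra power of $N$. This is exactly why property \RomanNumeralCaps{2} was imposed. Summing over the choices of $w$ gives \eqref{E.Lifting 2 in Case 2} for one step, and iterating over $j$ gives the full statement with a finite index set $\Theta$.

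The main obstacle is checking that goodness is preserved at each deletion, in particular properties \RomanNumeralCaps{1} and \RomanNumeralCaps{3} for the new leaves of the backbone. The parent $w_0$ of $v_j$ becomes a leaf of $(\tree^\circ)^\circ$, but property \RomanNumeralCaps{2} is only required for $v_1,\dots,v_n$, which are the leaves of the original $\tree^\circ$.

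Property \RomanNumeralCaps{1} must be checked for every former neighbour of $v_j$. If the chosen $w$ equals $w_0$ or $w_i$, each other neighbour receives a new red edge from $w$. The vertex $w$ itself receives $d-1\ge 1$ edges when it appears once in $N(v_j)$; when it appears twice, it has the other edges.

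The delicate case is a neighbour whose only red edges went to $v_j$ and which is merged into a loop because it coincides with $w$. Here property \RomanNumeralCaps{3} is used. It excludes the situation in which $v_j$ and its parent are joined only by a double red edge, and this is precisely the configuration in which $w_0$ would lose all its red edges. I would therefore choose $w$ to avoid $w_0$ whenever $w_0$ is a double red neighbour; property \RomanNumeralCaps{3} guarantees that another neighbour exists. I would then check property \RomanNumeralCaps{3} for the remaining $v_k$, $k>j$, by a short case analysis on whether $v_k$ or its parent lies in $N(v_j)$. I expect this case analysis to be the fiddly part. The first thing to try is to restrict the admissible choices of $w$ in Lemma \ref{NL.lifting 2} to a subset that still dominates the sum, namely the minimizer over the time distances, which can always be taken to be a non-parent vertex.
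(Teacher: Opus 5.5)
Your mechanism is the one the paper uses: integrate out a backbone leaf with Lemma \ref{NL.sum of cha}, then trade the remaining time sum for red edges among its neighbours via Lemma \ref{NL.lifting 2}. But the real content of the proposition is that the output can always be kept good, and you defer exactly that. The scheme you do commit to (fixed order $v_1,\dots,v_n$, one leaf per step, doubled edges) cannot achieve it. You never check property \RomanNumeralCaps{2} for the surviving leaves $v_k$, $k>j$; your remark about $w_0$ does not cover them, and the property genuinely fails. Take two backbone leaves $v_j,v_k$ whose only red edges form a double edge between them. This already arises from Case 2 of Leaf lifting when a part of $\pi$ consists of the four leaf-children of $v_j$ and $v_k$. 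Integrating out $v_j$ leaves $\sum_{s<t_0}(t_0-s+1)^{-1/2}(|t_k-s|+1)^{-1}$, where $t_0,t_k$ are the times of $w_0,v_k$. This is of order $D^{-1/2}\log D$ when $t_0-t_k=D$. So only a single red edge $\{w_0,v_k\}$ can be produced; doubling it ``because the weights multiply'' would assert the false bound $\log N/D$. Whatever $w$ you choose, $v_k$ ends up with at most one red edge. The paper resolves this by deleting $v_j$ and $v_k$ at once, applying Lemmas \ref{NL.sum of cha} and \ref{NL.lifting 1-2} twice at cost $\log^2 N$.

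You are also missing three further ingredients.
\begin{enumerate}
\item Always prune the remaining leaf $v_i$ of minimal red degree. When $\deg_R(v_i)\ge 3$, this guarantees every other $v_j$ has at least three red edges. Since each vertex occurs at most twice among the red neighbours, each such $v_j$ then keeps at least two.
\item Handle the configurations where property \RomanNumeralCaps{3} breaks after a single deletion, with $w$ the parent of $v_j$. One is the isolated red triangle on $v_i,v_j,w$; there the paper deletes $v_j$ instead of $v_i$. The other is $\deg_R(v_i)=\deg_R(v_j)=3$ with red edges $\{v_j,w\}$, $\{v_i,w\}$ and a double edge $\{v_i,v_j\}$; there both leaves are deleted. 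Your fallback of non-parent centres does not settle these: in the triangle, centring at $v_j$ or at $w$ gives the graph in which $v_j$ and its parent are joined only by a double red edge.
\item When both red edges of $v_i$ go to its parent, \RomanNumeralCaps{3} does not supply ``another neighbour'' as you claim. It says the parent has another red edge, and one simply deletes $v_i$ (Lemma \ref{NL.1 red 1 black}).
\end{enumerate}

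Finally, the pairing rule of Proposition \ref{P.lifting 2} is defined through $\mathcal F$, which rests on amenability and has no analogue for good graphs. Property \RomanNumeralCaps{2} is also not what prevents an $N$-loss; for $d=1$, Lemma \ref{NL.1 red 1 black} already gives $O(\log N)$. Its role is to leave enough red edges for property \RomanNumeralCaps{1} to survive until the star reduction.
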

Note that, since $G_\theta$ contains no vertex in ${v_1,\dots,v_n}$, the “goodness” of $G_\theta$ only means that every non-root vertex is incident to at least one red edge.
\begin{proof}
    It suffices to prove the following. If $G$ is a good graph and has at least one vertex in $\{v_1,\dots,v_n\}$, then there exists a family of good graphs $\{G_\lambda\}_{\lambda\in\Lambda}$, each having fewer vertices in $\{v_1,\dots,v_n\}$, such that for any $x$ and $t\le aN$,
    \begin{equation}\label{E. Lifting 2 in Sec 5}
        \action{t,x}{G}\loglesssim\sum_{\lambda\in\Lambda}\action{t,x}{G_{\lambda}}.
    \end{equation}
    If the preceding statement holds, then we can apply it to $G$ finitely many times until we obtain a family of good graphs that satisfy \eqref{E.Lifting 2 in Case 2} and all have $(\tree^\circ)^\circ$ as backbone. It therefore suffices to prove this statement. 

    Suppose the vertex set of $G$ is the vertex set of $(\tree^\circ)^\circ$ together with $\{v_1,\dots,v_i\}$, and $v_i$ is incident to the fewest red edges among $\{v_1,\dots,v_i\}$. Denote by $\reddeg(v)$ the number of red edges incident to $v$. The procedures for the following cases are illustrated in Figure \ref{F.lifting 2 in case 2}.   

    \textbf{Case 1:} If $\reddeg(v_i)=2$ and the two red edges incident to $v_i$ connect to different vertices $u,w$, we delete $v_i$ and add a red edge $\{u,w\}$ to obtain a new graph $G_\lambda$. Then, \eqref{E. Lifting 2 in Sec 5} follows from Lemma \ref{NL.sum of cha} and \ref{NL.lifting 2}. One can check directly that \RomanNumeralCaps{1} and \RomanNumeralCaps{2} are satisfied. If \RomanNumeralCaps{3} holds for this $G_\lambda$, then we are done. Otherwise, $G_\lambda$ violates \RomanNumeralCaps{3}, and then $u$ and $w$ must be $v_j\in\{v_1,\dots,v_{i-1}\}$ and its parent. Also, in $G$, the only red edges incident to $v_i$ or $v_j$ (without loss of generality, $u=v_j$) are $\{v_i,v_j\},\{v_i,w\}$ and $\{v_j,w\}$. Then, we can delete $v_j$ from $G$ and add a red edge $\{v_i,w\}$ to get another $G_\lambda$. Under this operation, $\{v_i,w\}$ is a double red edge in $G_\lambda$. \eqref{E. Lifting 2 in Sec 5} follows from Lemma \ref{NL.sum of cha} and \ref{NL.lifting 2}. It is straightforward to check that this $G_\lambda$ satisfies \RomanNumeralCaps{1}, \RomanNumeralCaps{2} and \RomanNumeralCaps{3}.
    
    \textbf{Case 2:} If $\reddeg(v_i)=2$ and the two red edges incident to $v_i$ connect to the same vertex $u$, let $w$ be the parent of $v_i$. 

    If $w=u$, then $w$ must be incident to red edges other than the two incident to $v_i$. In this case, we can simply delete $v_i$ to obtain $G_\lambda$. $G_\lambda$ is good and \eqref{E. Lifting 2 in Sec 5} follows from Lemma \ref{NL.1 red 1 black}.

    If $w\neq u$, then we delete $v_i$ and add a red edge $\{w,u\}$ to get $G_\lambda$. \eqref{E. Lifting 2 in Sec 5} follows from Lemmas \ref{NL.sum of cha} and \ref{NL.lifting 1-2}. One can check that $G_\lambda$ satisfies \RomanNumeralCaps{1} and \RomanNumeralCaps{3}. If it also satisfies \RomanNumeralCaps{2}, then we are done. Otherwise, $u$ must be some $v_j\in\{v_1,\dots,v_{i-1}\}$ and the only red edges incident to $v_j$ in $G$ are double red edges $\{v_i,v_j\}$. In this case, we delete both $v_i$ and $v_j$ to get $G_\lambda$. This $G_\lambda$ is good and \eqref{E. Lifting 2 in Sec 5} follows from applying Lemmas \ref{NL.sum of cha} and \ref{NL.lifting 1-2} twice.

    \textbf{Case 3:} If $\reddeg(v_i)\ge 3$. Suppose $v_i$ is connected by red edges to $u_1,\dots,u_k$. They need not be distinct, but each vertex appears at most twice. Let $u_0$ be the parent of $v_i$. For every $1\le j\le k$, denote by $G_j$ the graph obtained from $G$ by deleting $v_i$ and adding red edges $\{u_j,u_{j'}\},1\le j'\neq j\le k$ . For every pair of $1\le j<j'\le k$ such that $u_j\neq u_{j'}$, denote by $G_{j,j'}$ the graph obtained by deleting $v_i$ and adding red edges $\{u_0,u_{m}\},1\le m\neq j,j'\le k$ and $\{u_j,u_{j'}\}$ from G. Let $\{G_\lambda\}_{\lambda\in\Lambda}$ be the set of graphs consisting of all the $G_j$ and $G_{j,j'}$ above. \eqref{E. Lifting 2 in Sec 5} follows from Lemmas \ref{NL.sum of cha} and \ref{NL.lifting 2}. In each $G_\lambda$, each of $u_1,\dots,u_k$ is incident to at least one red edge. \RomanNumeralCaps{1} is satisfied. Since each vertex appears at most twice in $u_1,\dots,u_k$, the number of red edges incident to it in $G_\lambda$ is at least its number in $G$ minus one. Recall that $v_i$ is incident to the fewest red edges among $\{v_1,\dots,v_i\}$ and $\reddeg(v_i)\ge 3$ in $G$. Thus, each vertex in $\{v_1,\dots,v_{i-1}\}$ is incident to at least two red edges in $G_\lambda$. \RomanNumeralCaps{2} is satisfied. If all $G_\lambda$ satisfy \RomanNumeralCaps{3}, then we are done. Otherwise, one of them violates  \RomanNumeralCaps{3}. Then, there exists a $v_j\in\{v_1,\dots,v_{i-1}\}$ such that the only red edges incident to $v_j$ in $G$ are a red edge between $v_j$ and its parent $w$, and a double red edge between $v_i$ and $v_j$. Thus $\reddeg(v_j)=3$ in $G$ and $\reddeg(v_i)$ must also equal three by our assumption. To satisfy property \RomanNumeralCaps{3}, we must add a red edge $\{v_j,w\}$ when constructing $G_\lambda$. Thus, $w$ is also connected to $v_i$ via a red edge in $G$. All the red edges incident to either $v_i$ or $v_j$ are shown in Figure \ref{F.lifting 2 in case 2}. In this case, we delete both $v_i$ and $v_j$ to get $G_\lambda$. This $G_\lambda$ is good and \eqref{E. Lifting 2 in Sec 5} follows from applying Lemma \ref{NL.sum of cha} and \ref{NL.lifting 1-2} twice.
\end{proof}

\input{figures/lifting_2_in_case_2}  

Finally, we establish the star reduction and complete the proof.

\begin{proposition}[\textbf{Star reduction}]\label{P.lifting 3 in Case 2}
    If $G$ is a good graph with backbone $(\tree^\circ)^\circ$, then for any $x$ and $t\le aN$,
    \begin{equation}\label{E.lifting 3 in Case 2}
        \action{t,x}{G}\loglesssim N^{\frac{1}{4}(l-2)n}.
    \end{equation}
\end{proposition}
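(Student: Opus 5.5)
The plan is to discard most red edges and then prune the backbone leaf by leaf. This mirrors the proof of Proposition \ref{P.lifting 3}: a non-root vertex pays $N^{1/2}$ when it has no remaining red edge, and only a logarithmic factor otherwise.

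\textbf{Step 1: discard red edges.} Let $G$ be good with backbone $(\tree^\circ)^\circ$. Since $\tree=\tree_l^n$, this backbone consists of $n$ disjoint paths hanging from $\vroot$, each with $l-2$ non-root vertices. Write $m=(l-2)n$ for the number of non-root vertices. By property \RomanNumeralCaps{1}, every non-root vertex is incident to at least one red edge. Red edges are never incident to the root, because they were created only among vertices of $\tree^\circ$.
\begin{itemize}
\item Choose a minimal red edge cover $E_0$ of the non-root vertices. A minimal edge cover is a disjoint union of stars, each with at least two vertices.
\item So $E_0$ splits the $m$ non-root vertices into $c\le m/2$ components $\mathcal C_1,\dots,\mathcal C_c$.
\item Every red weight $(|\Delta t|+1)^{-1/2}$ is at most $1$. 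Deleting all red edges outside $E_0$ therefore only increases the action, and we obtain a graph $G_0$ with $\action{t,x}{G}\le\action{t,x}{G_0}$.
\item It now suffices to show $\action{t,x}{G_0}\loglesssim N^{c/2}$, since $N^{c/2}\le N^{m/4}=N^{\frac14(l-2)n}$.
\end{itemize}

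\textbf{Step 2: prune $G_0$ leaf by leaf.} At each stage, remove a leaf $v$ of the current backbone (the bottom vertex of some path) and sum out $\embed(v)$. The vertex $v$ has exactly one black edge, to its parent $w$, plus whatever red edges of $E_0$ still join it to remaining vertices. There are two cases.
\begin{itemize}
\item If $v$ still has a red edge to a remaining vertex $u$, drop all but one of its red edges. Summing $\cha'$ over space gives a factor $\lesssim(|\Delta t|+1)^{-1/2}$, by Lemma \ref{NL.sum of cha}. Summing the time of $v$ against the single red factor $(|\embed^{(1)}(u)-s|+1)^{-1/2}$ gives $\loglesssim 1$, by Lemma \ref{NL.1 red} combined with Lemma \ref{NL.sum of cha}. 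This is exactly the computation in the proof of Proposition \ref{P.lifting 3}.
\item If $v$ has no remaining red edge, sum $\cha'$ over all space-time positions below $w$. By Lemma \ref{NL.sum of cha} this costs $\lesssim N^{1/2}$.
\end{itemize}
Pruning backbone leaves in any order eventually removes every non-root vertex.

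\textbf{Step 3: counting.} Within a component $\mathcal C_j$, every vertex removed before the last one still has its $E_0$-neighbours present. For a star, either the centre is still present, or the vertex being removed is the centre and some leaves remain. Hence only the last vertex of each $\mathcal C_j$ pays $N^{1/2}$, and all others pay a logarithmic factor. This gives $\action{t,x}{G_0}\loglesssim N^{c/2}$, which proves \eqref{E.lifting 3 in Case 2}.

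\textbf{Main point to check.} The step needing care is the one-red-edge pruning estimate, uniformly in the positions of $w$ and $u$ and for $t\le aN$. One must verify that
\[
\sum_{s<\embed^{(1)}(w)}(\embed^{(1)}(w)-s)^{-1/2}(|\embed^{(1)}(u)-s|+1)^{-1/2}\loglesssim 1,
\]
including the case where $u$ lies later in time than $w$. I would take this from the appendix lemmas, as in Proposition \ref{P.lifting 3}. The remaining ingredients are elementary: the monotonicity used when discarding red edges, and the fact that no red edge meets the root.
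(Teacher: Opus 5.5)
\textbf{There is a genuine gap in Step 3.} You claim that every vertex of $\mathcal C_j$ removed before the last one still has an $E_0$-neighbour present. This misses the case where a star-leaf is pruned after its centre; your parenthetical case split omits exactly this case.

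The pruning order is set by the backbone, not by $E_0$, so this case can be forced. If the centre lies below two or more of its star-leaves on the same path, the centre must be removed first. You sum it out against a single red factor and bound the others by $1$. Every star-leaf still present then becomes red-isolated and later pays $N^{1/2}$.

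Concretely, take $l=5$. Label path $i$ of $(\tree^\circ)^\circ$ as $a_i$ (child of $\vroot$), $b_i$, $c_i$, and put red edges $\{a_i,c_i\}$ and $\{b_i,c_i\}$. This graph is good, since property I is all that goodness requires for this backbone. $E_0$ is forced to be all red edges, so $c=n$, and the removal order $c_i,b_i,a_i$ is forced. Your scheme then gives $N^{n}$ up to logarithms, not the required $N^{3n/4}$.

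No choice of $E_0$ or of pruning order fixes this, because the discarded information is a red factor between the remaining neighbours. Summing out $c_i$ against its black factor (using Lemma \ref{NL.sum of cha} for the space sum) and both red factors gives
\[
\sum_{s<\embed^{(1)}(b_i)}\frac{1}{\embed^{(1)}(b_i)-s+1}\cdot\frac{1}{\sqrt{\embed^{(1)}(a_i)-s+1}}\lesssim\frac{\log N}{\sqrt{\embed^{(1)}(a_i)-\embed^{(1)}(b_i)+1}},
\]
which is a new red edge $\{a_i,b_i\}$. Throwing this factor away is exactly what costs the extra $N^{1/4}$ per path.

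The missing ingredient is the redistribution used in the paper. Let $v$ be a backbone leaf with parent $u_0$ and red neighbours $u_1,\dots,u_k$. When $v$ is summed out, the black edge becomes $(|\embed^{(1)}(u_0)-s|+1)^{-1/2}$. Lemma \ref{NL.lifting 2} then turns the $k+1$ time factors into a sum over graphs $H_0,\dots,H_k$, in which the neighbours are joined by $k-1$ new red edges in a star pattern. With the indices $j_i$ in that lemma chosen appropriately, at most one vertex becomes red-isolated.

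Let $z(H)$ be the number of non-root vertices with no red edge. It starts and ends at $0$, rises by at most one at each such step, and drops by one at each step costing $N^{1/2}$. Hence at most $(l-2)n/2$ steps cost $N^{1/2}$, which gives the bound.

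You can keep your Step 1 reduction to a star forest; it does no harm. Step 2, however, must create red edges rather than only delete them.
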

\begin{proof}
    For any graph $H$, we denote by $z(H)$ the number of non-root vertices of $H$ that are not incident to any red edge. Since $G$ is good, $z(G)=0$ at the beginning.

    We use the following claim.
    Let $H$ be a graph with an arbitrary backbone $\tree$ and $v$ be one of its leaves. Let $\tree'$ be the tree obtained by deleting $v$ from $\tree$.

    \begin{itemize}
        \item\textbf{Case 1:} If $v$ is not incident to any red edge, let $H'$ be the graph obtained by deleting $v$ from $H$. It is a graph with backbone $\tree'$, and for any $x$ and $t\le aN$,
        \begin{equation}\label{E.lifting 3 in Case 2-1}
            \action{t,x}{H}\lesssim N^{\frac{1}{2}}\action{t,x}{H'}.
        \end{equation}
        Moreover, $z(H')=z(H)-1$.
        \item\textbf{Case 2:} If $v$ is incident to at least one red edge in $H$, then there exists a family of graphs $\{H_\lambda\}_{\lambda\in\Lambda}$, all having  $\tree'$ as backbone, such that  for any $x$ and $t\le aN$,
        \begin{equation}\label{E.lifting 3 in Case 2-2}
            \action{t,x}{H}\loglesssim\sum_{\lambda\in\Lambda}\action{t,x}{H_\lambda}.
        \end{equation}
        Moreover, $z(H_\lambda)\le z(H)+1$ for any $\lambda$.
    \end{itemize}

    \begin{figure}[htbp]
\centering
\resizebox{0.98\textwidth}{!}{%
\begin{tikzpicture}[
    x=1cm,y=1cm,
    line cap=round,line join=round,
    vtx/.style={circle,fill=black,inner sep=1.7pt},
    bedge/.style={draw=black,line width=0.95pt},
    redge/.style={draw=red,line width=1.05pt},
    casehead/.style={font=\large\bfseries},
    symlab/.style={font=\Large},
    lab/.style={font=\small}
]

% =========================================================
% outer frame
% =========================================================
\draw[line width=1.2pt] (0,0.80) rectangle (17.80,8.55);
\draw[line width=1.2pt] (0,7.65) -- (17.80,7.65);
\draw[line width=1.2pt] (0,5.05) -- (17.80,5.05);
\draw[line width=1.2pt] (0,4.20) -- (17.80,4.20);

% headers
\node[anchor=west,casehead] at (0.45,8.10) {Case 1:};
\node[anchor=west,casehead] at (0.45,4.55) {Case 2:};

% =========================================================
% Case 1 formula
% =========================================================
\node[symlab] at (2.95,6.35) {$\mathcal{L}_{t,x}$};
\node[symlab] at (3.75,6.35) {$\Bigg ($};
\node[symlab] at (6.80,6.35) {$\Bigg )$};
\node[symlab] at (7.65,6.35) {$\lesssim$};
\node[symlab] at (8.85,6.35) {$N^{1/2}\,\mathcal{L}_{t,x}$};
\node[symlab] at (10.15,6.35) {$\Bigg ($};
\node[symlab] at (13.20,6.35) {$\Bigg )$};

% =========================================================
% Case 1: left graph
% =========================================================
\begin{scope}[xshift=5.25cm,yshift=5.77cm]
    \coordinate (A)  at ( 0.00, 1.45);
    \coordinate (L)  at (-1.18, 0.78);
    \coordinate (M)  at (-0.40, 0.78);
    \coordinate (U0) at ( 0.38, 0.78);
    \coordinate (R)  at ( 1.20, 0.78);
    \coordinate (BL) at (-1.18,-0.08);
    \coordinate (U2) at (-0.67,-0.08);
    \coordinate (M2) at (-0.2,-0.08);
    \coordinate (V)  at ( 0.15,-0.08);
    \coordinate (B3) at ( 0.56,-0.08);
    \coordinate (BR) at ( 1.20,-0.08);

    \draw[bedge] (A)--(L) (A)--(M) (A)--(U0) (A)--(R);
    \draw[bedge] (L)--(BL);
    \draw[bedge] (M)--(U2) (M)--(M2);
    \draw[bedge] (U0)--(V) (U0)--(B3);
    \draw[bedge] (R)--(BR);

    \draw[redge] (L)--(U2);
    \draw[redge] (L)--(V);
    \draw[redge] (V)--(R);
    \draw[redge] (B3)--(BR);
    \draw[redge] (M2)--(V);

    \foreach \p in {A,L,M,U0,R,BL,U2,M2,V,B3,BR}
        \node[vtx] at (\p) {};

    \node[lab] at (-1.18,-0.36) {$v$};
\end{scope}

% =========================================================
% Case 1: right graph
% =========================================================
\begin{scope}[xshift=11.62cm,yshift=5.77cm]
    \coordinate (A)  at ( 0.00, 1.45);
    \coordinate (L)  at (-1.18, 0.78);
    \coordinate (M)  at (-0.40, 0.78);
    \coordinate (U0) at ( 0.38, 0.78);
    \coordinate (R)  at ( 1.20, 0.78);
    \coordinate (U2) at (-0.67,-0.08);
    \coordinate (M2) at (-0.2,-0.08);
    \coordinate (V)  at ( 0.15,-0.08);
    \coordinate (B3) at ( 0.56,-0.08);
    \coordinate (BR) at ( 1.20,-0.08);

    \draw[bedge] (A)--(L) (A)--(M) (A)--(U0) (A)--(R);
    \draw[bedge] (M)--(U2) (M)--(M2);
    \draw[bedge] (U0)--(V) (U0)--(B3);
    \draw[bedge] (R)--(BR);

    \draw[redge] (L)--(U2);
    \draw[redge] (L)--(V);
    \draw[redge] (V)--(R);
    \draw[redge] (B3)--(BR);
    \draw[redge] (M2)--(V);

    \foreach \p in {A,L,M,U0,R,U2,M2,V,B3,BR}
        \node[vtx] at (\p) {};
\end{scope}

% =========================================================
% Case 2 symbols
% =========================================================
\node[symlab] at (3.98,2.55) {$\Rightarrow$};
\node[symlab] at (7.35,2.55) {$+$};
\node[symlab] at (10.68,2.55) {$+$};
\node[symlab] at (14.02,2.55) {$+$};

% =========================================================
% Case 2: H
% =========================================================
\begin{scope}[xshift=2.12cm,yshift=2.05cm]
    \coordinate (A)  at ( 0.00, 1.42);
    \coordinate (L)  at (-1.10, 0.78);
    \coordinate (M)  at (-0.38, 0.78);
    \coordinate (U0) at ( 0.36, 0.78);
    \coordinate (R)  at ( 1.12, 0.78);
    \coordinate (BL) at (-1.10,-0.06);
    \coordinate (U2) at (-0.6,-0.06);
    \coordinate (M2) at (-0.15,-0.06);
    \coordinate (V)  at ( 0.2,-0.06);
    \coordinate (B3) at ( 0.56,-0.06);
    \coordinate (BR) at ( 1.12,-0.06);

    \draw[bedge] (A)--(L) (A)--(M) (A)--(U0) (A)--(R);
    \draw[bedge] (L)--(BL);
    \draw[bedge] (M)--(U2) (M)--(M2);
    \draw[bedge] (U0)--(V) (U0)--(B3);
    \draw[bedge] (R)--(BR);

    \draw[redge] (L)--(U2);
    \draw[redge] (L)--(V);
    \draw[redge] (V)--(R);
    \draw[redge] (B3)--(BR);
    \draw[redge] (M2)--(V);
    
    \foreach \p in {A,L,M,U0,R,BL,U2,M2,V,B3,BR}
        \node[vtx] at (\p) {};

    \node[lab] at (-1.33,1.05) {$u_1$};
    \node[lab] at ( 0.07,0.75) {$u_0$};
    \node[lab] at ( 1.33,1.05) {$u_3$};
    \node[lab] at (-0.15,-0.35) {$u_2$};
    \node[lab] at ( 0.2,-0.35) {$v$};
    \node[lab] at ( 0.00,-0.9) {$H$};
\end{scope}

% =========================================================
% Case 2: H_0
% =========================================================
\begin{scope}[xshift=5.72cm,yshift=2.05cm]
    \coordinate (A)  at ( 0.00, 1.42);
    \coordinate (L)  at (-1.10, 0.78);
    \coordinate (M)  at (-0.38, 0.78);
    \coordinate (U0) at ( 0.36, 0.78);
    \coordinate (R)  at ( 1.12, 0.78);
    \coordinate (BL) at (-1.10,-0.06);
    \coordinate (U2) at (-0.6,-0.06);
    \coordinate (M2) at (-0.15,-0.06);
    \coordinate (B3) at ( 0.56,-0.06);
    \coordinate (BR) at ( 1.12,-0.06);

    \draw[bedge] (A)--(L) (A)--(M) (A)--(U0) (A)--(R);
    \draw[bedge] (L)--(BL);
    \draw[bedge] (M)--(U2) (M)--(M2);
    \draw[bedge] (U0)--(B3);
    \draw[bedge] (R)--(BR);

    \draw[redge] (L)--(U2);
    \draw[redge] (M2)--(U0);
    \draw[redge] (U0)--(R);
    \draw[redge] (B3)--(BR);

    \foreach \p in {A,L,M,U0,R,BL,U2,M2,B3,BR}
        \node[vtx] at (\p) {};

    \node[lab] at (-1.33,1.05) {$u_1$};
    \node[lab] at ( 0.07,0.75) {$u_0$};
    \node[lab] at ( 1.33,1.05) {$u_3$};
    \node[lab] at (-0.15,-0.35) {$u_2$};
    \node[lab] at ( 0.00,-0.9) {$H_0$};
\end{scope}

% =========================================================
% Case 2: H_1
% =========================================================
\begin{scope}[xshift=9.05cm,yshift=2.05cm]
    \coordinate (A)  at ( 0.00, 1.42);
    \coordinate (L)  at (-1.10, 0.78);
    \coordinate (M)  at (-0.38, 0.78);
    \coordinate (U0) at ( 0.36, 0.78);
    \coordinate (R)  at ( 1.12, 0.78);
    \coordinate (BL) at (-1.10,-0.06);
    \coordinate (U2) at (-0.6,-0.06);
    \coordinate (M2) at (-0.15,-0.06);
    \coordinate (B3) at ( 0.56,-0.06);
    \coordinate (BR) at ( 1.12,-0.06);

    \draw[bedge] (A)--(L) (A)--(M) (A)--(U0) (A)--(R);
    \draw[bedge] (L)--(BL);
    \draw[bedge] (M)--(U2) (M)--(M2);
    \draw[bedge] (U0)--(B3);
    \draw[bedge] (R)--(BR);

    \draw[redge] (L)--(U2);
    \draw[redge] (L)--(M2);
    \draw[redge] (L) to[bend left=12] (R);
    \draw[redge] (B3)--(BR);

    \foreach \p in {A,L,M,U0,R,BL,U2,M2,B3,BR}
        \node[vtx] at (\p) {};

    \node[lab] at (-1.33,1.05) {$u_1$};
    \node[lab] at ( 0.07,0.75) {$u_0$};
    \node[lab] at ( 1.33,1.05) {$u_3$};
    \node[lab] at (-0.15,-0.35) {$u_2$};
    \node[lab] at ( 0.00,-0.9) {$H_1$};
\end{scope}

% =========================================================
% Case 2: H_2
% =========================================================
\begin{scope}[xshift=12.38cm,yshift=2.05cm]
    \coordinate (A)  at ( 0.00, 1.42);
    \coordinate (L)  at (-1.10, 0.78);
    \coordinate (M)  at (-0.38, 0.78);
    \coordinate (U0) at ( 0.36, 0.78);
    \coordinate (R)  at ( 1.12, 0.78);
    \coordinate (BL) at (-1.10,-0.06);
    \coordinate (U2) at (-0.6,-0.06);
    \coordinate (M2) at (-0.15,-0.06);
    \coordinate (B3) at ( 0.56,-0.06);
    \coordinate (BR) at ( 1.12,-0.06);

    \draw[bedge] (A)--(L) (A)--(M) (A)--(U0) (A)--(R);
    \draw[bedge] (L)--(BL);
    \draw[bedge] (M)--(U2) (M)--(M2);
    \draw[bedge] (U0)--(B3);
    \draw[bedge] (R)--(BR);

    \draw[redge] (L)--(U2);
    \draw[redge] (L)--(M2);
    \draw[redge] (M2)--(R);
    \draw[redge] (B3)--(BR);

    \foreach \p in {A,L,M,U0,R,BL,U2,M2,B3,BR}
        \node[vtx] at (\p) {};

    \node[lab] at (-1.33,1.05) {$u_1$};
    \node[lab] at ( 0.07,0.75) {$u_0$};
    \node[lab] at ( 1.33,1.05) {$u_3$};
    \node[lab] at (-0.15,-0.35) {$u_2$};
    \node[lab] at ( 0.00,-0.9) {$H_2$};
\end{scope}

% =========================================================
% Case 2: H_3
% =========================================================
\begin{scope}[xshift=15.70cm,yshift=2.05cm]
    \coordinate (A)  at ( 0.00, 1.42);
    \coordinate (L)  at (-1.10, 0.78);
    \coordinate (M)  at (-0.38, 0.78);
    \coordinate (U0) at ( 0.36, 0.78);
    \coordinate (R)  at ( 1.12, 0.78);
    \coordinate (BL) at (-1.10,-0.06);
    \coordinate (U2) at (-0.38,-0.06);
    \coordinate (M2) at (-0.02,-0.06);
    \coordinate (B3) at ( 0.56,-0.06);
    \coordinate (BR) at ( 1.12,-0.06);

    \draw[bedge] (A)--(L) (A)--(M) (A)--(U0) (A)--(R);
    \draw[bedge] (L)--(BL);
    \draw[bedge] (M)--(U2) (M)--(M2);
    \draw[bedge] (U0)--(B3);
    \draw[bedge] (R)--(BR);

    \draw[redge] (L)--(U2);
    \draw[redge] (L) to[bend left=12] (R);
    \draw[redge] (M2)--(R);
    \draw[redge] (B3)--(BR);

    \foreach \p in {A,L,M,U0,R,BL,U2,M2,B3,BR}
        \node[vtx] at (\p) {};

    \node[lab] at (-1.33,1.05) {$u_1$};
    \node[lab] at ( 0.07,0.75) {$u_0$};
    \node[lab] at ( 1.33,1.05) {$u_3$};
    \node[lab] at (-0.02,-0.35) {$u_2$};
    \node[lab] at ( 0.00,-0.9) {$H_3$};
\end{scope}

\end{tikzpicture}%
}
\caption{Two cases of Star reduction.}
\label{F.lifting 3 in case 2}
\end{figure}

    Figure \ref{F.lifting 3 in case 2} illustrates a specific $H$ and vertex $v$. If the claim holds, we can iteratively apply it to $G$ until only the root remains. During this procedure, $z(\cdot)$ remains nonnegative throughout and ends at zero. So the number of vertices $v$ belonging to Case 1 is at most the number of vertices $v$ belonging to Case 2. Since $G$ has $(l-2)n$ non-root vertices, Case 1 occurs at most $(l-2)n/2$ times and hence \eqref{E.lifting 3 in Case 2} is established. It therefore suffices to prove the claim.
    
    If $v$ belongs to Case 1, then \eqref{E.lifting 3 in Case 2-1} follows from Lemma \ref{NL.sum of cha}. Moreover, $z(H')=z(H)-1$.

    If $v$ belongs to Case 2, suppose $v$ is connected by red edges to $u_1,\dots,u_k$. Let $u_0$ be the parent of $v$. Then, for any $1\le j\le k$, let $H_j$ be the graph obtained from $H$ by deleting $v$ and adding red edges $\{u_i,u_j\}: 1\le i\neq j \le k$. Also let $H_0$ be the graph obtained from $H$ by deleting $v$ and adding red edges

    $\{u_0,u_i\}:2\le i\le k$. Then, for any $0\le j\le k$, $z(H_j)\le z(H)+1$ and \eqref{E.lifting 3 in Case 2-2} follows from Lemma \ref{NL.sum of cha} and \ref{NL.lifting 2}.
\end{proof}
\begin{proof}[Proof of Theorem \ref{T.moments upper bound on K 3}]
    By Propositions \ref{P.Lifting 1 in Case 2}, \ref{P.Lifting 2 in Case 2} and \ref{P.lifting 3 in Case 2}, we have
    \begin{align*}
        \bbE\left[\left(\kterm{l}(t,x)\right)^n\right]&\lesssim N^{-(\frac{1}{4}+\rate)ln}\sum_{\pi~is~proper}\action{t,x}{(\tree_l^n)^\pi}\loglesssim N^{-(\frac{1}{4}+\rate)ln}\cdot N^{\frac{1}{4}(l-2)n}=N^{-(\frac{1}{2}+l\rate)n}.
    \end{align*}
\end{proof}

\section*{Acknowledgements}
We thank Yuchen Liao for helpful discussions. Y.Z. and F.L. thank Arka Adhikari, Fanhao Kong, Weijun Xu, Rongfeng Sun and Jinjiong Yu for useful comments and references, and also thank Xinyi Li for his kind support of our project.  D.L. is  supported by the National Natural Science Foundation of China (Grant No. 12371157).   T.W. is supported by the National Key Research and Development Project (Grant No. 2025YFA1017600), Anhui Postdoctoral Scientific Research Program Foundation (Grant No. 2025B1055), the Fundamental Research Funds for the Central Universities (Grant No.  WK0010250106), University of Science and Technology of China-Southwest University of Science and Technology Counterpart Cooperation and Development Joint Fund (Grant No. KY0010002501), and the Open Research Fund of Hubei Key Laboratory of Mathematical Sciences (Central China Normal University, Wuhan 430079, P. R. China).  Y.Z. is supported by the National Key R\&D Program of China (No.\ 2021YFA1002700), and the Beijing Natural Science Foundation (JQ26001). 

\section*{Statement on the use of AI}

Artificial intelligence (AI) tools were not used in developing any of the mathematical results or proofs presented in this paper. The paper was drafted without the use of AI tools. ChatGPT (versions 5.4-5.6) was used only to polish the language of the final draft and to generate figures.

\appendix
\section{Some Lemmas}\label{S.Some Lemmas}
In this section, we state and prove several numerical lemmas. Recall that $p(t,x)$ is the transition probability of the simple symmetric random walk, $\cha(t,x)=\difference p(t,x)=p(t-1,x-1)-p(t-1,x+1)$ and $\cha’(t,x)=C_{\cha}(1+t+x^2)^{-1}$, where $C_{\cha}$ is an absolute constant. Lemmas \ref{NL.upper bound on cha} to \ref{NL.1 red 1 black} are easily checked, and we omit their proofs here.
\begin{lemma}\label{NL.upper bound on cha}
     For any $t\ge 1$,
     $$\sup_{x\in\bbZ}\left\{p(t,x)\right\}=O\left(\frac{1}{\sqrt{t+1}}\right),\quad  \quad \sup_{x\in\bbZ}\left\{\absolute{\cha(t,x)}\right\}=O\left(\frac{1}{t+1}\right).$$
\end{lemma}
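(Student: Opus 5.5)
We need to prove Lemma \ref{NL.upper bound on cha}: $\sup_x p(t,x)=O((t+1)^{-1/2})$ and $\sup_x|\cha(t,x)|=O((t+1)^{-1})$.

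For the first bound we use the explicit formula $p(t,x)=2^{-t}\binom{t}{(t-x)/2}$. The binomial coefficient is maximized at the central index, so $\sup_x p(t,x)\le 2^{-t}\binom{t}{\lfloor t/2\rfloor}$. Stirling's formula gives $2^{-2m}\binom{2m}{m}\sim(\pi m)^{-1/2}$. The odd case $t=2m+1$ reduces to the even case via $\binom{2m+1}{m}=\frac{2m+1}{m+1}\binom{2m}{m}$. This yields $O(t^{-1/2})$ for $t\ge1$, which is equivalent to $O((t+1)^{-1/2})$.

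For the second bound, set $u=t-1\ge0$. We have $\cha(t,x)=p(u,x-1)-p(u,x+1)$, and only $x$ with $u+x$ odd contribute. Write $k=(u-x+1)/2$, so that $p(u,x-1)=2^{-u}\binom{u}{k}$ and $p(u,x+1)=2^{-u}\binom{u}{k-1}$. Then
\[
\binom{u}{k}-\binom{u}{k-1}=\binom{u}{k}\Bigl(1-\frac{k}{u-k+1}\Bigr)=\binom{u+1}{k}\frac{u+1-2k}{u+1}.
\]
Hence
\[
|\cha(t,x)|=\frac{2|x|}{u+1}\,p(u+1,x)=\frac{2|x|}{t}\,p(t,x).
\]

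The remaining step is to show $\sup_x |x|p(t,x)=O(1)$ uniformly in $t$; this is the only nontrivial estimate. The plan is a local bound of the form $p(t,x)\lesssim t^{-1/2}e^{-cx^2/t}$ for $|x|\le t$. This follows from the ratio identity
\[
\frac{p(t,x+2)}{p(t,x)}=\frac{t-x}{t+x+2}\le e^{-2x/t}
\]
for $x\ge0$. Multiplying these ratios from $0$ (or $1$) up to $x$ gives $p(t,x)\le p(t,x_0)e^{-cx^2/t}$ with $x_0\in\{0,1\}$, and the first part controls $p(t,x_0)$. Then
\[
|x|p(t,x)\lesssim \frac{|x|}{\sqrt t}\,e^{-cx^2/t}\le \sup_{y\ge0} y e^{-cy^2}<\infty .
\]
Combining, $|\cha(t,x)|\lesssim t^{-1}$ for $t\ge1$. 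The claimed form $O((t+1)^{-1})$ follows since $t\ge1$ and $|\cha|\le1$ handles small $t$.

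The main obstacle is this Gaussian-tail step. Justifying $\frac{t-x}{t+x+2}\le e^{-2x/t}$ needs a little care; I would first try $\log\frac{1-a}{1+b}\le -(a+b)$ with $a=x/t$, $b=(x+2)/t$. Alternatively, one can avoid it by noting $|x|p(t,x)=\tfrac{t}{2}|\cha(t+1,x)|$ and bounding $\sum$-free directly, but the ratio argument is the cleanest route.
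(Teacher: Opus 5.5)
The paper gives no proof to compare against: this lemma is one of Lemmas \ref{NL.upper bound on cha}--\ref{NL.1 red 1 black}, which the paper calls ``easily checked'' and omits. Your argument does prove it. The identity $\cha(t,x)=\frac{2x}{t}p(t,x)$, combined with the Gaussian bound on $p$, even gives the sharper pointwise estimate $\absolute{\cha(t,x)}\lesssim t^{-3/2}\absolute{x}e^{-cx^2/t}$. That is exactly the bound the paper tacitly uses in the power counting of its introduction.

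One step needs repair. The inequality you propose for the ratio bound, $\log\frac{1-a}{1+b}\le-(a+b)$, is false. Already for $a=0$ it reads $\log(1+b)\ge b$, which fails for every $b>0$. At $x=0$, for instance, the ratio is $\frac{t}{t+2}>e^{-2/t}$. Instead, for $0\le x<t$, drop the $+2$ and use the odd-power expansion:
\[
\frac{t-x}{t+x+2}\le\frac{1-a}{1+a}=\exp\Bigl(-2\bigl(a+\tfrac{a^{3}}{3}+\tfrac{a^{5}}{5}+\cdots\bigr)\Bigr)\le e^{-2a},\qquad a=\frac{x}{t}.
\]
This is the bound you need; at $x=t$ the ratio is $0$.

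When you telescope from $x_0\in\{0,1\}$, the exponent is $\frac{2}{t}\sum_{i<m}(x_0+2i)\ge\frac{2}{t}m(m-1)$ with $m=(x-x_0)/2$. This gives $e^{-x^2/(2t)}$ only up to a factor $e^{O(x/t)}=O(1)$. That is harmless, but state it.

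Two smaller points. Your closing ``alternative'' should be dropped. By your own identity, $\absolute{x}p(t,x)=\frac{t}{2}\absolute{\cha(t,x)}$, not $\frac{t}{2}\absolute{\cha(t+1,x)}$, so it only restates the goal. Also, in the binomial computation the middle expression divides by zero at $k=u+1$ (i.e.\ $x=-t$). The outer identity $\binom{u}{k}-\binom{u}{k-1}=\frac{u+1-2k}{u+1}\binom{u+1}{k}$ still holds there, as a direct check shows.
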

\begin{lemma}\label{NL.1 red}
    For any $0\le t\le aN$,
    $$\sum_{s=0}^{aN-1}\frac{1}{\sqrt{\absolute{t-s}+1}}=O(\sqrt N).$$
    Moreover, for any $0\le t_1,t_2\le aN$,
    $$\sum_{s=0}^{aN-1}\frac{1}{\sqrt{\absolute{t_1-s}+1}}\cdot\frac{1}{\sqrt{\absolute{t_2-s}+1}} =O(\log N).$$
\end{lemma}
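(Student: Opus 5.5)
The first bound is a direct comparison with an integral. Put $u=s-t$. The summand is then $(|u|+1)^{-1/2}$, and $u$ ranges over an interval of length $aN$. Extending the range to all $|u|\le aN$ can only increase the sum, so
$$\sum_{s=0}^{aN-1}\frac{1}{\sqrt{|t-s|+1}}\le 2\sum_{u=0}^{aN}\frac{1}{\sqrt{u+1}}\le 2\int_0^{aN+1}\frac{du}{\sqrt{u}}\lesssim\sqrt{N}.$$
This estimate is uniform in $t$.

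For the second bound I would assume $t_1\le t_2$ by symmetry and set $d=t_2-t_1$. I split the range of $s$ into three regions.

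\emph{Region $s\le t_1$.} Here $|t_2-s|\ge |t_1-s|$, so the product is at most $(t_1-s+1)^{-1}$. Summing this over $s$ gives at most $\sum_{k\le aN}(k+1)^{-1}=O(\log N)$.

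\emph{Region $s\ge t_2$.} Symmetrically, the product is at most $(s-t_2+1)^{-1}$, which again sums to $O(\log N)$.

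\emph{Region $t_1<s<t_2$.} Write $k=s-t_1$, so $t_2-s=d-k$. The product becomes $\bigl((k+1)(d-k+1)\bigr)^{-1/2}$. On the half with $k\le d/2$ we have $d-k+1\ge k+1$, so the product is bounded by $(k+1)^{-1}$. The half with $k>d/2$ is handled the same way with the roles of $k$ and $d-k$ swapped. Each half therefore contributes $O(\log(d+1))=O(\log N)$.

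One could instead compare with $\int_0^{d}(x(d-x))^{-1/2}\,dx=\pi$, which shows the middle region is in fact $O(1)$. The harmonic bound is enough for the statement, though.

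There is no real obstacle here. The only points that need care are the regions where one of the two factors is bounded by the other, and the endpoint terms where $|t_i-s|=0$. The $+1$ in the denominators makes those endpoint terms harmless. All constants depend only on $a$.
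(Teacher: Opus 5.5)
Your proof is correct: the integral comparison gives the $O(\sqrt N)$ bound uniformly in $t$, and splitting the second sum according to which factor dominates reduces it to harmonic sums of size $O(\log N)$. The paper omits the proof of this lemma as ``easily checked'', and your argument is exactly the routine verification it leaves to the reader.
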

\begin{lemma}\label{NL.sum of cha} 
    For any $0\le t\le aN, x\in\bbZ$,
    $$\sum_{z\in\bbZ}\cha'(t,x-z)=O\left(\frac{1}{\sqrt{t+1}}\right), \quad \sum_{s=1}^{t-1}\sum_{z\in\bbZ}\cha'(t-s,x-z)=O\left(\sqrt N\right).$$
\end{lemma}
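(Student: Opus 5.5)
The two estimates in Lemma \ref{NL.sum of cha} are elementary kernel estimates for $\cha'(t,x)=C_{\cha}(1+t+x^2)^{-1}$; I would prove them by comparing the spatial sum with an integral and then summing the resulting bound in time.

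For the first bound, fix $0\le t\le aN$ and $x\in\bbZ$. Since $z\mapsto x-z$ is a bijection of $\bbZ$,
\[
\sum_{z\in\bbZ}\cha'(t,x-z)=C_{\cha}\sum_{y\in\bbZ}\frac{1}{1+t+y^2},
\]
so the sum does not depend on $x$. Set $A:=1+t\ge 1$. The function $y\mapsto (A+y^2)^{-1}$ is even and decreasing in $|y|$. Isolating the $y=0$ term and bounding the rest by the integral over $[0,\infty)$ on each side gives
\[
\sum_{y\in\bbZ}\frac{1}{A+y^2}\le \frac1A+2\int_0^\infty\frac{dy}{A+y^2}=\frac1A+\frac{\pi}{\sqrt A}\le\frac{1+\pi}{\sqrt A}.
\]
The last step uses $A\ge 1$. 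This is $O\big((t+1)^{-1/2}\big)$ with an absolute constant.

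For the second bound, apply the first with $t-s$ in place of $t$, where $1\le s\le t-1$; note $1\le t-s\le aN$. Reindexing by $u=t-s$ gives
\[
\sum_{s=1}^{t-1}\sum_{z\in\bbZ}\cha'(t-s,x-z)\lesssim\sum_{u=1}^{t-1}\frac{1}{\sqrt{u+1}}\le 2\sqrt{t}\le 2\sqrt{aN}=O(\sqrt N).
\]
The middle inequality is the standard comparison $\sum_{u\le T}(u+1)^{-1/2}\le\int_0^T u^{-1/2}\,du$. The implicit constant depends only on $a$ and $C_{\cha}$, consistent with the paper's convention for $\lesssim$ on $\boxdomain$.

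There is no real obstacle here. The only points to watch are uniformity in $x$, which holds because the spatial sum is translation invariant, and the small-$t$ regime, which is covered by the $+1$ in the denominator: it keeps $\cha'$ bounded and makes the bound hold uniformly down to $t=0$.
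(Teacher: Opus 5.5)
Your proof is correct, and it is exactly the routine verification the paper leaves out; the paper omits the proofs of these kernel lemmas as easily checked. You compare with an integral to get $\sum_{y\in\mathbb{Z}}(A+y^2)^{-1}\le(1+\pi)A^{-1/2}$ for $A=1+t\ge 1$, and then sum in time, $\sum_{u=1}^{t-1}(u+1)^{-1/2}\le 2\sqrt{t}\le 2\sqrt{aN}$. Only the second estimate uses $t\le aN$; the first holds uniformly for all $t\ge 0$ and $x\in\mathbb{Z}$.
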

\begin{lemma}\label{NL.1 red 1 black}
     For any $t_1, t_2\le aN$ and $x\in\bbZ$,
     $$\sum_{s=1}^{t_1-1}\sum_{z\in\bbZ}\cha'(t_1-s,x-z)\cdot\frac{1}{\sqrt{\absolute{t_2-s}+1}}=O(\log N).$$
\end{lemma}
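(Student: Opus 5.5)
Lemma \ref{NL.1 red 1 black} is a direct estimate that separates the space sum from the time sum. For fixed $s$ with $1\le s\le t_1-1$, I would first sum over $z$ using the first bound of Lemma \ref{NL.sum of cha}: since $\cha'(u,y)=C_{\cha}(1+u+y^2)^{-1}$, we have $\sum_{z\in\bbZ}\cha'(t_1-s,x-z)\lesssim (t_1-s+1)^{-1/2}$. This is just the comparison $\sum_{y}(1+u+y^2)^{-1}\lesssim\int_{\bbR}(1+u+y^2)^{-1}\,dy=\pi(1+u)^{-1/2}$.

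That reduces the claim to the one-dimensional bound
\[
\sum_{s=1}^{t_1-1}\frac{1}{\sqrt{t_1-s+1}}\cdot\frac{1}{\sqrt{\absolute{t_2-s}+1}}=O(\log N).
\]
Every summand is nonnegative, so I can extend the range to $0\le s\le aN-1$. Since $t_1-s=\absolute{t_1-s}$ on the original range, the extended sum is exactly the second estimate of Lemma \ref{NL.1 red}, which gives $O(\log N)$ uniformly in $t_1,t_2\le aN$.

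If I want to verify that estimate directly rather than cite it, I would split the $s$-range at the midpoint of $t_1$ and $t_2$. Say $t_1\le t_2$, and put $d=t_2-t_1$.
\begin{itemize}
\item For $s\le t_1$, the second factor is at most $(d+t_1-s+1)^{-1/2}\le (t_1-s+1)^{-1/2}$. The sum is then bounded by $\sum_{u\le aN}(u+1)^{-1}=O(\log N)$.
\item For $t_1<s<t_2$, I use $\absolute{t_1-s}+1$ in place of $t_1-s+1$ in the first factor. The sum $\sum_{0<k<d}k^{-1/2}(d-k)^{-1/2}$ is $O(1)$ by comparison with a Beta integral.
\item For $s\ge t_2$, both distances are at least $s-t_2$, so the sum is again $O(\log N)$.
\end{itemize}

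The only mildly delicate point is uniformity in $x$, $t_1$ and $t_2$. The $z$-sum is translation invariant, so the $x$-dependence disappears, and every constant above depends only on $a$ and $C_{\cha}$. The whole argument is routine and I do not expect a real obstacle; the logarithm comes entirely from the diagonal region where $s$ is close to both $t_1$ and $t_2$.
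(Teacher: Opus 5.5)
Your proposal is correct and follows the route the paper has in mind: the paper omits this proof as routine, and where it uses such bounds it combines Lemma \ref{NL.sum of cha} for the $z$-sum with Lemma \ref{NL.1 red} for the $s$-sum, exactly as you do. One small remark: your closing heuristic is reversed. The region between $t_1$ and $t_2$ contributes only $O(1)$, and the $\log N$ comes from the range where $s$ lies on the same side of both $t_1$ and $t_2$ (away from the diagonal once $|t_1-t_2|$ is large), where the summand behaves like the reciprocal of the distance; this does not affect the proof.
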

\begin{lemma}\label{NL.lifting 1-1}
    For any fixed $k\ge 2$, $t_1\le t_2\le\cdots\le t_k\le aN$ and any $x_1,\ldots,x_k\in\bbZ$,
    \begin{equation}\label{E.graph lifting 1}
    \sum_{s=1}^{t_1-1}\sum_{z\in\bbZ}\prod_{i=1}^{k}\cha'(t_i-s,x_i-z)\lesssim \frac{\log N}{\sqrt{t_2-t_1+1}}\cdot\prod_{i=3}^{k}\frac{1}{t_i-t_1+1}.
    \end{equation}
\end{lemma}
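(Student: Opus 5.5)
We need to prove Lemma NL.lifting 1-1: for $t_1\le\dots\le t_k\le aN$,
\[
\sum_{s=1}^{t_1-1}\sum_{z}\prod_{i=1}^k \cha'(t_i-s,x_i-z)\lesssim \frac{\log N}{\sqrt{t_2-t_1+1}}\prod_{i=3}^k\frac{1}{t_i-t_1+1}.
\]

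The plan is to bound all but two factors by their suprema, handle the spatial sum with a Cauchy–Schwarz type bound, and then do the time sum explicitly.

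\emph{Factors $i\ge3$.} Since $s<t_1$, we have $t_i-s\ge t_i-t_1+1$. Dropping the $x^2$ term gives
\[
\cha'(t_i-s,x_i-z)\le \frac{C_{\cha}}{1+t_i-s}\le\frac{C_{\cha}}{t_i-t_1+1}.
\]
These factors come out of the sum and produce exactly the product over $i\ge3$. This uses only that the constant depends on the fixed $k$.

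\emph{Spatial sum of the remaining two factors.} For fixed $s$, write $a=t_1-s\ge1$ and $b=t_2-s\ge a$. We need
\[
\sum_z \frac{1}{(1+a+(x_1-z)^2)(1+b+(x_2-z)^2)}.
\]
Bound the second factor by its supremum $1/(1+b)$ and sum the first over $z$. Since $\sum_z (1+a+y^2)^{-1}\lesssim (1+a)^{-1/2}$ (compare with $\int dy/(a+y^2)=\pi/\sqrt a$, i.e.\ Lemma NL.sum of cha), the spatial sum is $\lesssim (1+a)^{-1/2}(1+b)^{-1}$.

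\emph{Time sum.} Set $r=t_1-s\in[1,t_1-1]$ and $d=t_2-t_1\ge0$. It remains to show
\[
\sum_{r=1}^{aN}\frac{1}{\sqrt{r}\,(r+d+1)}\lesssim\frac{\log N}{\sqrt{d+1}}.
\]
Split according to $r\le d+1$ and $r>d+1$:
\begin{itemize}
\item For $r\le d+1$ the summand is at most $(d+1)^{-1}r^{-1/2}$, and the sum is $\lesssim (d+1)^{-1}\sqrt{d+1}=(d+1)^{-1/2}$.
\item For $r>d+1$ the summand is at most $r^{-3/2}$, and the sum is $\lesssim (d+1)^{-1/2}$.
\end{itemize}
So the time sum is in fact $O\big((d+1)^{-1/2}\big)$ without the logarithm, and the stated bound with its extra $\log N$ follows a fortiori.

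There is no real obstacle. The only care needed is to put the $x^2$-integration on the factor with the smaller time gap $t_1-s$ and the supremum bound on the $t_2$ factor; the reverse choice would give $(1+a)^{-1}(1+b)^{-1/2}$, whose time sum carries a genuine logarithm. Either way the claimed bound holds.
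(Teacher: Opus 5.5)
Your proof is correct and follows the paper's argument essentially step for step. The paper also pulls out the factors $i\ge 3$ via $\sup_z\cha'(t_i-s,x_i-z)\lesssim (t_i-t_1+1)^{-1}$, bounds the $t_2$-factor by its supremum while summing the $t_1$-factor over $z$, and splits the time sum according to whether $t_1-s\le t_2-t_1$. Your observation that this split actually gives $O\big((t_2-t_1+1)^{-1/2}\big)$ with no logarithm is also right, so the $\log N$ in the statement is just harmless slack.
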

\begin{proof}
    We first consider the case $k=2$.

    \begin{align*}
        \sum_{s=1}^{t_1-1}\sum_{z\in\bbZ}\absolute{\cha'(t_1-s,x_1-z)\cha'(t_2-s,x_2-z)}
        \le&\sum_{s=1}^{t_1-1}\sup_{z\in\bbZ}\left\{\absolute{\cha'(t_2-s,x_2-z)}\right\}\sum_{z\in\bbZ}\absolute{\cha'(t_1-s,x_1-z)}\\
        \lesssim&\sum_{s=1}^{t_1-1}\frac{1}{t_2-s+1}\cdot\frac{1}{\sqrt{t_1-s+1}}\lesssim\frac{\log N}{\sqrt{t_2-t_1+1}},
    \end{align*}
    where the last inequality follows by splitting the sum according to whether $t_1-s\le t_2-t_1$ or $t_1-s> t_2-t_1$.
    For general $k\ge 3$, \eqref{E.graph lifting 1} reduces to the $k=2$ case because for $3\le i\le k$,

    $$\sup_{z\in\bbZ}\left\{\absolute{\cha'(t_i-s,x_i-z)}\right\}=O\left(\frac{1}{t_i-s+1}\right)=O\left(\frac{1}{t_i-t_1+1}\right).$$
\end{proof}
\begin{lemma}\label{NL.lifting 1-2}
    If $t_1\le t_2\le\cdots\le t_k\le aN$ and $t^{(1)}, t^{(2)},\dots,t^{(k)}$ is any rearrangement of them, then
    $$\frac{1}{\sqrt{t_2-t_1+1}}\cdot\prod_{i=3}^{k}\frac{1}{t_i-t_1+1}\le\prod_{i=1}^{k-1}\frac{1}{\sqrt{|t^{(i+1)}-t^{(i)}|+1}}.$$
\end{lemma}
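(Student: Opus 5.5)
The plan is to clear the square roots and reduce to a counting argument over the edges of a path. Squaring both sides and inverting, the claimed inequality is equivalent to
$$\prod_{i=1}^{k-1}\left(\absolute{t^{(i+1)}-t^{(i)}}+1\right)\le (t_2-t_1+1)\prod_{i=3}^{k}(t_i-t_1+1)^2 .$$
To handle ties cleanly, I will identify the rearrangement with a permutation $\sigma$ of the index set $\{1,\dots,k\}$, so that $t^{(i)}=t_{\sigma(i)}$. The left-hand side is then a product over the $k-1$ edges $\{\sigma(i),\sigma(i+1)\}$ of a Hamiltonian path $P$ on the vertex set $\{1,\dots,k\}$. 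These edges are distinct unordered pairs of indices, and every vertex has degree at most $2$ in $P$.

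The first step is a single-edge bound. For an edge $\{a,b\}$ of $P$ with $a<b$, both $t_a$ and $t_b$ lie in $[t_1,t_b]$ because the $t_i$ are sorted. Hence
$$\absolute{t_b-t_a}+1\le t_b-t_1+1 .$$
I will charge each edge to its larger index $b$. Since $a<b$, the index $b$ is never $1$.

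The second step is counting how many edges are charged to each index. Because the degree of any vertex in $P$ is at most $2$, each index $j\ge 3$ is charged by at most two edges. The index $2$ can only be the larger endpoint of the edge $\{1,2\}$, which appears at most once in $P$, so it is charged at most once. Multiplying the single-edge bounds over all edges, and using that every factor $t_j-t_1+1$ is at least $1$ so that unused multiplicity can be padded, gives exactly the displayed inequality.

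Taking square roots and reciprocals then yields the lemma. I do not expect any real obstacle. The only point needing care is that index $2$ receives at most one charge; this is what produces the single factor $(t_2-t_1+1)^{-1/2}$ rather than $(t_2-t_1+1)^{-1}$, and it holds because the path edges are distinct pairs of positions even when some of the $t_i$ coincide.
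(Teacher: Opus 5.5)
Your proof is correct and follows the paper's argument. The paper bounds each consecutive gap by $\max\{t^{(i)},t^{(i+1)}\}-t_1$ and observes that $t_2$ is charged at most once and each $t_j$ with $j\ge 3$ at most twice. Your index-based (permutation) bookkeeping is slightly more careful than the paper's phrasing, which speaks of the multiset of values and becomes ambiguous when some $t_i$ coincide.
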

\begin{proof}
    Since $t_1\le t^{(i+1)},t^{(i)}$, we have
    $$|t^{(i+1)}-t^{(i)}|\le \max\{t^{(i+1)},t^{(i)}\}-t_1.$$
    Then, 
    $$\prod_{i=1}^{k-1}\frac{1}{\sqrt{|t^{(i+1)}-t^{(i)}|+1}}\ge\prod_{i=1}^{k-1}\frac{1}{\sqrt{\max\{t^{(i+1)},t^{(i)}\}-t_1+1}}.$$
    Note that in the multiset $\{\max\{t^{(i+1)},t^{(i)}\}:1\le i\le k-1\}$, $t_2$ appears at most once, and each $t_i,3\le i\le k$ appears at most twice. The proof is complete.
\end{proof}
\begin{lemma}\label{NL.lifting 1-3}
    If $t_1,t_2\le aN$ and $x_1,x_2\in\bbZ$, then 
    $$\sum_{s=1}^{\min\{t_1,t_2\}-1}\sum_{z\in\bbZ}\cha'(t_1-s,x_1-z)\cha'(t_2-s,x_2-z)^2\lesssim\frac{1}{\absolute{t_1-t_2}+1}.$$
\end{lemma}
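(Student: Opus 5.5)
We need to bound $\sum_{s<\min(t_1,t_2)}\sum_z \cha'(t_1-s,x_1-z)\cha'(t_2-s,x_2-z)^2$ by $(|t_1-t_2|+1)^{-1}$, with no logarithm allowed. Write $d=|t_1-t_2|$ and $r=\min\{t_1,t_2\}-s\ge1$. Then the two time lags are $r$ and $r+d$ in some order.

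\textbf{Step 1: kill the $z$-sum.} Pull out the supremum over $z$ of one factor and sum the remaining factor in $z$. By the definition of $\cha'$ and Lemma \ref{NL.sum of cha}:
\begin{itemize}
\item $\sup_z\cha'(u,\cdot)\lesssim (u+1)^{-1}$;
\item $\sum_z\cha'(u,y-z)\lesssim (u+1)^{-1/2}$;
\item $\sum_z\cha'(u,y-z)^2\lesssim (u+1)^{-3/2}$, which follows from $\sup\cdot\sum$.
\end{itemize}

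\textbf{Step 2: split according to which time is smaller.}

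Case $t_2\le t_1$. The squared factor has the short lag $r$ and the single factor has lag $r+d$. Bound the single factor by its supremum $\lesssim (r+d+1)^{-1}\le (d+1)^{-1}$. Sum the squared factor in $z$ to get $\lesssim (r+1)^{-3/2}$. This gives
$$\lesssim (d+1)^{-1}\sum_{r\ge1}(r+1)^{-3/2}\lesssim (d+1)^{-1}.$$

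Case $t_1<t_2$. The single factor has lag $r$ and the squared factor has lag $r+d$. Bound the squared factor by its supremum squared, $\lesssim (r+d+1)^{-2}$. Sum the single factor in $z$ to get $\lesssim (r+1)^{-1/2}$. This gives
$$\sum_{r\ge1}(r+1)^{-1/2}(r+d+1)^{-2}.$$
Split at $r=d$:
\begin{itemize}
\item For $r\le d$, the summand is at most $(d+1)^{-2}(r+1)^{-1/2}$, and the partial sum is $\lesssim (d+1)^{-2}(d+1)^{1/2}=(d+1)^{-3/2}$.
\item For $r>d$, the summand is at most $(r+1)^{-5/2}$, and the tail sum is $\lesssim (d+1)^{-3/2}$.
\end{itemize}
Both pieces are $\le (d+1)^{-1}$.

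\textbf{Main obstacle.} The only delicate point is avoiding the $\log N$ that the cruder argument of Lemma \ref{NL.lifting 1-1} would produce. This is exactly why the square is placed on the appropriate factor before summing: the extra power of decay makes every $r$-sum convergent uniformly in $N$.
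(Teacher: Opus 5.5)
Your proof is correct and follows the paper's argument. In the case $t_2\le t_1$ you do exactly what the paper does: bound the lone factor by its supremum $\lesssim (t_1-t_2+1)^{-1}$ and use $\sum_{s,z}\cha'(t_2-s,x_2-z)^2=O(1)$. For $t_1<t_2$ the paper only writes ``analogous,'' and your lag-by-lag bound with the split at $r=d$ supplies the detail that case needs: the literal analogue, a global supremum of the squared factor times $\sum_{s,z}\cha'(t_1-s,x_1-z)$, would only give $O\bigl(\sqrt N\,(t_2-t_1+1)^{-2}\bigr)$.
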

\begin{proof}
    If $t_2\le t_1$, then
    \begin{align*}
        \sum_{s=1}^{t_2-1}\sum_{z\in\bbZ}\cha'(t_1-s,x_1-z)\cha'(t_2-s,x_2-z)^2
        &\le\sup_{\substack{s\le t_2-1\\z\in\bbZ}}\{\cha'(t_1-s,x_1-z)\}\sum_{s=1}^{t_2-1}\sum_{z\in\bbZ}\cha'(t_2-s,x_2-z)^2\\
        &\lesssim\sup_{\substack{s\le t_2-1\\z\in\bbZ}}\{\cha'(t_1-s,x_1-z)\}=O\left(\frac{1}{t_1-t_2+1}\right).
    \end{align*}
    The case $t_1<t_2$ is analogous.
\end{proof}
\begin{lemma}\label{NL.lifting 1-4}
    If $t_1\le t_2\le\cdots\le t_m\le aN$, then for any $1\le i\le m$ and $x_1,x_2,\dots,x_m\in\bbZ$,
    $$\sum_{s=1}^{t_1-1}\sum_{z\in\bbZ}\left[\cha'(t_i-s,x_i-z)\prod_{j=1}^{m}\cha'(t_j-s,x_j-z)\right]\lesssim\prod_{j=1}^{m-1}\frac{1}{t_{j+1}-t_j+1}.$$
\end{lemma}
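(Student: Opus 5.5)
The plan is to bound the factor $\cha'(t_i-s,x_i-z)$ that appears twice by its supremum in $z$, and to handle the remaining product of $m$ kernels by peeling them off one at a time. The exception is the factor with the smallest time, whose spatial sum we keep.

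Fix $s\le t_1-1$. Since $t_j-s\ge t_j-t_1+1$, every factor with $j\ge 2$ satisfies
$$\sup_z \cha'(t_j-s,x_j-z)\lesssim (t_j-s+1)^{-1}.$$
A first attempt would pull out all factors with $j\ge2$ this way and keep $\sum_z \cha'(t_1-s,x_1-z)\lesssim (t_1-s+1)^{-1/2}$ (Lemma \ref{NL.sum of cha}). This gives
$$\sum_{s} (t_1-s+1)^{-1/2}(t_i-s+1)^{-1}\prod_{j=2}^m (t_j-s+1)^{-1}.$$
That is too crude: the $s$-sum of $(t_1-s+1)^{-1/2}$ alone produces $\sqrt{N}$-type growth unless enough decay accompanies it. The better route is to use that $\cha'$ has unit-order total mass against the time sum. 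Concretely, use $\sum_{s,z}\cha'(t_1-s,x_1-z)^2\lesssim 1$ when $i=1$. When $i\ne1$, use $\sum_{s,z}\cha'(t_1-s,x_1-z)\,\cha'(t_2-s,x_2-z)\lesssim$ something bounded by $(t_2-t_1+1)^{-1/2}\log N$; but that is weaker than needed. So the key step is to get a genuinely summable weight by squaring.

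The cleaner plan is as follows. By Cauchy–Schwarz in $z$, or simply $\cha'\le C$, the double factor provides $\cha'(t_i-s,x_i-z)\cha'(t_j-s,\cdot)$ with two kernels at time lags at least $t_1-s$. Then:

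\emph{Case $i=1$.} Bound factors $j\ge2$ by suprema, $(t_j-t_1+1)^{-1}$ each. Then use $\sum_{s,z}\cha'(t_1-s,x_1-z)^2\lesssim\sum_{u\ge1} u^{-3/2}<\infty$. This gives $\prod_{j\ge2}(t_j-t_1+1)^{-1}$, and since $t_j-t_1\ge t_j-t_{j-1}$ this is at most $\prod_{j=1}^{m-1}(t_{j+1}-t_j+1)^{-1}$.

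\emph{Case $i\ge2$.} Bound $\cha'(t_i-s,\cdot)^2\lesssim (t_i-t_1+1)^{-2}$ together with the suprema of the other factors with $j\ge2$, $j\ne i$. Then handle $\sum_{s,z}\cha'(t_1-s,x_1-z)$ by Lemma \ref{NL.sum of cha}, which gives $O(\sqrt{t_1})$. This loses too much, so instead keep one copy of $\cha'(t_i-s,\cdot)$ and pair it with $\cha'(t_1-s,\cdot)$. By the proof of Lemma \ref{NL.lifting 1-3} (sup of one kernel times spatial sum of the other), $\sum_{s,z}\cha'(t_1-s,x_1-z)\cha'(t_i-s,x_i-z)\cdot\sup\cha'(t_i-s)\lesssim\sum_s (t_1-s+1)^{-1/2}(t_i-s+1)^{-2}\lesssim (t_i-t_1+1)^{-3/2}$. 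This is at most $(t_i-t_1+1)^{-1}$.

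Combined with $\prod_{j\ge2,j\ne i}(t_j-t_1+1)^{-1}$, the result is $\prod_{j\ge2}(t_j-t_1+1)^{-1}$ again. This is dominated by the telescoping product as before.

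The main obstacle is making sure the single $s$-summation converges without a $\sqrt N$ or $\log N$ loss; the extra (doubled) kernel supplies precisely the extra $s$-decay needed. I would check the elementary sum $\sum_{u\ge1}u^{-1/2}(u+d)^{-2}\lesssim (d+1)^{-3/2}$ by splitting at $u=d$.
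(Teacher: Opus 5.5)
Your final argument is correct and is essentially the paper's proof. You bound the factors with $j\ge 2$, $j\ne i$, by their suprema. You then reduce to $\sum_{s,z}\cha'(t_1-s,x_1-z)^2=O(1)$ when $i=1$, and to $\sum_{s,z}\cha'(t_1-s,x_1-z)\,\cha'(t_i-s,x_i-z)^2\lesssim (t_i-t_1+1)^{-1}$ when $i\ge 2$. Finally you use $t_j-t_1\ge t_j-t_{j-1}$.

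The only difference is that for $i\ge 2$ you evaluate this core sum directly via per-$s$ suprema, getting $(t_i-t_1+1)^{-3/2}$, instead of citing Lemma~\ref{NL.lifting 1-3}. This usefully spells out the case of that lemma (squared kernel at the larger time) which the paper only calls analogous. Also, the ``first attempt'' you discarded as too crude is already this same computation once the $s$-dependence of the suprema is kept, so the detours in your write-up can simply be deleted.
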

\begin{proof}
 Let
$$
I_i:=\sum_{s=1}^{t_1-1}\sum_{z\in\mathbb Z}
\Bigl[\cha'(t_i-s,x_i-z)\prod_{j=1}^m \cha'(t_j-s,x_j-z)\Bigr].
$$
We first consider the case $2\le i\le m$. For any $2\le j\le m$, Lemma \ref{NL.upper bound on cha} gives
$$
\sup_{1\le s\le t_1-1}\sup_{z\in\mathbb Z}
\cha'(t_j-s,x_j-z)
=O\!\left(\frac1{t_j-t_{j-1}+1}\right).
$$
Applying this bound yields

$$
I_i
\lesssim
\left(\prod_{j=i+1}^m \frac1{t_j-t_{j-1}+1}\right)
\left(\prod_{j=2}^{i-1} \frac1{t_j-t_{j-1}+1}\right)
\sum_{s=1}^{t_1-1}\sum_{z\in\mathbb Z}
\cha'(t_1-s,x_1-z)\cha'(t_i-s,x_i-z)^2.
$$
Note that Lemma \ref{NL.lifting 1-3} yields
$$
\sum_{s=1}^{t_1-1}\sum_{z\in\mathbb Z}
\cha'(t_1-s,x_1-z)\cha'(t_i-s,x_i-z)^2
\lesssim \frac1{t_i-t_1+1}.
$$
Since $t_i-t_1\ge t_i-t_{i-1}$, we obtain
$$
I_i\lesssim \prod_{j=1}^{m-1}\frac1{t_{j+1}-t_j+1},
\qquad 2\le i\le m.
$$
It remains to consider the case $i=1$. The same argument gives
$$
I_1
\lesssim
\left(\prod_{j=2}^m \frac1{t_j-t_{j-1}+1}\right)
\sum_{s=1}^{t_1-1}\sum_{z\in\mathbb Z}\cha'(t_1-s,x_1-z)^2.
$$
For the remaining sum,  Lemmas \ref{NL.upper bound on cha} and \ref{NL.sum of cha} imply
\begin{align*}
    \sum_{s=1}^{t_1-1}\sum_{z\in\mathbb Z}\cha'(t_1-s,x_1-z)^2
&\le
\sum_{s=1}^{t_1-1}
\Bigl(\sup_{z\in\mathbb Z}\cha'(t_1-s,x_1-z)\Bigr)
\sum_{z\in\mathbb Z}\cha'(t_1-s,x_1-z)\\
&\lesssim
\sum_{s=1}^{t_1-1}
\frac1{t_1-s+1}\cdot \frac1{\sqrt{t_1-s+1}}
=
O(1).
\end{align*}
This completes the proof.    
\end{proof}
\begin{lemma}\label{NL.lifting 2}
    For a fixed $d\ge 1$ and any $1\le t_0,\ldots,t_d\le aN$ with $0\le j_0,\ldots,j_d\le d$ satisfying $j_i\neq i$ for every $0\le i\le d$,
    $$\sum_{s=1}^{aN}\prod_{i=0}^{d}\frac{1}{\sqrt{\absolute{t_i-s}}+1}\lesssim\log N\sum_{i=0}^{d}\prod_{\substack{0\le j\le d\\ j\neq i,j_i}}\frac{1}{\sqrt{\absolute{t_j-t_i}}+1}.$$
\end{lemma}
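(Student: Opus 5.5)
Order the times: relabel so that $t_{(0)}\le t_{(1)}\le\cdots\le t_{(d)}$, and write $w_j(s):=(\sqrt{|t_j-s|}+1)^{-1}$. The plan is to split the range $1\le s\le aN$ according to which $t_i$ is closest to $s$. On the set $I_i:=\{s:\ |s-t_i|\le |s-t_j|\ \forall j\}$ we use the triangle inequality $|t_j-t_i|\le |t_j-s|+|s-t_i|\le 2|t_j-s|$, which gives $w_j(s)\lesssim (\sqrt{|t_j-t_i|}+1)^{-1}$ for every $j\ne i$. Hence on $I_i$ the product $\prod_{j=0}^d w_j(s)$ is bounded by $w_i(s)\,w_{j_i}(s)\prod_{j\ne i,j_i}(\sqrt{|t_j-t_i|}+1)^{-1}$, where we keep the two factors $w_i$ and $w_{j_i}$ unbounded (note $j_i\neq i$, so these are two distinct factors). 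Summing over $s\in I_i\subset[1,aN]$ and using the second estimate of Lemma \ref{NL.1 red} (with $1/\sqrt{|\cdot|+1}$ comparable to $1/(\sqrt{|\cdot|}+1)$), we get $\sum_{s}w_i(s)w_{j_i}(s)=O(\log N)$. Summing over $i=0,\dots,d$ then gives exactly the right-hand side.

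So the key steps, in order, are as follows. First, set up the Voronoi-type decomposition $\{1,\dots,aN\}=\bigcup_i I_i$, with ties broken arbitrarily. Second, prove the pointwise comparison $w_j(s)\le 2\,(\sqrt{|t_j-t_i|}+1)^{-1}$ for $s\in I_i$; this uses $\sqrt{2x}+1\le 2(\sqrt x+1)$. Third, apply the two-factor $\log N$ bound from Lemma \ref{NL.1 red}. Fourth, sum over $i$.

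The only subtle point is that the factor $j_i$ is arbitrary rather than, say, the nearest neighbour of $t_i$. I handle this by not estimating $w_{j_i}$ at all and instead letting it pair with $w_i$ in the logarithmic sum. For $d=1$ the product over $j\ne i,j_i$ is empty, and the claim reduces directly to Lemma \ref{NL.1 red}. I expect no real obstacle; the main care is tracking constants depending only on $d$.
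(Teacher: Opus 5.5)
Your proof is correct and is essentially the paper's argument. The paper likewise splits $[1,aN]$ into the nearest-point cells $J_i$ of the sorted $t_i$, bounds every factor but two by $(\sqrt{|t_j-t_i|}+1)^{-1}$ on $J_i$, and controls the remaining pair with Lemma \ref{NL.1 red}. The only difference is that the paper keeps the adjacent factor $j=i\pm1$ and then observes that the right-hand side is smallest when $j_i=i\pm1$, whereas you keep $w_{j_i}$ directly, which makes that extra observation unnecessary.
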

\begin{proof}
    Without loss of generality, after relabeling, $t_0\le t_1\le\cdots\le t_d$. We partition the interval $[1, aN]$ into the subintervals
    \begin{align*}
        J_0=\left[1,\frac{t_0+t_1}{2}\right]\cap\bbZ,\ \     J_i=\left[\frac{t_{i-1}+t_{i}}{2},\frac{t_{i}+t_{i+1}}{2}\right]\cap\bbZ,~\forall~1\le i\le d-1,\ \
        J_d=\left[\frac{t_{d-1}+t_{d}}{2},aN\right]\cap\bbZ.
    \end{align*}
    It suffices to show that, for any $0\le i\le d$,
    \begin{equation}\label{E.NL.lifting 2}
        \sum_{s\in J_i}\prod_{j=0}^{d}\frac{1}{\sqrt{\absolute{t_j-s}}+1}\lesssim\log N\prod_{\substack{0\le j\le d\\ j\neq i,j_i}}\frac{1}{\sqrt{\absolute{t_j-t_i}}+1}.
    \end{equation}
    
    \textbf{Case 1}: If $1\le i\le d-1$, we have
    \begin{align*}
        \sum_{s\in J_i}\prod_{j=0}^{d}\frac{1}{\sqrt{\absolute{t_j-s}}+1}
        &\le\prod_{\substack{0\le j\le d\\j\neq i,i-1}}\max_{s\in J_i}\left\{\frac{1}{\sqrt{\absolute{t_j-s}}+1}\right\}\cdot\sum_{s\in J_i}\frac{1}{\sqrt{\absolute{t_i-s}}+1}\cdot\frac{1}{\sqrt{\absolute{t_{i-1}-s}}+1}\\
        &\lesssim\log N\prod_{\substack{0\le j\le d\\j\neq i,i-1}}\frac{1}{\sqrt{\absolute{t_j-t_i}}+1}.
    \end{align*}
    Similarly, 
    $$\sum_{s\in J_i}\prod_{j=0}^{d}\frac{1}{\sqrt{\absolute{t_j-s}}+1}\lesssim\log N\prod_{\substack{0\le j\le d\\j\neq i,i+1}}\frac{1}{\sqrt{\absolute{t_j-t_i}}+1}.$$
    Note that the right-hand side of \eqref{E.NL.lifting 2} is minimized when $j_i=i-1$ or $i+1$. This concludes the proof of \eqref{E.NL.lifting 2}.

    \textbf{Case 2}: If $i=0$ or $d$, the same argument gives
    $$\sum_{s\in J_0}\prod_{j=0}^{d}\frac{1}{\sqrt{\absolute{t_j-s}}+1}\lesssim\log N\prod_{\substack{1\le j\le d\\j\neq 0,1}}\frac{1}{\sqrt{\absolute{t_j-t_0}}+1}$$
    and  $$\sum_{s\in J_d}\prod_{j=0}^{d}\frac{1}{\sqrt{\absolute{t_j-s}}+1}\lesssim\log N\prod_{\substack{0\le j\le d\\j\neq d-1,d}}\frac{1}{\sqrt{\absolute{t_j-t_d}}+1}.$$
    Then, \eqref{E.NL.lifting 2} is valid since its right-hand side is minimized when $j_0=1$ or $j_d=d-1$.
\end{proof}
\begin{lemma}\label{NL.reflection identity}
    For any $1\le t_1\le t_2$ and $x_1,x_2\in\bbZ$, 
    $$\sum_{\substack{0\le s\le t_1-1\\z\in\bbZ}}\cha(t_1-s,x_1-z)\cha(t_2-s,x_2-z)=4p\left(t_2-t_1,x_1-x_2\right)-4p\left(t_1+t_2,x_1-x_2\right).$$
    In particular, for any $t \in \bbZ_+$,
    $$\sum_{\substack{0\le s\le t-1\\z\in\bbZ}}\cha(t-s,z)^2=4-4p(2t,0).$$
\end{lemma}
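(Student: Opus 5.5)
The statement to prove is the reflection identity of Lemma~\ref{NL.reflection identity}. The plan is to work directly with the definition $\cha(t,x)=p(t-1,x-1)-p(t-1,x+1)$ and use the Chapman--Kolmogorov relation of the simple random walk together with the symmetry $p(t,x)=p(t,-x)$.

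\textbf{Step 1 (spatial sum for fixed $s$).} Fix $s\le t_1-1$ and put $a=t_1-s-1\ge 0$, $b=t_2-s-1\ge 0$. Expand the product $\cha(t_1-s,x_1-z)\cha(t_2-s,x_2-z)$ into four terms of the form $\pm p(a,x_1-z\mp1)p(b,x_2-z\pm1)$. Use symmetry to write $p(b,x_2-z+c)=p(b,z-x_2-c)$, and then apply Chapman--Kolmogorov in the form $\sum_z p(a,y-z)p(b,z-w)=p(a+b,y-w)$. With $y=x_1-x_2$, the four terms give
\[
\sum_z \cha(t_1-s,x_1-z)\cha(t_2-s,x_2-z)=2p(a+b,y)-p(a+b,y-2)-p(a+b,y+2)=:-\Delta_2 p(a+b,\cdot)(y).
\]
Here the sign bookkeeping should be checked: the two like-sign terms both give shift $0$, and the cross terms give shifts $\pm 2$.

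\textbf{Step 2 (rewrite via the heat equation).} Since $p(n+1,\cdot)=\frac12(p(n,\cdot-1)+p(n,\cdot+1))$, applying this twice gives
\[
p(n+2,y)=\tfrac14\bigl(p(n,y-2)+2p(n,y)+p(n,y+2)\bigr).
\]
Hence $2p(n,y)-p(n,y\pm2)$ combined equals $4p(n,y)-4p(n+2,y)$. With $n=a+b=t_1+t_2-2s-2$, each summand becomes $4\bigl(p(t_1+t_2-2s-2,y)-p(t_1+t_2-2s,y)\bigr)$.

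\textbf{Step 3 (telescoping in $s$).} As $s$ runs over $0,\dots,t_1-1$, the index $n=t_1+t_2-2s-2$ runs in steps of $2$ from $t_2-t_1$ up to $t_1+t_2-2$. The sum telescopes to $4p(t_2-t_1,y)-4p(t_1+t_2,y)$, which is the claim since $p(\cdot,y)=p(\cdot,-y)$. The special case follows by taking $t_1=t_2=t$ and $x_1=x_2$, using $p(0,0)=1$.

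The only real obstacle is the sign and shift bookkeeping in Step 1, together with the boundary term $s=0$ (the lemma sums from $s=0$, so $\cha(t_1,\cdot)$ is still defined via $p(t_1-1,\cdot)$ and the telescoping endpoints must match exactly). I would verify the result numerically for $t_1=t_2=1$: $\sum_z\cha(1,z)^2=2=4-4p(2,0)=4-2$, which checks out.
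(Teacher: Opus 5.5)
Your proof is correct and is the real-space version of the paper's argument. The paper encodes $p(t,\cdot)$ and $\cha(t,\cdot)$ as coefficients of $\bigl(\frac{q+q^{-1}}{2}\bigr)^{t}$ and $(q-q^{-1})\bigl(\frac{q+q^{-1}}{2}\bigr)^{t-1}$. In that language your Chapman--Kolmogorov step is a product of Laurent polynomials, your two-step heat-equation identity is $2-q^2-q^{-2}=4\bigl(1-(\frac{q+q^{-1}}{2})^2\bigr)$, and the telescoping over $n=t_2-t_1,\dots,t_1+t_2-2$ is identical (your sign and shift bookkeeping is right).
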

\begin{proof}
For a formal power series $a(q)=\sum_{n\in\bbZ}a_nq^n$ and an arbitrary integer $m$, denote by $\left[q^m\right](a(q))$ the coefficient of $q^m$ in $a(q)$. The standard coefficient representation is 

$$p(t,x)=\left[q^x\right]\left(\left(\frac{q+q^{-1}}{2}\right)^t\right).$$
Then, 
\begin{align*}
    \cha(t,x)&=p(t-1,x-1)-p(t-1,x+1)=\left[q^{x-1}\right]\left(\left(\frac{q+q^{-1}}{2}\right)^{t-1}\right)-\left[q^{x+1}\right]\left(\left(\frac{q+q^{-1}}{2}\right)^{t-1}\right)\\
    &=\left[q^x\right]\left((q-q^{-1})\left(\frac{q+q^{-1}}{2}\right)^{t-1}\right).
\end{align*}
By symmetry, we also have
$$\cha(t,x)=\left[q^{-x}\right]\left((q^{-1}-q)\left(\frac{q+q^{-1}}{2}\right)^{t-1}\right).$$
Thus, the sum can be written as
\begin{align*}
    &\sum_{s=0}^{t_1-1}\sum_{z\in\bbZ}\cha(t_1-s,x_1-z)\cha(t_2-s,x_2-z)\\
    =&\sum_{s=0}^{t_1-1}\sum_{z\in\bbZ}\left[q^{x_1-z}\right]\left((q-q^{-1})\left(\frac{q+q^{-1}}{2}\right)^{t_1-s-1}\right)\cdot\left[q^{z-x_2}\right]\left((q^{-1}-q)\left(\frac{q+q^{-1}}{2}\right)^{t_2-s-1}\right)\\
    =&\left[q^{x_1-x_2}\right]\left((2-q^2-q^{-2})\sum_{s=0}^{t_1-1}\left(\frac{q+q^{-1}}{2}\right)^{t_1+t_2-2s-2}\right)\\
    =&4\left[q^{x_1-x_2}\right]\left(\left(\frac{q+q^{-1}}{2}\right)^{t_2-t_1}-\left(\frac{q+q^{-1}}{2}\right)^{t_1+t_2}\right)=4p(t_2-t_1,x_1-x_2)-4p(t_1+t_2,x_1-x_2).
\end{align*}
The second identity is a special case of the first obtained by taking $x_1=x_2=0$ and $t_1=t_2=t$.
\end{proof}
\section{Proof of Proposition \ref{P.binary tree}}\label{S.Proof of Proposition P.binary tree}
\begin{proof}
    We construct a sequence of trees $\tree=\tree_0,\ldots,\tree_C=\tilde\tree$ interpolating between $\tree$ and $\tilde\tree$. They all satisfy the condition that every non-root, non-leaf vertex has at least two children. They also have naturally identified leaf sets carrying the proper partition $\pi$, and for any $0\le i\le C-1$,
    \begin{equation}\label{E.interpolation}
        \action{t,x}{\tree_i^\pi}\lesssim\action{t+1,x}{\tree_{i+1}^\pi}
    \end{equation}
    for the fixed proper partition $\pi$ and all $(t,x)$. The construction is given by induction. Assume we have obtained a sequence up to $T_i$. If $T_i$ is not a full binary tree, then it must have a non-root vertex $u$ with at least three children. Suppose $u$ has $k$ children $v_1,\ldots,v_k,~k\ge 3$. We pick two of them by the following rules:
    \begin{itemize}
        \item \textbf{Case 1:} At least one of $v_1,\dots,v_k$ is not a leaf. WLOG, $v_1$ is not a leaf. If $v_1$ has two children, both of which are leaves, and $v_2$ is a leaf which, together with two children of $v_1$, forms a part of $\pi$, then we pick $v_1$ and $v_3$. Otherwise, we pick $v_1$ and $v_2$.
        \item \textbf{Case 2:} All of $v_1,\dots,v_k$ are leaves, but they are not in the same part of $\pi$. Suppose $v_i$ and $v_j$ do not belong to the same part, and we pick them.
        \item \textbf{Case 3:} All of $v_1,\dots,v_k$ are leaves, and they are in the same part of $\pi$. In this case, either $k\ge 4$ or that part of $\pi$ containing them contains some other leaf. We simply pick $v_1$ and $v_2$.
    \end{itemize}
    Suppose the selected pair is $v_i$ and $v_j$. We then delete the edges $[u,v_i], [u,v_j]$ and add a new vertex $w$ with three edges $[u,w],[w,v_i],[w,v_j]$. Let this new tree be $T_{i+1}$. Since the operation producing $T_{i+1}$ from $T_i$ preserves leaves, there is a natural identification between $L(T_i)$ and $L(T_{i+1})$. In each of the three cases, $\pi$ remains a proper partition of $L(T_{i+1})$. Now, for any given $\pi$ and $\embed\in\embedset{t,x}^\pi(T_i)$, let $\embed'$ be
    \begin{align*}
            \embed'(v)=\begin{cases}
            \embed(v) & \text{if $v$ is neither $w$ nor an ancestor of $w$ in $T_{i+1}$}\\
            \embed(u) & \text{if $v$ is $w$}\\
            \embed(v)+(1,0) & \text{if $v$ is an ancestor of $w$ in $T_{i+1}$}
        \end{cases}
    \end{align*}
   which is an element of $\embedset{t+1,x}(T_{i+1})$. Note that $\embed'$ coincides with $\embed$ on all leaves; consequently, it induces the same partition $\pi$, that is, $\embed' \in \embedset{t+1,x}^\pi(T_{i+1})$. It can be verified directly that $\pi$ remains a proper partition of $L(T_{i+1})$.
   We also use $\cha'(s,z) \lesssim \cha'(s+1,z)$ for any $s \ge 1$ and $z \in \mathbb{Z}$. Combined with the fact that $\weight{\embed'}([u,w]) = \cha'(1,0) > 0$, this gives
    $$\prod_{e\in E(T_i^\pi)}\weight{\embed}(e)\lesssim\prod_{e\in E(T_{i+1}^\pi)}\weight{\embed'}(e).$$ 
    Since the mapping $\embed \mapsto \embed'$ is injective, \eqref{E.interpolation} follows by summing this inequality over all $\embed$. Thus, we have constructed $T_{i+1}$ and, by induction, the entire sequence $T_0, T_1, \ldots$. In this sequence, each subsequent term contains exactly one more vertex than the preceding one, while the number of leaves remains unchanged. Eventually, $T_C$ is a full binary tree for some constant $C = C(T)$. This completes the proof.
\end{proof}
\printbibliography

\end{document}